\documentclass[12pt,a4paper]{amsart}
\usepackage{amsfonts}
\usepackage{amsthm}
\usepackage{amsmath}
\usepackage{xcolor}
\usepackage{amssymb}
\usepackage{mathtools} \usepackage{stmaryrd} \usepackage{amscd}
\usepackage{dsfont} \usepackage[latin2]{inputenc}
\usepackage{t1enc}
\usepackage[mathscr]{eucal}
\usepackage{indentfirst}
\usepackage{graphicx}
\usepackage{graphics}
\usepackage{hyperref}
\usepackage{epic}
\numberwithin{equation}{section}
\usepackage[margin=2.9cm, marginparwidth=2.4cm]{geometry}
\usepackage{epstopdf} 
\usepackage{enumerate}
\usepackage{mathrsfs} \usepackage{upgreek}
\newcommand{\invtemp}{\upbeta}
\newcommand{\tempr}{\mathrm{T}}
\newcommand{\vertiii}[1]{{\left\vert\kern-0.25ex\left\vert\kern-0.25ex\left\vert #1 
    \right\vert\kern-0.25ex\right\vert\kern-0.25ex\right\vert}}

\theoremstyle{plain}
\newtheorem{Th}{Theorem}[section]
\newtheorem{Lemma}[Th]{Lemma}
\newtheorem{Cor}[Th]{Corollary}
\newtheorem{Prop}[Th]{Proposition}

 \theoremstyle{definition}
\newtheorem{Def}[Th]{Definition}

\newtheorem{Rem}[Th]{Remark}
\newtheorem*{Rem*}{Remark}
\newtheorem{?}[Th]{Problem}

\renewcommand{\P}{\mathsf{P}}

\newcommand{\Ind}[1]{\mathds{1}_{\{ #1\}}}

\newcommand{\R}{\mathbb{R}}
\newcommand{\N}{\mathbb{N}}
\newcommand{\Z}{\mathbb{Z}}
\renewcommand{\d}{\mathrm{d}}

\newcommand{\eps}{\epsilon}
\newcommand{\Xn}{X^{(n)}}

\newcommand{\sL}{\mathscr{L}}
\newcommand{\Co}{\mathrm{Co}}
\newcommand{\out}{{\mathrm{out}}}
\newcommand{\Q}{\mathbb{Q}}
\newcommand{\one}{{(1)}}
\newcommand{\two}{{(2)}}
\renewcommand{\aa}{\mathbf{a}}
\newcommand{\xn}{x^{(n)}}
\newcommand{\WW}{\mathbb{W}}
\newcommand{\cW}{\mathcal{W}}
\newcommand{\sW}{\mathsf{W}}
\newcommand{\FF}{\mathbb{F}}
\newcommand{\cF}{\mathcal{F}}
\newcommand{\sF}{\mathsf{F}}
\newcommand{\cG}{\mathcal{G}}
\newcommand{\rr}{\mathsf{r}}
\newcommand{\XX}{\mathbf{X}}

\renewcommand{\b}{\mathsf{b}}
\newcommand{\LL}{\mathbb{L}}
\newcommand{\alphap}{{\alpha,p}}
\newcommand{\alphainfty}{{\alpha,\infty}}
\newcommand{\alphaprime}{{\alpha',p'}}
\newcommand{\cS}{\mathcal{S}}

\newcommand{\m}{{(m)}}
\newcommand{\sM}{\mathscr{M}}

\newcommand{\sS}{\mathsf{S}}
\renewcommand{\Bbb}{\mathbb}

\newcommand{\Wc}{\cW}
\newcommand{\Ws}{\sW}

\newcommand{\Fc}{\cF}
\newcommand{\Fs}{\sF}

\newcommand{\Pc}{\mathcal{P}}

\newcommand{\bpf}[1][Proof]{{\noindent {\sc #1: }}}
\newcommand{\epf}{{{\hfill $\Box$ \smallskip}}}
\newcommand{\ONE}{{\mathbf{1}}}

\newcommand{\Pp}{\mathsf{P}}

\newcommand{\sC}{\mathscr{C}}
\newcommand{\n}{{(n)}}
\newcommand{\cB}{\mathcal{B}}
\newcommand{\Bc}{\cB}
\newcommand{\visc}{\frac{\varkappa}{2}}
\newcommand{\temp}{\varkappa}

\newcommand{\NR}{\N\times\R}

\newcommand{\tshift}{\theta}

\newcommand{\Shear}{\Xi}
\newcommand{\Ac}{{\mathcal A}}
\newcommand{\lsupp}{\mathop{\mathrm{supp}_-}}
\newcommand{\rsupp}{\mathop{\mathrm{supp}_+}}
 \newcommand{\setbetap}{\Pi}

\usepackage{ulem}
\usepackage{cancel}

\begin{document}

\title{Dynamic polymers: invariant measures and ordering by noise}

\author{Yuri Bakhtin}
\author{Hong-Bin Chen}
\address{Courant Institute of Mathematical Sciences\\ New York University \\ 251~Mercer~St, New York, NY 10012, USA }
\email{bakhtin@cims.nyu.edu, hbchen@cims.nyu.edu }

\begin{abstract}
We develop a dynamical approach to infinite volume directed polymer measures in random environments. We define polymer dynamics in $1+1$ dimension as a stochastic gradient flow on polymers pinned at the origin, for energy involving quadratic nearest neighbor interaction and local interaction with random environment. We prove existence and uniqueness of the solution, continuity of the flow, the order-preserving property with respect to the coordinatewise partial order, and the invariance of the asymptotic slope. We establish ordering by noise which means that if two initial conditions have distinct slopes, then the associated solutions eventually get ordered coordinatewise. This, along with the shear-invariance property and existing results on static infinite volume polymer measures, allows to prove that for a fixed asymptotic slope and almost every realization of the environment, the polymer dynamics has a unique invariant distribution given by a unique infinite volume polymer measure, and, moreover, One Force -- One Solution principle holds. We also prove that every  polymer measure is concentrated on paths with well-defined asymptotic slopes and give an estimate on deviations from  straight lines.  
\end{abstract}
\maketitle
\tableofcontents

\section{Introduction}
\subsection{Background.}
The goal of this paper is to initiate the study of infinite volume directed polymer measures in random potential from a dynamical point of view, with the help of infinite-dimensional stochastic gradient flows in random potentials.

The term {\it directed polymers} applies to a variety of mathematical models of  monomer 
chains subject to local self-interactions and interactions with random environment,
see \cite{Giacomin:MR2380992}, \cite{Hollander:MR2504175}, \cite{Comets:MR3444835} and references therein.

For finite chains (often viewed as time-parametrized paths), random polymer measures are usually defined as Gibbs distributions with reference measure
being the distribution of a random walk and with Boltzmann--Gibbs weights given by the potential accumulated by random walk paths from the random environment.  This can be done most naturally when either both endpoints
are fixed (point-to-point polymers) or when only one of them is fixed (point-to-line polymers).

Directed polymers are essential in the study of some basic PDEs with random forcing.
The classical Feynman--Kac  formula for solutions of the Cauchy problem for the linear heat equation  with multiplicative potential can be interpreted in terms of integration with respect to a polymer measure. In turn, one of the basic nonlinear systems, the Burgers equation in dimension $d\in\N$ with viscosity $\visc>0$ and external forcing $f=f(t,x)$, 
\begin{equation*}
\partial_t u + (u \cdot \nabla_x) u=\visc\Delta_x u + f,
\end{equation*}
($\nabla_x$ is the gradient and $\Delta_x$ is the Laplace operator with respect to $x\in\R^d$) considered on gradient vector fields
can be reduced to such a  heat equation by the Hopf--Cole transformation.

In recent papers \cite{Bakhtin-Li:MR3911894}, \cite{Bakhtin-Li:MR3856947}  the polymer approach to the Burgers equation on the line $(d=1)$ with random kick forcing $f=-\partial_x F$ given by a random potential
\begin{equation*}
F(t,x)= F_\omega(t,x)=\sum_{n\in\Z} F_{\omega, n}(x)\delta_{n}(t), 
\end{equation*}
with kicks  $(F_n)$ being i.i.d.\ weakly mixing stationary processes
was instrumental in the study of the ergodic properties of that random dynamical system.

The central part of the program realized in  \cite{Bakhtin-Li:MR3911894} was working with thermodynamic limits of the
polymer measures in the random forcing potential. Denoting the space of polymer chains with asymptotic slope $v\in\R^d$ by $S(v)$,
in the one-dimensional case, it was shown that for each $v$, with probability one there is a unique 
measure on infinite paths concentrated on $S(v)$ and satisfying the Dobrushin--Lanford--Ruelle (DLR) condition.
The latter requires (see~\cite{Sinai:MR691854} 
    or~\cite{Georgii:MR956646}) that the finite-dimensional distributions conditioned on the complementary (infinitely many) coordinates coincide with the point-to-point polymer measures defined as the Boltzmann--Gibbs measures for finitely many degrees of freedom via the usual exponential formula: given 
    the  temperature $\tempr=\temp$, the potential $F:\N\times\R\to\R$, and 
    the endpoints $x_0,x_n\in\R$,
    the Lebesgue density of the point-to-point polymer probability measure on the path $(x_0,x_1,\ldots,x_n)$ is given, up to a normalizing factor, by 
    \begin{align*}
 \exp\Bigg\{-\frac{1}{\tempr} \bigg( \sum_{i=1}^n V(\delta_i x)+\sum_{i=1}^n F_i(x_i)\bigg) \Bigg\},
    \end{align*}
    where $\delta_i x =  x_i - x_{i-1}$
    and $V(r)=|r|^2/2$.
    We give a more detailed technical definition in Section~\ref{section:IVPM}. 

Loosely speaking, these measures can be viewed as Gibbs measures on semi-infinite polymer chains/paths  $x:\{0\}\cup\N\to \R^d$, with 
energy $E(x)$ defined by
\begin{equation}
\label{eq:energy-of-infinite-path}
E(x)=\sum_{i\in\N} V(\delta_i x)+\sum_{i\in\N} F_i(x_i),
\end{equation}
 This series does not converge, but its increments under local perturbations of polymers are well-defined.

Due to the presence of the quadratic term $V(\delta_ix)$ (which in fluid dynamics terms can be interpreted as the kinetic energy of the particle following path $x$, so that
at time $i$ it is located at $x_i$), these infinite volume Gibbs measures can  also be viewed as (tilted)  Discrete Gaussian Free Field in a random potential. They were shown to be limits of a broad class of sequences of finite volume polymer measures. Besides point-to-point, point-to-line polymers, this class includes point-to-distribution ones where we fix one endpoint and the distribution of another point.

Crucially, these thermodynamic limits or infinite volume polymer measures are then directly used in  \cite{Bakhtin-Li:MR3911894} to construct global stationary solutions of the Burgers equation with kick random forcing, prove their uniqueness, and show that they serve as skew-invariant one-point random attractors, thus proving the One Force --- One Solution  principle (1F1S) on each ergodic component composed of functions with fixed average (coinciding with the slope of the polymer). 
The existence-uniqueness and other properties of IVPMs are central in the approach of  \cite{Bakhtin-Li:MR3911894} to the global solutions of the Burgers equation.  It is a powerful approach resulting  in a more detailed and precise description of the basin of attraction and mode of convergence to the global solution given by the 1F1S than in more recent papers \cite{Dunlap-Graham-Ryzhik}, \cite{Dunlap:MR4173562}, where the same circle of problems  was approached with PDE methods at the level of the Markov semigroup.

 In~\cite{Bakhtin-Li:MR3856947},  showing that IVPMs asymptotically concentrate near one-sided infinite minimizers of the random Lagrangian action as the temperature goes to zero, was in the core of the argument that the global stationary solutions of randomly forced Burgers equation 
 with zero viscosity studied in~\cite{kickb:bakhtin2016} and~\cite{BCK:MR3110798}
 are inviscid limits of positive viscosity global solutions, and thus the inviscid limit also holds for the invariant distributions.

Describing thermodynamic limits  --- and thus finding to which extent the local interactions define the macroscopic state of the system --- is a fundamental problem for Gibbs distributions.
There is a growing interest to IVPMs in random environment, especially in connection with the study of Busemann functions playing the role of global solutions of appropriate analogues of the Burgers/KPZ equations.
Besides  \cite{Bakhtin-Li:MR3911894}, \cite{Bakhtin-Li:MR3856947},  see
~\cite{Bakhtin-Khanin:MR2791052}, \cite{Georgiou--Rassoul-Agha--Seppalainen--Yilmaz:MR3395462}, \cite{Rassoul-Agha--Seppalainen--Yilmaz:2013arXiv1311.3016G}, \cite{Alberts--Rassoul-Agha--Simper:MR4091097},
\cite{Janjigian--Rassoul-Agha:MR4089495},
\cite{Janjigian--Rassoul-Agha:MR4099996} studying lattice polymer models.

In~\cite{Bakhtin-Khanin-non:MR3816628} a new notion of generalized polymer based on stochastic control is introduced in the hope that it will be useful in studying a broader class of random systems from the KPZ universality class. 

\medskip

For general kinetic energies or local self-interaction potentials $V$ and in higher dimensions the problem of existence and uniqueness of IVPMs  in $S(v)$ for a fixed $v$ (i.e., with given
asymptotic slope) is open and so is the problem of existence and uniqueness of infinite one-sided geodesics (minimizers or ground states of 
the energy~$E$) in $S(v)$. 
 In this paper, we develop a dynamical point of view that can be useful in addressing these issues.

\bigskip

A fundamental feature of a finite volume Gibbs measure
known at least since the analysis of the Fokker---Planck equations in \cite{Kolmogorov:MR1513121}
is that it is a unique invariant measure of a stochastic gradient flow with potential given by the energy function.
To remind the reader what this means,  suppose that
 $E:\R^{Nd}\to\R$ is a smooth energy function of $Nd$ coordinates of $N$ particles described by $d$ coordinates each and $\tempr>0$ is the temperature  parameter such that for the inverse temperature $\invtemp=\tempr^{-1}$,
\[
Z=\int_{\R^{Nd}} e^{-\invtemp E(x)}\d x <\infty.
\]
Then the Gibbs measure
\begin{equation*}
\mu(\d x)=\frac{e^{-\invtemp E(x)}}{Z}\d x
\end{equation*}
is a unique invariant probability distribution for the diffusion process solving the SDE
\begin{equation}
\label{eq:general-stoch-gradient-flow}
\d X(t)=-\nabla E(X(t))\d t + \sqrt{2 T} \d W(t),
\end{equation}
where $W(t)=(W_1(t),\ldots,W_{Nd}(t))$ is the standard $Nd$-dimensional Wiener process with independent components.
Moreover, the stochastic dynamics \eqref{eq:general-stoch-gradient-flow} is reversible with respect to the Gibbs distribution $\mu$.

\medskip

It is natural to expect that a similar property holds for  infinite volume ($N=\infty$)  Gibbs distributions including IVPMs. 
This suggests the following natural plan for addressing the existence/uniqueness issue for IVPMs corresponding to the case $V(r)=|r|^2/2$
in higher dimensions $d\ge 2$ (and, in principle, more general $V$). The existence arguments from~\cite{Bakhtin-Li:MR3911894}--\cite{Bakhtin-Li:MR3856947} translate to higher dimensions easily since they are mostly based on the shear-invariance property
of the directed  polymer measures due to the quadratic form of $V$. Thus for a given $v$, with probability one, there is an IVPM on $S(v)$.
It is an invariant measure for the associated infinite-dimensional stochastic gradient flow, and to prove its uniqueness,
it is sufficient to prove that there are no other invariant distributions. This unique ergodicity on $S(v)$ seems plausible because some forms
of irreducibility and regularity of transition probabilities hold. For the irreducibility/controllability/accessibility property one can 
show that for any $x\in S(v)$ and any open set $O$ in $S(v)$ in an appropriate metric there are  controls (noise realizations) that bring the stochastic trajectory from $x$ to $O$. Certain regularity of transition probabilities perhaps short of the strong Feller property should follow from the elliptic
nature of the additive noise forcing all the coordinates independently. It is also plausible that a similar approach based on reversible Markov chains  might work for lattice polymers.

\smallskip

We reserve this program for a future publication and note that even the identity between Gibbs and invariant distributions is far from trivial in infinite dimensions.  Besides the fact that the DLR formalism for infinite volume Gibbs measures (see~\cite{Sinai:MR691854} 
or~\cite{Georgii:MR956646}) does not provide their explicit representations,
  one must also deal with infinite-dimensional gradient flows, probability distributions, and diffusions, and the Fokker--Planck equations must be made sense of in infinite dimensions if one wants to use them.
In several cases, these complications have been overcome, and  infinite volume Gibbs measures have been studied as invariant for reversible dynamics in infinite-dimensional configuration spaces, see 
\cite{Fritz:MR656511},
\cite{Liggett-book:MR776231},
\cite{Funaki-Spohn-1997:MR1463032},
\cite{Albeverio-Kondratiev-Rockner-Tsikalenko:MR1813614}
\cite{Bogachev-Rockner-Wang:MR2073138},
\cite{Jahnel-Kulske-2019:MR3919445}.

\medskip

The results and techniques from these papers do not apply to our polymer setting. A distinctive feature of our setup is that the energy function
is generated by an unbounded random stationary potential in $d+1$ dimensions. We note though that in the case of bounded potentials, the conditions of the theorem on equivalence between the Gibbs and invariance properties from  \cite{Fritz:MR656511} are satisfied. That result is based on the analysis of free energy, an approach completely different from ours. The fact that every IVPM is invariant under~\eqref{eq:general-stoch-gradient-flow} seems very robust but the converse is harder to verify directly in our setting. In the bounded potential case, some of our arguments can also be simplified.

\subsection{Main results}
We hope that eventually the dynamic polymers that we introduce  below in~\eqref{eq:main-sde} or their generalizations will be useful, in a variety of settings, in understanding the existence-uniqueness and other properties of static IVPMs providing a direct description of global solutions of the Burgers equation  in various dimensions and under various kinds of forcing (see~\cite{Bakhtin-Li:MR3911894} and a discussion of global solutions for more general
Hamilton--Jacobi equations and the associated generalized polymers in~\cite{Bakhtin-Khanin-non:MR3816628}).
In this paper, rather than  using stochastic dynamics to derive properties of Gibbs measures as the program presented above suggests,
we are going to explore it backwards in the $1+1$-dimensional setting  studied in~\cite{Bakhtin-Li:MR3911894}, \cite{Bakhtin-Li:MR3856947},
where the existence and uniqueness of IVPMs is known,  and prove
several results concerning the dynamics of the associated infinite dimensional stochastic flows.

Thus we assume that $d=1$, $V(r)=r^2/2$, and impose just a few requirements on the  random potential $(F_k(x))_{(k,x)\in \N\times\R}$:
 $(F_k)_{k\in\N}$ form an i.i.d.\ sequence and $(F_1(x))_{x\in\R}$ is a weakly mixing stationary process with finite exponential moments.
We give a complete set of requirements on the random potential in Section~\ref{section:random_pot} including a couple of additional technical ones such as
$C^2(\R)$-smoothness of realizations and the presence of large almost flat regions that is needed to prove an ordering by noise theorem. Here we just note that a broad class of stationary processes is covered including  asymptotically decorrelating Gaussian processes and shot-noise type processes with bounded compactly supported kernels.

In this setting, we study gradient-type stochastic dynamics on infinite polymer chains $x:\{0\}\cup \N\to \R$ pinned at $0$ ($X_0(t)\equiv0$) with Gaussian white noise acting on all coordinates and a random drift that depends on neighboring coordinates and the random potential:
\begin{equation}
\label{eq:main-sde}
\d X_k(t)= - \nabla_k E(X(t)) \d t + \sigma \d W_k(t),\quad i\in\N,
\end{equation}
where $\sigma=\sqrt{2\tempr}$ (the temperature $\tempr=\temp\in(0,\infty)$ is fixed along with the inverse temperature $\invtemp=\tempr^{-1}$  ),
the gradient $\nabla E(x)$ is understood as a formal application of the partial derivative to the formal expression
$E(x)$ given in  \eqref{eq:energy-of-infinite-path}:
\begin{equation*}
-\nabla_k E(x)=- \partial_{x_k}E(x)=  (\Delta x)_k + f_k(x_k),\quad k\in\N.
\end{equation*}
where the discrete Laplacian $\Delta_k$ can be  written as
\begin{equation}
\label{eq:laplace}
 \Delta_k x = \delta_{k+1}x-\delta_kx = x_{k-1}-2x_k+x_{k+1},\quad k\in\N,
\end{equation}
so \eqref{eq:main-sde} can be rewritten as
\begin{equation}
\label{eq:main-sde1}
\d X_k(t)=\Delta_k X(t)\d t +f_k(X_k(t))\d t + \sigma \d W_k(t),\quad k\in\N.
\end{equation}

We begin our study with  solutions of the polymer dynamics  \eqref{eq:main-sde} on finite time intervals. We
introduce appropriate metric spaces of polymers and
prove that the Galerkin approximations to \eqref{eq:main-sde} define the dynamics uniquely  
and show that the resulting solution maps form a continuous and order-preserving random dynamical system (RDS), a self-consistent
family of solution maps $(\Phi^t_{F,W})_{t\ge0}$, associated with the potential realization $F$ and the noise realization $W$.
Here the order-preserving property or monotonicity means that if $x\succeq y$, then $\Phi^t_{F,W}x\succeq\Phi^t_{F,W}y$, where $\succeq$
stands for  coordinatewise comparison: we write $x\succeq y$ if $x_k \ge y_k$ for all $k\in\N$. 

We also show that for each $v\in \R$, the space $S(v)$ is  a.s.-invariant under $(\Phi^t_{F,W})$.

These results are given in Sections~\ref{section:SDE} and~\ref{section:dyn_inf_poly} after introducing the setting in Section~\ref{sec:setting}.

\medskip

Then we study the long-term properties of the  polymer  dynamics. Let us collect our main results and state them in the form of one theorem:
\begin{Th} \label{th:collect} \leavevmode
\begin{enumerate}[I.]
\item
\label{it:Gibbs-is-invariant} For almost all realizations of $F$: every IVPM is invariant for the Markov process generated by \eqref{eq:main-sde}.
\item 
\label{it:uniq-erg-mix}
(Unique ergodicity; identity between Gibbs and invariant measures for a fixed slope; mixing.) For fixed $v\in\R$ and for almost every realization of the
potential~$F$: (i) the unique IVPM $\mu_{F,v}$ on $S(v)$ constructed 
in~\cite{Bakhtin-Li:MR3911894} is also a unique invariant distribution on $S(v)$ for the Markov process generated by \eqref{eq:main-sde}; 
(ii) 
this Markov process is mixing with respect to $\mu_{F,v}$.
\item
\label{it:1F1S}
 (1F1S.)  For fixed $v\in\R$ and for almost every realization of the 
potential $F$,  the RDS generated by~\eqref{eq:main-sde} on $S(v)$ a.s.-admits a unique stationary nonanticipating global solution serving as a one-point global pullback attractor on $S(v)$.
\item 
\label{it:ordering}
(Ordering by noise.) For almost every realization of the
potential $F$ and noise~$W$, if $x^1\in S(v_1)$, $x^2\in S(v_2)$ and $v_1>v_2$, then there is $t_0=t_0(F,W,x^1,x^2)>0$ such that $\Phi^t_{F,W}x^1\succeq\Phi^t_{F,W}x^2$ for~$t>t_0$.
\end{enumerate}
\end{Th}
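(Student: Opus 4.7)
My plan for Part~\ref{it:Gibbs-is-invariant} is to combine the DLR characterization of IVPMs with the invariance of finite-volume Gibbs measures and the continuity of the flow established in Sections~\ref{section:SDE}--\ref{section:dyn_inf_poly}. For each $n\in\N$, the Galerkin truncation of~\eqref{eq:main-sde1} to coordinates $X_1,\dots,X_n$ with $X_0\equiv 0$ and any fixed boundary value $X_{n+1}$ is a classical nondegenerate diffusion on $\R^n$ driven by minus the gradient of a finite energy; its unique invariant distribution is precisely the point-to-point polymer measure that appears in the DLR conditional specifications of any IVPM. Averaging the finite-dimensional invariance identity against any IVPM $\mu$ and using a.s.\ convergence of Galerkin approximations to $\Phi^t$ as $n\to\infty$, together with the a.s.\ invariance of $S(v)$ under the flow, then yields invariance of $\mu$ under the full semigroup. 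Care is needed in estimating tail contributions uniformly in $n$ so that the limit passage is legitimate.

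The central new ingredient is Part~\ref{it:ordering}, ordering by noise, which I expect to be the main obstacle. My strategy exploits the quadratic form of $V$: the shear $(\Shear_v x)_k = x_k + vk$ commutes with the discrete Laplacian and hence conjugates the SDE with potential $F$ into the SDE with shifted potential $\widetilde F_k(y)=F_k(y-vk)$. Combined with the order-preserving property, this reduces matters to comparing two solutions started from piecewise-linear sub- and supersolutions of $x^2$ and $x^1$ differing by an affine function of positive slope $(v_1-v_2)/2$. For all sufficiently large indices the slope difference already forces $X^1_k(0)>X^2_k(0)$, so only a finite initial segment needs to be flipped into order. The difficulty is that the random drift difference $f_k(X^1_k)-f_k(X^2_k)$ has no definite sign, so pure monotonicity is insufficient; here the hypothesis that $F$ admits large almost flat regions enters, producing, with positive probability on each time window of fixed length, a favorable event on which the local potential is near-constant on the relevant scale, so the additive noise forcing alone drives the initial segment of one trajectory above the other. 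Disjoint windows give independent attempts, Borel--Cantelli promotes positive probability to almost sure success, and monotonicity promotes the first instance of ordering into ordering for all larger times.

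Parts~\ref{it:uniq-erg-mix} and~\ref{it:1F1S} then follow by a sandwich argument. Given any invariant distribution $\nu$ on $S(v)$, I couple samples from $\nu$ and from the IVPM $\mu_{F,v}$ to the same noise and bracket both between the flows of deterministic polymers in $S(v-\eps)$ and $S(v+\eps)$; ordering by noise forces both $\nu$- and $\mu_{F,v}$-evolved samples to lie coordinatewise between the bracketing trajectories after finite time. Stationarity of $\nu$ and $\mu_{F,v}$ permits passing $t\to\infty$ in the coupling, and shrinking $\eps\to 0$ together with continuity of IVPMs in the slope identifies $\nu=\mu_{F,v}$; the mixing claim comes from the same coupling applied to arbitrary initial laws. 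For Part~\ref{it:1F1S}, the same squeeze applied to pullback trajectories $\Phi^t_{F,W}$ started from times far in the past shows that the pullback limit exists almost surely and is independent of the initial polymer in $S(v)$, hence defines a unique nonanticipating stationary global solution, i.e.\ a one-point pullback attractor on $S(v)$, with shear-invariance confirming that the limit lies in $S(v)$.

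In summary, the genuinely new and hardest step is the ordering one: no drift-level argument can order two pinned polymers near the origin, and it is essential to combine the noise-driven controllability provided by the large-flat-region hypothesis on $F$ with the monotonicity of the RDS so that a single rare favorable window suffices to lock the ordering in for all later times. Once ordering by noise is secured, the unique ergodicity, mixing, and 1F1S conclusions follow along essentially standard monotone-RDS lines adapted to the infinite-dimensional polymer setting.
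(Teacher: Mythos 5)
Your Part~\ref{it:Gibbs-is-invariant} sketch matches the paper's approach (Galerkin truncation, identification of the finite-volume invariant measure with the point-to-point conditional of the DLR specification, and passing to the limit using the a.s.\ convergence of Galerkin approximations), so that part is fine.

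For Part~\ref{it:ordering} your outline has a genuine gap. You propose to combine a positive-probability ordering event on each window with independence across disjoint windows and Borel--Cantelli, but the positive-probability estimate is conditional on the state at the start of the window lying in a controlled bracket such as $\llbracket \rr^{u-h}-c,\rr^{u+h}+c\rrbracket$; without a recurrence statement guaranteeing that both trajectories return to such brackets infinitely often, the attempts are not uniformly bounded below in probability and the Borel--Cantelli step does not close. The paper supplies exactly such a recurrence lemma, derived from ergodicity of IVPMs at nearby rational slopes (it dominates each trajectory from above and below by samples of ergodic IVPMs and applies the Birkhoff ergodic theorem). Your picture of the mechanism is also off: it is not the additive noise that flips a finite initial segment and leaves the tail alone. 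Rather, the noise acts as a control steering both entire trajectories into a large almost-flat region of the potential; once there, the SDE is close to the homogeneous discrete heat equation, and a separate deterministic convexity argument shows that the heat flow orders the two configurations after a finite time. That deterministic heat-flow ordering is the actual engine, and it is an ingredient your sketch does not have.

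For Parts~\ref{it:uniq-erg-mix} and~\ref{it:1F1S} you invoke ``continuity of IVPMs in the slope'' to pass $\eps\to 0$, but no such continuity is established (it is only conjectured in the paper). The paper instead works with sample (Markov) measures for each invariant measure, uses ordering by noise to show that the supports of sample measures at distinct slopes are linearly ordered coordinatewise, and then applies shear invariance together with the observation that a right-continuous nondecreasing random process with stationary increments is a.s.\ continuous at any fixed point. This last step substitutes for the continuity-in-slope you need and is specific to each fixed $v$; you cannot get it by a naive $\eps\to 0$ sandwich. The 1F1S upgrade and mixing then follow by the same support-comparison mechanism and a pullback argument, again using the fixed-$v$ continuity of the monotone map $v'\mapsto x(v',t,F,W)$ rather than any slope-continuity of IVPMs.
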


We prove part~\ref{it:Gibbs-is-invariant} in Section~\ref{section:IVPM}. This is a general statement not involving specific slopes.  Once we fix a slope value $v\in\R$, much more information is provided by parts~\ref{it:uniq-erg-mix} and~\ref{it:1F1S} proven in Section~\ref{section:1F1S}.

Unique ergodicity (uniqueness of an invariant distribution in part~\ref{it:uniq-erg-mix})  implies, in particular, that if we fix an asymptotic slope $v$, then for typical initial conditions with respect to the IVPM $\mu_{F,v}$  and typical noise realizations, 
empirical measures  for the dynamics (time averages or normalized occupation measures) converge to $\mu_{F,v}$. Mixing is a stronger property. It means  that for every initial condition $x\in S(v)$, the distribution of $\Phi^t_{F,W}$ converges to $\mu_{F,v}$ as $t\to\infty$.

The 1F1S principle on each $S(v)$ (part~\ref{it:1F1S}) is a stronger statement which we are able to prove using the additional structure of the system. In fact, we first prove that every IVPM is invariant directly, but to prove the converse implication and the mixing property we have to rely on the 1F1S principle.  

It is natural to expect 1F1S (also known as synchronization) in order-preserving systems (i.e., systems with monotonicity property, which is tightly related to  the maximum principle), one example being the study of the Burgers equation and other HJB-type systems,  see~\cite{Bakhtin-Li:MR3911894} and references therein. Moreover, there are general theorems deriving synchronization from monotonicity under additional assumptions, see~\cite{Flandoli-Gess-Scheutzow:MR3630300}. However, these additional assumptions  (mixing with respect to a unique invariant distribution and concentration of the measure on intervals) are too restrictive to be useful for us before the unique ergodicity is established. So we derive unique ergodicity and mixing from 1F1S 
(part~\ref{it:uniq-erg-mix} from part~\ref{it:1F1S}) and not the other way around.

In fact, we first prove a weaker form of 1F1S in Section~\ref{sec:weak1F1S}, in terms of sample Markov measures and then upgrade it to the strong 1F1S in Section~\ref{sec:strong1F1S}.

The proof of the weaker form of 1F1S relies, in turn, on two properties. One is  the skew-invariance of the dynamics and the IVPMs with respect to shear transformations. This is the result of our assumption on the quadratic nature of  $V(r)=r^2/2$. We address it in 
Section~\ref{sec:shear-inv}.
The other important property (part \ref{it:ordering})  is a strengthening of monotonicity that we call {\it ordering by noise} and prove in Section~\ref{sec:monotonization}.

For a typical random environment, in the absence of noise, there are many local minima of the potential and the solutions of the deterministic counterpart of \eqref{eq:main-sde} with $\sigma=0$  may get  stuck at local energy minimizers and never get ordered. 
Thus ordering or monotonization phenomenon happens due to the presence of noise. 

Other related terms existing in the literature are {\it noise-induced order}  and  {\it order from noise}. The former
describes situations where increasing the noise level makes the leading Lyapunov exponent of the linearization of the stochastic system negative (so synchronization occurs). The latter is a broader term  more relevant for us. It  goes back to 
\cite{vonFoerster2003} and reflects the situation where due to noisy inputs the system has a chance to explore larger parts of the configuration space avoiding local traps and approaching the ground state.  The concept
of order from noise
is relevant in Markov Chain Monte Carlo algorithms~\cite{Winkler-MCMC:MR1316400}, in the stochastic gradient descent in deep learning~\cite{Goodfellow-Bengio-Courville_DeepL:MR3617773}, in the modern view of biological evolution~\cite{koonin_logic_2011}. 
In our setting, we can actually speak of an emerging total order since it is immediate to generalize  part~\ref{it:ordering}  to any number of initial conditions with distinct slopes.

We prove the ordering by noise property under the assumption of presence of large regions where the potential is almost flat. The mechanism of ordering that we exploit is based on a recurrence property for the polymer dynamics with respect to those flat regions and comparison of the polymer dynamics with solutions of homogeneous linear equation on those regions. However, we believe that the ordering holds (perhaps taking extremely long times) for a broader class of potentials like periodic ones where our assumption fails.

\subsection{Extreme decompositions and transversal fluctuations of polymers}
\leavevmode

Part~\ref{it:uniq-erg-mix} of Theorem~\ref{th:collect} falls short of a complete description of the ergodic decomposition for the Markov process generated by~\eqref{eq:main-sde}. It only says that once we fix~$v$, the measure $\mu_{F,v}$ is ergodic with probability 1. 
One could also be interested in joint behavior for all $v\in\R$ simultaneously. In principle, it can be more complicated due to the presence of uncountably many exceptional sets, one per each $v\in\R$.
However, a natural conjectural picture  for almost every  realization of $F$  is the following:  

For each $v\in\R$, $S(v)$ supports  a unique ergodic measure, $\mu_{F,v}$; there are no other ergodic measures, and thus every invariant distribution is a mixture of $\mu_{F,v}$, $v\in\R$.  Every invariant measure is an IVPM and vice versa. Each ergodic measure is an extreme IVPM and vice versa. For each $v\in\R$, $S(v)$ supports  a unique extreme IVPM, $\mu_{F,v}$; there are no other extreme IVPMs, so that
every IVPM is a mixture of $\mu_{F,v}$, $v\in\R$.

An equivalent way to  state this is to say that
two convex sets --- (i) all ergodic measures for polymer dynamics and (ii) all IVPMs ---
coincide, and the set of their extreme points (measures that are not mixtures of other measures from the same set) coincides with $\{\mu_{F,v}\}_{v\in\R}$.  See \cite[Chapter~7]{Georgii:MR956646} for generalities on extreme decompositions of infinite volume Gibbs measures. 

Moreover, we believe that the conjecture stated above for our model actually extends 
to a much broader class of self-interaction energies $V(\cdot)$ and, more generally, for a broad class of directed polymer models.   
In Section~\ref{section:IVPM}, we prove the following result towards this conjecture:

\begin{Th}\label{eq:extreme-measures-have-direction}
For almost every realization of $F$,  the following holds: if  (i) $\mu$ is an extreme IVPM or (ii) $\mu$ is an ergodic measure for the Markov polymer dynamics generated by~\eqref{eq:main-sde} and an IVPM,
then there is $v\in\R$ such that 
$\mu$ is concentrated on~$S(v)$. Since every IVPM is a mixture of extreme ones, each IVPM is concentrated on paths with well-defined slopes.
\end{Th}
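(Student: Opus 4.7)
The central object is the slope functional $\slope(x) = \lim_{k\to\infty} x_k/k$, defined whenever the limit exists. I would organize the argument in four steps.

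\textbf{Step 1 (slope exists $\mu$-a.s.\ for every IVPM).} By the DLR property satisfied by any IVPM $\mu$, for each $N$ the conditional distribution of $(X_0,\ldots,X_N)$ given the sigma-algebra generated by $(X_k)_{k\ge N}$ is the point-to-point polymer measure from $(0,0)$ to $(N, X_N)$. For this finite-volume measure I would prove a transversal fluctuation estimate of the form
\begin{equation*}
\Prob{|X_k - (k/N) X_N| \ge h} \;\le\; C\exp\Big(-\frac{c\,h^2}{k(N-k)/N}\Big), \qquad 0 \le k \le N,
\end{equation*}
with constants $C, c > 0$ depending only on the law of $F$ and not on $N$ or $X_N$. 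When $F \equiv 0$ this is the classical Gaussian bridge estimate; the random environment is absorbed by comparing partition functions of paths in a tube around the straight line, using the exponential moment assumption on $F$ and the translation stationarity of $F_1$. Setting $v_k = X_k/k$, the bound translates into $|v_k - v_N| = O(k^{-1/2})$ in probability, and a Borel--Cantelli argument on a dyadic subsequence together with a maximal inequality inside each dyadic block upgrade this to $\mu$-a.s.\ convergence of $v_k$.

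\textbf{Step 2 (constancy for extreme IVPMs).} For each $v \in \R$ the event $\{\slope(x) = v\}$ lies in the tail sigma-algebra $\mathcal{T} = \bigcap_{N} \sigma(X_k : k \ge N)$, and so does $\{\slope \text{ exists}\}$. By the standard characterization of extreme infinite-volume Gibbs measures \cite[Ch.~7]{Georgii:MR956646}, an IVPM is extreme if and only if it is trivial on $\mathcal{T}$. Combined with Step~1 this forces $\mu(S(v)) = 1$ for some deterministic $v \in \R$.

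\textbf{Step 3 (constancy for ergodic Markov-invariant measures that are IVPMs).} From the a.s.-invariance of each $S(v)$ under the flow $\Phi^t_{F,W}$, established in Section~\ref{section:dyn_inf_poly}, the functional $\slope$ is invariant under the Markov semigroup generated by~\eqref{eq:main-sde}. Since $\mu$ is an IVPM, Step~1 ensures $\slope$ is $\mu$-a.s.\ defined; ergodicity then forces $\slope$ to be $\mu$-a.s.\ constant, i.e.\ $\mu(S(v))=1$ for some $v\in\R$.

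\textbf{Step 4 (general IVPMs).} Every IVPM decomposes as a barycentric integral of extreme ones; Step~2 says each of these is supported on some $S(v)$, so every IVPM is supported on paths with well-defined slopes. (Alternatively this follows directly from Step~1.)

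\textbf{Main obstacle.} The technical heart is the transversal fluctuation bound of Step~1 in the presence of the unbounded random potential. In the Gaussian reference case, straight lines uniquely minimize the kinetic part of the energy and bridges are sharply concentrated around them; the difficulty is to preserve this concentration after tilting by $\exp(-\invtemp \sum_i F_i(X_i))$, uniformly in $N$ and in the endpoint $X_N$. This requires controlling ratios of partition functions over tubes around straight lines, which in turn requires both the stationarity in $x$ of the $F_i$ (so that environmental statistics along different straight-line skeletons can be compared) and exponential moments of $F_1$ (to absorb large local values). Converting these environmental estimates into a usable conditional probability bound, and then threading them through the DLR and dyadic Borel--Cantelli arguments, is the main labor.
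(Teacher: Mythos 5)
Your Steps~2--4 match the paper's strategy: the paper also invokes tail-triviality of extreme Gibbs measures via~\cite[Theorem (7.7)(a)]{Georgii:MR956646}, uses the $\Phi^t_{F,W}$-invariance of $S(v_-,v_+)$ (Proposition~\ref{prop:invariance_slope}) to handle the ergodic case, and deduces the general IVPM case from extreme decomposition.

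The gap is in Step~1. The Gaussian-type transversal fluctuation bound
\[
\Prob{|X_k - (k/N)X_N| \ge h} \;\le\; C\exp\Big(-\frac{c\,h^2}{k(N-k)/N}\Big)
\]
is false for directed polymers in random environment, quenched or annealed. It would force the midpoint of a length-$N$ polymer to fluctuate by at most $O(N^{1/2+\eps})$, whereas the transversal fluctuation exponent $\xi$ for $1+1$-dimensional directed polymers is $\ge 1/2$ with strict inequality expected (conjecturally $\xi=2/3$); even the paper itself only proves the upper bound $\xi \le 3/4$ in Theorem~\ref{thm:fluc_intro}. A straight-line-tube comparison of partition functions cannot produce sub-diffusive concentration here: in a typical random environment the minimizing paths and the Gibbs mass genuinely wander superdiffusively. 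The bound as stated would also directly contradict the superdiffusivity results in the polymer/last-passage literature.

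The correct, and much weaker, input at this point is exactly what the paper uses: Lemma~\ref{lemma:straightness} (imported from~\cite[Theorem 9.1]{Bakhtin-Li:MR3911894} via the DLR condition), which says only that $\mu$-a.e.\ path eventually lies in the cone $\Co(k,x_k,Qk^{-\delta})$ for some fixed $\delta\in(0,1/4)$ --- i.e.\ transversal fluctuations grow no faster than $k^{1-\delta}\approx k^{3/4+}$. That is enough: it yields $|x_{k'}/k' - x_k/k|\le Qk^{-\delta}$ for all $k'\ge k$ eventually, which makes $(x_k/k)$ Cauchy and gives existence of the slope. Your dyadic Borel--Cantelli machinery is then unnecessary; the cone bound already controls \emph{all} $k'\ge k$ simultaneously. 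In short, you identified the right reduction (DLR plus finite-volume straightness) but substituted a concentration estimate that is quantitatively too strong and cannot be proved, when a much coarser and already-available one suffices.
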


The results of~\cite{Bakhtin-Li:MR3911894} state a.s.-existence of IVPMs concentrated on $S(v)$ simultaneously for all $v\in\R$ and a.s.-uniqueness for individual values of~$v$.
The proofs actually allow to conclude that there are at most countably many non-uniqueness values of $v$ at the same time which is still short of 
the conjectural picture above.

For the zero temperature (or viscosity in the Burgers/KPZ terms) case, an analogous conjecture would be that for almost every potential $F$ and simultaneously for all $v\in\R$, there is a unique one-sided ground state (semi-infinite energy minimizer, or geodesic) in $S(v)$. Most likely, this is false, in analogy with the a.s.-presence of shocks in global solutions of Burgers  equation, see also~\cite{janjigian2020geometry}, where related questions were studied for a last passage percolation lattice model.

Theorem~\ref{eq:extreme-measures-have-direction} means that transversal fluctuations of polymer paths under IVPMs are sublinear. 
Extending its proof and
using the tools developed in~\cite{Bakhtin-Li:MR3911894}, we are able to obtain an additional upper bound on transversal fluctuations in Section~\ref{section:ext_IVPM_asymp_slope}:
\begin{Th}\label{thm:fluc_intro}
For every $\xi'>3/4$, there is $Q>0$ such that 
for every $v\in\R$, for almost every realization of $F$, for $\mu_{F,v}$-a.e.\ polymer path $x$, 
$|x_k-vk|<Qk^{\xi'}$ for sufficiently large $k$.
\end{Th}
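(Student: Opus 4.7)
The plan is to reduce to the case $v=0$ using the shear-invariance developed in Section~\ref{sec:shear-inv}: under the map $x_k\mapsto x_k-vk$, the measure $\mu_{F,v}$ corresponds to $\mu_{\tilde F,0}$ for a sheared potential $\tilde F$ that satisfies the same distributional hypotheses as $F$. So it suffices, for each $\xi'>3/4$, to produce a deterministic $Q>0$ such that for $\P$-a.e.\ $F$ and $\mu_{F,0}$-a.e.\ path $x$ one has $|x_k|<Qk^{\xi'}$ for all sufficiently large~$k$. This I would obtain by a Borel--Cantelli argument applied to the joint measure $\P\otimes\mu_{F,0}$: provided one establishes
\begin{equation*}
\sum_{k\ge1}\E\bigl[\mu_{F,0}\bigl(|x_k|\ge Qk^{\xi'}\bigr)\bigr]<\infty,
\end{equation*}
Fubini gives that the inner series is a.s.\ convergent in $F$, and the first Borel--Cantelli lemma then applies under $\mu_{F,0}$ for each such $F$.

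The main step is therefore the tail estimate $\E\bigl[\mu_{F,0}(|x_k|\ge Qk^{\xi'})\bigr]\le Ck^{-\gamma}$ with $\gamma>1$. Extending the proof of Theorem~\ref{eq:extreme-measures-have-direction}, I would work with the finite-volume point-to-distribution approximants of $\mu_{F,0}$ from~\cite{Bakhtin-Li:MR3911894} and carry out an energy/entropy balance at transversal scale $k^{\xi'}$. A deviation of the $k$-th coordinate by $R=Qk^{\xi'}$ costs the polymer a quadratic (kinetic) penalty of order $R^{2}/k=Q^{2}k^{2\xi'-1}$ in the log-weight, whereas the free-energy fluctuations accumulated from the random environment over a horizontal scale $k$ are controlled, by the concentration estimates available in~\cite{Bakhtin-Li:MR3911894}, by~$k^{1/2}$ up to logarithmic corrections. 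Since $\xi'>3/4$ means $2\xi'-1>1/2$, the deterministic Gaussian penalty beats the environmental fluctuation by a genuine power of~$k$, which yields a stretched-exponential upper bound on $\{|x_k|\ge Qk^{\xi'}\}$ once $Q$ is chosen large enough depending only on $\xi'$, certainly summable in~$k$.

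The hard part will be propagating this tail estimate from the finite-volume approximants to the IVPM $\mu_{F,0}$ uniformly in~$k$, because $\mu_{F,0}$ is obtained only as a weak thermodynamic limit and one has to commute this limit with the tail bound without losing summability. Here I would combine the lower semicontinuity of the tail events $\{|x_k|\ge Qk^{\xi'}\}$ (they are closed in the topology used in~\cite{Bakhtin-Li:MR3911894}) with the order-preserving and shear-invariance structure established earlier in the present paper, in order to sandwich $\mu_{F,0}$ between explicit finite-volume measures whose tails obey the above energy/entropy bound with uniform constants. Once this uniform tail estimate is in place, the Borel--Cantelli step is immediate and produces the asserted $Q$ depending only on~$\xi'$, completing the proof.
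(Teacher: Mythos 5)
Your plan (reduce to $v=0$ by shear, establish a tail bound $\E[\mu_{F,0}(|x_k|\ge Qk^{\xi'})]\le Ck^{-\gamma}$ with $\gamma>1$, then Fubini plus Borel--Cantelli) is a genuinely different route from the paper's. The paper's proof does no Borel--Cantelli at all: it invokes Lemma~\ref{lemma:straightness}, which gives, almost surely in $F$ and simultaneously for \emph{every} $\mu\in\Pc_F$, that $\mu$-a.e.\ path $x$ eventually satisfies $|x_{k'}/k'-x_k/k|\le Qk^{-\delta}$ for all $k'\ge k$ with $\delta=1-\xi'$, and then simply lets $k'\to\infty$ using $x\in S(v)$. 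The crucial mechanism that makes this work is the DLR disintegration \eqref{eq:disintegration}: it expresses the conditional law of $x_{\le n}$ given $x_{\ge n+1}$ under any IVPM as a finite-volume point-to-point polymer measure, so the finite-volume straightness estimate of \cite[Theorem~9.1]{Bakhtin-Li:MR3911894} (which holds uniformly in terminal data) applies directly to the IVPM. No weak limit needs to be taken at all.

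That is exactly where your proposal has a genuine gap. You flag ``propagating this tail estimate from the finite-volume approximants to the IVPM'' as the hard part, and your proposed resolutions do not resolve it. First, the semicontinuity argument is stated in the wrong direction: $\{|x_k|\ge Qk^{\xi'}\}$ is closed, and the portmanteau theorem for closed sets gives $\limsup_n\mu_n(C)\le\mu(C)$ — a \emph{lower} bound on $\mu(C)$, which is the opposite of what you need; to get an upper bound from weak convergence you would have to work with the open event $\{|x_k|>Qk^{\xi'}\}$, and even then you need a weak convergence statement for the right sequence of approximants. Second, the proposed sandwiching of $\mu_{F,0}$ in the $\preceq$-stochastic order between explicit finite-volume measures with uniform tail control is asserted, not constructed, and it is not clear how to produce such a sandwich. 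By contrast, the DLR disintegration used in the paper bypasses both issues. (Also, a smaller point: your energy/entropy heuristic is correct in spirit, but the quantitative concentration estimate that makes it rigorous is precisely the content of \cite[Theorem~9.1]{Bakhtin-Li:MR3911894}, which the paper imports wholesale rather than reproves. If you are going to cite it anyway, the cleaner path is to cite it through DLR as the paper does.) The shear reduction to $v=0$ is fine and consistent with Lemma~\ref{lemma:sample_meas_sh_inv}, and it correctly yields only the $v$-dependent exceptional set of the stated Theorem~\ref{thm:fluc_intro} (not the stronger uniform-in-$v$ version restated in Section~\ref{section:ext_IVPM_asymp_slope}), which is acceptable for the statement as posed.
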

In other words, the transversal fluctuation exponent $\xi$ for polymers under any IVPM is bounded above by $3/4$. The KPZ universality suggests that the correct value for $\xi$ is $2/3$, see, e.g., \cite{Bakhtin-Khanin-non:MR3816628}, but this remains an open problem for our model. Of course, our estimate matches 
the upper bound  established in~\cite{Bakhtin-Li:MR3911894} on $\xi$ defined for
a sequence of polymer measures on paths of growing length.

One expects (see \cite{Bakhtin-Khanin-non:MR3816628}) that in dimension $1+1$, for each $v\in\R$ the IVPM (and stationary distribution for the dynamic polymer) $\mu_{F,v}$ is localized.  This localization statement means that under $\mu_{F,v}$  the deviations of polymers from a certain path
(fluctuating with the KPZ exponent $2/3$)
behave like a stationary process. Such localization statement has been obtained in the more traditional setting of polymers of growing finite length in \cite{bakhtin-seo2020localization} following the pioneering ideas of \cite{Bates-Chatterje:MR4089496} and using the 
topology on the space of measures introduced in~\cite{Mukherjee-Varadhan:MR3572328}.

\bigskip

{\bf Acknowledgments.} Yuri Bakhtin is grateful to the National Science Foundation for partial support via grant DMS-1811444. Both authors thank the anonymous referee for the comments that helped to improve the paper significantly.

\section{Setting and notation}\label{sec:setting}

\subsection{Random potentials}\label{section:random_pot}
Let us describe the probability space $(\FF,  \cF, \sF)$ of random potentials defined on  $\NR$. We denote the space of real-valued twice continuously differentiable functions on $ \R$  by $\sC^2(\R)$. We define a metric on $\sC^2(\R)$ by
\begin{align*}
    d_{\sC^2(\R)}(\gamma,\gamma') = \sum_{n=1}^\infty 2^{-n}\bigg(1\wedge \|\gamma -\gamma'\|_{\sC^2([-n,n])}\bigg)
\end{align*}
where $\|\gamma -\gamma'\|_{\sC^2([-n,n])} = \sum_{j=0}^2\sup_{s\in[-n,n]}\big|\frac{\d^j}{\d s^j}(\gamma(s) - \gamma'(s))\big|$.

\smallskip

We assume that $\FF$ is the space of functions $F:\N\times \R\to\R$ such that $F_k:\R\to\R$ is in $\sC^2(\R)$ for each $k\in\N$. 
Let $\mathcal{F}$ be the completion with respect to $\sF$ of the Borel $\sigma$-algebra generated by $\big(\sC^2(\R),
d_{\sC^2(\R)} \big)^\N$. 
Throughout, for each $F\in \FF$, we set
\begin{align}\label{eq:def_f}
    f(x) = \big(f_k(x_k)\big)_{k\in \N} = \big(-F'_k(x_k)\big)_{k\in\N}, \quad x=(x_1,x_2,\ldots)\in\R^\N,
\end{align}
where $F'_k$ is the first-order derivative of $F_k$. 
We assume that the probability measure $\sF$ on $(\FF,\cF)$ satisfies the following conditions:

\begin{itemize}
    \item  $\sF$ is invariant under  translations defined by
    \begin{align}\label{eq:Theta^n,a}
        (\Theta^{n,a} F)_m(r)=F_{m+n}(r+a),\quad (n,a)\in(\N\cup\{0\})\times\R,\ (m,r)\in\NR.
    \end{align}
Moreover, $(F_k(\cdot))_{k\in\N}$ forms an i.i.d.\ sequence.
   \item The action of the group of spatial shifts $(\Theta^{0,a})_{a\in\R}$ is ergodic on $\FF$. In particular, for each $k\in\N$, $F_k(\cdot)$ is an ergodic process.
    \item For the inverse temperature $\invtemp=\tempr^{-1}=\varkappa^{-1}$,
\begin{align*}\sF \exp\big(-\invtemp F_1(0)\big)<\infty.
\end{align*}
Here and throughout the paper, we use $\Pp \xi$ or $\Pp[\xi]$ to denote expectation of a r.v.\ $\xi$ with respect to a probability measure $\Pp$.

    \item For some $\eta>0$,
\begin{align}\label{eq:exp_moment_F}
    \sF \exp\big(\eta\| F_1\|_{\sC^2([-1,1])}\big)<\infty.
\end{align}
  \item The following holds $\sF$-a.s.: for every $n\in\N$, every $l\geq 0$ and every $\delta>0$, there is $a\in\R$ such that
\begin{align}\label{eq:flat}
    |f_k(r)|\leq \delta,\quad \text{\rm for all}\ (k,r)\in \{1,2,\dots, n\}\times [a-l,a+l].
\end{align}
\end{itemize}

\begin{Rem}
The results of~\cite{Bakhtin-Li:MR3911894} on IVPMs were obtained under all these restrictions with two exceptions.
One is the condition on the $\sC^2$-norm in~\eqref{eq:exp_moment_F} replacing a similar condition on the sup-norm in~\cite{Bakhtin-Li:MR3911894} .
Another is the ``presence of flatness'' condition~\eqref{eq:flat} which  was not needed in~\cite{Bakhtin-Li:MR3911894} at all, and in the present paper is  
only used in Section~\ref{sec:monotonization} and Section~\ref{section:1F1S}.
Section~\ref{section:SDE} and Section~\ref{section:dyn_inf_poly}  are independent of the results of \cite{Bakhtin-Li:MR3911894} and require only \eqref{eq:exp_moment_F} and the invariance of $\sF$ under \eqref{eq:Theta^n,a}.
\end{Rem}

It is easy to see that the conditions above define a broad class of random potentials including stationary Gaussian processes with decaying correlations and stationary  processes of  shot noise type based on contributions from configuration points of a Poisson point process.

The i.i.d.\ assumption implies that besides the invariance under the action of shift semigroup $\Theta=(\Theta^{n,a})$, the measure $\sF$ is also invariant under the action of the 
shear group $\Shear=(\Shear^v)$, where the shear transformation $\Shear^v$, $v\in\R$, is defined by 
\begin{equation}
\label{eq:shear-on-F}
(\Shear^v F)(x)=\big(F_k(x_k-kv)\big)_{k\in\N}\, , \quad x\in  \R^\N.
\end{equation}

\subsection{Function spaces}
\label{sec:funcion}
For $\alpha\in(0,\infty)$ and $p\in[1,\infty]$, we define $\|\cdot\|_{\alpha,p} :\R^\N\to[0,+\infty]$ by, for $x\in\R^\N$,
\begin{align}\label{eq:def_norms}
    \begin{split}
    &\|x\|_\alphap=\Bigg(\sum_{k\in\N}\bigg|\frac{x_k}{k^\alpha}\bigg|^p\Bigg)^\frac{1}{p},\quad p<\infty, \\
    &\|x\|_{\alpha,\infty}=\sup_{k\in\N}\frac{|x_k|}{k^\alpha}.
    \end{split}
\end{align}
Accordingly, we set
\begin{align*}
    \XX^\alphap= \big\{x\in\R^\N:\ \|x\|_\alphap<\infty\big\}.
\end{align*}
It can be readily checked that $\XX^\alphap$ equipped with the norm $\|\cdot\|_\alphap$ is a Banach space. 
We mainly work with $(\alphap)$ in
\begin{align}\label{eq:betap_set}
   \setbetap= \Big\{(\alpha,p)\in (0,+\infty)\times[1,+\infty]:\ \alpha p >1\Big\}.
\end{align}
We will often refer to the following obvious statement:
\begin{Lemma}\label{lemma:ptw_cvg}
If $(x^n)_{n\in\N}$ converges to $x$ in $\XX^\alphap$ for some $(\alphap)\in (0,\infty)\times [1,\infty]$, then $\lim_{n\to\infty}x^n_k=x_k$ in $\R$ for every $k\in\N$.
\end{Lemma}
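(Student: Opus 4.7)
The statement is the basic fact that convergence in any of the weighted norms $\|\cdot\|_\alphap$ implies coordinatewise convergence, so the proof is just an application of the definitions. My plan is to fix $k\in\N$ and bound $|x^n_k-x_k|$ by a constant (depending only on $k$) times $\|x^n-x\|_\alphap$.

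Concretely, for $p<\infty$, starting from the definition in \eqref{eq:def_norms} I would isolate the $k$-th term of the sum:
\begin{equation*}
\frac{|x^n_k-x_k|^p}{k^{\alpha p}}\le \sum_{j\in\N}\bigg|\frac{x^n_j-x_j}{j^\alpha}\bigg|^p = \|x^n-x\|_\alphap^{\,p},
\end{equation*}
so $|x^n_k-x_k|\le k^\alpha\|x^n-x\|_\alphap$. For $p=\infty$ the analogous bound is immediate from the supremum definition: $|x^n_k-x_k|\le k^\alpha\|x^n-x\|_{\alpha,\infty}$. In either case, since $k$ is fixed and $\|x^n-x\|_\alphap\to 0$ as $n\to\infty$ by hypothesis, the right-hand side tends to $0$, giving $\lim_n x^n_k=x_k$ in $\R$.

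There is no real obstacle here; the lemma is purely a tautological consequence of the definitions of $\|\cdot\|_\alphap$, and the restriction to $(\alpha,p)\in\setbetap$ plays no role (only the finiteness of $k^\alpha$ for $k\in\N$ matters). I would present it in two lines exactly as above, and would not invoke any results from Section~\ref{section:random_pot} or any probabilistic structure.
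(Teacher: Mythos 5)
Your proof is correct and is exactly the elementary coordinatewise estimate the paper implicitly relies on; the paper labels the lemma an ``obvious statement'' and gives no written proof, so there is nothing to compare against beyond noting you supplied the expected two-line argument. Your closing remark that only finiteness of $k^\alpha$ matters (not membership in $\setbetap$) is also accurate.
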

\smallskip

Next, we introduce spaces for continuous paths. For two real numbers $t_1<t_2$ and a Banach space $\XX$ with norm $\vertiii{\cdot}$, we denote by $\sC(t_1,t_2;\XX)$ the space of continuous $\XX$-valued functions on $[t_1,t_2]$ and $\sC(\R;\XX)$ the space of $\XX$-valued continuous functions on $\R$. We equip $\sC(t_1,t_2;\XX)$ with the norm
\begin{align*}
    \| \gamma \|_{\sC(t_1,t_2;\XX)} = \sup_{s\in[t_1,t_2]}\vertiii{\gamma (s)},
\end{align*}
and equip $\sC(\R;\XX)$ with the LU (local uniform) metric
\begin{align}\label{eq:metric_cont}
    d_{\sC(\R;\XX)}(\gamma,\gamma')=\sum_{n=1}^\infty 2^{-n}\bigg(1\wedge\big\|\gamma-\gamma'\big\|_{\sC(-n,n;\XX)}\bigg).
\end{align}
Here, we used the notation $a\wedge b = \min\{a,b\}$ for real numbers $a,b$. Later, we will also write $a\vee b = \max\{a,b\}$. 
\medskip

Lastly,  for a linear space $\XX$ we denote the set of linear endomorphisms of $\XX$ by $\sL(\XX)$; for a Banach space $\XX$ with norm 
$\vertiii{\cdot}$,  we denote the set of bounded linear endomorphisms of $\XX$ by $\sL_\b(\XX)$ (omitting the dependence on the norm for brevity).

We often abbreviate a normed space like $(\XX,\vertiii{\cdot})$ to $\XX$ and speak
about the topology, the Borel $\sigma$-algebra of  $\XX$, or continuity in $\XX$, omitting an explicit reference to the norm. Our main spaces are
$\XX^\alphap$ equipped with $\|\cdot\|_\alphap$, the real line with Euclidean norm, and $\R^\N$ with the product topology which is characterized by the coordinatewise convergence and is metrizable. We note that $\XX^\alphap$ is Polish for $p<\infty$ and $\R^\N$ is also Polish.

For a topological space $\XX$, we denote by $\cB(\XX)$ the associated Borel $\sigma$-algebra. It is easy to see that
\begin{align*}\XX^\alphap\in\cB(\R^\N),\quad(\alphap)\in\Pi.
\end{align*}
Due to the lack of separability in $\XX^\alphainfty$, we need a $\sigma$-algebra generated by a weaker topology. For $(\alphap)\in\Pi$, we introduce
\begin{align}\label{eq:def_B^alphap}
    \cB^\alphap = \{E\cap \XX^\alphap: E\in\cB(\R^\N)\}.
\end{align}
We mostly work with the measurable spaces $(\XX^\alphap, \cB^\alphap)$ and $(\R^\N,\cB(\R^\N))$. The lemma below shows that, for $p<\infty$, $\cB^\alphap$ coincides with the $\sigma$-algebra generated by the strong topology.
\begin{Lemma}\label{lem:B^alphap}
If $p<\infty$, then $\cB^\alphap = \cB(\XX^\alphap)$.
\end{Lemma}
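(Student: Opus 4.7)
The plan is to establish the two inclusions $\cB^\alphap\subseteq\cB(\XX^\alphap)$ and $\cB(\XX^\alphap)\subseteq\cB^\alphap$ separately, relying on the fact that for $p<\infty$ the Banach space $\XX^\alphap$ (a weighted $\ell^p$ space) is separable and its norm can be accessed by the increasing limit of the partial norms
\[
\phi_N(x)=\bigg(\sum_{k=1}^N \Big|\tfrac{x_k}{k^\alpha}\Big|^p\bigg)^{1/p},\qquad N\in\N,
\]
each of which is continuous on $\R^\N$ in the product topology.

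For $\cB^\alphap\subseteq\cB(\XX^\alphap)$, I would observe that each coordinate projection $\pi_k\colon\XX^\alphap\to\R$, $x\mapsto x_k$, satisfies $|\pi_k(x)|\le k^\alpha\|x\|_\alphap$ and is therefore norm-continuous. Hence the inclusion map $(\XX^\alphap,\|\cdot\|_\alphap)\hookrightarrow(\R^\N,\text{product topology})$ is continuous, which implies that the trace of $\cB(\R^\N)$ on $\XX^\alphap$ is contained in the norm-Borel $\sigma$-algebra $\cB(\XX^\alphap)$.

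For the reverse inclusion, the function $\phi(x)=\|x\|_\alphap=\sup_N\phi_N(x)=\lim_N\phi_N(x)$ on $\R^\N$ (taking values in $[0,+\infty]$) is Borel measurable as a countable supremum of product-continuous functions. More generally, for any fixed $x_0\in\XX^\alphap$ and $r>0$, the set
\[
B(x_0,r)=\Big\{x\in\R^\N:\ \|x-x_0\|_\alphap<r\Big\}=\bigcap_{N\in\N}\Big\{x\in\R^\N:\ \phi_N(x-x_0)<r\Big\}
\]
lies in $\cB(\R^\N)$. Since $p<\infty$, the set of rational, finitely supported sequences is countable and dense in $\XX^\alphap$, so $\XX^\alphap$ is separable and every norm-open subset of $\XX^\alphap$ is a countable union of sets of the form $\XX^\alphap\cap B(x_0,r)$ with $x_0$ in a countable dense subset. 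Each such set is in $\cB^\alphap$, and since $\cB^\alphap$ is a $\sigma$-algebra, every open set (hence every Borel set) of $\XX^\alphap$ belongs to $\cB^\alphap$.

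The only nonroutine point is to make sure that the norm sublevel sets in $\R^\N$ are truly Borel in the product $\sigma$-algebra, which is where the monotone approximation $\phi=\sup_N\phi_N$ and the product-continuity of each $\phi_N$ come in; the separability of $\XX^\alphap$ then completes the argument and the failure of this separability at $p=\infty$ explains why the lemma is restricted to $p<\infty$.
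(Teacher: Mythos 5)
Your argument follows essentially the same two-inclusion route that the paper uses: separability of $\XX^\alphap$, coordinate projections are norm-continuous (equivalently, cylindrical sets intersected with $\XX^\alphap$ are norm-Borel), and norm-balls belong to $\cB(\R^\N)$ via the partial sums $\phi_N$.

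There is one small but genuine slip in the display: the claimed identity
\[
\Big\{x\in\R^\N:\ \|x-x_0\|_\alphap<r\Big\}=\bigcap_{N\in\N}\Big\{x\in\R^\N:\ \phi_N(x-x_0)<r\Big\}
\]
is false. Since $\phi_N(x-x_0)\uparrow \|x-x_0\|_\alphap$, the right-hand side is $\{x:\|x-x_0\|_\alphap\le r\}$, the \emph{closed} ball, because the supremum of quantities that are all strictly below $r$ need only be $\le r$. Your first sentence in that paragraph already contains the correct repair — $\phi(\cdot-x_0)$ is Borel as an increasing limit of product-continuous functions, so the open sublevel set $\{\phi(\cdot-x_0)<r\}$ is automatically in $\cB(\R^\N)$ without any intersection formula; alternatively, replace open balls by closed balls (which is how the paper phrases it) and note that the open ball is $\bigcup_m\overline{B}(x_0,r-1/m)$. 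With either fix the separability step goes through unchanged and your proof matches the paper's.
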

\begin{proof}
The result follows from the separability of $\XX^\alphap$ and observations that every closed ball in $\cB(\XX^\alphap)$ belongs to $\cB(\R^\N)$ and that cylindrical sets in $\cB(\R^\N)$ intersected with $\XX^\alphap$ belong to $\cB(\XX^\alphap)$.
\end{proof}

\subsection{Noise}Let us describe the probability space $(\WW,\cW,\sW)$ supporting the two-sided $\R^\N$-valued Wiener process. We identify $\WW$ with $(\sC_0(\R;\R))^\N$, the countable product of real-valued continuous functions $W_k, k\in\N$, defined on $\R$ and satisfying $W_k(0)=0$. Under $\sW$,  the components of $W=(W_k)_{k\in\N}\in\WW$ are independent standard two-sided Wiener processes. We assume that $\cW$ is complete with respect to $\sW$. 
For any interval $I\subset \R$, we define $\cW_{I}$ to be the completion of
\begin{align*}
     \sigma\big\{W(r_2)-W(r_1):\ r_1,r_2\in I;\ r_1\leq r_2 \big\}
\end{align*}
under $\sW$.
In addition, we define $\Wc_t=\Wc_{(-\infty,t]}$, $t\in\R$. We call a stochastic process $(X_t)_{t\in\R}$ nonanticipating if it is adapted to the filtration $(\Wc_t)_{t\in\R}$.

The group of time shifts $\tshift =(\tshift^s)_{s\in \R}$ acting on $\WW$   is defined by
\begin{align}
\label{eq:tshift}
    \tshift^sW(t) = W(t+s)-W(s),\quad s,t\in\R.
\end{align}
These shifts preserve $\sW$. 
We will also show in Lemma~\ref{Lemma:W_local_Holder} that $W$ is a continuous process in $\XX^{\alphap}$ under $\sW$ for all $(\alphap)\in\Pi$.

\section{Well-posedness of the SDE}\label{section:SDE}
We will consider a slightly more general system than \eqref{eq:main-sde1}. 
Recalling the definition of $f$ in \eqref{eq:def_f}, fixing an arbitrary $\sigma=\sqrt{2\tempr}>0$ and
 $A\in \sL(\R^\N)$, we consider the SDE
\begin{align}\label{eq:gen_SDE}
\begin{split}
\d X (t) &=  \big(A X(t) + f(X(t))\big)\d t + \sigma \d W(t),\quad  t\in\R ,\\
X (0) &= x.
\end{split}
\end{align}

\begin{Def}\label{def:solution}
For $x\in \R^\N$, $A\in\sL(\R^\N)$, $F\in\FF$ and $W\in \WW$, we say that $X$ is a solution of \eqref{eq:gen_SDE} and write $X\in\cS(x,A,F,W)$ if $X,\, AX \in \big(\sC(\R;\R)\big)^\N$ and
\begin{align}\label{eq:sys_int_eq}
        X_k(t) = x_k +\int_0^t \big(A_k X(s) + f_k(X_k(s))\big)\d t +\sigma W_k(t),\quad k\in\N,\ t\in \R.
\end{align}
Here, $A_ky=(Ay)_k$ for $y\in\R^\N$.
\end{Def}
Note that this is a pathwise definition, it does not involve any stochastic integrals or probability at all. We are mostly interested in the forward solutions 
($t\ge0$)  but solving the equation
backwards ($t\le 0$) also makes sense for bounded operators~$A$, see below.

For a topological space $\XX$, we denote
\[
\cS(x,A,F,W;\XX)=\cS(x,A,F,W)\cap \sC(\R;\XX).
\] 
The following well-posedness result will be proved in the next section. We recall the definitions of $\Pi$ in~\eqref{eq:betap_set} and $\cB^\alphap$ in \eqref{eq:def_B^alphap}.

\begin{Th}\label{thm:RDS} 
There are full measure sets $\FF_0\in\mathcal{F}$ and $\WW_0\in\cW $ such that the following holds. 
Let $(\alphap),(\alphaprime)$ satisfy
\begin{equation}
\label{eq:(beta'-beta)p'>1}
(\alphap),(\alphaprime)\in\setbetap;\quad 
p'<\infty;\quad  (\alpha'-\alpha)p'>1.
\end{equation}
Let
\begin{equation}
\label{eq:A-bounded}
A\in \sL_\b(\XX^\alphap)\cap \sL_\b(\XX^\alphaprime).
\end{equation}
Then, there is a map
\begin{align*}
    \Phi :\ \R\times \XX^\alphap \times \FF_0\times \WW_0 \to \XX^\alphap
\end{align*}
with the following properties:

\begin{enumerate}[1.]
    \item \label{item:RDS_meas} Measurability: $\Phi$ is $(\cB(\R)\times \cB^\alphap\times \cF\times \cW,\ \cB^\alphap)$-measurable, and progressively measurable,
    i.e., for every $t\in \R$, the restricted map
    \begin{align*}
        \Phi|_{I_t}:\  I_t\times \XX^\alphap \times \FF_0\times \WW_0\to \XX^\alphap
    \end{align*}
    is $(\cB(I_t)\times \cB^\alphap\times \cF\times \cW_{I_t},\ \cB^\alphap)$-measurable, where $I_t=[0\wedge t,\,0\vee t]$. 
    \item \label{item:RDS_SDE} Solving the SDE: for each $(x,F,W)\in \XX^\alphap \times \FF_0\times \WW_0$, the map $\Phi(\cdot,x,F,W)$ is a unique solution to \eqref{eq:gen_SDE} in the sense that
    \begin{align*}
        \big\{\Phi(\cdot,x,F,W)\big\}= \cS(x,A,F,W;\XX^\alphap).
    \end{align*}
    \item \label{item:RDS_cont_dep} Continuous dependence: for each $F\in\FF_0$, the map 
    \begin{align*}
        \R \times \XX^\alphap\times \WW_0\ni\quad  (t,x,W)\mapsto \Phi(t,x,F,W)\quad \in \XX^\alphap
    \end{align*}
    is continuous.
    Here $\WW_0$ is endowed with the topology induced from $\sC(\R;\XX^\alphap)$.
    \item \label{item:RDS_cocycle} Cocycle property: for each $F\in\FF_0$ and $W\in\WW_0$, the maps
    \begin{align}
    \label{eq:solution-map}
        \Phi^t_{F,W}= \Phi(t,\cdot,F,W):\ \XX^\alphap \to \XX^\alphap
    \end{align}
    satisfy 
    \begin{enumerate}[i.]
        \item $\Phi^0_{F,W}$ is the identity map on $\XX^\alphap$,
        \item $\Phi^{t+s}_{F,W}= \Phi^{t}_{F,\theta^s W}\circ \Phi^{s}_{F,W} $ for all $s,t\in\R$.
    \end{enumerate}
\end{enumerate}

\end{Th}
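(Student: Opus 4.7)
My approach is to eliminate the stochastic integral by the change of variable $Y(t)=X(t)-\sigma W(t)$, turning \eqref{eq:gen_SDE} into the random integral equation
\begin{align*}
Y_k(t)=x_k+\int_0^t\bigl[A_k\bigl(Y(s)+\sigma W(s)\bigr)+f_k\bigl(Y_k(s)+\sigma W_k(s)\bigr)\bigr]\d s,\quad k\in\N,
\end{align*}
which can be treated pathwise for each $(F,W)$. By Lemma~\ref{Lemma:W_local_Holder}, $W\in \sC(\R;\XX^\alphap)\cap\sC(\R;\XX^\alphaprime)$ holds on a full-measure $\WW_0\in\cW$, and a diagonal argument over a countable dense subset of $\Pi$ produces a single $\WW_0$ serving all admissible $(\alphap),(\alphaprime)$ at once.

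Next I would construct a full-measure $\FF_0\in\cF$ on which the potential has quantitatively controlled growth. Using the i.i.d.\ structure of $(F_k)_{k\in\N}$, the spatial stationarity of $F_1$, the exponential moment \eqref{eq:exp_moment_F}, and Borel--Cantelli, one arranges that for $F\in\FF_0$ the quantities $\|F_k\|_{\sC^2([-M,M])}$ grow at most polynomially in $(k,M)$. This supplies, on every ball $|y|\leq Ck^\alpha$, bounds on $|f_k(y)|$ and $|f_k'(y)|$ compatible with the scale set by $\XX^\alphap$; together with $A\in\sL_\b(\XX^\alphap)\cap\sL_\b(\XX^\alphaprime)$, these are the quantitative inputs for all subsequent estimates.

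Existence I would then obtain by Galerkin truncation: let $Y^\n$ solve the finite-dimensional random ODE obtained by retaining only the first $n$ coordinates and freezing the rest at $0$. Local solutions exist classically, and global existence with bounds uniform in $n$ on compact time intervals follows from the integral equation, boundedness of $A$ in $\XX^\alphaprime$, and Grönwall. The heart of the argument is showing that $(Y^\n)$ is Cauchy in $\sC([-T,T];\XX^\alphaprime)$ for every $T>0$; here I would exploit the componentwise character of $f$ (so $f_k$ depends on $Y_k$ only) and the embedding $\XX^\alphap\hookrightarrow\XX^\alphaprime$, which holds precisely because $(\alpha'-\alpha)p'>1$ makes $\sum_k k^{-(\alpha'-\alpha)p'}$ summable. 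Passing to the limit yields a candidate $Y$; a uniform $\XX^\alphap$-bound along the approximations promotes the limit to $\sC(\R;\XX^\alphap)$, and converting back gives $X=Y+\sigma W\in\cS(x,A,F,W;\XX^\alphap)$.

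For uniqueness, given two solutions $X^\one,X^\two\in\cS(x,A,F,W;\XX^\alphap)$, the embedding lets me compare them in the Polish space $\XX^\alphaprime$ and apply Grönwall to $\|X^\one(t)-X^\two(t)\|_\alphaprime$, using the local Lipschitz bound on $f_k$ (valid because $X^\one,X^\two$ are bounded in $\XX^\alphap$ on compact time intervals) and boundedness of $A$ in $\XX^\alphaprime$. Continuous dependence (item~\ref{item:RDS_cont_dep}) follows from an analogous Grönwall comparison of two input triples; measurability (item~\ref{item:RDS_meas}) follows by realising $\Phi$ as a pointwise limit of the manifestly measurable Galerkin maps, invoking Lemma~\ref{lem:B^alphap} to identify $\cB^\alphaprime$ with the Borel $\sigma$-algebra of the Polish space $\XX^\alphaprime$; the cocycle property (item~\ref{item:RDS_cocycle}) is a formal consequence of uniqueness combined with the defining property of $\tshift^sW$. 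The main obstacle throughout is balancing the $k$-dependent growth of $|f_k|$ against the weights $k^{-\alpha p}$ and $k^{-\alpha'p'}$ in the two norms: the two-scale setup with the sharp embedding condition $(\alpha'-\alpha)p'>1$ is exactly what allows the a priori estimates and Cauchy arguments to close simultaneously at both scales.
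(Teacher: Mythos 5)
Your overall architecture matches the paper's: Galerkin truncation, a two-norm embedding $\XX^\alphap\hookrightarrow\XX^\alphaprime$ driven by $(\alpha'-\alpha)p'>1$, uniqueness and continuous dependence via Gronwall, cocycle from uniqueness, measurability as a limit of the Galerkin maps. The change of variables $Y=X-\sigma W$ is harmless but unnecessary: Definition~\ref{def:solution} is already a pathwise integral identity with no stochastic integral to remove.

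However, there is a genuine gap in the a priori estimates. When you say you will "apply Gr\"onwall $\ldots$ using the local Lipschitz bound on $f_k$ (valid because $X^\one,X^\two$ are bounded in $\XX^\alphap$ on compact time intervals)," you are implicitly treating $f$ as a locally Lipschitz map on $\XX^\alphap$ or $\XX^\alphaprime$. It is not. On a ball $\|x\|_\alphap\le M$ the coordinates satisfy $|x_k|\le Mk^\alpha$, and on the interval $[-Mk^\alpha,Mk^\alpha]$ the Lipschitz constant of $f_k$ grows like $\log k$ (this is the logarithmic --- not polynomial --- rate that \eqref{eq:exp_moment_F} plus Borel--Cantelli actually yields, cf.\ Lemma~\ref{Lemma:f_property}~\eqref{item:f_k_upper_bound}). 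Since $\sup_k\log k=\infty$, there is no local Lipschitz constant for $f$ as a map between these Banach spaces, so plain Gr\"onwall cannot close; both your Cauchy argument for the Galerkin sequence and your uniqueness argument, as written, break down. The paper handles this with a log-Lipschitz estimate, Lemma~\ref{Lemma:f_property}~\eqref{item:f-f}: truncating the sum at index $N$, using Lipschitz constant $\log N$ on $k\le N$ and the sup bound on $k>N$ gives
\[
\|f(x)-f(y)\|_\alphap\le C\inf_{N}\bigl(N^{-\upsilon}+\|x-y\|_\alphap\log N\bigr).
\]
Feeding this into Gr\"onwall produces a factor $N^{KT}$ against a prefactor $N^{-\upsilon}$, which only tends to $0$ as $N\to\infty$ when $T$ is short enough that $KT<\upsilon$; Lemma~\ref{Lemma:Cauchy} therefore slices $[0,T]$ into $\kappa$ subintervals with $KT/\kappa<\upsilon$ and iterates. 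Without some version of this two-parameter truncation-plus-time-slicing mechanism your plan does not close, and you should also replace "polynomially" by "logarithmically" in your Borel--Cantelli step, since the logarithmic rate is precisely what makes the $\inf_N$ trick viable.
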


\begin{Rem}\label{remark:F=0_W=0}
The sets $\FF_0$ and $\WW_0$ are described in Lemma~\ref{Lemma:f_property} and Lemma~\ref{Lemma:W_local_Holder}, respectively. In particular, we have $0\in\FF_0$ and $0\in\WW_0$.
\end{Rem}

\begin{Rem}
Throughout the paper, in a slight abuse of notation, we will not distinguish between $\Fc$, $\Wc$ and their restrictions to $\FF_0$ and $\WW_0$.
The same goes for the restrictions of measures  $\sF$ and $\sW$ to those  $\sigma$-algebras.
\end{Rem}

\subsection{Preliminary results}
We denote by $C$ a positive constant, which may vary from instance to instance.
\subsubsection{Conditions for the Cauchy property and convergence}

\begin{Lemma}\label{Lemma:Cauchy}
Suppose $X^n \in \cS(x^n,A^n,F^n,W^n;\XX^\alphap)$ for all $n\in\N$ and for some $(\alphap)\in(0,\infty)\times[1,\infty]$. Further assume
\begin{itemize}
    \item $(x^n)_{n\in\N}$ is Cauchy in $\XX^\alphap$;
    \item $(W^n)_{n\in\N} $ is Cauchy in $\sC(\R;\XX^\alphap)$;
    \item for each $T>0$, there are $C>0$ and real $(\delta_{k,j})_{k,j\in\N}$ with $\lim_{k,j\to\infty}\delta_{k,j}=0$ such that the following holds for all $m,n\in\N$ and $s\in[-T,T]$,
    \begin{align}\label{eq:A_diff}
        \|A^mX^m(s)-A^nX^n(s)\|_\alphap\leq \delta_{m, n}+C\|X^m(s)-X^n(s)\|_\alphap.
    \end{align}
    \item for each $T>0$, there are $K,\upsilon>0$ and real $(\delta'_{k,j})_{k,j\in\N}$ with $\lim_{k,j\to\infty}\delta'_{k,j}=0$ such that the following holds for all $m,n\in\N$ and $s\in[-T,T]$,
\begin{align}\label{eq:f_difference_condition}
    \|f^m(X^m(s))-f^n(X^n(s))\|_\alphap\leq \delta'_{m, n}+K\inf_{N\in1+\N}\big( N^{-\upsilon}+\|X^m(s)-X^n(s)\|_\alphap\log N\big).
\end{align}
\end{itemize}
Then, the sequence $(X^n)_{n\in\N}$ is Cauchy in $\sC(\R;\XX^\alphap)$.
\end{Lemma}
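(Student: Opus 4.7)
The plan is to subtract the two integral equations of Definition~\ref{def:solution} for $X^m$ and $X^n$, take the $\|\cdot\|_\alphap$ norm, apply the pointwise hypotheses~\eqref{eq:A_diff} and~\eqref{eq:f_difference_condition}, and run a Gronwall-type argument on short time windows. The main obstacle is the $\log N$ factor in~\eqref{eq:f_difference_condition}: after Gronwall it becomes a factor $N^{KT}$ that fights the freedom to send $N\to\infty$. I handle this by choosing $N=N(m,n)$ adaptively and restricting to windows of length strictly less than $\upsilon/K$.

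Concretely, fix $T>0$ and let $C,K,\upsilon,(\delta_{m,n}),(\delta'_{m,n})$ be the constants produced by~\eqref{eq:A_diff} and~\eqref{eq:f_difference_condition} for this $T$. Subtracting the $m$ and $n$ versions of~\eqref{eq:sys_int_eq} and applying the two bounds yields, for every $t\in[0,T]$ and every fixed $N\in 1+\N$,
\begin{align*}
\|X^m(t)-X^n(t)\|_\alphap \le a_{m,n}(T)+KTN^{-\upsilon}+(C+K\log N)\int_0^t\|X^m(s)-X^n(s)\|_\alphap\,\d s,
\end{align*}
where $a_{m,n}(T):=\|x^m-x^n\|_\alphap+(\delta_{m,n}+\delta'_{m,n})T+\sigma\|W^m-W^n\|_{\sC(-T,T;\XX^\alphap)}$ tends to $0$ as $m,n\to\infty$ by the first three hypotheses. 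Gronwall's lemma then gives
\begin{align*}
\sup_{t\in[0,T]}\|X^m(t)-X^n(t)\|_\alphap \le \bigl(a_{m,n}(T)+KTN^{-\upsilon}\bigr)\,e^{CT}\,N^{KT}.
\end{align*}

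To exploit this, I would first pick $T_0\in(0,\upsilon/K)$ and set $N=N_{m,n}:=\bigl\lceil a_{m,n}(T)^{-1/(2KT_0)}\bigr\rceil$. Since $\upsilon-KT_0>0$, both $a_{m,n}(T)\,N_{m,n}^{KT_0}\le a_{m,n}(T)^{1/2}$ and $KT_0\,N_{m,n}^{-\upsilon+KT_0}=KT_0\,a_{m,n}(T)^{(\upsilon-KT_0)/(2KT_0)}$ tend to $0$, so $\sup_{t\in[0,T_0]}\|X^m(t)-X^n(t)\|_\alphap\to 0$. To extend to $[-T,T]$, I would iterate on successive windows $[jT_0,(j+1)T_0]$, $j=1,\dots,\lceil T/T_0\rceil$: on each such window the integral equation relates $X^m(t)-X^n(t)$ to the shifted initial difference $X^m(jT_0)-X^n(jT_0)$, which already tends to $0$ by the previous step, and the identical Gronwall estimate (with the same $C,K,\upsilon$ attached to the outer $T$) applies verbatim, only with a new vanishing $a_{m,n}$. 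The negative-time direction is symmetric: write the integral equation from $t<0$ up to $0$ and repeat the argument. Finitely many iterations cover $[-T,T]$, so $(X^n)_{n\in\N}$ is Cauchy in $\sC(-T,T;\XX^\alphap)$ for every $T>0$, hence in $\sC(\R;\XX^\alphap)$ by~\eqref{eq:metric_cont}.
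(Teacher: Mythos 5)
Your proof is correct and follows essentially the same route as the paper's: subtract the integral equations, insert the hypotheses~\eqref{eq:A_diff} and~\eqref{eq:f_difference_condition}, apply Gronwall on sub-intervals of length strictly less than $\upsilon/K$ so that the factor $N^{KT_0-\upsilon}$ still decays, and then iterate over finitely many such windows to cover $[-T,T]$. The only cosmetic difference is the quantifier order: you pick $N=N_{m,n}$ adaptively as a power of $a_{m,n}(T)$, whereas the paper fixes $N$ large first (to make $KT_1e^{CT_1}N^{-\upsilon+KT_1}<\eps$) and then lets $m,n\to\infty$ with that $N$ frozen; the two bookkeeping choices are interchangeable.
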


\begin{proof}[Proof of Lemma~\ref{Lemma:Cauchy}]
For convenience, we write $\|\cdot\|=\|\cdot\|_\alphap$ and $\XX=\XX^\alphap$ in this proof. By the definition of the metric in \eqref{eq:metric_cont}, we only need to show that $(X^n)_{n\in\N}$ is Cauchy in $\sC(-T,T;\XX)$ for any fixed $T>0$. We only prove the Cauchy property in $\sC(0,T;\XX)$; the other part is similar.

\smallskip
Recalling $K$ from \eqref{eq:f_difference_condition}, we choose $\kappa\in\N$ large enough to ensure 
\begin{align}\label{eq:choice_of_m}
    -\upsilon +K\frac{T}{\kappa}<0.
\end{align}
Set $T_j = \frac{j}{\kappa}T$, for $j=0,1,2,\dots,\kappa$.
The plan is to show the Cauchy property of $(X^n)_{n\in\N}$ in $\sC(T_j,T_{j+1};\XX)$ iteratively. By Definition~\ref{def:solution}, the solution $X^n$ satisfies 
\begin{align*}
    X^n(t) &= x^{n}+\int_0^t \Big(A^n X^n(s)+f^n\big(X^n(s)\big)\Big)\d s + \sigma W^n(t),
\end{align*}
understood in the sense of \eqref{eq:sys_int_eq}.
For  $t\in[0,T_1]$ we write
\begin{align*}
    &\|X^m(t)-X^n(t)\|\\
    &\qquad\leq \|x^{m}-x^{n}\| + \int_0^t\Big(\|A^mX^m(s)-A^nX^n(s)\| + \|f^m(X^m(s)-f^n(X^n(s))\| \Big)\d s\\
    &\qquad\qquad\qquad+ \sigma \|W^m(t)-W^n(t)\|,
\end{align*}
and apply Gronwall's inequality, \eqref{eq:A_diff}, and  \eqref{eq:f_difference_condition} to obtain
\begin{align*}
    &\|X^m-X^n\|_{\sC(0,T_1;\XX)}\\
    &\leq \Big(\|x^{m}-x^{n}\|+\sigma\|W^m-W^n\|_{\sC(0,T_1;\XX)}+\delta_{m,n}+\delta'_{m, n} +K T_1N^{-\upsilon}\Big)e^{(C+K\log N)T_1}.
\end{align*}
For any $\eps>0$, by \eqref{eq:choice_of_m}, we can set $N$ large enough to ensure
\begin{align*}
    K T_1N^{-\upsilon}e^{(C+K\log N)T_1}=\big(K T_1e^{CT_1}\big)N^{-\upsilon+K T_1}<\eps.
\end{align*}
With this choice of $N$, for sufficiently large $m,n$, we also have
\begin{align*}
    \Big(\|x^{m}-x^{n}\|+\sigma\|W^m-W^n\|_{\sC(0,T_1;\XX)}+\delta_{m, n}+\delta'_{m, n} \Big)e^{(C+K\log N)T_1}<\eps.
\end{align*}
The last three displays imply now that  $(X^n)_{n\in\N}$ is Cauchy in $\sC(0,T_1;\XX)$.

\smallskip

To proceed, let us assume that $(X^n)_{n\in\N}$ is Cauchy in $\sC(T_{j-1},T_{j};\XX)$ for some $j\geq 1$. For each $n$ and $t\geq T_j$, we have
\begin{align*}
    X^n(t)= X^n(T_j)+ \int_{T_j}^t\Big(A X^n(s)+f^n\big(X^n(s)\big)\Big)\d s + \sigma W^n(t)- \sigma W^n(T_j).
\end{align*}
Analogously, Gronwall's inequality together with \eqref{eq:A_diff} and \eqref{eq:f_difference_condition} implies 
\begin{align*}
    \|X^m-X^n\|_{\sC(T_j,T_{j+1};\XX)}\leq \Big(\|X^m(T_j)-X^n(T_j)\|+\sigma\|W^m-W^n\|_{\sC(T_j,T_{j+1};\XX)}\\+\delta_{m,n}+\delta'_{m, n} +KT_1N^{-\upsilon}\Big)e^{(C+K\log N)T_1}.
\end{align*}
Similar to the analysis on  $[0,T_1]$, we first take $N$ sufficiently large and then $m,n$ sufficiently large to obtain the Cauchy property in $\sC(T_j,T_{j+1};\XX)$. This completes the inductive step thus proving that the sequence $(X^n)_{n\in\N}$ is Cauchy in $\sC(\R;\XX^\alphap)$.
\end{proof}

We also need a convergence version of Lemma~\ref{Lemma:Cauchy} which can be proved analogously:

\begin{Lemma}\label{Lemma:cvg_sol}
Suppose $X^n \in \cS(x^n,A^n,F^n,W^n;\XX^\alphap)$ for all $n\in\N\cup\{\infty\}$ and for some $(\alphap)\in(0,\infty)\times[1,\infty]$. Further assume
\begin{itemize}
    \item $\lim_{n\to\infty}x^n = x^\infty$ in $\XX^\alphap$;
    \item $\lim_{n\to\infty}W^n=W^\infty$ in $\sC(\R;\XX^\alphap)$;
    \item for each $T>0$, there are $C>0$ and real $(\delta_{k})_{k\in\N}$ with $\lim_{k\to\infty}\delta_{k}=0$ such that the following holds for all $n\in\N$ and $s\in[-T,T]$,
    \begin{align*}
        \|A^n X^n(s)-A^\infty X^\infty(s)\|_\alphap\leq \delta_{n}+\|X^n(s)-X^\infty(s)\|_\alphap.
    \end{align*}
    \item for each $T>0$, there are $K,\upsilon>0$ and real $(\delta'_{k})_{k\in\N}$ with $\lim_{k\to\infty}\delta'_{k}=0$ such that the following holds for all  $n\in\N$ and $s\in[-T,T]$,
\begin{align*}\|f^n(X^n(s))-f^\infty(X^\infty(s))\|_\alphap\leq \delta'_{n}+K\inf_{N\in1+\N}\big( N^{-\upsilon}+\|X^n(s)-X^\infty(s)\|_\alphap\log N\big).
\end{align*}
\end{itemize}
Then, $\lim_{n\to\infty}X^n=X^\infty$  in $\sC(\R;\XX^\alphap)$.
\end{Lemma}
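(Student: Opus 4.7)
The plan is to mimic almost verbatim the argument used for Lemma~\ref{Lemma:Cauchy}, simply replacing index pairs $(m,n)$ by $(n,\infty)$. By the definition \eqref{eq:metric_cont} of the LU metric, it suffices to show, for arbitrary fixed $T>0$, that $X^n\to X^\infty$ in $\sC(-T,T;\XX^\alphap)$; we focus on the interval $[0,T]$, the negative-time case being symmetric.

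Recalling the constant $K$ from the $f$-hypothesis, I would fix an integer $\kappa\in\N$ satisfying $-\upsilon+KT/\kappa<0$ (this is the same choice as in \eqref{eq:choice_of_m}), and set $T_j=jT/\kappa$ for $j=0,1,\ldots,\kappa$. The proof then proceeds by induction on $j$, establishing convergence in $\sC(T_j,T_{j+1};\XX^\alphap)$ step by step. For $t\in[0,T_1]$, subtracting the integrated equations for $X^n$ and $X^\infty$ and using the two hypotheses together with Gronwall's inequality yields
\begin{align*}
\|X^n-X^\infty\|_{\sC(0,T_1;\XX^\alphap)}
\leq \Big(\|x^n-x^\infty\|_\alphap+\sigma\|W^n-W^\infty\|_{\sC(0,T_1;\XX^\alphap)}+\delta_n+\delta'_n+KT_1N^{-\upsilon}\Big)e^{(C+K\log N)T_1},
\end{align*}
valid for every $N\in 1+\N$. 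Given $\eps>0$, the choice of $\kappa$ guarantees $KT_1 e^{CT_1}\cdot N^{-\upsilon+KT_1}\to 0$ as $N\to\infty$, so one first fixes $N$ large enough to make the last term strictly less than $\eps$; then the three hypotheses (convergence of $x^n$ in $\XX^\alphap$, convergence of $W^n$ in $\sC(\R;\XX^\alphap)$, and $\delta_n,\delta'_n\to 0$) imply that the remaining factor tends to $0$ as $n\to\infty$, giving convergence on $[0,T_1]$.

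For the inductive step, assuming convergence in $\sC(T_{j-1},T_j;\XX^\alphap)$, one writes the integral equation starting from time $T_j$,
\begin{align*}
X^n(t)=X^n(T_j)+\int_{T_j}^{t}\Big(A^n X^n(s)+f^n(X^n(s))\Big)\d s+\sigma W^n(t)-\sigma W^n(T_j),
\end{align*}
and the same Gronwall argument on $[T_j,T_{j+1}]$ yields a bound whose leading term is $\|X^n(T_j)-X^\infty(T_j)\|_\alphap$, which tends to zero by the induction hypothesis. Iterating up to $j=\kappa-1$ completes the proof on $[0,T]$.

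The argument involves no genuinely new obstacle beyond what was handled in Lemma~\ref{Lemma:Cauchy}; the only subtlety to keep in mind is that the $f$-hypothesis contains the logarithmic factor $\log N$ coupled to the distance $\|X^n-X^\infty\|_\alphap$, which forces the two-scale choice (first pick $N$ large, then $n$ large) and the splitting into subintervals of length $T/\kappa$ short enough that the exponential $e^{(C+K\log N)T_1}$ does not overwhelm the gain $N^{-\upsilon}$. Once this balance is respected exactly as in the proof of the Cauchy version, the convergence statement follows.
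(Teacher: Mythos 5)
Your proof is correct and follows exactly the approach the paper intends: the paper states that Lemma~\ref{Lemma:cvg_sol} ``can be proved analogously'' to Lemma~\ref{Lemma:Cauchy}, and you carry out precisely that analogy, replacing the index pair $(m,n)$ by $(n,\infty)$, using the same subdivision $T_j=jT/\kappa$, the same Gronwall estimate, and the same two-scale choice of $N$ then $n$.
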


\subsubsection{Properties of the random potential}

\begin{Lemma}\label{Lemma:f_property}
Let $\FF_0\subset \FF$ be the set of potentials $F$ with the following properties: 
\begin{enumerate}[(1)]
    \item \label{item:f_k_upper_bound}  
    
    there is a positive constant $C>0$ such that
    \begin{align*}
    |f_k(r)|\leq C(1+\log k+\log_+|r|), \quad k\in \N,\ r\in\R,
\end{align*}
where $\log_+ = \log \vee 0$;
    \item \label{item:f-f} for every $(\alpha,p)\in\setbetap$ (see \eqref{eq:betap_set}), every $M>0$ and every $u,c\in\R$, there are $C,\upsilon>0$ such that
    \begin{align*}
    \|f(x)-f(y)\|_\alphap\leq C\inf_{N\in1+\N}\big(N^{-\upsilon}+\|x-y\|_\alphap\log N\big),
\end{align*}
    holds for all $x,y\in \R^\N$ satisfying $\|x-z\|_\alphap,\|y-z\|_\alphap\leq M$ where $z=(ku+c)_{k\in\N}$. 
\end{enumerate}

\medskip

Then, $\FF_0\in\cF$ and $\sF(\FF_0)=1$. In addition,  $\FF_0$ is invariant under the action of the shift semigroup $\Theta$ defined in 
\eqref{eq:Theta^n,a} and the shear group $\Shear$ defined in \eqref{eq:shear-on-F}.
\end{Lemma}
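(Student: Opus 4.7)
The plan is to establish both properties simultaneously on a single full-measure subset of $\FF$, first by deriving uniform logarithmic bounds on $|F_k|$, $|F_k'|$, $|F_k''|$ via Borel--Cantelli, and then by deriving property~(2) through an $N$-split that interpolates between Lipschitz and pointwise control. Set $M^{(j)}_{k,n}=\sup_{r\in[n,n+1]}|F^{(j)}_k(r)|$ for $k\in\N$, $n\in\Z$, $j\in\{0,1,2\}$. By the i.i.d.\ assumption on $(F_k)$ and stationarity of $F_1(\cdot)$, each $M^{(j)}_{k,n}$ has the same distribution as $\sup_{r\in[0,1]}|F_1^{(j)}(r)|$, which is dominated by $\|F_1\|_{\sC^2([-1,1])}$ and hence has exponential moments by~\eqref{eq:exp_moment_F}. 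Chebyshev gives $\sF\{M^{(j)}_{k,n}>t\}\leq C_0 e^{-\eta t}$, so choosing the threshold $t=C_\star(1+\log k+\log(1+|n|))$ with $C_\star$ large enough ensures $\sum_{k,n,j}\sF\{M^{(j)}_{k,n}>t\}<\infty$, and Borel--Cantelli produces a measurable set $\FF_0'$ of full $\sF$-measure on which
\begin{align*}
M^{(j)}_{k,n}\leq C(F)\bigl(1+\log k+\log(1+|n|)\bigr),\quad k\in\N,\ n\in\Z,\ j\in\{0,1,2\}.
\end{align*}
Since any $r\in\R$ lies in some $[n,n+1]$ with $\log(1+|n|)\leq\log_+|r|+\log 2$, this immediately yields property~(1) together with an analogous pointwise bound on $|F_k''(r)|=|f_k'(r)|$.

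To derive property~(2) on the same set, fix $(\alpha,p)\in\Pi$, $M>0$, $u,c\in\R$, and $x,y$ with $\|x-z\|_\alphap,\|y-z\|_\alphap\leq M$ where $z_k=ku+c$. Then $|x_k|\vee|y_k|\leq|uk+c|+Mk^\alpha\leq C_1 k^{1\vee\alpha}$, and writing $\ell_k=1+\log k$, the previous bounds give
\begin{align*}
|f_k(x_k)-f_k(y_k)|\leq\min\bigl\{C_2\ell_k|x_k-y_k|,\ C_3\ell_k\bigr\}
\end{align*}
by the mean value theorem using the $F_k''$-estimate (Lipschitz side) and property~(1) (pointwise side); the constants $C_2,C_3$ depend on $u,c,M$ but not on $x,y$. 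For any $N\in 1+\N$, splitting at $k=N$ and using the Lipschitz estimate for $k\leq N$ and the pointwise one for $k>N$ yields, when $p<\infty$,
\begin{align*}
\|f(x)-f(y)\|_\alphap^p\leq C_4^p\ell_N^p\,\|x-y\|_\alphap^p+C_5^p\sum_{k>N}\ell_k^p k^{-\alpha p},
\end{align*}
and the tail is $O(N^{-(\alpha p-1)}\ell_N^p)$ because $\alpha p>1$, so the $p$-th root gives a rate $N^{-\upsilon}$ for any fixed $\upsilon<\alpha-1/p$. The $p=\infty$ case is analogous, with $\sup_{k>N}\ell_k/k^\alpha$ in place of the tail sum. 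Taking the infimum over $N$ delivers property~(2).

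Finally, $\FF_0\supset\FF_0'$ combined with the $\sF$-completeness of $\cF$ gives $\FF_0\in\cF$ and $\sF(\FF_0)=1$. For $\Theta^{n,a}$-invariance, $(\Theta^{n,a}F)_k(r)=F_{k+n}(r+a)$ preserves the logarithmic bound since $\log(k+n)\leq\log k+\log(1+n)$ and $\log_+|r+a|\leq\log_+|r|+\log(1+|a|)+\log 2$, so (1) and (2) for $\Theta^{n,a}F$ follow from those for $F$ with adjusted constants and shifted center $(u,c+a)$. For $\Shear^v$-invariance, $(\Shear^v F)_k(r)=F_k(r-kv)$ sends the reference point $z_k=ku+c$ to $k(u+v)+c$, so property~(2) for $\Shear^v F$ with parameters $(u,c,M)$ reduces to property~(2) for $F$ with parameters $(u+v,c,M)$, which holds because (2) is stated for every $u\in\R$. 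The main technical point is to calibrate $\upsilon$ and the split $N$ so that the $\ell_N$ Lipschitz factor is compensated by the $N^{-\upsilon}$ tail uniformly in $(\alpha,p)\in\Pi$ and $(u,c,M)$; the simultaneous validity of the logarithmic bounds on $F_k,F_k',F_k''$ on a single full-measure set is precisely what enables this uniformity.
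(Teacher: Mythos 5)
Your proof is correct, and it takes a genuinely simpler route than the paper's on the core point.

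The paper establishes property~(2) by running Borel--Cantelli on events of the form
$E_k=\{\sup_{(r,r')\in[-a_k,a_k]^2}|f_k(r+z'_k)-f_k(r'+z'_k)|/|r-r'|\geq b_k\}$,
where $a_k=M'k^{\alpha'}$ grows polynomially. Because $a_k$ depends on the parameters $(M',\alpha',u',c')$, the paper must run this for a countable dense set of rational parameters, take a countable intersection, and then approximate general parameters by rational ones. The estimate of $\sF(E_k)$ itself requires a covering argument by translates $B_{m,n}$, splitting into diagonal ($m=n$, where the $\sC^2$-moment controls the local Lipschitz constant) and off-diagonal ($m\ne n$, where $|r-r'|\geq 1$ reduces the ratio to sup bounds on $|f_k|$). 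You sidestep all of this: by putting the Borel--Cantelli threshold directly on $M^{(j)}_{k,n}=\sup_{[n,n+1]}|F^{(j)}_k|$ for $j\in\{0,1,2\}$, you obtain logarithmic bounds on $|F_k|,|F_k'|,|F_k''|$ on unit intervals that are parameter-free, so a single full-measure set handles all $(\alpha,p,M,u,c)$ simultaneously. The Lipschitz ingredient for property~(2) then drops out via the mean value theorem applied to $f_k=-F_k'$ using the $F_k''$-bound, eliminating both the rational-approximation layer and the diagonal/off-diagonal covering argument. The $N$-split interpolation at the end (Lipschitz for $k\leq N$, pointwise for $k>N$, with the tail summable because $\alpha p>1$) is the same mechanism as in the paper. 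Your sketch of $\Theta$- and $\Shear$-invariance is at about the same level of detail as the paper (which simply declares it straightforward); one small thing to be careful about if you write it out fully is that $\Theta^{n,a}$ shifts the index and hence reweights the $\|\cdot\|_\alphap$-norm by a factor comparable to $(1+n/k)^\alpha$, and it moves the reference line to $z'_k=ku+(c+a-nu)$, not $(u,c+a)$ -- but since property~(2) is quantified over all $(u,c)$ and the reweighting is bounded for $n$ fixed, these are harmless adjustments.
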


In the remaining part of this section, we often apply Lemma~\ref{Lemma:f_property}\eqref{item:f-f} with $u,c=0$ to $x,y\in\XX^\alphap$ satisfying $\|x\|_\alphap,\|y\|_\alphap\leq M$.

\begin{proof}[Proof of Lemma \ref{Lemma:f_property}]
We use the Borel--Cantelli lemma and the completeness of $\cF$ to verify the claim.
First, we construct a full measure set of potentials having property~\eqref{item:f_k_upper_bound}. For $k\in\N$, $n\in\Z$ and $c\in \R$, set
\begin{align*}
    \xi_{k,n} =\exp\bigg(\eta \sup_{r\in[n,n+1]}|F'_k(r)|\bigg).
\end{align*}
Using \eqref{eq:exp_moment_F}, the invariance of $\sF$ under \eqref{eq:Theta^n,a} (so $\sF \xi_{k,n}=\sF \xi_{1,0}$), and the Markov inequality, we have
\begin{align*}
    \sum_{k\in\N,n\in\Z}\sF\big\{\xi_{k,n}> k^2 (|n|+1)^2\big\}\leq \sum_{k\in\N,n\in\Z}\frac{\sF \xi_{k,n}}{k^2 (|n|+1)^2}\\
    = \sF \xi_{1,0}\sum_{k\in\N,n\in\Z}\frac{1}{k^2(|n|+1)^2} <\infty.
\end{align*}
The Borel--Cantelli lemma implies the existence of a full measure set $\widetilde\FF\in\mathcal{F}$ such that for each $F\in\widetilde\FF$ there is $\kappa\in\N$ such that $\xi_{k,n}\leq k^2 (|n|+1)^2$ for all $k, n$ satisfying $k+|n|> \kappa$.
Hence, for all $k, n$ satisfying $k+|n|> \kappa$, we have
\begin{align*}
    F'_k(r)\leq 2\eta^{-1}\big(\log k+\log(|n|+1)\big)\leq C(\log k + \log_+|r|),\quad   r\in[n,n+1].
\end{align*}
 Since each $F_k$ is continuously differentiable, we can adjust the constant $C>0$ in this inequality to ensure that
\begin{align*}
    F'_k(r)\leq C,\quad   r\in[n,n+1]
\end{align*}
holds also for $k$ and $n$ satisfying $k+|n|\leq\kappa$. Hence, property~\eqref{item:f_k_upper_bound} holds for all potentials in $\widetilde\FF$.
\bigskip

Now we turn to property \eqref{item:f-f}. We first construct a full measure set $\widehat{\FF}$ and then show that the desired property holds on $\widehat{\FF}$.

Let $M', \alpha' \in \Q\cap(0,\infty)$, $u',c' \in \Q$, and 
\begin{align}\label{eq:z'}
    z' = (ku'+c')_{k\in\N}.
\end{align}
For $\gamma'>0$ to be specified later, we set
\begin{align}\label{eq:a_k,b_k}
    a_k = M'k^\alpha,\quad b_k=\gamma'\log k,\qquad  k\in\N.
\end{align}
Consider the event
\begin{align*}
    E_k =\bigg\{\sup_{(r,r')\in[-a_k,a_k]^2}\frac{|f_k(r+z'_k)-f_k(r'+z'_k)|}{|r-r'|}\geq b_k\bigg\}\in \mathcal{F}.
\end{align*}
For $m,n\in\Z$ with even $m-n$, we set $B_{m,n} = (m,n)+\{(s_1,s_2):|s_1|+|s_2|\leq 1\}$. These sets cover $\R^2$. The number of $B_{m,n}$ with $m=n$ and intersecting $[-a_k,a_k]^2$ nontrivially is bounded by $Ca_k$. For $(r,r')\in B_{m,n}$ with $m=n$, we have $(r,r')\in (m,n)+[-1,1]^2$. Hence, for such $(m,n)$, by \eqref{eq:exp_moment_F} and the invariance of $\sF$ under \eqref{eq:Theta^n,a}, we have
\begin{align*}
    &\sF\bigg\{\sup_{(r,r')\in B_{m,n}}\frac{|f_k(r+z'_k)-f_k(r'+z'_k)|}{|r-r'|}\geq b_k\bigg\}
    \\
    &\leq \sF\bigg\{\sup_{(r,r')\in[-1,1]^2}\frac{|f_1(r+z'_k)-f_1(r'+z'_k)|}{|r-r'|}\geq b_k\bigg\}\leq Ce^{-\eta b_k}.
\end{align*}
The rest of $[-a_k,a_k]^2$ can be covered by at most $Ca^2_k$ many $B_{m,n}$ with $m\neq n$. Note that for $(r,r')\in B_{m,n}$ with $m\neq n$, we must have $|r-r'|\geq 1$. Hence, if $m\neq n$, we get, by \eqref{eq:Theta^n,a} and \eqref{eq:exp_moment_F}, that
\begin{align*}
    &\sF\bigg\{\sup_{(r,r')\in B_{m,n}}\frac{|f_k(r+z'_k)-f_k(r'+z'_k)|}{|r-r'|}\geq b_k\bigg\}\\
    &\leq \sF\bigg\{\sup_{r\in m+[-1,1]}|f_k(r+z'_k)|\geq \frac{1}{2}b_k\bigg\}+\sF \bigg\{\sup_{r'\in n+[-1,1]}|f_k(r'+z'_k)|\geq \frac{1}{2}b_k\bigg\}\leq Ce^{-\frac{\eta}{2}b_k}.
\end{align*}
Applying the union bound, we obtain
\begin{align*}
    \sF (E_k) &\leq \sum_{m,n:\ B(m,n)\cap[-a_k,a_k]^2\neq \emptyset}\sF\bigg\{\sup_{(r,r')\in B_{m,n}}\frac{|f_k(r+z'_k)-f_k(r'+z'_k)|}{|r-r'|}\geq b_k\bigg\}\\
    &\leq Ca_k e^{-\eta b_k} + Ca^2_k e^{-\frac{\eta}{2}b_k}.
\end{align*}
To have $\sum_{k\in\N} \sF(E_k)<\infty$, 
we only need to ensure
\begin{align*}\sum_{k>1} a^2_k e^{-\frac{\eta}{2}b_k} = M^2\sum_k k^{2\alpha' -\frac{1}{2}\eta' \gamma' }<\infty.
\end{align*}
It suffices to make sure that
\begin{align*}\frac{1}{2}\eta' \gamma' - 2\alpha' >1,
\end{align*}
which is possible by choosing $\gamma'$ sufficiently large. Then, the Borel--Cantelli lemma implies that there is a full measure set $\FF_{M',\alpha',u',c'}$ such that for each $F \in \FF_{M',\alpha',u',c'}$ there is a constant $\kappa>0$ such that
\begin{align*}
    \sup_{r,r'\in[-a_k,a_k]}\frac{|f_k(r+z'_k)-f_k(r'+z'_k)|}{|r-r'|}<  b_k, \quad k\geq \kappa.
\end{align*}
For $k<\kappa$, we can find a large constant $C>1$ such that the left-hand side of the above display is bounded by $C$. Hence, 
\begin{align}\label{eq:lipschitz_constant_at_a_k}
    \sup_{r,r'\in[-a_k,a_k]}\frac{|f_k(r+z'_k)-f_k(r'+z'_k)|}{|r-r'|}< C (1+b_k), \quad k\in\N.
\end{align}
Now, we set
\begin{align}\label{eq:hat_F}
    \widehat{\FF}= \widetilde \FF \cap \bigcap_{\substack{M',\,\alpha'\in\Q\cap(0,\infty) \\ u',\,c'\in\Q}} \FF_{M',\alpha',u',c'}
\end{align}
where $\widetilde \FF$ is the full measure set  on which  property \eqref{item:f_k_upper_bound} holds.

Since in the following we will need an approximation argument, let us summarize the above for convenience: on $\widehat\FF$, for every $M',\alpha'\in\Q\cap (0,\infty)$, $u',c'\in\Q$ and $z'= z'(u',c')$ defined in \eqref{eq:z'},
there is $\gamma'>0$ such that \eqref{eq:lipschitz_constant_at_a_k} holds for $a_k,b_k$ given in~\eqref{eq:a_k,b_k}.

Now let us now show that  property \eqref{item:f-f} holds on $\widehat{\FF}$. Let $z,(\alphap),M,u,c$ be given in the statement of \eqref{item:f-f}.
For simplicity, we write $\|\cdot\|=\|\cdot\|_\alphap$. For all $x,y\in\R^\N$ satisfying $\|x-z\|,\|y-z\|\leq M$, the definition of the norm in \eqref{eq:def_norms} gives that
\begin{align*}
    |x_k-ku-c|,\ |y_k-ku-c|\leq  Mk^{\alpha}, \quad k\in\N.
\end{align*} 
Then, we fix some $u',c'\in\Q$ sufficiently close to $u,c$, and some $M',\alpha'\in\Q\cap(0,\infty)$ sufficiently large to ensure that, for all such $x,y$,
\begin{align}\label{eq:upper_bound_by_a_k}
    |x_k-z'_k|,\ |y_k-z'_k|\leq  a_k, \quad k\in\N
\end{align}
for $z'_k$ and $a_k$ given in \eqref{eq:z'} and \eqref{eq:a_k,b_k}, respectively.

To estimate $\|f(x)-f(y)\|$, using \eqref{eq:upper_bound_by_a_k} and \eqref{eq:lipschitz_constant_at_a_k}, we get
\begin{align}\label{eq:f_diference_k<N}
    |f_k(x_k)-f_k(y_k)|\leq C (1+b_k) |x_k-y_k|.
\end{align}
By property \eqref{item:f_k_upper_bound},
we also have 
\begin{align}\label{eq:f_diference_k>N}
\begin{split}
    |f_k(x_k)-f_k(y_k)|\leq C(1+\log k &+ \log_+|x_k-z'_k|+\log_+|z'_k|
    \\
    &+ \log_+|y_k-z'_k|+\log_+|z'_k| )\leq C(1+\log k),
\end{split}
\end{align}
where \eqref{eq:upper_bound_by_a_k} is used in the last inequality. 
We distinguish two cases: $p<\infty$ or $p=\infty$. In the first case, taking any natural number $N>1$ , applying \eqref{eq:f_diference_k<N} to $k\leq N$ and \eqref{eq:f_diference_k>N} tor $k>N$, we obtain
\begin{align*}
    \|f(x)-f(y)\|^{p}& = \sum_{k\in\N}|f_k(x_k)-f_k(y_k)|^{p}k^{-\alpha p} \\
    &\leq \sum_{k\leq N} C (1+b_k)^{p}|x_k-y_k|^{p}k^{-\alpha p} + \sum_{k>N}C(1+\log k)^{p}k^{-\alpha p}\\
    &\leq C(1+b_N)^{p}\|x-y\|^{p}+ C \sum_{k>N}k^{-\alpha p+\delta}\\
    &\leq C (\log N)^{p}\|x-y\|^{p}+CN^{-\alpha p+1+\delta},
\end{align*}
where we fix some $\delta\in(0,\alpha p-1)$.
\medskip
If $p=\infty$, then applying \eqref{eq:f_diference_k<N} to $k\leq N$ and~\eqref{eq:f_diference_k>N} to $k>N$, we obtain
\begin{align*}
    \|f(x)-f(y)\|& = \sup_{k\in\N}|f_k(x_k)-f_k(y_k)|k^{-\alpha} \\
    &\leq \sup_{k\leq N} C (1+b_k)|x_k-y_k|k^{-\alpha} + \sup_{k>N}C(1+\log k)k^{-\alpha}\\
    &\leq C (1+b_N)\|x-y\|+ C N^{-\alpha}\log N\\
    &\leq C \log N\|x-y\|+CN^{-\alpha+\delta},
\end{align*}
for some $\delta\in(0,\alpha)$. Hence, property \eqref{item:f-f} holds on $\widehat \FF$.

In conclusion, the set $\widehat\FF$ defined in \eqref{eq:hat_F} is measurable and has full measure as a countable intersection of full measure sets, and  the properties \eqref{item:f_k_upper_bound} and \eqref{item:f-f} hold for each $F\in \widehat \FF$. Hence $\widehat\FF\subset \FF_0$.
By the completeness of $\cF$, we conclude that $\FF_0\in\cF$ and $\sF(\FF_0)=1$. 
 Checking the  invariance under the action of $\Theta$ and $\Shear$ is straightforward.
\end{proof}

\subsubsection{Properties of the noise}
\begin{Lemma}\label{Lemma:W_local_Holder}
Let $\WW_0= \widetilde\WW\cap\big(\bigcap_{\alphap\in\Pi}\sC(\R;\XX^\alphap)\big)$
where $\widetilde \WW$ is the set on which $W_k$ satisfies the law of the iterated logarithm for every $k\in\N$, 
i.e, for every $W\in \widetilde\WW$,
\[
\limsup_{t\to\pm\infty} \frac{|W_k(t)|}{\sqrt{2t\log\log t}} =1,\qquad k\in\N.
\]
Then $\WW_0\in\cW$ and $\sW(\WW_0)= 1$. In addition, $\WW_0$ is invariant under the action of the time shifts $\tshift=(\tshift^t)$ defined
in~\eqref{eq:tshift}.
\end{Lemma}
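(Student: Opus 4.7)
The statement decomposes into: (i) $\widetilde\WW$ has full measure; (ii) for each $(\alphap)\in\Pi$, $\sC(\R;\XX^\alphap)$ has full measure; (iii) the intersection $\WW_0$ is invariant under $\tshift=(\tshift^s)_{s\in\R}$. Measurability of $\WW_0$ will then follow from $\sW$-completeness of $\cW$.

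Part (i) is immediate: $\widetilde\WW$ is the countable intersection over $k\in\N$ of the classical LIL events for the individual Brownian motions $W_k$, each of full measure.

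For part (ii), fix $(\alphap)\in\Pi$ with $p<\infty$ and $T>0$. Doob's $L^p$-maximal inequality applied componentwise gives
\begin{equation*}
\sW \sup_{|t|\le T}\|W(t)\|_\alphap^p \le \sum_{k\in\N}\frac{\sW\sup_{|t|\le T}|W_k(t)|^p}{k^{\alpha p}}\le C_p T^{p/2}\sum_{k\in\N}\frac{1}{k^{\alpha p}}<\infty
\end{equation*}
because $\alpha p>1$. A Markov--Borel--Cantelli argument along a sequence $N_m\uparrow\infty$ then yields, almost surely, $\lim_{N\to\infty}\sup_{|t|\le T}\sum_{k>N}|W_k(t)|^p/k^{\alpha p}=0$. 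Combined with the continuity and diagonal vanishing of each truncated sum $\sum_{k\le N}|W_k(t)-W_k(s)|^p/k^{\alpha p}$ in $(s,t)$, this yields $W\in\sC(-T,T;\XX^\alphap)$ a.s.; intersecting over $T=n\in\N$ gives a.s.\ continuity on all of $\R$. The case $p=\infty$ is analogous: a Gaussian moment bound of order $q>2/\alpha$ combined with Borel--Cantelli forces $\sup_{|t|\le T}|W_k(t)|\le k^{\alpha/2}$ for all sufficiently large $k$, delivering the required tail decay in the weighted sup-norm. To obtain the conclusion \emph{simultaneously for all} $(\alphap)\in\Pi$, I reduce to a countable family: since $\alpha'\le\alpha$ and $p'\le p$ imply the continuous embedding $\XX^{\alphaprime}\hookrightarrow\XX^\alphap$ (by monotonicity of the weight $k^{-\alpha}$ and nesting of $\ell^{p}$), it suffices to work with a countable subset of $\Pi\cap(\Q\times(\Q\cup\{\infty\}))$ cofinal from below in $\Pi$; intersecting countably many full-measure events gives the claim.

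For part (iii), the LIL is preserved by $\tshift^s$ because subtracting the constant $W_k(s)$ does not change $\limsup_{t\to\pm\infty}|W_k(t)|/\sqrt{2t\log\log t}$, and $\sC(\R;\XX^\alphap)$ is preserved because continuity at $t=s$ places $W(s)$ into $\XX^\alphap$ and subtracting a fixed element of a Banach space preserves continuity. The only real subtlety in the whole argument is ensuring \emph{continuity} (rather than merely pointwise-in-$t$ boundedness) of $t\mapsto W(t)\in\XX^\alphap$, which is why the tail control above has to be carried out uniformly in $t$, with the supremum placed inside the sum in the Doob estimate.
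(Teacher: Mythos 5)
Your proof is correct, but it takes a genuinely different route from the paper's. The paper obtains continuity of $t\mapsto W(t)$ in $\XX^\alphap$ by verifying the Kolmogorov--Chentsov moment criterion: a Minkowski-integral/Gaussian-moment estimate gives $\sW\|W(t_1)-W(t_0)\|_\alphap^q\le C|t_1-t_0|^{q/2}$ with $q/2>1$, and the Banach-space Kolmogorov--Chentsov theorem then yields a H\"older-continuous modification, which must agree with $W$ by coordinatewise continuity. You instead control the \emph{tail} of the weighted sum uniformly in $t$ — via the Doob maximal inequality and a Markov/Borel--Cantelli argument — and then patch in the finite truncation, which is continuous and vanishes on the diagonal. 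Your route is more elementary (it avoids invoking a Banach-valued modification theorem), and it also has the advantage of making explicit a step the paper leaves implicit: the reduction from the uncountable index set $\Pi$ to a countable cofinal-from-below subfamily of $\Q$-indexed pairs via the continuous embeddings $\XX^{\alpha',p'}\hookrightarrow\XX^{\alpha,p}$ for $\alpha'\le\alpha$, $p'\le p$ (which follow from $k^{-\alpha}\le k^{-\alpha'}$ and the $\ell^{p'}\subset\ell^p$ nesting). The paper simply says it intersects over rational indices, relying on the reader to supply this monotonicity.

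One small inaccuracy to flag: for $p=1$ the inequality you cite is not Doob's $L^p$ maximal inequality (which requires $p>1$); the bound $\sW\sup_{|t|\le T}|W_k(t)|^p\le C_pT^{p/2}$ for $p\ge1$ is more directly a consequence of the reflection principle (so $\sup_{[0,T]}W_k\stackrel{d}{=}|W_k(T)|$). Since your reduction lets you restrict to, say, rational $p'\ge 2$ in the cofinal family, you could sidestep $p=1$ entirely, but as written the attribution should be corrected or the $p\in[1,2)$ case should be handled by the reflection/BDG route. The LIL and shift-invariance parts are fine, and match the paper's reasoning.
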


\begin{proof} 
Let $(\alphap)\in\setbetap$. If $p<\infty$, then applying the Minkowski integral inequality and estimates of Gaussian moments, we can obtain, for $q>2$,
\begin{align*}
    &\sW\big\|W(t_1)-W(t_0)\big\|^{q}_\alphap = \sW\bigg(\sum_{k\in\N}\big|W_k(t_1)-W_k(t_0)\big|^p k^{-\alpha p }\bigg)^\frac{q}{p} \\
    &\leq \bigg(\sum_{k\in\N}\Big(\sW\big|W_k(t_1)-W_k(t_0)\big|^{q}\Big)^\frac{p}{q} k^{-\alpha p}\bigg)^\frac{q}{p}\leq C |t_1-t_0|^\frac{q}{2}.
\end{align*}
Due to $q/2>1$ and the completeness of $\XX^\alphap$, the Kolmogorov--Chentsov continuity theorem implies that $t\mapsto W(t)$ is H\"older-continuous in $\XX^\alphap$, $\sW$-a.s. If $p=\infty$, we choose $q>0$ so large that $q>2$ and $\alpha q>1$. Then, using bounds for Gaussian moments, we have
\begin{align*}
    &\sW\big\|W(t_1)-W(t_0)\big\|^{q}_{\alpha,\infty} = \sW\bigg(\sup_{k\in\N}\big|W_k(t_1)-W_k(t_0)\big|^q k^{-\alpha q }\bigg) \\
    &\leq \sum_{k\in\N}\Big(\sW\big|W_k(t_1)-W_k(t_0)\big|^{q}\Big) k^{-\alpha q}\leq C |t_1-t_0|^\frac{q}{2}.
\end{align*}
We take the intersection of all these full measure sets indexed by rational $\alpha$, rational $p$ and $p=\infty$. This set is thus measure and contained in $\bigcap_{\alphap}\sC(\R;\XX^\alphap)$. By the completeness of $\cW$, we conclude that $\bigcap_{\alphap}\sC(\R;\XX^\alphap)\in\cW$ and $\sW\big(\bigcap_{\alphap}\sC(\R;\XX^\alphap)\big)=1$. On the other hand, the law of the iterated logarithm is a classical result implying $\widetilde\WW\in\cW$ and $\sW(\widetilde\WW)=1$. Checking the shift-invariance of $\widetilde\WW$ is straightforward, and  the proof is completed.
\end{proof}

\subsection{Existence and uniqueness}

Henceforth, we fix $\FF_0$ from Lemma~\ref{Lemma:f_property} and $\WW_0$ from Lemma~\ref{Lemma:W_local_Holder}.
We approach the infinite-dimensional polymer dynamics using Galerkin approximations.

\begin{Def}\label{def:galerkin}
For $(x,A,F,W)\in \R^\N\times \sL(\R^\N)\times \FF\times \WW$, a sequence $(X^\n)_{n\in\N}$ is called a sequence of Galerkin approximations of $\cS(x,A,F,W)$ if, for all $n\in\N$, $X^\n\in\cS(x,A^\n,F^\n,W^\n)$ where 
\begin{gather}\label{eq:A^nF^nW^n}
    A^\n = \big(\Ind{k\leq n}A_k\big)_{k\in\N}\ ,\quad
    F^\n = \big(\Ind{k\leq n}F_k\big)_{k\in\N}\ ,\quad
    W^\n = \big(\Ind{k\leq n}W_k\big)_{k\in\N}\ .
\end{gather}
\end{Def}

Note that if $X^\n\in\cS(x,A^\n,F^\n,W^\n)$, then $X^\n$ satisfies.
\begin{align}
    \d \Xn_k(t) & = \big(A_k\Xn(t) + f_k(\Xn_k(t))\big)\d t +\sigma \d W_k(t),\quad k=1,\dots,n, \label{eq:Galerkin_approx}\\
    \Xn_k(0) & =x_k,\quad k=1,\dots,n,\nonumber \\
    \Xn_k(t) & = x_n, \quad t\in \R,\quad k> n\nonumber
\end{align}
The last display indicates that $\Xn$ is essentially an $n$-dimensional SDE. For every $F\in\FF_0$, each coordinate of $f$ has slow growth given in Lemma~\ref{Lemma:f_property}~\eqref{item:f_k_upper_bound} and is locally Lipschitz. Using this and the growth of sample paths given by the law of iterated logarithm stated in Lemma~\ref{Lemma:W_local_Holder}, we can apply the Picard--Lindel\"of theorem (a slight modification of \cite[Theorem 2.5]{teschl2012ordinary} is sufficient for our purpose) to obtain the existence and uniqueness of solutions to \eqref{eq:Galerkin_approx} for every $(F,W)\in\FF_0\times \WW_0$. Therefore, the following holds.

\begin{Lemma}\label{lemma:galerkin_exists}
For each $(x,A,F,W)\in \R^\N\times \sL(\R^\N)\times \FF_0\times \WW_0$, there is a unique sequence of Galerkin approximations of $\cS(x,A,F,W)$.
\end{Lemma}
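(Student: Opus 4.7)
I would construct $X^\n$ separately for each fixed $n\in\N$ and exploit the effectively finite-dimensional nature of the Galerkin system; both the existence and the uniqueness of the entire sequence then reduce to the corresponding statement for each individual $n$. For $k>n$, since $A^\n,F^\n,W^\n$ all vanish on the $k$-th coordinate by \eqref{eq:A^nF^nW^n}, the integral equation in Definition~\ref{def:solution} forces $X^\n_k(t)\equiv x_k$. Plugging these constants into $A_k X^\n(s)$ for $k\le n$, and using that $A\in\sL(\R^\N)$ makes $A_k x$ a well-defined real number, I can rewrite
\begin{align*}
A_k X^\n(s) = A_k x + \sum_{j=1}^n A_{k,j}\bigl(X^\n_j(s)-x_j\bigr),
\end{align*}
which is a \emph{finite} linear combination of the remaining unknowns. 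Hence \eqref{eq:Galerkin_approx} collapses to a genuine $n$-dimensional random ODE $\dot Y = b(Y) + \sigma \dot W^{\le n}$ for $Y=(X^\n_k)_{k\le n}$, with $W^{\le n}=(W_k)_{k\le n}$ and drift $b:\R^n\to\R^n$ whose $k$-th component is $A_k x + \sum_{j\le n}A_{k,j}(y_j-x_j)+f_k(y_k)$.

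Next, I would apply a standard pathwise fixed-point argument. Substituting $Z(t)=Y(t)-\sigma W^{\le n}(t)$ removes the noise and yields the deterministic ODE
\begin{align*}
\dot Z(t)=b\bigl(Z(t)+\sigma W^{\le n}(t)\bigr),\qquad Z(0)=(x_k)_{1\le k\le n},
\end{align*}
whose right-hand side is continuous in $t$ (since $W\in\WW_0$ is continuous) and locally Lipschitz in $Z$ (because each $F_k\in\sC^2(\R)$ makes $f_k$ locally Lipschitz, and the $A$-term is linear). The classical Picard--Lindel\"of theorem, exactly in the form cited from \cite{teschl2012ordinary}, then produces a unique maximal solution on some open interval around $t=0$.

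The main obstacle is ruling out finite-time blow-up, so that the local solution extends to all of $\R$. Here I would combine the at-most-logarithmic growth of $f_k$ from Lemma~\ref{Lemma:f_property}\eqref{item:f_k_upper_bound} with the linearity of the $A$-term to obtain $|b(y)|\le C_n(1+|y|)$ uniformly on $\R^n$, with a constant $C_n$ depending on $n$, $A$, and $x$ (only the linear piece contributes to the leading order, since $f_k$ is actually sub-linear in $|y_k|$). Since $W^{\le n}$ is continuous and therefore bounded on compact time intervals (a fortiori by Lemma~\ref{Lemma:W_local_Holder}), a standard Gronwall estimate on $|Z(t)|$, applied separately on $[0,T]$ and $[-T,0]$ for arbitrary $T>0$, yields $|Z(t)|\le C e^{Ct}$ on compact intervals and thus precludes blow-up in either time direction. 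This gives a unique global $Z$, hence a unique global $X^\n$; assembling the sequence in $n$ finishes the proof.
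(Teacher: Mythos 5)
Your proposal is correct and follows essentially the same route as the paper: recognize that for $k>n$ the coordinates are frozen (so the system is effectively an $n$-dimensional random ODE), then invoke local Lipschitz continuity of $f_k$ together with its at-most-logarithmic growth from Lemma~\ref{Lemma:f_property}\eqref{item:f_k_upper_bound} to get local existence/uniqueness via Picard--Lindel\"of and rule out finite-time blow-up via a Gronwall estimate. The substitution $Z=Y-\sigma W^{\le n}$ is a clean way to reduce to the deterministic ODE that the paper implicitly has in mind when it cites a ``slight modification'' of Teschl's theorem, and your observation that the $A$-dependence collapses to the finite sum $A_k x + \sum_{j\le n} A_{k,j}(y_j-x_j)$ correctly exploits the fact that $X^\n - x$ is finitely supported, which is why $A\in\sL(\R^\N)$ (without boundedness) suffices here.
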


\begin{Prop}[Convergence of Galerkin approximations]\label{prop:existence}
Suppose that $A$, $(\alphap)$, and $(\alphaprime)$ satisfy
\eqref{eq:(beta'-beta)p'>1} and \eqref{eq:A-bounded}.  Then, for each $(x,F,W)\in \XX^\alphap\times\FF_0\times\WW_0$, the sequence of Galerkin approximations of $\cS(x,A,F,W)$ converges in $\sC(\R;\XX^\alphaprime)$ to some $X\in \cS(x,A,F,W;\XX^\alphaprime)$.

\end{Prop}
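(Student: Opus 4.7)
The approach is to apply Lemma~\ref{Lemma:Cauchy} to the Galerkin approximations $(X^\n)$ in $\sC(\R;\XX^\alphaprime)$, and then to pass to the limit in the coordinatewise integral equations. The critical preliminary step will be a uniform a priori bound in the \emph{stronger} norm $\|\cdot\|_\alphap$.

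To obtain this bound, I would start from Lemma~\ref{Lemma:f_property}\eqref{item:f_k_upper_bound} combined with $\log_+ r\le r$ to get $|f_k(r)|\le C(1+\log k+|r|)$; since $\alpha p>1$ makes $\sum_k (1+\log k)^p k^{-\alpha p}<\infty$, this yields $\|f^\n(y)\|_\alphap\le C(1+\|y\|_\alphap)$ for all $y\in\XX^\alphap$. Combined with~\eqref{eq:A-bounded} and $W\in\sC(\R;\XX^\alphap)$ from Lemma~\ref{Lemma:W_local_Holder}, a Gronwall estimate applied to the integral equation for $X^\n$, run separately on $[0,T]$ and $[-T,0]$, gives $\sup_n \sup_{t\in[-T,T]}\|X^\n(t)\|_\alphap\le M_T<\infty$ for every $T>0$.

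Next, I would verify the four hypotheses of Lemma~\ref{Lemma:Cauchy} with norm $\|\cdot\|_\alphaprime$. The initial conditions $x^\n=x$ are constant and lie in $\XX^\alphaprime$ via the embedding $\XX^\alphap\hookrightarrow\XX^\alphaprime$ from $(\alpha'-\alpha)p'>1$. The convergence $W^\n\to W$ in $\sC(\R;\XX^\alphaprime)$ follows from Dini's theorem: the continuous functions $t\mapsto\|W(t)-W^\n(t)\|_\alphaprime$ decrease monotonically to $0$ pointwise on $[-T,T]$ (since $W\in\sC(\R;\XX^\alphaprime)$ and $p'<\infty$), so the convergence is uniform. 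For~\eqref{eq:A_diff}, decompose $A^\m X^\m-A^\n X^\n=A^\m(X^\m-X^\n)+(A^\m-A^\n)X^\n$; the first piece is bounded by $\|A\|_{\sL_\b(\XX^\alphaprime)}\|X^\m-X^\n\|_\alphaprime$, while for $m>n$ the tail satisfies
\[
\|(A^\m-A^\n)X^\n\|_\alphaprime^{p'}=\sum_{n<k\le m}\frac{|(AX^\n)_k|^{p'}}{k^{\alpha' p'}}\le (CM_T)^{p'}\sum_{k>n}k^{-(\alpha'-\alpha)p'}\to 0,
\]
using $|(AX^\n)_k|/k^\alpha\le\|AX^\n\|_\alphap\le CM_T$ from the a priori bound together with $(\alpha'-\alpha)p'>1$. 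For~\eqref{eq:f_difference_condition}, split $f^\m(X^\m)-f^\n(X^\n)=[f^\m(X^\m)-f^\m(X^\n)]+[(f^\m-f^\n)(X^\n)]$: the first piece fits Lemma~\ref{Lemma:f_property}\eqref{item:f-f} (with $u=c=0$ and radius $M_T$) in exactly the required form, while the second has components $\Ind{n<k\le m}f_k(x_k)$ (because $X^\n_k=x_k$ for $k>n$), and its $\XX^\alphaprime$-norm is dominated by the tail of $\sum_k|f_k(x_k)|^{p'}/k^{\alpha' p'}$, which converges thanks to $x\in\XX^\alphaprime$ and $\alpha' p'>1$.

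Lemma~\ref{Lemma:Cauchy} then delivers a limit $X=\lim X^\n$ in $\sC(\R;\XX^\alphaprime)$. To identify $X$ as a solution I would pass to the limit in~\eqref{eq:sys_int_eq} coordinatewise: for each fixed $k$ and $n\ge k$ the truncated coefficients agree with the untruncated ones, and then continuity of $f_k$, boundedness of $A$ on $\XX^\alphaprime$, Lemma~\ref{lemma:ptw_cvg}, and dominated convergence (with the integrand uniformly bounded on $[-T,T]$ via the a priori bound) identify $X$ as an element of $\cS(x,A,F,W;\XX^\alphaprime)$. The main obstacle is the first step: the logarithmic growth of $f_k$ prevents closing the a priori estimate directly in the target norm while also controlling truncation tails, and the gap $(\alpha'-\alpha)p'>1$ between the two norms is exactly what converts a uniform $\XX^\alphap$-bound into vanishing truncation errors in $\XX^\alphaprime$.
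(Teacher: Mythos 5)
Your proof is correct and follows the same overall strategy as the paper's: derive a uniform a priori bound in $\|\cdot\|_\alphap$ via Gronwall, verify the four hypotheses of Lemma~\ref{Lemma:Cauchy} in the weaker norm $\|\cdot\|_\alphaprime$, then identify the limit coordinatewise. The only cosmetic differences are that you use Dini's theorem for $W^{(n)}\to W$ where the paper estimates the tail $\sum_{k>n}|W_k(s)|^{p'}k^{-\alpha'p'}$ directly, and in the $f$-decomposition you place the truncation error on $X^{(n)}$ (exploiting $X^{(n)}_k=x_k$ for $k>n$) whereas the paper places it on $X^{(m)}$; in the latter step you cite $x\in\XX^\alphaprime$ for the tail convergence, but what is really used is the logarithmic growth $|f_k(x_k)|\le C(1+\log k)$ from $x\in\XX^\alphap$ and Lemma~\ref{Lemma:f_property}\eqref{item:f_k_upper_bound} together with $\alpha'p'>1$, a minor imprecision that does not affect correctness.
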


\begin{proof}
For convenience, we write $\|\cdot\|=\|\cdot\|_\alphap$, $\XX= \XX^\alphap$, $\|\cdot\|'=\|\cdot\|_\alphaprime$ and $\XX'=\XX^\alphaprime$. We want to prove that the sequence of Galerkin approximations $(X^\n)_{n\in\N}$ of $\cS(x,A,F,W)$ is Cauchy in  $\sC(\R;\XX')$ using Lemma~\ref{Lemma:Cauchy} applied to $(\alphaprime)$. 

\smallskip

Step 1. The definition of norms in \eqref{eq:def_norms} implies
\begin{align}\label{eq:y_k<k^beta}
    |y_k|\leq \|y\|k^\alpha,\quad  y\in\XX.
\end{align}
Then, we get
\begin{align*}
    \|y\|'= \Bigg(\sum_{k\in\N}\bigg|\frac{y_k}{k^{\alpha'}}\bigg|^{p'}\Bigg)^\frac{1}{p'}\leq \|y\|\Bigg(\sum_{k\in\N}k^{(\alpha-\alpha')p'}\Bigg)^\frac{1}{p'}\leq C\|y\|,\quad  y\in\XX
\end{align*}
where in the last inequality we used~\eqref{eq:(beta'-beta)p'>1}.
Therefore, $\XX\subset\XX'$ and $\|x'\|\leq C$. Since the Galerkin approximations are essentially finite dimensional, we obtain
\begin{align*}
    X^\n\in\cS(x,A^\n,F^\n,W^\n)\cap\sC(\R;\XX)\cap\sC(\R;\XX'),
\end{align*}
for $A^\n,F^\n,W^\n$ given in \eqref{eq:A^nF^nW^n}. 

\smallskip

Step 2. Let us show that $(W^\n)_{n\in\N}$ is Cauchy in $\sC(\R;\XX')$. By Lemma~\ref{Lemma:W_local_Holder}, we have $W\in\sC(\R;\XX)$. 
By \eqref{eq:y_k<k^beta}, for each $T>0$, there is $C>0$ such that
\begin{align*}
    \sup_{n,\,k\in\N,\ s\in[-T,T]}|W_k(s)k^{-\alpha}|\leq C.
\end{align*}
Then, for $s\in[-T,T]$, we have
\begin{align*}
    \Big(\|W^\m(s)-W^\n(s)\|'\Big)^{p'}=\sum_{k=(m\wedge n)+1}^{m\vee n} \bigg|\frac{W_k(s)}{k^{\alpha'}}\bigg|^{p'}\leq C\sum_{k=(m\wedge n)+1}^{m\vee n} k^{(\alpha-\alpha')p'}.
\end{align*}
Hence, due to \eqref{eq:(beta'-beta)p'>1}, the sequence $(W^\n)_{n\in\N}$ is Cauchy in $\sC(-T,T;\XX')$ for each $T>0$ and thus Cauchy in $\sC(\R;\XX')$.

\smallskip

Step 3. Let $T>0$. We show that $\|X^\n\|_{\sC(-T,T;\XX)}$ is bounded uniformly in $n$. 
Lemma~\ref{Lemma:f_property}~\eqref{item:f_k_upper_bound} implies
\begin{align}\label{eq:||f(y)||}
    \|f^\n(y)\|\leq \|f(y)\| \leq C(1+\|y\|),\quad   y\in \XX.
\end{align}
From the definition of the norm in \eqref{eq:def_norms}, we can get
\begin{align}\label{eq:An<A}
    \sup_{n\in\N}\|A^\n\|_{\sL_\b(\XX)} \leq \|A\|_{\sL_\b(\XX)}<\infty.
\end{align}
The definition of $W^\n$ gives
\begin{align*}
    \sup_{n\in\N}\|W^\n\|_{\sC(\R;\XX)} \leq \|W\|_{\sC(\R;\XX)}<\infty.
\end{align*}
These along with \eqref{eq:Galerkin_approx} yield
\begin{align}\label{eq:X^n<}
    \|\Xn(t)\| \leq \|x\| + \int_0^{|t|} C\big(1+\|\Xn(s)\|\big)\d s + \sigma \|W(t)\|.
\end{align}
Gronwall's inequality gives
\begin{align*}
    \|\Xn\|_{\sC(-T,T;\XX)}\leq \Big(CT+\|x\| + \sigma \|W(t)\|_{\sC(-T,T;\XX)}\Big)e^{CT}\leq C, \quad   n \in \N.
\end{align*}
For convenience, we rewrite the above as
\begin{align}\label{eq:sup_Xn}
    \sup_{n\in\N}\|\Xn\|_{\sC(-T,T;\XX)}\leq C.
\end{align}
\smallskip

Step 4. Let us verify condition \eqref{eq:A_diff}. The triangle inequality yields
\begin{align*}
    &\big\|A^{(m)}X^{(m)}(s)-A^\n X^\n(s)\big\|'
    \\
&
 \leq \big\|A^\m\big(X^{(m)}(s)-X^\n(s)\big)\big\|'+\big\|\big(A^{(m)}-A^\n\big) X^\n(s)\big\|'.
\end{align*}
The first term on the right is bounded by $C\|X^\m(s)-X^\n(s)\|'$ due to~\eqref{eq:An<A} with $\XX$ replaced by $\XX'$. Assuming $n\geq m$, the second term can be expanded as
\begin{align*}
    \Big(\big\|\big(A^{(m)}-A^\n\big) X^\n(s)\big\|'\Big)^{p'}=\sum_{k=m+1}^{n} \bigg|\frac{A_k X^\n(s) }{k^{\alpha'}}\bigg|^{p'}.
\end{align*}
Using \eqref{eq:sup_Xn}, $A\in \sL_\b(\XX)$ and \eqref{eq:y_k<k^beta}, we have
\begin{align*}
    \sup_{n, k\in\N, \ s\in[-T,T]}\bigg|\frac{A_kX^\n(s)}{k^\alpha}\bigg|\leq C.
\end{align*}
Setting $\delta_{m,n}= \big(\sum_{k=(m\wedge n)+1}^{m\vee n}(Ck^{\alpha-\alpha'})^{p'}\big)^\frac{1}{p'}$, we have $\lim_{m,n\to\infty}\delta_{m,n}=0$ due to \eqref{eq:(beta'-beta)p'>1}. Then, the above two displays imply
\begin{align*}
    \big\|\big(A^{(m)}-A^\n\big) X^\n(s)\big\|'\leq \delta_{m,n},\quad   m,n, \    s\in [-T,T].
\end{align*}
Hence, we conclude that
\begin{align*}
    \big\|A^{(m)}X^{(m)}(s)-A^\n X^\n(s)\big\|'\leq C\|X^\m(s)-X^\n(s)\|' + \delta_{m,n},
\end{align*}
for all $m,n\in\N$ and $s\in[-T,T]$,
verifying \eqref{eq:A_diff}.

\medskip

Step 5. We verify \eqref{eq:f_difference_condition}.
By \eqref{eq:sup_Xn}, there is $M>0$ such that $\|X^\n(t)\|\leq M$ for all $n\in\N$ and $t\in[-T,T]$. Let $y,y'\in\XX$ satisfy $\|y\|,\|y'\|\leq M$. This along this \eqref{eq:y_k<k^beta} yields
\begin{align}\label{eq:y,y'}
    |y_k|,\, |y'_k|\leq M k^\alpha,\quad  k\in\N.
\end{align}
The triangle inequality yields
\begin{align}\label{eq:fm(y)-fn(y')}
    \|f^\m(y)-f^\n(y')\|'\leq \|f^\m(y)-f^\n(y)\|'+\|f^\n(y)-f^\n(y')\|'.
\end{align}
Due to $p'<\infty$, by the definition of $f^\n$, \eqref{eq:y,y'} and Lemma~\ref{Lemma:f_property}~\eqref{item:f_k_upper_bound}, the first term on the right of \eqref{eq:fm(y)-fn(y')} can be bounded by
\begin{align*}
    \Bigg(\sum_{k=(m\wedge n)+1}^{m\vee n}|f_k(y_k)|^{p'}k^{-\alpha' p'}\Bigg)^\frac{1}{p'}\leq C\Bigg(\sum_{k=(m\wedge n)+1}^{m\vee n}\big(1+\log k +\log_+|y_k|\big)^{p'}k^{-\alpha' p'}\Bigg)^\frac{1}{p'}\\\leq C\Bigg(\sum_{k=m\wedge n}^{m\vee n}k^{-\alpha' p' +\eps}\Bigg)^\frac{1}{p'}
\end{align*}
for some $\eps$ satisfying $\eps \in (0,\alpha' p' -1)$. Set $\delta'_{m,n}$ to be the last term in the above display, and we have $\lim_{m,n\to\infty}\delta'_{m,n}=0$.

\smallskip

For the second term on the right of \eqref{eq:fm(y)-fn(y')}, we can apply Lemma~\ref{Lemma:f_property}~\eqref{item:f-f} to see that it is bounded by
\begin{align*}
    \|f(y)-f(y')\|'\leq C(N^{-\upsilon}+\|y-y'\|'\log N).
\end{align*}
Replacing $y$ by $X^\m(s)$ and $y'$ by $X^\n(s)$ in the above three displays yields \eqref{eq:f_difference_condition}.

\medskip

Step 6. Combining the results from previous steps, we can apply Lemma~\ref{Lemma:Cauchy} to see that $(X^\n)_{n\in\N}$ is Cauchy in $\sC(\R;\XX')$. By the completeness, the limit denoted by $X$ exists in $\sC(\R;\XX')$. In particular, Lemma~\ref{lemma:ptw_cvg} implies
\begin{align}\label{eq:ptw_X^n}
    \lim_{n\to\infty}X^\n_k(s)=X_k(s),\quad   k\in\N,\ s\in[-T,T].
\end{align}
Due to $A\in\sL_\b(\XX')$ and the same lemma, we also have $\lim_{n\to\infty}A_kX^\n(s)=A_kX(s)$ for each $k$ and $s$. Using the continuity of each $f_k$, we can pass to the limit for each $k$ in \eqref{eq:Galerkin_approx} to see that $X$ satisfies \eqref{eq:sys_int_eq}, namely, $X\in\cS(x,A,F,W)$. This completes our proof.
\end{proof}

\begin{Lemma}[Existence]Suppose that $(\alphap), (\alphaprime)$, and $A$ satisfy \eqref{eq:(beta'-beta)p'>1} and \eqref{eq:A-bounded}.
Then, for each $(x,F,W)\in \XX^\alphap\times\FF_0\times\WW_0$, 
\begin{align*}\cS(x,A,F,W;\XX^\alphap)  \neq \emptyset.
\end{align*}
In particular, the limit of the Galerkin approximations of $\cS(x,A,F,W)$ in $\sC(\R;\XX^\alphaprime)$ belongs to $\cS(x,A,F,W;\XX^\alphap)$.
\end{Lemma}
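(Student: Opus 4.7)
The plan is to take the solution $X$ produced by Proposition~\ref{prop:existence} and, since $X\in\cS(x,A,F,W)$ already holds there, to show only that its regularity can be upgraded from $\sC(\R;\XX^\alphaprime)$ to $\sC(\R;\XX^\alphap)$. The essential input is the uniform bound
\[
\sup_{n\in\N}\|\Xn\|_{\sC(-T,T;\XX^\alphap)}\le C=C(T)
\]
for every $T>0$, which was established in Step~3 of the proof of Proposition~\ref{prop:existence} and does not depend on passage through $\XX^\alphaprime$.

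First I would pass to the pointwise limit. Convergence of $\Xn$ to $X$ in $\sC(\R;\XX^\alphaprime)$ yields coordinatewise convergence $\Xn_k(s)\to X_k(s)$ for every $k\in\N$ and $s\in\R$ by Lemma~\ref{lemma:ptw_cvg}. For $p<\infty$, Fatou's lemma applied with counting measure on $\N$ gives
\[
\|X(s)\|_\alphap^{p}=\sum_{k\in\N}|X_k(s)|^{p}k^{-\alpha p}\le \liminf_{n\to\infty}\|\Xn(s)\|_\alphap^{p}\le C^{p},
\]
and for $p=\infty$ the analogous direct supremum estimate applies. Hence $X(s)\in\XX^\alphap$ for every $s\in\R$, with the norm locally bounded in~$t$.

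To obtain continuity in $\XX^\alphap$ I would then exploit the integral equation \eqref{eq:sys_int_eq} directly. Since $\sup_{s\in[-T,T]}\|X(s)\|_\alphap\le C$ and $A\in\sL_\b(\XX^\alphap)$, the drift satisfies $\sup_{s\in[-T,T]}\|AX(s)\|_\alphap\le C$. Combining the bound $|X_k(s)|\le \|X(s)\|_\alphap k^\alpha$ with Lemma~\ref{Lemma:f_property}\eqref{item:f_k_upper_bound} gives $|f_k(X_k(s))|\le C(1+\log k)$ uniformly in $s\in[-T,T]$, so
\[
\sup_{s\in[-T,T]}\|f(X(s))\|_\alphap \le C\Bigl(\sum_{k\in\N}(1+\log k)^{p}k^{-\alpha p}\Bigr)^{1/p}<\infty
\]
because $\alpha p >1$ (the $p=\infty$ case is analogous). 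Thus the drift integral $\int_{t_0}^t(AX(s)+f(X(s)))\d s$ is Lipschitz in $t$ as an $\XX^\alphap$-valued path, and since Lemma~\ref{Lemma:W_local_Holder} already provides $\sigma W(\cdot)\in \sC(\R;\XX^\alphap)$, the representation~\eqref{eq:sys_int_eq} gives $X\in\sC(\R;\XX^\alphap)$, so $X\in \cS(x,A,F,W;\XX^\alphap)$.

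The only delicate point is the passage from $\XX^\alphaprime$-convergence of the Galerkin sequence to norm control in the finer space $\XX^\alphap$, and this is handled cleanly via coordinatewise convergence and Fatou, so no new substantive estimate is required beyond those already produced inside the proof of Proposition~\ref{prop:existence}.
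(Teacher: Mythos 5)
Your proposal is correct and follows essentially the same route as the paper's proof: both take the solution $X$ furnished by Proposition~\ref{prop:existence}, use the uniform $\XX^\alphap$-bound on the Galerkin approximations together with coordinatewise convergence and Fatou's lemma (or a direct supremum argument for $p=\infty$) to conclude $X(t)\in\XX^\alphap$ with a locally uniform norm bound, and then read continuity in $\XX^\alphap$ off the integral equation~\eqref{eq:sys_int_eq} using boundedness of $A$, the growth bound on $f$ from Lemma~\ref{Lemma:f_property}\eqref{item:f_k_upper_bound}, and the $\XX^\alphap$-continuity of $W$ from Lemma~\ref{Lemma:W_local_Holder}. The only cosmetic difference is that the paper packages the drift estimate as $\|f(y)\|_\alphap\le C(1+\|y\|_\alphap)$ in~\eqref{eq:||f(y)||} rather than writing out the coordinatewise $(1+\log k)$ bound, but the underlying calculation is identical.
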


\begin{proof}
As before, we write $\|\cdot\|=\|\cdot\|_\alphap$, $\XX=\XX^\alphap$ and $\XX'=\XX^\alphaprime$. By Proposition~\ref{prop:existence}, there is a solution
$ X\in \cS(x,A,F,W;\XX')$
and there is a sequence Galerkin approximations $(X^\n)_{n\in\N}$ satisfying
\begin{align*}
    \lim_{n\to\infty}X^\n = X\quad \text{ in }\XX'.
\end{align*}
It remains to show that $X\in\sC(\R;\XX)$.

\smallskip
Step 1. Let us show that $X(t)\in\XX$ for all $t$.
Recall the estimate \eqref{eq:sup_Xn} and the pointwise convergence \eqref{eq:ptw_X^n}. If $p<\infty$, invoking Fatou's lemma, we can see that, for each $T>0$, there is $C>0$ such that
\begin{align}\label{eq:supX_betap}
    \sup_{t\in[-T,T]}\|X(t)\| \leq C.
\end{align}
If $p=\infty$, then \eqref{eq:sup_Xn} implies that, for each $T>0$, there is $C>0$ such that
\begin{align*}
    |X^\n_k(t)|\leq Ck^\alpha,\quad   n\in\N, \ k\in\N, \  t\in[-T,T].
\end{align*}
By the pointwise limit \eqref{eq:ptw_X^n}, we have
\begin{align*}
    |X_k(t)|\leq Ck^\alpha,\quad    k\in\N,\ t\in[-T,T],
\end{align*}
which also gives \eqref{eq:supX_betap}

\smallskip

Step 2. We show $t\mapsto X(t)$ is continuous in $\XX$. Due to $X\in\cS(x,A,F,W)$ and Definition~\ref{def:solution}, we know that $X$ satisfies \eqref{eq:sys_int_eq}. Then, by $A\in\sL_\b(\XX)$, we have, for $t_0,t_1\in[-T,T]$ satisfying $t_0\leq t_1$,
\begin{align*}
    \|X(t_1)-X(t_0)\|  \leq \int_{t_0}^{t_1}\Big(C\|X(s)\|  + \|f(X(s))\| \Big)\d s\\ + \sigma\|W(t_1)-W(t_0)\| .
\end{align*}
Along with \eqref{eq:||f(y)||} and \eqref{eq:supX_betap}, this display implies
\begin{align*}
    \|X(t_1)-X(t_0)\|  \leq C|t_1-t_0|+\sigma\|W(t_1)-W(t_0)\|.
\end{align*}
From this and Lemma~\ref{Lemma:W_local_Holder}, we can deduce $X\in \sC(-T,T;\XX)$ for any $T>0$. Therefore, $X\in \sC(\R;\XX)$.
\end{proof}

\begin{Lemma}[Uniqueness]For each $(\alphap)\in\setbetap$, and each $(x,A,F,W)\in \XX^\alphap\times \sL_\b(\XX^\alphap)\times\FF_0\times\WW_0$, the set
$\cS(x,A,F,W;\XX^\alphap)$
contains at most one element.
\end{Lemma}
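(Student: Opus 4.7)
The plan is a straightforward Gronwall argument, but tailored to accommodate the logarithmic growth of the ``Lipschitz constant'' of $f$ given in Lemma~\ref{Lemma:f_property}\eqref{item:f-f}. In fact, the estimates we need are essentially those encoded in Lemma~\ref{Lemma:Cauchy} (or in its convergence variant Lemma~\ref{Lemma:cvg_sol}), so after a short check the result will follow by applying one of them to a trivial sequence.

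Suppose $X,Y\in\cS(x,A,F,W;\XX^\alphap)$. Set $Z=X-Y$. Since both $X$ and $Y$ solve \eqref{eq:sys_int_eq} with the same $(x,A,F,W)$, the additive noise term $\sigma W_k(t)$ cancels in the difference, giving
\begin{align*}
Z_k(t)=\int_0^t\Big(A_k Z(s)+f_k(X_k(s))-f_k(Y_k(s))\Big)\,\mathrm d s,\qquad k\in\N,\ t\in\R,
\end{align*}
with $Z(0)=0$. Fix $T>0$. Since $X,Y\in\sC(\R;\XX^\alphap)$, there exists $M>0$ with $\|X(s)\|_\alphap,\|Y(s)\|_\alphap\le M$ for all $s\in[-T,T]$. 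Applying Lemma~\ref{Lemma:f_property}\eqref{item:f-f} with $u=c=0$, there exist $C,\upsilon>0$ (depending on $M$, $(\alpha,p)$, $F$) such that for all $s\in[-T,T]$ and every $N\in 1+\N$,
\begin{align*}
\|f(X(s))-f(Y(s))\|_\alphap\le C\bigl(N^{-\upsilon}+\|Z(s)\|_\alphap\log N\bigr).
\end{align*}
Combined with the bound $\|AZ(s)\|_\alphap\le\|A\|_{\sL_\b(\XX^\alphap)}\|Z(s)\|_\alphap$, setting $u(t)=\|Z(t)\|_\alphap$ yields for $t\in[0,T]$
\begin{align*}
u(t)\le CN^{-\upsilon}T+\bigl(C+C\log N\bigr)\int_0^t u(s)\,\mathrm d s.
\end{align*}

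The remaining step is exactly the iteration used in the proof of Lemma~\ref{Lemma:Cauchy}. Choose $\kappa\in\N$ with $-\upsilon+CT/\kappa<0$, and set $T_j=jT/\kappa$. Gronwall on $[0,T_1]$ gives $\sup_{[0,T_1]}u\le CT_1 N^{-\upsilon}e^{(C+C\log N)T_1}=C'N^{-\upsilon+CT_1}$. Letting $N\to\infty$ forces $u\equiv 0$ on $[0,T_1]$. Using $Z(T_1)=0$ as the new initial condition, the same Gronwall step on $[T_1,T_2]$ shows $u\equiv 0$ there, and iterating $\kappa$ times we obtain $Z\equiv 0$ on $[0,T]$. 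The argument on $[-T,0]$ is identical, and since $T$ was arbitrary we conclude $X=Y$ on $\R$.

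Alternatively, one can package this by invoking Lemma~\ref{Lemma:cvg_sol} directly with the trivial choice $x^n=x$, $A^n=A$, $F^n=F$, $W^n=W$, $X^n=X$ for every $n\in\N$, and $x^\infty=x$, $A^\infty=A$, $F^\infty=F$, $W^\infty=W$, $X^\infty=Y$; the first two hypotheses hold trivially, the operator bound is immediate from $A\in\sL_\b(\XX^\alphap)$, and the $f$-bound is exactly Lemma~\ref{Lemma:f_property}\eqref{item:f-f}. The conclusion $X^n\to X^\infty$ in $\sC(\R;\XX^\alphap)$ then reads $X=Y$. The only real obstacle is the logarithmic Lipschitz growth of $f$, which is handled by restricting to the short interval $[0,T_1]$ chosen to beat the $\log N$ factor against $N^{-\upsilon}$; this is precisely the device already present in Lemma~\ref{Lemma:Cauchy}, so no new difficulty arises.
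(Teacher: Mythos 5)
Your proposal is correct, and its final ``alternatively'' paragraph is precisely the paper's own proof: the paper applies Lemma~\ref{Lemma:cvg_sol} with the constant sequence $x^n=x$, $A^n=A$, $F^n=F$, $W^n=W$, $X^n=X$, $X^\infty=Y$, $\delta_n=\delta_n'=0$, and the $f$-estimate supplied by Lemma~\ref{Lemma:f_property}\eqref{item:f-f}. Your main presentation simply inlines the Gronwall/short-interval iteration that lives inside the proof of Lemma~\ref{Lemma:Cauchy}/\ref{Lemma:cvg_sol}, so the two are the same argument at different levels of abstraction.
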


\begin{proof}

Let $X,Y\in \cS(x,A,F,W;\XX^\alphap)$. Then for any $T>0$, there is $M>0$ such that
\begin{align*}
    \|X\|_{\sC(-T,T;\XX^\alphap)},\quad  \|Y\|_{\sC(-T,T;\XX^\alphap)}\leq M.
\end{align*}
Then, by Lemma~\ref{Lemma:f_property}~\eqref{item:f-f}, there are constants $K,\upsilon>0$  such that
\begin{align*}
    \|f(X(s))-f(Y(s))\|_\alphap\leq K\big(N^{-\upsilon}+\|x-y\|_\alphap \log N\big),\quad   s\in[-T,T].
\end{align*}
This allows us to apply Lemma~\ref{Lemma:cvg_sol} by setting
\begin{gather*}
    x^n=x,\quad W^n =W,\quad A^n=A,\quad F^n = F,\quad   n\in \N\cup\{\infty\};\\
    \delta_n=\delta'_n=0,\quad X^n = X, \quad  n\in\N; \qquad X^\infty = Y,
\end{gather*}
and conclude that $X=Y$.
\end{proof}

\begin{Cor}\label{cor:galerkin}
Let   $(\alphap)$, $(\alphaprime)$, and  $A$ satisfy~\eqref{eq:(beta'-beta)p'>1} and~\eqref{eq:A-bounded}. 
 For each $(x,F,W)\in \XX^\alphap\times\FF_0\times\WW_0$, the set $\cS(x,A,F,W;\XX^\alphap)$  is a singleton and its unique element is the limit of the Galerkin approximations of $\cS(x,A,F,W)$ in $\sC(\R;\XX^\alphaprime)$.
\end{Cor}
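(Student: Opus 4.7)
The statement is essentially a consolidation of the Existence Lemma and the Uniqueness Lemma that immediately precede it, so my plan is to verify that the hypotheses of both lemmas are in force and then splice the two conclusions together.

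First I would observe that the hypothesis~\eqref{eq:A-bounded} gives $A\in\sL_\b(\XX^\alphap)\cap\sL_\b(\XX^\alphaprime)$, while~\eqref{eq:(beta'-beta)p'>1} supplies the compatibility between the two exponent pairs needed for the Existence Lemma. Applying that lemma, we obtain both that $\cS(x,A,F,W;\XX^\alphap)\neq\emptyset$ and, more sharply, that the limit in $\sC(\R;\XX^\alphaprime)$ of the Galerkin approximations of $\cS(x,A,F,W)$ (whose existence is guaranteed by Proposition~\ref{prop:existence}) is an element of $\cS(x,A,F,W;\XX^\alphap)$.

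Next, since $A\in\sL_\b(\XX^\alphap)$ and $(\alphap)\in\setbetap$, the Uniqueness Lemma applies and shows that $\cS(x,A,F,W;\XX^\alphap)$ has at most one element. Combining this with the nonemptiness just noted, the set is a singleton whose unique element must coincide with the Galerkin limit in $\sC(\R;\XX^\alphaprime)$ provided by Proposition~\ref{prop:existence}. This yields both assertions of the corollary simultaneously.

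There is no genuine obstacle here beyond bookkeeping; the substantive work has already been absorbed into Proposition~\ref{prop:existence} (convergence of Galerkin approximations via Lemma~\ref{Lemma:Cauchy}) and the two lemmas just cited. The only subtlety worth flagging is that the Existence Lemma produces a solution that is a priori only continuous into $\XX^\alphaprime$ and then upgrades it to $\sC(\R;\XX^\alphap)$, so when identifying it with the Galerkin limit we are using that the $\XX^\alphap$-valued trajectory and the $\XX^\alphaprime$-valued limit of the Galerkin sequence agree, which is immediate from coordinatewise convergence (Lemma~\ref{lemma:ptw_cvg}) applied in both topologies.
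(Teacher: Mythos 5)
Your argument is correct and matches the paper's (implicit) proof of this corollary exactly: the hypotheses \eqref{eq:(beta'-beta)p'>1} and \eqref{eq:A-bounded} are precisely those of the Existence Lemma, which yields nonemptiness and the identification of the Galerkin limit as an element of $\cS(x,A,F,W;\XX^\alphap)$, and the Uniqueness Lemma then closes it off. The ``subtlety'' you flag at the end is already absorbed into the statement of the Existence Lemma, so it does not add anything, but it is a sound observation.
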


\subsection{Proof of Theorem~\ref{thm:RDS}}

The existence and uniqueness results in the previous subsection guarantees that the map $\Phi(\cdot,x,F,W)$ sending $(x,F,W)$ to the unique solution in
$ \cS(x,A,F,W;\XX^\alphap)$
is well-defined. Hence, \eqref{item:RDS_SDE} is verified. The cocycle property \eqref{item:RDS_cocycle} is valid due to the uniqueness of solutions and the invariance of $\WW_0$ under $\tshift$, see Lemma~\ref{Lemma:W_local_Holder}. Corollary~\ref{cor:galerkin} and Lemma~\ref{lemma:ptw_cvg} imply that the sequence of Galerkin approximations converges to the solution coordinatewise. The Galerkin approximations are obviously  progressively measurable. Therefore, passing to the limit, we can verify the measurability property \eqref{item:RDS_meas}.
Lastly, the lemma below implies the continuous dependence property \eqref{item:RDS_cont_dep}. 

\begin{Lemma}\label{Lemma:continuous_dependence}
Let $(\alpha,p)\in\setbetap$ and suppose $A\in \sL_\b(\XX^\alphap)$, $F\in\FF_0$,
\begin{gather*}
    X^n\in \cS(x^n, A, F, W^n;\XX^\alphap), \quad  n\in\N\cup\{\infty\},\\
    \lim_{n\to\infty}x^n = x^\infty \text{ in }\XX^\alphap,\qquad \lim_{n\to\infty}W^n=W^\infty \text{ in } \sC(\R;\XX^\alphap).
\end{gather*}
Then, it holds that $\lim_{n\to\infty}X^n=X^\infty$ in $\sC(\R;\XX^\alphap)$.

\end{Lemma}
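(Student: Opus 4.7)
The natural plan is to reduce Lemma~\ref{Lemma:continuous_dependence} to an application of Lemma~\ref{Lemma:cvg_sol} with the constant choices $A^n=A$ and $F^n=F$ for every $n\in\N\cup\{\infty\}$. Under these choices, the first hypothesis of Lemma~\ref{Lemma:cvg_sol} is given, the second is given, and the condition \eqref{eq:A_diff}-type bound on $A^nX^n(s)-A^\infty X^\infty(s)=A\bigl(X^n(s)-X^\infty(s)\bigr)$ holds with $\delta_n=0$ and $C=\|A\|_{\sL_\b(\XX^\alphap)}$. The only substantive issue is to verify the condition on $\|f(X^n(s))-f(X^\infty(s))\|_\alphap$, and for that we need a uniform in $n$ bound on $\|X^n\|_{\sC(-T,T;\XX^\alphap)}$ so that we can invoke Lemma~\ref{Lemma:f_property}\eqref{item:f-f} with one common constant $M$.

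To obtain such a uniform bound, I would write the integral equation from Definition~\ref{def:solution} componentwise and then pass to the norm $\|\cdot\|_\alphap$. Using $A\in\sL_\b(\XX^\alphap)$ together with the linear growth estimate $\|f(y)\|_\alphap\le C(1+\|y\|_\alphap)$ established in \eqref{eq:||f(y)||}, and writing
\begin{equation*}
\|X^n(t)\|_\alphap\le \|x^n\|_\alphap+\int_0^{|t|}\!\bigl(C+C\|X^n(s)\|_\alphap\bigr)\d s+\sigma\|W^n(t)\|_\alphap,
\end{equation*}
Gronwall's inequality yields, for every $T>0$,
\begin{equation*}
\|X^n\|_{\sC(-T,T;\XX^\alphap)}\le \bigl(\|x^n\|_\alphap+CT+\sigma\|W^n\|_{\sC(-T,T;\XX^\alphap)}\bigr)e^{CT}.
\end{equation*}
Since $\|x^n\|_\alphap$ and $\|W^n\|_{\sC(-T,T;\XX^\alphap)}$ are bounded uniformly in $n$ by the assumed convergence (in $\XX^\alphap$ and in $\sC(\R;\XX^\alphap)$ respectively), we obtain a constant $M=M(T)$ with $\|X^n\|_{\sC(-T,T;\XX^\alphap)}\le M$ for all $n\in\N\cup\{\infty\}$.

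With this uniform bound in hand, Lemma~\ref{Lemma:f_property}\eqref{item:f-f} applied with $u=c=0$ produces constants $K,\upsilon>0$ (depending on $M$ and $(\alphap)$) such that
\begin{equation*}
\|f(X^n(s))-f(X^\infty(s))\|_\alphap\le K\inf_{N\in 1+\N}\bigl(N^{-\upsilon}+\|X^n(s)-X^\infty(s)\|_\alphap\log N\bigr)
\end{equation*}
for all $s\in[-T,T]$ and all $n$, which is precisely the fourth hypothesis of Lemma~\ref{Lemma:cvg_sol} with $\delta'_n=0$. All four hypotheses being verified, Lemma~\ref{Lemma:cvg_sol} gives $X^n\to X^\infty$ in $\sC(\R;\XX^\alphap)$, which is the claim.

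The only real obstacle is the uniform a priori bound on $\|X^n\|_{\sC(-T,T;\XX^\alphap)}$; everything else is a clean bookkeeping application of the convergence lemma already proved. The bound itself is no harder than the corresponding estimate \eqref{eq:sup_Xn} carried out for Galerkin approximations in the proof of Proposition~\ref{prop:existence}, so once that template is in hand the argument is straightforward.
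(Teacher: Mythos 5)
Your proof is correct and takes essentially the same route as the paper: reduce to Lemma~\ref{Lemma:cvg_sol}, obtain a uniform in $n$ bound on $\|X^n\|_{\sC(-T,T;\XX^\alphap)}$ via the linear-growth estimate \eqref{eq:||f(y)||} and Gronwall's inequality, and then invoke Lemma~\ref{Lemma:f_property}\eqref{item:f-f} with a single constant $M$ to verify the $f$-condition of the convergence lemma. The paper's proof is simply a compressed version of exactly this argument (it remarks only that the uniform bound is ``similar to the one that we used to obtain~\eqref{eq:sup_Xn}'' and that the rest ``immediately follows''), so you have filled in the details the authors chose to elide.
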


\begin{proof}[Proof of Lemma \ref{Lemma:continuous_dependence}]

In view of Lemma~\ref{Lemma:cvg_sol}, it suffices to verify that for each $T>0$, there are $K,\upsilon>0$ such that, for all $N\in\N$ and $s\in[-T,T]$, we have
\begin{align*}
    \|f(X^n(s))-f(X^\infty(s))\|_\alphap\leq K\big(N^{-\upsilon}+\|X^n(s)-X^\infty(s)\|_\alphap \log N\big).
\end{align*}
Using a  Gronwall inequality  argument similar to the one that we used to obtain~\eqref{eq:sup_Xn}, we can find  $M>0$ such that
\begin{align*}
    \|X^n\|_{\sC(-T,T;\XX^\alphap)}\leq M, \quad   n\in\N\cup\{\infty\}.
\end{align*}
The desired result immediately follows from this and Lemma~\ref{Lemma:f_property}~\eqref{item:f-f}. 
\end{proof}

\section{Dynamics of infinite polymers}\label{section:dyn_inf_poly}
We recall the discrete Laplacian $\Delta = (\Delta_k \cdot)_{k\in\N}$ given in \eqref{eq:laplace} and the Dirichlet boundary condition $X_0(t)\equiv 0$. It is immediate that $\Delta \in \sL(\R^\N)$. The next basic result can be checked directly using the definition of norms given in \eqref{eq:def_norms}.
\begin{Lemma}\label{lemma:lap_bdd}
It holds that $\Delta\in \bigcap_{(\alphap)\in[0,\infty)\times[1,\infty]}\sL_\b(\XX^\alphap)$.
\end{Lemma}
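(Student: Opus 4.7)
The plan is to bound $\|\Delta x\|_{\alpha,p}$ by three shifted versions of $\|x\|_{\alpha,p}$ and use an elementary estimate on $(k\pm1)^\alpha/k^\alpha$.

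First I would unfold the discrete Laplacian using the Dirichlet convention $x_0=0$:
\[
(\Delta x)_k = x_{k-1}-2x_k+x_{k+1}, \qquad k\in\N,
\]
and apply the triangle inequality in $\ell^p$ (with the weight $k^{-\alpha p}$ for $p<\infty$, or the sup with weight $k^{-\alpha}$ for $p=\infty$) to dominate $\|\Delta x\|_{\alpha,p}$ by a sum of three quantities of the form $\|(x_{k+j}/k^\alpha)_{k\in\N}\|_{\ell^p}$ with $j\in\{-1,0,+1\}$. The middle term ($j=0$) is exactly $\|x\|_{\alpha,p}$.

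For the remaining two terms, I would reindex and compare weights. For the $j=-1$ term, the Dirichlet condition kills the $k=1$ contribution, so after reindexing $j=k-1$ I get a sum with weights $(j+1)^{-\alpha p}\le j^{-\alpha p}$ (here $\alpha\ge 0$ is essential), which is bounded by $\|x\|_{\alpha,p}^p$. For the $j=+1$ term, reindexing $j=k+1$ yields weights $(j-1)^{-\alpha p}$ for $j\ge 2$; the elementary bound $j/(j-1)\le 2$ on $j\ge 2$ gives $(j-1)^{-\alpha p}\le 2^{\alpha p}\,j^{-\alpha p}$, and summing yields $2^{\alpha p}\|x\|_{\alpha,p}^p$. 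The $p=\infty$ case is handled identically with suprema in place of sums. Combining the three estimates produces a constant $C=C(\alpha,p)$ such that $\|\Delta x\|_{\alpha,p}\le C\|x\|_{\alpha,p}$ for all $x\in\XX^{\alpha,p}$, which is exactly boundedness of $\Delta$ on this Banach space. Linearity of $\Delta$ is immediate from its definition.

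There is no real obstacle here — the only subtlety worth flagging is making sure the endpoint $k=1$ in the shifted sums does not create an unbounded weight ratio, which is precisely why the Dirichlet condition $x_0=0$ is used in the $j=-1$ term (otherwise $(k-1)^{-\alpha}$ at $k=1$ would be undefined for $\alpha>0$).
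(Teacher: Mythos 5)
Your proof is correct and is exactly the direct verification the paper alludes to (the paper only says the lemma "can be checked directly using the definition of norms" and leaves the computation to the reader). The shift-and-compare-weights argument, using $x_0=0$ to avoid the boundary weight and the elementary bound $(j/(j-1))^{\alpha p}\le 2^{\alpha p}$ on $j\ge 2$, fills in those details correctly and uniformly over $(\alpha,p)\in[0,\infty)\times[1,\infty]$.
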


\begin{Rem}\label{Rem:SDE_Delta}
Due to Lemma~\ref{lemma:lap_bdd},  all results of Section~\ref{section:SDE} hold  for solutions of
 the following SDE which is a special case of \eqref{eq:gen_SDE} with $A=\Delta$:
\begin{align}\label{eq:SDE}
\begin{split}
\d X (t) &=  \big(\Delta X(t) + f(X(t))\big)\d t + \sigma \d W(t),\quad t\in\R,\\
X (0) &= x.
\end{split}
\end{align}
\end{Rem}

The random dynamical system $(\Phi^t_{F,W})$ generated by \eqref{eq:SDE} is called the polymer dynamics (we interpret elements of $\R^\N$ as polymers).
The goal of the section is to prove that  this system  is monotone and preserves slopes of polymers.

Recall that full measure sets $\FF_0\in \cF$ and $\WW_0\in\cW$ are described in Lemma~\ref{Lemma:f_property} and  Lemma~\ref{Lemma:W_local_Holder}, respectively. We consider solutions to \eqref{eq:SDE} in the space
\begin{align}\label{eq:def_LL}
    \LL=\XX^{1,\infty}
\end{align}
with norm $\|\cdot\|_\LL = \|\cdot\|_{1,\infty}$. The explicit formula for this norm is given in \eqref{eq:def_norms} for $\alpha =1$ and $p=\infty$. 

\smallskip

For $v_\pm\in \R$, we introduce the set
\begin{align*}S(v_-,v_+) = \Big\{x\in \R^\N: v_-\leq  \liminf_{k\to\infty}\frac{x_k}{k}\leq \limsup_{k\to\infty}\frac{x_k}{k}\leq v_+\Big\}.
\end{align*}
The set of polymers with the asymptotic slope $v\in\R$ is given by
\begin{align*}
    S(v) = S(v,v).
\end{align*}
The space $\LL$ is complete, and, for any choice of parameters $v,v_\pm$, sets $S(v_-,v_+)$ and $S(v)$ are closed in $\|\cdot\|_{\LL}$, and hence also complete. In addition, $S(v)$ (for any $v$) is separable under $\|\cdot\|_{\LL}$.

\begin{Prop}\label{prop:invariance_slope}
Let real numbers $v_-$ and $v_+$ satisfy $v_-\leq v_+$. For each $(x,F,W)\in S(v_-,v_+)\times \FF_0\times \WW_0$, the set
$ \cS(x,\Delta,F,W;\LL)$
contains exactly one element $\Phi^\cdot_{F,W}x= \Phi(\cdot,x,F,W)$, which satisfies
\begin{align*}
    \Phi^t_{F,W}x \in S(v_-,v_+),\quad  t\geq 0.
\end{align*}
In particular, we have $\Phi^\cdot_{F,W}x|_{[0,\infty)}\in \sC\big(0,\infty; S(v_-,v_+)\big)$.
\end{Prop}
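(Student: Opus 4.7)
The plan is to extract existence and uniqueness in $\LL$ directly from Theorem~\ref{thm:RDS}, and then to establish slope preservation in two stages: a shear-invariance reduction that handles linear initial data, followed by a monotonicity sandwich for general $x\in S(v_-,v_+)$. Since every $x\in S(v_-,v_+)$ has $\sup_k|x_k|/k<\infty$, we have $S(v_-,v_+)\subset\LL=\XX^{1,\infty}$, so Theorem~\ref{thm:RDS} applies with $(\alphap)=(1,\infty)$ and, e.g., $(\alphaprime)=(2,2)$; conditions \eqref{eq:(beta'-beta)p'>1} and \eqref{eq:A-bounded} both hold, the latter by Lemma~\ref{lemma:lap_bdd}. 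This at once identifies $\Phi^\cdot_{F,W}x$ as the unique element of $\cS(x,\Delta,F,W;\LL)$.

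For the linear case, fix $v,c\in\R$ and set $y_k=vk+c$ for $k\in\N$ (with $y_0=0$ by the Dirichlet convention). Let $X=\Phi^\cdot_{F,W}y$ and $Y_k(t)=X_k(t)-vk$. A direct computation, using $X_0\equiv 0$, shows $\Delta_kX=\Delta_kY$ for every $k\geq 1$, so $Y$ satisfies the polymer SDE driven by the same noise but with the sheared potential $\tilde F=\Shear^{-v}F$, which belongs to $\FF_0$ by the shear-invariance part of Lemma~\ref{Lemma:f_property}. Now the initial datum $Y(0)=(c)_{k\in\N}$ lies in $\XX^{\alphap}$ for any $(\alphap)\in\setbetap$ with $\alpha<1$, e.g.\ $(\alphap)=(\tfrac12,3)$. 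Applying Theorem~\ref{thm:RDS} to the $Y$-equation with this pair produces a unique solution in $\XX^{1/2,3}$; since the continuous embedding $\XX^{1/2,3}\hookrightarrow\LL$ implies that this solution also lies in $\LL$, uniqueness in $\LL$ (which $Y$ itself enjoys by Theorem~\ref{thm:RDS} applied with $(1,\infty)$) forces $Y\in\XX^{1/2,3}$. Summability of $|Y_k(t)|^3/k^{3/2}$ gives $|Y_k(t)|/k^{1/2}\to 0$, a fortiori $|Y_k(t)|/k\to 0$, so $\Phi^t_{F,W}y=Y(t)+(vk)_{k\in\N}\in S(v)$.

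For general $x\in S(v_-,v_+)$ and arbitrary $\eps>0$, the inequalities $\liminf_kx_k/k\geq v_-$ and $\limsup_kx_k/k\leq v_+$ allow a choice of constants $c^\pm\geq 0$ with
\begin{equation*}
(v_--\eps)k-c^-\;\leq\; x_k\;\leq\; (v_++\eps)k+c^+,\qquad k\in\N.
\end{equation*}
Denote the two bounding linear sequences by $y^-,y^+$. Combining the monotonicity of the polymer RDS (a separate property of \eqref{eq:SDE} established elsewhere in this section) with the previous step,
\begin{equation*}
\Phi^t_{F,W}y^-\;\preceq\;\Phi^t_{F,W}x\;\preceq\;\Phi^t_{F,W}y^+,\qquad \Phi^t_{F,W}y^\pm\in S(v_\pm\pm\eps).
\end{equation*}
Taking $\liminf_k$ and $\limsup_k$ of $(\Phi^t_{F,W}x)_k/k$ and letting $\eps\downarrow 0$ yields $\Phi^t_{F,W}x\in S(v_-,v_+)$. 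The concluding statement $\Phi^\cdot_{F,W}x|_{[0,\infty)}\in\sC\bigl(0,\infty;S(v_-,v_+)\bigr)$ is then immediate from continuity in $\LL$ together with the closedness of $S(v_-,v_+)$ in $\LL$ noted just before the proposition.

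The main technical point that requires care is the shear reduction. Because the Dirichlet boundary condition $X_0\equiv 0$ only annihilates genuinely linear $vk$ (not affine $vk+c$), the constant $c$ must be absorbed into the initial condition of $Y$ rather than into a shift of the equation; one must then verify that this $Y$, known a priori only in $\LL$, actually lies in a strictly smaller $\XX^{\alphap}$ with $\alpha<1$. The two-space application of Theorem~\ref{thm:RDS} in paragraph two delivers exactly this upgrade. Availability of the RDS monotonicity for \eqref{eq:SDE} is assumed to have been established just before the proposition; without it one can still recover $S(v)$-invariance from the shear step alone, but not the two-sided $S(v_-,v_+)$-invariance.
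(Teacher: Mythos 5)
Your proof is correct, and it takes a genuinely different route from the paper's. The paper controls $\|\Phi^t_{F,W}x-\sS^tz\|_*$ via Lemma~\ref{lemma:comp_in_*}, where $z$ is obtained from $x$ by replacing finitely many early coordinates with a linear ray; it then shows $\sS^tz$ stays pinched between the two unshifted rays $y^\pm_k=k(v_\pm\pm\eps)$ (which are exactly the stationary solutions of the discrete heat flow), and divides the $\|\cdot\|_*=\|\cdot\|_{3/4,2}$ error by $k$ to get sublinear deviations. Your argument sidesteps the heat-flow comparison entirely: you treat an affine ray $y_k=vk+c$ directly by shearing, observing that $Y=X-(vk)_k$ solves the SDE with potential $\Shear^{-v}F\in\FF_0$ and constant initial data $(c)_k$, then use the two-space formulation of Theorem~\ref{thm:RDS} together with uniqueness in $\LL$ to upgrade $Y(t)$ to $\XX^{1/2,3}$, which forces $|Y_k(t)|/k\to 0$; the general $x\in S(v_-,v_+)$ is then sandwiched between two such affine rays by Lemma~\ref{Lemma:monotonicity}. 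Both proofs hinge on the same monotone comparison with linear rays, and both hinge on the structural fact that the Laplacian is bounded on all the $\XX^{\alphap}$; the key divergence is that the paper needs to handle an affine shift (the ray $y^\pm$ is not stationary under $\sS^t$ once $c\neq 0$) by the $\|\cdot\|_*$-estimate, whereas you absorb the constant into the initial condition and let the regularity upgrade do the work. Your version is a bit more conceptually self-contained (shear invariance is a built-in symmetry of the model), but the paper's route establishes Lemma~\ref{lemma:comp_in_*} as a reusable tool, which is subsequently invoked again in the ordering-by-noise argument of Section~\ref{sec:monotonization}; so there is a structural reason the paper went the way it did. One small point worth spelling out in your write-up: the uniqueness step quietly uses that the inclusion $\XX^{1/2,3}\hookrightarrow\LL$ carries $\sC(\R;\XX^{1/2,3})$ into $\sC(\R;\LL)$ and that the sheared process $Y$ is a priori in $\sC(\R;\LL)$ because $\Shear^{-v}$ acts as a continuous affine map on $\LL$; both are true but should be noted to close the loop on ``uniqueness in $\LL$ forces $Y\in\XX^{1/2,3}$.''
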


Here, the LU metric on $\sC\big(0,\infty; S(v_-,v_+)\big)$ is defined similarly to \eqref{eq:metric_cont}, and $S(v_-,v_+)$ is endowed with the metric induced by $\|\cdot\|_\LL$.

A special case of  Proposition~\ref{prop:invariance_slope} for $v_-=v_+$ is the forward invariance of slope:
\begin{Cor} 
If $x\in S(v)$ for some $v\in\R$ and  $(F,W)\in \FF_0\times \WW_0$, then the unique solution $(\Phi^t_{F,W}x)_{t\in\R}$ satisfies
$\Phi^t_{F,W}x\in S(v)$ for all $t\ge0$.
\end{Cor}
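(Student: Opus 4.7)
The plan is to obtain this corollary as an immediate specialization of Proposition~\ref{prop:invariance_slope} to the degenerate interval where $v_- = v_+ = v$.

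Given $x \in S(v)$ and $(F,W) \in \FF_0 \times \WW_0$, the definition of $S(v_-,v_+)$ stated just before the proposition shows $S(v,v) = S(v)$, so the hypothesis $x \in S(v_-,v_+)$ is met with $v_- = v_+ = v$. The proposition then furnishes a unique element $\Phi^\cdot_{F,W}x$ of $\cS(x,\Delta,F,W;\LL)$ together with the forward-invariance assertion $\Phi^t_{F,W}x \in S(v_-,v_+) = S(v)$ for every $t \ge 0$, which is exactly the statement of the corollary.

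No genuine obstacle arises for the corollary itself since the proposition does all the work; two small consistency checks are worth naming. First, the ambient uniqueness claim is compatible with Theorem~\ref{thm:RDS}: by Lemma~\ref{lemma:lap_bdd} we have $\Delta \in \sL_\b(\XX^\alphap)$ for every admissible $(\alphap)$, and in particular for $(\alpha,p) = (1,\infty)$ paired with any auxiliary $(\alpha',p')$ satisfying \eqref{eq:(beta'-beta)p'>1} (for instance $\alpha' = 2$, $p' = 2$), so the unique two-sided flow $(\Phi^t_{F,W}x)_{t\in\R}$ alluded to in the corollary is indeed the one produced by the well-posedness theory on $\LL$. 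Second, the equality $S(v,v) = S(v)$ is purely by definition. With these observations in place, the corollary follows without any further argument; the only real analytic content lies in proving Proposition~\ref{prop:invariance_slope} itself, which we have assumed as given.
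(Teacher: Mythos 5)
Your proof is correct and is exactly how the paper handles this: the text explicitly presents the corollary as the special case $v_-=v_+$ of Proposition~\ref{prop:invariance_slope}, with no further argument needed. Your two side remarks (that $S(v,v)=S(v)$ by definition and that the two-sided flow comes from Theorem~\ref{thm:RDS} via Lemma~\ref{lemma:lap_bdd}) are accurate but not required.
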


\subsection{Monotonicity}\label{sec:monotonicity}

We introduce the following partial order. For $x,y \in \R^\N$, we write
\begin{align*}x \preceq y,\quad\text{if and only if}\quad x_k\leq y_k,\   k\in\N,
\end{align*}
and $x \succeq y$ provided $y\preceq x$. We also write
\begin{align*}x \prec y,\quad\text{if and only if}\quad x_k< y_k,\   k\in\N,
\end{align*}
and $x \succ y$ provided $y\prec x$.

\begin{Lemma}[Monotonicity]\label{Lemma:monotonicity}
Suppose
\begin{gather*}
    X\in \cS(x,\Delta, F,W;\XX^\alphap),\qquad
    Y\in \cS(y,\Delta, F,W;\XX^\alphap),
\end{gather*}
with $(\alphap)\in\setbetap$, $x,y \in \XX^\alphap$, $F\in\FF_0$ and $W\in\WW_0$. If $x\preceq y$, then $X(t)\preceq Y(t)$ for all $t\geq 0$.

\end{Lemma}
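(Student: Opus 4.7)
The crucial observation is that since $X$ and $Y$ solve the SDE driven by the same noise $W$, the noise cancels in the difference $D(t) = Y(t) - X(t)$, which therefore satisfies the pathwise integral equation
\[
D_k(t) = (y_k - x_k) + \int_0^t \bigl( \Delta_k D(s) + f_k(Y_k(s)) - f_k(X_k(s)) \bigr)\,\d s, \qquad k \in \N,\ t \in \R,
\]
with no stochastic term. The plan is to pass through the Galerkin approximations, establish coordinatewise nonnegativity of their difference in finite dimensions via a standard quasimonotone comparison argument, and then take a pointwise limit.

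I would let $X^{\n}$ and $Y^{\n}$ be the Galerkin approximations of $\cS(x,\Delta,F,W)$ and $\cS(y,\Delta,F,W)$ from Definition~\ref{def:galerkin}, which by construction share the same truncated noise $W^{\n}$. These satisfy a finite-dimensional system for $1 \le k \le n$, with $X^{\n}_0 = Y^{\n}_0 \equiv 0$ and the outer coordinates frozen at their initial values. Since $F_k \in \sC^2(\R)$, each $f_k = -F_k'$ is $\sC^1(\R)$, and the mean value theorem yields that $D^{\n} = Y^{\n} - X^{\n}$ solves the deterministic linear ODE
\[
\tfrac{\d}{\d t} D^{\n}_k(t) = D^{\n}_{k-1}(t) - 2 D^{\n}_k(t) + D^{\n}_{k+1}(t) + c^{\n}_k(t)\, D^{\n}_k(t), \qquad 1 \le k \le n,
\]
where $c^{\n}_k(t) = \int_0^1 f_k'\bigl(X^{\n}_k(t) + \theta D^{\n}_k(t)\bigr)\,\d\theta$ is bounded on any fixed $[0,T]$ (only finitely many continuous sample paths are involved). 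The frozen boundary values $D^{\n}_0 \equiv 0$ and $D^{\n}_{n+1} \equiv y_{n+1} - x_{n+1} \ge 0$ appear in the equations for $D^{\n}_1$ and $D^{\n}_n$ as nonnegative, time-independent forcing. The coefficient matrix of this linear system is quasimonotone: its off-diagonal entries, coming entirely from the discrete Laplacian, all equal $+1$. A classical comparison argument---for instance, rewriting the ODE in integral form, choosing $\lambda > 0$ so that the matrix $M(t) + \lambda I$ has nonnegative entries on $[0,T]$, and observing that the induced Picard iteration for $e^{\lambda t} D^{\n}(t)$ preserves the nonnegative cone when started from a nonnegative vector---therefore gives $D^{\n}(t) \succeq 0$ for all $t \ge 0$ and all $n$, using $D^{\n}(0) = y - x \succeq 0$.

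To conclude, I would pick any $(\alphaprime)$ compatible with $(\alphap)$ in the sense of~\eqref{eq:(beta'-beta)p'>1} and invoke Corollary~\ref{cor:galerkin}, which gives $X^{\n} \to X$ and $Y^{\n} \to Y$ in $\sC(\R; \XX^{\alphaprime})$. Lemma~\ref{lemma:ptw_cvg} then upgrades this to coordinatewise convergence $D^{\n}_k(t) \to D_k(t)$ for every $k \in \N$ and $t \in \R$, so the nonnegativity survives the limit and yields $X(t) \preceq Y(t)$ for all $t \ge 0$. The one delicate step is the finite-dimensional quasimonotone comparison principle, but given the nonnegative off-diagonal structure inherited from $\Delta$ and the local boundedness of the $c^{\n}_k$, it is classical and fits in a few lines.
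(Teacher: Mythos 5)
Your proof is correct, and it takes a genuinely different route from the paper's in the finite-dimensional step. Both arguments pass through the Galerkin approximations and conclude by coordinatewise convergence (via Corollary~\ref{cor:galerkin} and Lemma~\ref{lemma:ptw_cvg}), but the core monotonicity argument differs. The paper first reduces to the strict case $x\prec y$, defines the first time $\tau$ at which some coordinate $Z_k=Y^{\n}_k-X^{\n}_k$ with $1\le k\le n$ hits zero, and derives a contradiction: at $\tau$, $Z_m(\tau)=0$ forces $\Delta_m Z(\tau)=0$ (else the integral equation would make $Z_m$ increasing just before $\tau$), which propagates $Z_{m\pm1}(\tau)=0$ iteratively to $Z_{n+1}(\tau)=0$, contradicting $y_{n+1}-x_{n+1}>0$. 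This is a maximum-principle-type hitting argument and uses only the continuity of $f_k$, but it needs the preliminary reduction to strict inequality (otherwise $\tau$ could equal zero and $Z_{n+1}$ could vanish). Your argument instead linearizes the difference via the mean value theorem, using that $f_k=-F_k'\in\sC^1(\R)$, writes the $n$-dimensional system $\dot D^{\n}=M^{\n}(t)D^{\n}+b^{\n}$ with the Metzler (quasimonotone) matrix $M^{\n}(t)$ inheriting nonnegative off-diagonal entries from $\Delta$, nonnegative constant forcing $b^{\n}$, and locally bounded diagonal perturbation $c^{\n}_k(t)$, and then invokes the standard cone-preservation of the Picard iteration for $e^{\lambda t}D^{\n}(t)$ after adding $\lambda I$. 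That argument directly handles the closed cone $\succeq 0$, so the strict-inequality reduction and the continuity-of-the-flow limiting step used by the paper to pass from $\prec$ to $\preceq$ are not needed. The trade-off is that your version uses the full $\sC^2$-smoothness of the potential (so $f_k'$ exists and is continuous), whereas the paper's argument would survive with merely continuous $f_k$; under the paper's standing assumptions, both are equally valid.
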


\begin{proof}[Proof of Lemma \ref{Lemma:monotonicity}]

It suffices to show that $X(t)\preceq Y(t)$ for all $t\geq 0$ when $x\prec y$. To extend this to the general case $x\preceq y$, we can find a sequence $(X^n)_{n\in\N}$ in $\XX^\alphap$ such that $\lim_{n\to\infty} x^n =x \text{ in }\XX^\alphap$ and $x^n\prec y$ for all $n\in\N$ and then use the continuity  established in Theorem~\ref{thm:RDS} to take limits in  $X^n(t)\preceq Y(t),$  $t\geq 0.$

\smallskip

Henceforth, we assume
\begin{align}\label{eq:x<y}
    x\prec y.
\end{align}
Consider Galerkin approximations $X^{(n)}$ and $Y^{(n)}$ of $\cS(x,\Delta,F,W)$ and $\cS(y,\Delta,F,W)$, respectively, given in Definition~\ref{def:galerkin}. Corollary~\ref{cor:galerkin} implies that
\begin{align*}
    \lim_{n\to\infty}X^\n = X, \quad \lim_{n\to\infty}Y^\n = Y,\quad \text{ in }\sC(\R;\XX^\alphaprime),
\end{align*}
for some $(\alphaprime)\in\setbetap$ with $p'<\infty$. 
Using Lemma~\ref{lemma:ptw_cvg}, we obtain
\begin{align*}
    \lim_{n\to\infty}X^\n_k(t)=X_k(t),\quad \lim_{n\to\infty}Y^\n_k(t)=Y_k(t),\quad   t\in\R,\ k\in\N.
\end{align*}
Hence, it suffices to prove
\begin{align}\label{eq:X^(n)<Y^(n)}
    Y^{(n)}_k(t)>X^{(n)}_k(t), \quad   k\in\N,\ t\geq 0.
\end{align}
To this end, we set $Z(t) = Y^{(n)}(t)-X^{(n)}(t)$. Let
\begin{align*}
    \tau = \inf \Big\{t\geq 0: \min_{k=1,\dots, n}Z_k(t)\leq 0\Big\}.
\end{align*}
We want to show that $\tau =\infty$. Suppose $\tau<\infty$. Then  the continuity of individual coordinates of $Z$ implies that  there is $m$ such that
\begin{align}\label{eq:Z_m(tau)=0}
    Z_m(\tau)=0.
\end{align}
We claim 
\begin{align}\label{eq:Delta_mZ(tau)=0}
    \Delta_m Z(\tau)=0.
\end{align}
Indeed, suppose $\Delta_m Z(\tau)\neq 0$. Since $Z_m(\tau)=0$ and $Z_k(s)\ge 0$ for all $k$ and $s\le \tau$, by the definition of $\Delta_m$, we must have $\Delta_mZ(\tau)>0$. Note that, for all $t_1\geq t_0\geq 0$,
\begin{equation}
\label{eq:increment-of-Z}
Z_m(t_1)-Z_m(t_0)=\int_{t_0}^{t_1}\Big(\Delta_mZ(s)+f_m(Y_m(s))-f_m(X_m(s))\Big)\d s. 
\end{equation}
Using the continuity of $Z_m$ and the continuity of 
$f_m(Y_m(\cdot))-f_m(X_m(\cdot))$ which is equal to zero at $s=\tau$, we obtain from~\eqref{eq:increment-of-Z}
that $Z_m(\tau)-Z_m(\tau-r)>0$ for sufficiently small $r>0$.
However, the definition of $\tau$ implies that $Z_m(\tau-r)>0$ and thus $Z_m(\tau)>0$, which contradicts \eqref{eq:Z_m(tau)=0}. Therefore, we must have \eqref{eq:Delta_mZ(tau)=0}. 

\smallskip

 The definition of $\Delta_m$ and  \eqref{eq:Z_m(tau)=0}, \eqref{eq:Delta_mZ(tau)=0} imply $Z_{m-1}(\tau) = Z_{m+1}(\tau)=0$. Repeating the argument in the above paragraphs, we get $\Delta_{m-1} Z(\tau)=\Delta_{m+1} Z(\tau)$. Iterating in this fashion, we obtain $Z_k(\tau)=0$ for all $k=1,2,\dots,n,n+1$, which is impossible because \eqref{eq:Galerkin_approx} and \eqref{eq:x<y} imply
\begin{align*}
    Z_{n+1}(\tau) = Y^\n_{n+1}(\tau) - X^\n_{n+1}(\tau) = y_{n+1}-x_{n+1}>0.
\end{align*}
By contradiction we conclude that $\tau=\infty$. The definition of $\tau$ ensures that $Y^\n_k(t)>X^\n_k(t)$ for $k=1,2,\dots,n$ and $t\geq 0$. Now, \eqref{eq:X^(n)<Y^(n)} follows from this, \eqref{eq:x<y} and that $Y^\n_k(t)=y_k$, $X^\n_k(t)=x_k$ for all $k>n$ and all $t$.
\end{proof}

\subsection{Invariance of slopes}

To prove Proposition~\ref{prop:invariance_slope}, we will need solutions to the (homogeneous discrete) heat equation
\begin{align}\label{eq:discrete_heat}
    \dot x = \Delta x.
\end{align}
This equation can be viewed as a special case of \eqref{eq:SDE} with zero potential $F\equiv0$ and zero noise $W\equiv 0$.

\begin{Def}\label{def:heat_eqn}
If $x\in \XX^\alphap$ for some $(\alphap)\in\setbetap$, we denote by $\sS^\cdot x : \R\to \XX^\alphap$ the unique element in $\cS(x,\Delta,0,0;\XX^\alphap)$, i.e., the solution of the homogeneous discrete heat equation \eqref{eq:discrete_heat}.
\end{Def}

\begin{Rem}\label{rem:heat_eq}
By Remark~\ref{remark:F=0_W=0} and Remark~\ref{Rem:SDE_Delta}, $\sS$ is well-defined and, due to \eqref{eq:def_LL}, belongs to  $\sC(\R, \LL)$.
\end{Rem}

Recalling the definition of norms in \eqref{eq:def_norms}, we introduce
\begin{align}\label{eq:*norm}
    \|\cdot\|_* = \|\cdot\|_{\frac{3}{4},2}.
\end{align}
We need the following result, which will be used again later.

\begin{Lemma}\label{lemma:comp_in_*}
For each $F\in\FF_0$, $T>0$, $M>0$ there is a finite constant $C>0$ such that 
\begin{align*}
    \sup_{t\in[0,T]}\|\Phi^t_{F,W}x-\sS^tz\|_*<C
\end{align*}
holds for all $x, z\in\LL$ satisfying $\|x\|_\LL\leq M$ and $\|x-z\|_*\leq M$, and all $W\in\WW_0$ satisfying $\sup_{t\in[0,T]}\|W(t)\|_*\leq M$.
\end{Lemma}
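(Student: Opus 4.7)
The plan is to write an integral equation for the difference $Y(t) := \Phi^t_{F,W}x - \sS^t z$ and to close a Gronwall bound in the norm $\|\cdot\|_*$. Since $\Phi^t_{F,W}x$ solves~\eqref{eq:SDE} and $\sS^t z$ solves the homogeneous heat equation~\eqref{eq:discrete_heat}, subtracting the two coordinatewise integral equations (Definition~\ref{def:solution}) gives
\[
Y(t) \;=\; (x-z) + \int_0^t \Delta Y(s)\,\d s + \int_0^t f\big(\Phi^s_{F,W}x\big)\,\d s + \sigma W(t), \qquad t\in[0,T].
\]
Three of the four contributions to $\|Y(t)\|_*$ are easy: $\|x-z\|_*\le M$ and $\sigma\|W(t)\|_*\le\sigma M$ by hypothesis, while $\|\Delta Y(s)\|_*\le \|\Delta\|_{\sL_\b(\XX^{3/4,2})}\|Y(s)\|_*$ by Lemma~\ref{lemma:lap_bdd}. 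So the goal reduces to producing a uniform bound on $\|f(\Phi^s_{F,W}x)\|_*$ for $s\in[0,T]$.

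The intermediate step is a uniform $\LL$-bound $\sup_{s\in[0,T]}\|\Phi^s_{F,W}x\|_\LL \le C_1(F,T,M)$. Two ingredients combine here. First, the definitions of the weighted norms give a cheap embedding: $\|W(t)\|_*\le M$ forces $|W_k(t)|\le Mk^{3/4}$ coordinatewise, hence $|W_k(t)|/k\le Mk^{-1/4}\le M$ and $\|W(t)\|_\LL\le M$. Second, Lemma~\ref{Lemma:f_property}\eqref{item:f_k_upper_bound} yields $\|f(y)\|_\LL\le C(1+\log_+\|y\|_\LL)$ by the same style of term-by-term argument (the sequence $(1+\log k)/k$ is bounded on $\N$). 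Combining these two facts with the integral equation for $\Phi^t_{F,W}x$ in $\LL$ and applying Gronwall (after the harmless replacement $\log_+u \le u$) produces the desired uniform $\LL$-bound depending only on $F$, $T$, $M$.

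Once $\sup_{s\in[0,T]}\|\Phi^s_{F,W}x\|_\LL \le C_1$ is in hand, the pointwise estimate $|\Phi^s_{F,W}x_k|\le C_1 k$ together with Lemma~\ref{Lemma:f_property}\eqref{item:f_k_upper_bound} gives $|f_k(\Phi^s_{F,W}x_k)|\le C_2(1+\log k)$ for every $k\in\N$ and every $s\in[0,T]$. Since the weight $(\alpha,p)=(3/4,2)$ defining $\|\cdot\|_*$ was chosen precisely so that $\sum_{k\in\N}(1+\log k)^2 k^{-3/2}<\infty$, we obtain
\[
\sup_{s\in[0,T]}\|f(\Phi^s_{F,W}x)\|_*^2 \;\le\; C_2^2\sum_{k\in\N}\frac{(1+\log k)^2}{k^{3/2}} \;<\; \infty.
\]
Plugging this back into the integral inequality for $\|Y(t)\|_*$ and applying Gronwall on $[0,T]$ yields the desired constant $C$. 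The only real bookkeeping obstacle is juggling the two weighted norms $\|\cdot\|_*$ and $\|\cdot\|_\LL$: the bound on $\Phi^s_{F,W}x$ lives naturally in $\LL$ while the target estimate is in $\|\cdot\|_*$, and one must exploit that hypotheses phrased in $\|\cdot\|_*$ automatically transfer to $\LL$-control, whereas the $\|\cdot\|_*$-summability compensates for the logarithmic coordinate growth of $f$.
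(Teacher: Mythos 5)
Your high-level decomposition is the right one and matches the paper's in spirit: obtain a uniform $\|\cdot\|_\LL$-bound on $\Phi^s_{F,W}x$ via Gronwall in $\LL$, deduce from Lemma~\ref{Lemma:f_property}\eqref{item:f_k_upper_bound} that $\|f(\Phi^s_{F,W}x)\|_*$ is uniformly bounded (the weight $(3/4,2)$ makes $\sum_k(1+\log k)^2 k^{-3/2}<\infty$), and then run Gronwall in $\|\cdot\|_*$ for the difference $Y(t)$. The auxiliary estimates $\|\cdot\|_\LL\le\|\cdot\|_*$ and $\|f(y)\|_\LL\le C(1+\log_+\|y\|_\LL)$ are correct.

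There is, however, a genuine gap in the final Gronwall step. You apply Gronwall directly to $u(t)=\|Y(t)\|_*$, but you never establish that $u(t)<\infty$ for $t>0$. A priori, $\Phi^t_{F,W}x$ and $\sS^tz$ live only in $\LL=\XX^{1,\infty}$; the $\LL$-bound gives $|Y_k(t)|\le Ck$, which is far from sufficient to make $\sum_k|Y_k(t)|^2k^{-3/2}$ converge. Only at $t=0$ do you know $\|Y(0)\|_*=\|x-z\|_*\le M$. If $u(s)=+\infty$ for some $s\in(0,t]$, the integral inequality $u(t)\le M+\int_0^t(Cu(s)+C_f)\d s+\sigma M$ is satisfied trivially as $\infty\le\infty$, and Gronwall yields nothing. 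Moreover, the inequality $\bigl\|\int_0^t\Delta Y(s)\,\d s\bigr\|_*\le\int_0^t\|\Delta Y(s)\|_*\,\d s$ itself presupposes $Y(s)\in\XX^{3/4,2}$. Also note that a naive truncation to the first $N$ coordinates does not rescue the argument: the coupling term $\Delta_N Y$ involves $Y_{N+1}$, whose size $\sim CN$ produces a boundary contribution of order $N^{1/4}$ in $\|\cdot\|_*$, so the truncated bounds blow up in $N$.

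The paper avoids this entirely by working with Galerkin approximations $X^{(n)}$, $Z^{(n)}$ (Corollary~\ref{cor:galerkin}): since these modify only the first $n$ coordinates and leave the rest frozen at $x_k-z_k$, the map $t\mapsto\|X^{(n)}(t)-Z^{(n)}(t)\|_*$ is automatically finite and continuous, Gronwall applies legitimately, and the resulting bound is uniform in $n$; the lemma then follows by coordinatewise convergence and Fatou's lemma. To repair your proof you would need to replace the direct Gronwall step by this approximation-and-Fatou argument (or supply an independent a priori argument that $\|Y(t)\|_*<\infty$).
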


\begin{proof}[Proof of Lemma~\ref{lemma:comp_in_*}]
We denote by $C$ a finite positive constant depending only on $T$ and $M$; it may change from line to line. Let $x, z, W$ satisfy the conditions of the lemma.
For brevity, we write $X(t) = \Phi^t_{F,W}x$ and $Z(t) = \sS^tz$.
By Corollary~\ref{cor:galerkin} applied to $(\alphap )= (\frac{3}{4}, 2)$ (see \eqref{eq:*norm}), there are $(\alphaprime)\in\Pi$ and Galerkin approximations $(X^\n)$ and $(Z^\n)$ such that
\begin{align}\label{eq:X^ntoX_z^ntoz}
    \lim_{n\to\infty} X^\n = X,\quad \lim_{n\to\infty} Z^\n = Z,\quad \text{ in }\sC(\R;\XX^\alphaprime).
\end{align}

Due to~\eqref{eq:Galerkin_approx}, $\|x-z\|_*\leq M$, and the continuity of each coordinate of $X^\n-Z^\n$, the function
\begin{align*}
    t\mapsto \|X^\n(t)-Z^\n(t)\|_*
\end{align*}
is finite and continuous. Subtracting the equations for~$X^{n}$ and~$Z^{(n)}$ from one another and using the boundedness of $\Delta$ (see Lemma~\ref{lemma:lap_bdd}) we obtain, for $t\in \R$,
\begin{align*}
    \|X^\n(t)-Z^\n(t)\|_*\leq \|x-z\|_* + \int_0^{|t|} \Big(C\|X^\n(s)-Z^\n(t)\|_* + \|f^\n(X^\n(s))\|_* \Big)\d s \\
    + \|W^\n(t)\|_*.
\end{align*}
The norms $\|X^{(n)}(\cdot)\|_\LL$ satisfy an inequality similar to 
 \eqref{eq:X^n<}.
Moreover, using the definition of the norm in \eqref{eq:def_norms}, we can see that $\|W(t)\|_\LL\leq \|W(t)\|_*$ for all $t$. 
Gronwall's inequality implies now that $|X^\n_k(s)|\leq C k$ uniformly in $n$, $k$ and $s\in [0,T]$. This along with the definition of $f^\n$ in \eqref{eq:A^nF^nW^n} and Lemma~\ref{Lemma:f_property}~\eqref{item:f_k_upper_bound} yields
\begin{align*}
    \sup_{s\in[0,T],\ n\in\N}\|f^\n(X^\n(s))\|_* \leq C.
\end{align*}
Lastly, the definition of $W^\n$ in \eqref{eq:A^nF^nW^n} implies
\begin{align*}
    \sup_{t\in[0,T],\ n\in\N}\|W^\n(t)\|_*\leq M.
\end{align*}
Combining these estimates and applying Gronwall's inequality we obtain
\begin{align*}
    \sup_{t\in[0,T],\ n\in\N}\|X^\n(t)-Z^\n(t)\|_* \leq C.
\end{align*}
Lemma~\ref{lemma:ptw_cvg} and \eqref{eq:X^ntoX_z^ntoz} imply the coordinatewise convergence of $X^\n(t)- Z^\n(t)$ at each $t\in[-T,T]$. Invoking Fatou's lemma, we deduce the desired result from the above display.
\end{proof}

It remains to prove Proposition~\ref{prop:invariance_slope}.

\begin{proof}[Proof of Proposition~\ref{prop:invariance_slope}] Theorem~\ref{thm:RDS} gives the existence and uniqueness of 
$X=\Phi^\cdot_{F,W}x\in\cS(x,\Delta,F,W;\LL)$. We want to show that $X(t)\in S(v_-,v_+)$ for all $t\geq 0$. 
The idea is to compare $X$ with solutions of the homogeneous heat equation \eqref{eq:discrete_heat} 
and use Lemma~\ref{Lemma:monotonicity}. 

\smallskip

Due to $x\in S(v_-,v_+)$, for each $\eps>0$, we can find $m_\eps\in\N$ satisfying
\begin{align}\label{eq:slope_preserve_choice_m}
    x_k\in \big(k(v_--\eps), k(v_++\eps)\big), \quad k\geq m_\eps.
\end{align}
We set
\begin{align}\label{eq:def_z_k}
    z_k =
    \begin{cases}
    x_k , & \quad k \geq m_\eps,\\
    k(\tfrac{1}{2}v_-+\tfrac{1}{2}v_+) , & \quad k <m_\eps .
    \end{cases}
\end{align}
This implies the existence of $C_\eps>0$ depending on $\eps$ such that $\|x-z\|_*<C_\eps$. Applying Lemma~\ref{lemma:comp_in_*}, we obtain
\begin{align}\label{eq:slope_preserve_Z-X}
    \|X(t)-\sS^tz\|_*\leq C_{\eps,t} ,\quad   n, k \in \N,
\end{align}
where $C_{\eps,t}$ depends on $\eps$ and $t$.

\medskip

We compare $\sS^\cdot z$ with straight rays. Let $y^+, y^- \in \LL$ be given by
\begin{align*}
    y^\pm_k =k (v_\pm\pm \eps),\quad   k \in \N.
\end{align*}
By \eqref{eq:slope_preserve_choice_m} and \eqref{eq:def_z_k}, we have $y^- \preceq z\preceq y^+$. Lemma~\ref{Lemma:monotonicity} implies
\begin{align*}
    \sS^ty^-\preceq \sS^tz\preceq \sS^ty^+,\quad  t\geq 0.
\end{align*}
Using Remark~\ref{rem:heat_eq}, we can verify $\sS^ty^\pm = y^\pm$ for all $t\in\R$ (in other words, $y^\pm$ is a stationary solution of the heat equation). Therefore, the above gives
\begin{align}\label{eq:Z_k(t)_in_eps_strip}
    (\sS^tz)_k\in(k(v_--\eps),k(v_++\eps)), \quad   k\in \N,\ t\geq 0.
\end{align}

\medskip

To finish the proof, we combine \eqref{eq:slope_preserve_Z-X} with \eqref{eq:Z_k(t)_in_eps_strip} to see that, for every $t\geq 0$ and $\eps>0$, 
\begin{align*}
    \frac{X_k(t)}{k}\in \big(v_--\eps-C_{\eps,t}k^{-\frac{1}{4}},\ v_++\eps+C_{\eps,t}k^{-\frac{1}{4}}\big), \quad   k\in\N.
\end{align*}
Hence, we have $X(t)\in  S(v_--\eps,v_++\eps)$ for all $t\geq 0$ and all $\eps>0$. Sending $\eps\to0$, we conclude $X(t)\in S(v_-,v_+)$ for all $t\geq 0$.
\end{proof}

\section{Infinite volume polymer measures}\label{section:IVPM}

In this section, we introduce infinite volume polymer measures (IVPMs) following~\cite{Bakhtin-Li:MR3911894} and obtain first results on their
connection to the polymer dynamics studied in the previous sections. We also obtain several results concerning the extreme decompositions of these measures complementing the results of ~\cite{Bakhtin-Li:MR3911894}.

 Here we are mostly interested in IVPMs with a fixed endpoint which we place, without loss of generality, at $0\in\R$. 
 
 For $x\in \R^\N$, we introduce projections $x_{\leq n}=(x_1,x_2,\dots, x_n)$ and $x_{\geq n}=(x_{n},x_{n+1},\dots)$.

The following definition is the standard DLR (Dobrushin--Lanford--Ruelle) condition stating that conditional distributions for an infinite volume Gibbs measure conditioned on the configuration outside a finite volume are given by the classical Boltzmann--Gibbs definition.

\begin{Def}
For each $F\in\FF$, $\invtemp\in(0,\infty)$ we denote by $\Pc_{F,\upbeta}$ the collection of probability measures $\mu$ on $(\R^\N,\cB(\R^\N))$ satisfying the following property: for every $n\in\N$, there is a probability measure $\nu$ on $\R^{\{n+1,n+2,n+3,\,\dots\}}$ such that
\begin{align}\label{eq:disintegration}
    \mu(\d x)=\rho(x_{n+1}; \d x_{\leq n})\nu(\d x_{\geq n+1}),
\end{align}
where
\begin{gather}
    \rho(x_{n+1};\d x_{\leq n}) = \frac{e^{-\invtemp E_n(x_{n+1}; x_{\leq n})}}{Z_n(x_{n+1})}\,\d x_{\leq n}\ ,\nonumber\\
    Z_n(x_{n+1}) = \int_{\R^n} e^{-\invtemp E_n(x_{n+1};x_{\leq n})} \d x_{\leq n}\ ,\nonumber\\
    E_n(x_{n+1};x_{\leq n}) = \frac{1}{2}\sum_{k=0}^{n}(x_{k+1}-x_k)^2+\sum_{k=1}^{n}F_k(x_k)\ .\label{eq:def_E_n}
\end{gather}

\end{Def}

The elements in $\Pc_{F,\invtemp}$ are IVPMs in the random environment $F$ at inverse temperature~$\invtemp$. For $v\in\R$, we denote by $\Pc_{F,\invtemp}(v)$ the collection of measures in $\Pc_{F,\invtemp}$ which are concentrated on $S(v)$. We also set $\Pc_{F,\invtemp}(*)=\cup_{v\in\R}\Pc_{F,\invtemp}(v)$. In other words, under each IVPM in $\Pc_{F,\invtemp}(*)$, the asymptotic
slope is well-defined and constant.

The existence and uniqueness result from \cite[Theorem 4.2]{Bakhtin-Li:MR3911894} can be stated as follows:

\begin{Th}\label{thm:thermodynamic-limit}
 Let $\invtemp\in(0,\infty)$ and $v\in\R$. Then there is a full measure set $\FF_{\invtemp}(v)\in\Fc$ such that $\Pc_{\invtemp,F}(v)$ is a singleton 
$\{\mu_{\invtemp,v,F}\}$
 for every $F\in\FF_{\invtemp,v}$.
\end{Th}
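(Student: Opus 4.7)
The plan is to follow the standard thermodynamic limit strategy, first reducing to $v=0$ via shear invariance, then constructing IVPMs as limits of finite-volume measures, and finally using ergodicity and monotonicity to prove uniqueness.

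\textbf{Reduction via shear invariance.} First I would exploit the quadratic form of $V$. For any $v\in\R$ and any polymer path $x$, completing the square in $\sum_{k=0}^n (x_{k+1}-x_k-v)^2$ shows that shifting to coordinates $\tilde x_k = x_k - kv$ transforms the energy $E_n$ in the environment $F$ into the energy (up to a boundary term that affects only normalizations) in the shifted environment $\Shear^v F$. Since $\sF$ is $\Shear$-invariant, it suffices to prove the theorem for $v=0$, and to find a full measure set $\FF_\invtemp(0)$; then $\FF_\invtemp(v)=\Shear^{-v}\FF_\invtemp(0)$ works for arbitrary $v$. This reduction is important because it lets us use $S(0)$-specific concentration.

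\textbf{Existence via thermodynamic limits.} For $N\in\N$, consider the point-to-distribution polymer measure $\mu^{N}_F$ on $(x_1,\dots,x_N)$ with density proportional to $e^{-\invtemp E_{N-1}(x_N;x_{\le N-1})}\nu_N(\d x_N)$ for a suitable reference distribution $\nu_N$ (for instance, a centered Gaussian of variance $\asymp N$, consistent with the diffusive scale of slope-$0$ paths). The DLR condition~\eqref{eq:disintegration} holds by construction for scales $n<N$. Using the exponential moment bound~\eqref{eq:exp_moment_F} and standard free-energy estimates, one obtains tightness in the product topology on $\R^\N$ together with the slope constraint $\mu^N_F(S(-\eps,\eps))\to 1$ for every $\eps>0$, up to passing to a subsequence. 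Any subsequential limit $\mu_{\invtemp,0,F}$ then satisfies DLR and lies in $\Pc_{F,\invtemp}(0)$ on a full measure set of $F$.

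\textbf{Uniqueness via ergodicity and comparison of endpoints.} Suppose $\mu_1,\mu_2\in\Pc_{F,\invtemp}(0)$. By~\eqref{eq:disintegration}, both are determined by their respective marginals $\nu^{(i)}_n$ of $x_{n+1}$. Using the Cameron--Martin type identity from the quadratic interaction, the ratio of the two densities at level $n$ conditioned on $x_{n+1}$ reduces to a ratio of partition functions $Z_n(y_1)/Z_n(y_2)$ where $y_1,y_2$ are drawn from the two endpoint distributions. I would prove that for $\sF$-a.e.\ $F$,
\[
\frac{1}{n}\log\frac{Z_n(y_1)}{Z_n(y_2)}\xrightarrow[n\to\infty]{}0
\]
uniformly for $y_1,y_2=o(n)$, by appealing to subadditive ergodic theorem applied to $-\log Z_n$ (subadditivity in $n$ via concatenation) together with the $\Theta$-invariance of $\sF$. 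Concentration estimates (Gaussian concentration for the log-partition function, coming from~\eqref{eq:exp_moment_F} and tensorization) quantify the fluctuations well enough to promote $L^1$-convergence to almost sure convergence. Combined with monotonicity (measures are FKG with respect to endpoint, analogous to Lemma~\ref{Lemma:monotonicity}), this forces $\nu^{(1)}_n=\nu^{(2)}_n$ on the a.s.\ event, hence $\mu_1=\mu_2$.

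\textbf{Main obstacle.} The delicate step is the uniform control on $\log Z_n(y)$ in the transverse variable $y$ with $|y|\le\eps n$. The issue is that while ergodic theorems give limits pointwise for fixed $y$, one needs a joint statement over an uncountable family of endpoints. I would handle this via a chaining/equicontinuity argument using moment estimates on $\partial_y \log Z_n(y)$, or alternatively by proving that any two IVPMs in $\Pc_{F,\invtemp}(0)$ can be coupled through simultaneous point-to-point polymer measures whose marginals agree in a suitable shear-and-shift average. The shear-invariance of $\sF$ is the essential ingredient that converts a statement about a fixed $y$ into a uniform statement.
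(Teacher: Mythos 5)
The paper does not contain a proof of this statement: Theorem~\ref{thm:thermodynamic-limit} is presented as a restatement of \cite[Theorem~4.2]{Bakhtin-Li:MR3911894}, and no proof is given in this paper. Your proposal is therefore an attempt at a self-contained proof of the imported result, and should be judged as such.

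Taken on its own terms, the shear reduction and the thermodynamic-limit construction are in the right spirit and broadly consistent with what \cite{Bakhtin-Li:MR3911894} does (existence via tightness of finite-volume polymer measures, supported by straightness and concentration estimates such as those behind Lemma~\ref{Lemma:straightness_general}). The uniqueness step, however, has a genuine gap. The asserted implication from
\[
\frac{1}{n}\log\frac{Z_n(y_1)}{Z_n(y_2)}\longrightarrow 0
\]
to $\nu^{(1)}_n=\nu^{(2)}_n$ does not hold. Convergence of free-energy ratios after dividing by $n$ discards all sub-exponential information; it is a statement about limiting slopes and leading-order asymptotics, and it cannot distinguish between two distinct tight families of endpoint laws that differ only on $O(1)$ scales. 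Subadditive ergodic theory alone cannot be upgraded to equality of measures, and appending FKG monotonicity afterward does not repair this, since monotonicity in the endpoint is not by itself a uniqueness principle for a single fixed slope. What actually closes the uniqueness argument in \cite{Bakhtin-Li:MR3911894} (and is reused in Section~\ref{sec:shear-inv} of the present paper, see Lemma~\ref{lem:jumps-of-random-monotone-f} and the surrounding discussion, attributed to \cite{HoNe3}) is a ``lack of space'' argument: treat the whole family $(\mu_{F,v})_{v\in\R}$ simultaneously, use monotonicity to show that suitable endpoint functionals are nondecreasing and right-continuous in $v$, use shear invariance of $\sF$ to show that these monotone processes have stationary increments, and then observe that a monotone process with stationary increments has a.s.\ no jump at any \emph{fixed} $v$. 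That collapse of the interval of possible values to a single point is what gives uniqueness at each fixed $v$, almost surely. Your sketch mentions both monotonicity and shear invariance but never combines them into the jump-free form that actually delivers the conclusion; the free-energy estimate you substitute in their place is too coarse to do the job.
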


From now on we fix an arbitrary  $\invtemp\in(0,\infty)$ and suppress the dependence of various objects like $\Pc_F=\Pc_{F,\invtemp}$,
 $\FF(v)= \FF_{\invtemp}(v)$, or $\mu_{v,F}=\mu_{\invtemp,v,F}$  on $\invtemp$. Sometimes it is also convenient to omit the dependence on $F$.

\subsection{Invariance of IVPMs}For a measurable space $(\XX,\cB)$, we denote by $\sM_\b(\XX,\cB)$ the space of bounded $\cB$-measurable real-valued functions on $\XX$. Recall the definition of $\cB^\alphap$ in \eqref{eq:def_B^alphap}. By Remark~\ref{Rem:SDE_Delta} and Theorem~\ref{thm:RDS}, for each $F\in \FF_0$ and $(\alphap)\in\setbetap$, the polymer dynamics $(\Phi^t_{F,W})$ defines a Markov semigroup $(P^t_F)_{t\geq 0}=(P^t)_{t\geq 0}$ on $\phi\in \sM_\b(\XX^\alphap,\cB^\alphap)$ given by
\begin{align*}
    P^t\phi(x) = \sW \phi\big( \Phi^t_{F,W}x\big),\quad   \phi\in \sM_\b(\XX^\alphap,\cB^\alphap),\  x \in\XX^\alphap.
\end{align*}
The goal of this subsection is to show the invariance of IVPMs with respect to $(P^t)_{t\geq 0}$. This is stated in the proposition below, which corresponds to part~\ref{it:Gibbs-is-invariant} in  Theorem~\ref{th:collect}.

\begin{Prop}\label{prop:invariance}
Let $F\in\FF_0$. If for some $(\alphap)\in\Pi$, $\mu\in \Pc_F$ satisfies 
\begin{equation}
\label{eq:mu-concentr-on-LL}
\mu(\XX^\alphap)=1, 
\end{equation}
then the restriction of $\mu$ to $(\XX^\alphap,\cB^\alphap)$ is invariant under the Markov process defined by the polymer dynamics. Equivalently,

\begin{align*}
    \int P^t \phi(x) \mu (\d x) = \int \phi(x) \mu(\d x),\quad t\geq 0,\ \phi\in \sM_\b(\XX^\alphap,\cB^\alphap).
\end{align*}

\end{Prop}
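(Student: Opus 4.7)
The plan is to approximate the infinite-dimensional dynamics by its Galerkin truncations from Definition~\ref{def:galerkin}, use the classical finite-dimensional fact that a Gibbs measure is reversible for the associated overdamped Langevin SDE to show that $\mu$ is invariant under every Galerkin semigroup, and then pass to the limit via the coordinate-wise convergence of Galerkin approximations given by Corollary~\ref{cor:galerkin} and Lemma~\ref{lemma:ptw_cvg}. It suffices to verify the invariance identity for bounded continuous functions $\phi$ depending on finitely many coordinates: since $\cB^\alphap$ is the trace of $\cB(\R^\N)$ on $\XX^\alphap$, such $\phi$ form a measure-determining class, and the identity then extends to all $\phi\in \sM_\b(\XX^\alphap,\cB^\alphap)$ by a monotone class argument.

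\textbf{Galerkin invariance.} Fix $n\in\N$ and consider the Galerkin approximation $X^\n$ from \eqref{eq:Galerkin_approx} with $A=\Delta$, started at $x$: coordinates $X^\n_k$ for $k>n$ remain equal to $x_k$, while for $1\le k\le n$,
\begin{equation*}
\d X^\n_k(t)=\bigl(\Delta_k X^\n(t)+f_k(X^\n_k(t))\bigr)\,\d t+\sigma\,\d W_k(t),
\end{equation*}
subject to $X^\n_0(t)\equiv 0$ and $X^\n_{n+1}(t)\equiv x_{n+1}$. Direct differentiation of \eqref{eq:def_E_n} yields $-\partial_{x_k}E_n(x_{n+1};x_{\le n})=\Delta_k x+f_k(x_k)$ for $1\le k\le n$ (with the convention $x_0=0$), so this is precisely the stochastic gradient flow on $\R^n$ for the potential $E_n(x_{n+1};\cdot)$ at temperature $\tempr$, parameterized by the boundary value $x_{n+1}$. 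The classical Kolmogorov--Fokker--Planck computation implies that for each fixed $x_{n+1}$ the measure $\rho(x_{n+1};\,\d x_{\le n})$ is invariant (in fact reversible) under this finite-dimensional diffusion. Since only $x_{n+1}$ from the tail enters the drift and the rest of the tail is merely frozen, integrating the resulting conditional invariance against $\nu(\d x_{\ge n+1})$ in the DLR disintegration \eqref{eq:disintegration} and applying Fubini gives
\begin{equation*}
\int \sW\phi\bigl(X^\n(t)\bigr)\,\mu(\d x)=\int\phi\,\d\mu,\quad t\ge 0,\ \phi\in \sM_\b(\XX^\alphap,\cB^\alphap),
\end{equation*}
where $X^\n$ denotes the Galerkin solution started from~$x$. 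That is, $\mu$ is invariant under each Galerkin semigroup.

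\textbf{Passing to the limit and main obstacle.} For every $(x,F,W)\in\XX^\alphap\times\FF_0\times\WW_0$ and every $t\ge 0$, Corollary~\ref{cor:galerkin} combined with Lemma~\ref{lemma:ptw_cvg} gives $X^\n(t)\to \Phi^t_{F,W}x$ coordinate-wise. For a bounded continuous cylinder function $\phi$ this yields $\phi(X^\n(t))\to\phi(\Phi^t_{F,W}x)$ $\sW$-a.s., and bounded convergence gives $\sW\phi(X^\n(t))\to P^t\phi(x)$ pointwise in~$x$; a second application of bounded convergence against~$\mu$ transfers the Galerkin invariance to $P^t$. The step I expect to be most delicate is the Galerkin invariance itself: one must verify cleanly that, for $F\in\FF_0$, the regularity properties of $f_k$ from Lemma~\ref{Lemma:f_property} are sufficient to invoke the finite-dimensional Langevin reversibility theorem uniformly in the boundary value~$x_{n+1}$, and to justify the use of Fubini in the DLR decomposition so as to convert the conditional invariance, valid for each fixed tail, into joint invariance under~$\mu$.
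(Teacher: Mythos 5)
Your proposal follows the same route as the paper: approximate by Galerkin truncations, use finite-dimensional Langevin invariance of the DLR conditional Gibbs measures $\rho(x_{n+1};\cdot)$, integrate against the tail via the disintegration~\eqref{eq:disintegration}, and pass to the limit through coordinate-wise convergence (Corollary~\ref{cor:galerkin} and Lemma~\ref{lemma:ptw_cvg}) together with the definition~\eqref{eq:def_B^alphap} of $\cB^\alphap$.

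The one piece you flag as ``delicate'' and leave unproved is exactly the content of the paper's Lemma~\ref{lem:Lyapunov-function}: because $-\nabla_y E_n(x_{n+1};y)$ is only locally Lipschitz with logarithmic growth of the nonlinearity, invoking ``the classical Kolmogorov--Fokker--Planck computation'' does not by itself guarantee that the Galerkin diffusion is non-explosive and that $\rho(x_{n+1};\cdot)$ is genuinely invariant. The paper supplies this by verifying a dissipativity (Lyapunov) estimate $-y\cdot\nabla_y E_n(x_{n+1};y)\le C_1-C_2|y|^2$, which hinges on the strict positivity of the smallest eigenvalue $2(1-\cos\tfrac{\pi}{n+1})$ of the discrete Dirichlet Laplacian together with the sublinear bound on $f_k$ from Lemma~\ref{Lemma:f_property}\eqref{item:f_k_upper_bound}, and then cites a standard Langevin result (Roberts--Tweedie) to conclude invariance. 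If you add this Lyapunov bound, your argument is complete; without it, the Galerkin invariance step is asserted rather than proved.
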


\begin{Rem}\label{rem:S(v)}
For any $v\in\R$, we have $S(v)\subset \LL=\XX^{1,\infty}$. Using the definition of $\cB^{1,\infty}$ in \eqref{eq:def_B^alphap} and the above proposition, we can see that if $\mu\in\Pc_F$ is supported on $S(v)$, then $\mu$ viewed as a measure on  $(S(v),\cB_v)$ is invariant under the polymer dynamics. Here $\cB_v$ is the $\sigma$-algebra induced by $\cB^{1,\infty}$ on $S(v)$, given by $\cB_v=\{E\cap S(v):E\in\cB(\R^\N)\}$. Now, by the separability of $S(v)$ and arguments similar to those in the proof of Lemma~\ref{lem:B^alphap}, we have that $\cB_v = \cB(S(v))$, the latter being, in accordance with our notation in Section~\ref{sec:funcion}, 
 the Borel $\sigma$-algebra generated by the metric on $S(v)$ induced by norm $\|\cdot\|_{\LL}$ and making $S(v)$ a Polish space.
\end{Rem}

\begin{proof}[Proof of Proposition~\ref{prop:invariance}]

We denote by $\Phi^t_{n,F,W}x$, $n\in\N$, the Galerkin approximation (see Definition~\ref{def:galerkin}) of $\Phi^t_{F,W}x$ for $x\in\XX^\alphap$. Invoking Lemma~\ref{lemma:galerkin_exists}, we can define a sequence of approximation semigroups $\big((P^t_n)_{t\geq 0}\big)_{n\in\N}$ by
\begin{align*}
    P_n^t\phi (x) =  \sW \phi(\Phi^t_{n,F,W}x) ,\quad   t\geq 0,\   \phi \in \sM_\b(\XX^\alphap,\cB^\alphap),\ x\in\XX^\alphap.
\end{align*}
By \eqref{eq:Galerkin_approx} with $A$ replaced by $\Delta$, the projection $(\Phi^t_{n,F,W}x)_{\leq n}$ satisfies
\begin{align}\label{eq:SDE_X_leq_n}
    \d  (\Phi^t_{n,F,W}x)_{\leq n} = - \nabla_{x_{\leq n}}E_n\big(x_{n+1};(\Phi^t_{n,F,W}x)_{\leq n}\big)\d t + \sigma\d W_{\leq n}(t)
\end{align}
where $\nabla_{x_{\leq n}}$ is the gradient taken with respect to $x_{\leq n}$. The following lemma describes the dissipative behavior of $E_n$.

\begin{Lemma}
\label{lem:Lyapunov-function}
For each $F\in\FF_0$, each $n\in\N$ and each $x_{n+1}\in\R$, there are constants $C_1,C_2>0$ such that
\begin{align*}
   - y\cdot \nabla_y E_n(x_{n+1};y) \leq C_1 -C_2 |y|^2,\quad   y\in\R^n.
\end{align*}
\end{Lemma}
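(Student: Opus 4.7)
The plan is to compute $\nabla_y E_n(x_{n+1};y)$ explicitly and split $-y\cdot\nabla_y E_n$ into a Laplacian (quadratic) part and a potential part, then estimate each separately. Differentiating the expression in~\eqref{eq:def_E_n} and using the convention $y_0=0$, $y_{n+1}=x_{n+1}$, one gets $\partial_{y_k}E_n(x_{n+1};y)=-(\Delta y)_k+F_k'(y_k)$, so that
\[
-y\cdot \nabla_y E_n(x_{n+1};y)=y\cdot \Delta y+\sum_{k=1}^n y_k f_k(y_k).
\]

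For the Laplacian term, summation by parts (using the boundary data $y_0=0$ and $y_{n+1}=x_{n+1}$) yields
\[
y\cdot \Delta y=-\sum_{k=0}^n(y_{k+1}-y_k)^2+x_{n+1}(x_{n+1}-y_n).
\]
The discrete Dirichlet--Poincar\'e inequality (valid because $y_0=0$) gives $\sum_{k=0}^n(y_{k+1}-y_k)^2\geq \lambda_n|y|^2$ for a constant $\lambda_n>0$ depending only on $n$, and the cross term $x_{n+1}y_n$ is controlled by Young's inequality, absorbing a small multiple of $|y|^2$ at the cost of a constant $C(n,x_{n+1})$. This yields
\[
y\cdot\Delta y\leq -\tfrac{\lambda_n}{2}|y|^2+C(n,x_{n+1}).
\]

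For the potential term, Lemma~\ref{Lemma:f_property}\eqref{item:f_k_upper_bound} bounds $|f_k(r)|\leq C(1+\log k+\log_+|r|)$. Hence, for each $k\leq n$,
\[
|y_k f_k(y_k)|\leq C(1+\log k)|y_k|+C|y_k|\log_+|y_k|,
\]
and the elementary inequality $|r|\log_+|r|\leq \epsilon r^2+C_\epsilon$ (valid for any $\epsilon>0$) together with $|y_k|\leq \epsilon y_k^2+(4\epsilon)^{-1}$ allows us to estimate
\[
\Bigl|\sum_{k=1}^n y_k f_k(y_k)\Bigr|\leq \epsilon|y|^2+C(n,\epsilon,F).
\]
Choosing $\epsilon=\lambda_n/4$, we combine the two estimates to obtain the desired inequality with $C_2=\lambda_n/4$ and $C_1$ depending on $n$, $x_{n+1}$, and $F$.

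The main point requiring care is the boundary contribution $x_{n+1}(x_{n+1}-y_n)$ from the summation by parts, since the Dirichlet--Poincar\'e inequality applies only on the $y_0=0$ side; but this is a single scalar linear-in-$y_n$ term and is absorbed by Young's inequality without trouble. Everything else is a routine combination of the summation-by-parts identity with the subquadratic growth of $f_k$ provided by Lemma~\ref{Lemma:f_property}.
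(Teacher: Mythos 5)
Your proof is correct and follows essentially the same route as the paper: the same decomposition of $-y\cdot\nabla_y E_n$ into a quadratic (Laplacian) part, a linear boundary term in $y_n$, and the potential sum $\sum y_k f_k(y_k)$, with the last two absorbed by Young's inequality and the logarithmic growth of $f_k$ from Lemma~\ref{Lemma:f_property}\eqref{item:f_k_upper_bound}. The only cosmetic difference is that the paper establishes the coercivity $y\cdot A y\geq c_n|y|^2$ by diagonalizing the tridiagonal matrix $A$ explicitly (eigenvalues $2-2\cos(m\pi/(n+1))$), whereas you obtain the same quantity via summation by parts and a discrete Dirichlet--Poincar\'e inequality; these are two proofs of the same coercivity bound.
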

\begin{proof}
Recalling the formula \eqref{eq:def_E_n}, we can rewrite $\nabla E_n(x_{n+1};\cdot)$ as 
\begin{align}\label{eq:grad_E_n}
    -\nabla_y E_n(x_{n+1};y)= - A y + b(x_{n+1}) + \big(f_k(y_k)\big)_{k=1}^n\ ,\quad   y \in \R^n,
\end{align}
where $b(x_{n+1})= (0,0,\dots,0,x_{n+1})$ and $A$ is an $n\times n$ matrix whose diagonal entries are $2$, super-diagonal and sub-diagonal entries are $-1$, and the rest are $0$. For example, when $n=4$, we have
\begin{align*}
    A = \begin{pmatrix}
2 & -1 & 0 & 0\\
-1 & 2 & -1 & 0 \\
0 & -1 & 2 & -1  \\
0  & 0 & -1 & 2
\end{pmatrix}.
\end{align*}
We claim that $A$ is positive definite. Setting $\theta = \frac{\pi}{n+1}$ and using classical trigonometric identities, we can verify that $\lambda_m=2-2\cos(m\theta)$ is an eigenvalue of $A$ with the eigenvector $v_m= (\sin(m\theta),\sin(2m\theta),\dots, \sin(nm\theta))$ for each $m=1,2,\dots,n$. This implies $y\cdot Ay \geq 2(1-\cos(\frac{1}{n+1}\pi))|y|^2$,
for all $y\in\R^n$. Plugging this into \eqref{eq:grad_E_n} and using Lemma~\ref{Lemma:f_property}~\eqref{item:f_k_upper_bound} to control $f_k$, we obtain the desired result.
\end{proof}

Continuing the proof of Proposition~\ref{prop:invariance}, let us fix $F\in\FF_0$. It is classical (c.f. \cite[Theorem 2.1]{roberts1996exponential}) that the condition stated 
in Lemma~\ref{lem:Lyapunov-function} is sufficient to guarantee that $\rho(x_{n+1},\cdot)$ is an invariant measure for dynamics generated by \eqref{eq:SDE_X_leq_n}. Since $(\Phi^t_{n,F,W}x)_{\geq n+1}= x_{\geq n+1}$ for $t\geq 0$ (see \eqref{eq:Galerkin_approx}), we obtain
\begin{align*}
    \int \sW\phi \big(\Phi^t_{n,F,W}x\big) \rho(x_{n+1},\d x_{\leq n}) = \int \phi(x)\rho(x_{n+1},\d x_{\leq n}),\quad  \phi \in \sM_\b(\XX^\alphap,\cB^\alphap).
\end{align*}
and due to \eqref{eq:disintegration}
\begin{align}\label{eq:invar_X^n}
    \int \sW\phi \big(\Phi^t_{n,F,W}x\big)\,\mu(\d x) = \int \phi\, \d \mu,\quad  \phi \in \sM_\b(\XX^\alphap,\cB^\alphap).
\end{align}

Then, we want to take the limit on the left-hand side of \eqref{eq:invar_X^n}. Let us pick any $(\alphaprime)$ satisfying
\eqref{eq:(beta'-beta)p'>1}.  Then,  Corollary~\ref{cor:galerkin} implies
 that
$\Phi^\cdot_{n,F,W}x$
converges to $\Phi^\cdot_{F,W}x$ in $\sC(\R;\XX^\alphaprime)$ for every $x\in \XX^\alphap$  and every $W\in\WW_0$. 

Fixing an arbitrary $t>0$ and recalling~\eqref{eq:mu-concentr-on-LL}, we obtain by Lemma~\ref{lemma:ptw_cvg}  that $\Phi^t_{n,F,W}x$
converges to $\Phi^t_{F,W}x$ coordinatewise (i.e., in the topology on $\XX^{\alphap}$ induced by the product topology on $\R^\N$) for $\mu$-a.e.~$x$ and every $W\in\WW_0$. Hence, we can send $n\to\infty$ in \eqref{eq:invar_X^n} for any bounded function $\phi:\XX^\alphap \to\R$ continuous in that topology to see that
\begin{align*}
    \int \sW\phi \big(\Phi^t_{F,W}x \big)\,\mu(\d x) = \int \phi\, \d \mu.
\end{align*}
Recalling the definition of $\cB^\alphap$ in \eqref{eq:def_B^alphap}, we can extend this identity to any $\phi \in \sM_\b(\XX^\alphap,\cB^\alphap)$,
which completes the proof of Proposition~\ref{prop:invariance}.
\end{proof}

\subsection{Extreme IVPMs have asymptotic slopes}\label{section:ext_IVPM_asymp_slope}

For $\mu\in \Pc_F$, we say that $\mu$ is extreme or that $\mu$ is an extreme point of $\Pc_F$, if $\mu$ is not equal to $ s \nu_1 + (1-s)\nu_2$ for any $s \in (0,1)$ and any distinct $\nu_1,\nu_2\in\Pc_F$.

The following result is a part  of Theorem~\ref{eq:extreme-measures-have-direction}.

\begin{Prop}\label{prop:extreme_IVPM_slope}
There is a full measure set $\FF^*\in \cF$ such that for every $F\in\FF^*$, all extreme points of $\Pc_F$ belong to $\Pc_F(*)$.
\end{Prop}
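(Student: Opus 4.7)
I would first invoke the classical DLR result (see \cite[Theorem~7.7]{Georgii:MR956646}) that a measure $\mu$ is extreme in $\Pc_F$ if and only if the tail $\sigma$-algebra $\cT=\bigcap_{n\ge 1}\sigma(x_k:k\ge n)$ is $\mu$-trivial. The slope functions $v_+(x)=\limsup_{k\to\infty} x_k/k$ and $v_-(x)=\liminf_{k\to\infty} x_k/k$ are $\cT$-measurable, since they are unaffected by changes in finitely many coordinates. Hence, for every extreme $\mu$, they equal deterministic constants $v_\pm\in[-\infty,+\infty]$ $\mu$-a.s.

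\textbf{Main step.} Define $\FF^*\in\cF$ as the intersection of $\FF_0$ with the countable family $\bigcap_{v\in\Q}\FF(v)$ of full-measure sets from Theorem~\ref{thm:thermodynamic-limit}, along with any further $\sF$-full-measure set required below. On $\FF^*$, for every extreme $\mu\in\Pc_F$, I would show $v_-=v_+\in\R$, arguing by contradiction. Suppose $v_-<v_+$ (allowing infinite values), pick rationals $v_1<v_2$ in $(v_-,v_+)$, and choose a threshold $c\in(v_1,v_2)$. Set $B_n=\{x_{n+1}/(n+1)<c\}$. Since $v_-<c<v_+$ and by tail-triviality, both $\mu(B_n)$ and $\mu(B_n^c)$ are bounded away from zero along a subsequence $n_j\to\infty$. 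The normalized restrictions $\mu_n^{\pm}=\mu(\,\cdot\,|B_n^{\pm})$ are not themselves in $\Pc_F$ because $B_n$ depends on the single coordinate $x_{n+1}$ and conditioning breaks the DLR specification for volumes exceeding $n$. However, using the DLR disintegration~\eqref{eq:disintegration} and Gaussian-type bounds on the kernel $\rho(x_{n+1};\cdot)$ coming from the quadratic kinetic term, I would extract weakly convergent subsequences $\mu_{n_j}^{\pm}\to\mu^{\pm}$ and argue that the limits $\mu^\pm$ belong to $\Pc_F$ and are concentrated on $S(v_1)$ and $S(v_2)$ respectively. Theorem~\ref{thm:thermodynamic-limit} then forces $\mu^{\pm}=\mu_{v_i,F}$ for $i=1,2$, and taking the limit in the identity $\mu=\mu(B_n)\mu_n^-+\mu(B_n^c)\mu_n^+$ exhibits $\mu$ as a nontrivial convex combination of $\mu_{v_1,F}$ and $\mu_{v_2,F}$, contradicting extremality. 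The infinite-slope cases $v_\pm=\pm\infty$ would be excluded similarly by letting $c\to\pm\infty$ and using the exponential moment assumption~\eqref{eq:exp_moment_F} to rule out Gibbs mass escaping to infinity.

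\textbf{Main obstacle.} The technical core of the argument is the weak-limit extraction and the identification of the limits with $\mu_{v_i,F}$. The key difficulty is that the conditioning event $B_n$ is not tail-measurable, so the DLR property is only recovered in the $n\to\infty$ limit and requires tightness estimates uniform in $n$. These tightness estimates should follow from Gaussian bounds on $\rho(x_{n+1};\cdot)$ combined with~\eqref{eq:exp_moment_F} and with the monotonicity in $v$ of the thermodynamic-limit construction of~$\mu_{v,F}$ in~\cite{Bakhtin-Li:MR3911894}, which will also be needed to pin down the slope of each limit $\mu^\pm$.
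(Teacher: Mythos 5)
Your Steps 1--2 (using tail-triviality for extreme DLR measures, so that $v_-=\liminf x_k/k$ and $v_+=\limsup x_k/k$ are $\mu$-a.s.\ constant) match the opening of the paper's proof exactly, and this part is sound. The main step, however, takes a genuinely different route from the paper and, as written, has a gap at its very first move.

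The key problem is the claim that ``both $\mu(B_n)$ and $\mu(B_n^c)$ are bounded away from zero along a subsequence.'' This does not follow from $v_-<c<v_+$ $\mu$-a.s.\ and tail-triviality alone. The hypothesis $v_-<c<v_+$ only guarantees that $\mu$-a.e.\ path visits both $\{x_{n+1}/(n+1)<c\}$ and its complement infinitely often; it says nothing about the marginal probabilities $\mu(B_n)$, which can tend to $0$ or $1$ even as the path keeps crossing the threshold (for example, if the crossings below the line happen at increasingly sparse, path-dependent times whose indicator has vanishing marginal probability --- such behavior is not excluded by tail-triviality). Without a uniform lower bound on $\mu(B_n)$ and $\mu(B_n^c)$ along a subsequence, the convex decomposition $\mu=\mu(B_n)\mu_n^-+\mu(B_n^c)\mu_n^+$ degenerates in the limit and yields no contradiction. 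Furthermore, even granting such a subsequence, the subsequent steps are not a routine tightness argument: conditioning on $B_n$ constrains a single coordinate and does not, by itself, pin down the asymptotic slope of any weak subsequential limit of $\mu_n^\pm$; the claim that the limits land in $\Pc_F(v_1)$ and $\Pc_F(v_2)$ would require an argument essentially equivalent to a fluctuation control that you do not provide. The infinite-slope cases are similarly hand-waved.

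The paper avoids all of this by proving a pathwise straightness estimate, Lemma~\ref{lemma:straightness} (obtained from the finite-volume estimate Lemma~\ref{Lemma:straightness_general}, itself derived from \cite[Theorem 9.1]{Bakhtin-Li:MR3911894}), which says that for $F$ in a full-measure set $\FF^*$ and every $\mu\in\Pc_F$, the path $\mu$-a.s.\ satisfies
\[
\Big|\frac{x_{k'}}{k'}-\frac{x_k}{k}\Big|\leq Qk^{-\delta},\qquad k'\ge k\ge n(x),
\]
so that $(x_k/k)_k$ is Cauchy in $\R$ $\mu$-a.s. Combined with the $\cT$-measurability and hence $\mu$-a.s.\ constancy of $v_\pm$ (your Steps 1--2), this immediately gives $v_-=v_+\in\R$, with no need for any decomposition or weak-limit extraction. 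If you want to salvage your approach, the fluctuation bound behind Lemma~\ref{lemma:straightness} is exactly the missing ingredient --- but once you have it, the contradiction argument becomes unnecessary.
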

Before we prove this proposition, let us give a corollary.  Proposition~\ref{prop:extreme_IVPM_slope} and the extreme decomposition (\cite[Theorem (7.26)]{Georgii:MR956646}) applied to the Polish space $\R^\N$
imply that every IVPM is a mixture of IVPMs concentrated on paths with well-defined and constant asymptotic slope, for almost every realization of $F$:

\begin{Cor}
Let $F\in \FF^*$. If $\mu\in \Pc_F$, then there is a probability measure $\Pi_\mu$ on $\Pc_F(*)$ such that
\begin{align*}
    \mu = \int_{\Pc_F(*)} \nu \ \Pi_\mu(\d \nu).
\end{align*}
\end{Cor}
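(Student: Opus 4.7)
The plan is to derive this corollary directly from Proposition~\ref{prop:extreme_IVPM_slope} combined with the general extreme decomposition theorem for convex sets of Gibbs measures on Polish spaces.

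First, I would observe that $\Pc_F$ is a convex subset of the space of probability measures on $(\R^\N,\cB(\R^\N))$: the DLR identity \eqref{eq:disintegration} is linear in $\mu$, since the finite-volume conditional kernels $\rho(x_{n+1};\d x_{\leq n})$ are determined intrinsically by $F$ and $\invtemp$ and do not depend on $\mu$. So convex combinations of IVPMs are again IVPMs. Next, I would verify that $(\rho_n)_{n\in\N}$ is a proper specification on the standard Borel space $\R^\N$ in the sense of \cite{Georgii:MR956646}: the densities are jointly Borel in $(x_{\leq n},x_{n+1})$ and continuous in $x_{n+1}$.

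With these ingredients in place, I would apply \cite[Theorem~(7.26)]{Georgii:MR956646}: for every $\mu\in\Pc_F$, there is a unique probability measure $w_\mu$ on the set $\mathrm{ex}\,\Pc_F$ of extreme IVPMs (equipped with the evaluation $\sigma$-algebra) representing $\mu$ as the barycenter
\[
\mu = \int_{\mathrm{ex}\,\Pc_F} \nu\, w_\mu(\d\nu).
\]
Then Proposition~\ref{prop:extreme_IVPM_slope} gives, for $F\in\FF^*$, the inclusion $\mathrm{ex}\,\Pc_F\subseteq\Pc_F(*)$. The set $\Pc_F(*)$ is measurable in the enveloping evaluation $\sigma$-algebra, because the event $\{x\in\R^\N:\ \lim_{k\to\infty}x_k/k \text{ exists in }\R\}=\bigcup_{v\in\R}S(v)$ is Borel in $\R^\N$, so the map $\mu\mapsto\mu(\bigcup_v S(v))$ is measurable. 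Viewing $w_\mu$ as a measure on $\Pc_F(*)$ (extended by zero outside $\mathrm{ex}\,\Pc_F$) and denoting it $\Pi_\mu$ yields the required representation.

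The only mild subtlety is to check that the measure-theoretic framework --- the evaluation $\sigma$-algebra on $\Pc_F(*)$, the measurability of $\mathrm{ex}\,\Pc_F$ inside it, and the well-definedness of the barycentric integral --- is handled correctly. But all of this is covered by the Polish structure of $\R^\N$ and the standard DLR setup of \cite[Chapter~7]{Georgii:MR956646}, so no new work is needed beyond appealing to that formalism.
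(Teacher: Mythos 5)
Your proposal is correct and follows the same route as the paper: invoke Proposition~\ref{prop:extreme_IVPM_slope} together with the extreme decomposition theorem \cite[Theorem~(7.26)]{Georgii:MR956646} on the Polish space $\R^\N$, so that the barycentric measure $w_\mu$ on $\mathrm{ex}\,\Pc_F\subseteq\Pc_F(*)$ serves as $\Pi_\mu$. One small quibble: showing that $\mu\mapsto\mu(\bigcup_v S(v))$ is measurable controls membership in a superset of $\Pc_F(*)$ rather than $\Pc_F(*)$ itself (a measure could charge $\bigcup_v S(v)$ without living on a single $S(v)$), but this is harmless because $w_\mu$ is supported on the measurable set $\mathrm{ex}\,\Pc_F$ and one simply equips $\Pc_F(*)$ with the trace $\sigma$-algebra.
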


To prove the proposition, we need a lemma on the straightness of the polymer measure $\mu$. For $x\in \R^\N$ and $k\in \N$, we define 
\begin{align*}
    x^\out(k)= \{(m, x_m): m>k\}.
\end{align*}
For $(k,r)\in\N\times \R$ and $\eta>0$, we define
\begin{align*}
    \Co(k,r,\eta)=\{(m,r')\in\N\times\R:|r'/m - r/k|\leq \eta\}.
\end{align*}

\begin{Lemma}\label{lemma:straightness}
Let $\delta \in (0,\frac{1}{4})$. There is a full measure set $\FF^*\subset \FF_0$ and a constant $Q>0$ such that for every $F \in \FF^*$ and every $\mu\in\Pc_F$, we have $\mu(B)=1$, where
\begin{align*}
    B= \big\{x\in\R^\N: \exists n\in \N,\ \forall    k \geq n,\quad  x^{\out}(k)\subset \Co(k, x_k, Qk^{-\delta})\big\}.
\end{align*}
\end{Lemma}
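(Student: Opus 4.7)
The approach is to combine the DLR disintegration of $\mu$ with transversal-fluctuation estimates for point-to-point polymer measures from~\cite{Bakhtin-Li:MR3911894}, packaged into a Borel--Cantelli argument along a geometric subsequence of scales.

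First, for any triple of indices $k<m<N$, the DLR condition at level $N$ identifies the $\mu$-conditional joint distribution of $(x_k,x_m)$ given $x_{\geq N+1}$ with the two-point marginal of the finite-volume point-to-point polymer measure on $N$ steps with endpoints $0$ and $x_{N+1}$. Combining this with shear-invariance (a consequence of the quadratic self-interaction $V(r)=r^2/2$) and the transversal-fluctuation bounds from~\cite{Bakhtin-Li:MR3911894}, I would extract a stretched-exponential estimate
\begin{equation*}
    \mu^{(N)}_{0\to y}\Bigl(\bigl|x_k-\tfrac{ky}{N}\bigr|\geq t\, k^{3/4}\Bigr)\ \leq\ C e^{-c t^p},\qquad k\leq N/2,
\end{equation*}
valid uniformly in $y\in\R$ and in $F$ ranging over a full-measure set, the uniformity in $y$ being obtained by shearing the endpoint to slope zero. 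Integrating out the conditioning on $x_{N+1}$ under $\mu$ preserves the bound for the $\mu$-marginals.

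Second, fix $k_n=2^n$ and, for each $m$, apply the estimate with $N=3m$. The triangle inequality
\begin{equation*}
    \bigl|x_m/m-x_{k_n}/k_n\bigr|\ \leq\ \bigl|x_{k_n}/k_n-x_{N+1}/N\bigr|+\bigl|x_m/m-x_{N+1}/N\bigr|
\end{equation*}
together with the above estimate at $k_n$ and $m$ (noting $m^{-1/4}\leq k_n^{-1/4}$ for $m\geq k_n$) yields
\begin{equation*}
    \mu\Bigl(\bigl|x_m/m-x_{k_n}/k_n\bigr|>Q k_n^{-\delta}\Bigr)\ \leq\ C e^{-c Q^p k_n^{p(1/4-\delta)}}.
\end{equation*}
A union bound over the polynomially many $m\in[k_n,k_{n+1}]$ costs only a polynomial factor, dominated by the stretched-exponential decay since $\delta<1/4$; for $m\geq k_{n+1}$, chaining the same bound across successive dyadic blocks and summing the geometric series $\sum_{n'\geq n}k_{n'}^{-\delta}\lesssim k_n^{-\delta}$ absorbs the chain into a constant multiple of $Q k_n^{-\delta}$. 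Hence $\sum_n \mu(A_n)<\infty$, where $A_n$ is the event that $|x_m/m-x_{k_n}/k_n|>Qk_n^{-\delta}$ for some $m\geq k_n$, and Borel--Cantelli produces the random $n\in\N$ of the lemma. Propagation from the subsequence $(k_n)$ to all scales $k$ follows from the triangle inequality for slopes, at the cost of enlarging $Q$ by a constant factor.

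The main obstacle is securing the point-to-point concentration bound in a form that is simultaneously uniform in the random endpoint $y=x_{N+1}$ and valid on a single full-measure set $\FF^*$ of environments; this is where the paper needs to draw carefully on the machinery of \cite{Bakhtin-Li:MR3911894}. Shear-invariance reduces the $y$-dependence to the scalar slope $y/N$, after which a countable intersection indexed by rational slopes, combined with the shift-invariance of $\sF$, defines $\FF^*$. The exponent constraint $\delta<\tfrac14$ matches precisely the $3/4$-upper bound on the polymer transversal-fluctuation exponent obtained in~\cite{Bakhtin-Li:MR3911894} and is the reason for the stated range of $\delta$; once the input concentration is in place, the rest of the argument is routine.
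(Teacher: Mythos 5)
Your approach is directionally sound but reconstructs machinery the paper simply imports. The paper's proof of this lemma is one short Borel--Cantelli/union argument on top of its Lemma~\ref{Lemma:straightness_general}, which in turn is cited wholesale as a consequence of \cite[Theorem~9.1]{Bakhtin-Li:MR3911894} together with the DLR disintegration~\eqref{eq:disintegration}. Crucially, Theorem~9.1 of~\cite{Bakhtin-Li:MR3911894} already delivers the finite-volume \emph{straightness} statement in cone form --- i.e.\ $x^\out(k)\subset\Co(k,x_k,Qk^{-\delta})$ with high probability --- so no dyadic chaining or reconstruction from single-coordinate transversal-fluctuation bounds is needed. Your proposal essentially re-derives that cone estimate from a two-point concentration inequality; that is more work and, as written, not quite airtight, for the reason below.

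The genuine gap is the claimed uniformity of the concentration bound ``in $y\in\R$ and in $F$ ranging over a full-measure set.'' That statement is too strong: the exceptional $F$-set and the onset scale of the estimate unavoidably depend on how large the endpoint slope $|y|/N$ is, and shear-invariance only lets you reduce to slope zero, not to erase this dependence, because the shear that does so is $y$-dependent and $y=x_{N+1}$ is random under $\mu$. The paper resolves exactly this issue through the extra $\Co(M)=\{(k,r):|r|\leq Mk\}$ restriction in Lemma~\ref{Lemma:straightness_general}: the bound is established uniformly only over endpoints with $|x_{N+1}|/N\leq M$, on an $F$-set $\FF^{(n)}_1(\delta,M,\upsilon)$ that depends on $M$; one then takes $\FF^*=\bigcap_M\bigcup_{n\geq n(M)}\FF^{(n)}_1$ and notes $B\supset\bigcap_M\bigcup_{n\geq n(M)}B(n,M,\upsilon)$, letting $M\to\infty$ remove the slope cutoff. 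Your suggestion of ``a countable intersection indexed by rational slopes'' does not do the same job, since the relevant slopes $x_{N+1}/N$ are not rational and the union over $N$ is not controlled by density alone. If you replace that step with the $\Co(M)$-truncation scheme, the rest of your argument (dyadic scales, union bound within a block, telescoping across blocks, then the routine propagation from $(k_n)$ to all $k$ at the cost of enlarging $Q$) is a correct, if longer, route to the same conclusion, and your observation that $\delta<\tfrac14$ corresponds to the $\xi\leq 3/4$ fluctuation exponent matches the paper's.
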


\begin{proof}[Proof of Proposition~\ref{prop:extreme_IVPM_slope}]

Fix $\FF^*$ in the above lemma and let $\mu$ be an extreme point of $\Pc_F$ for some $F\in\FF^*$. By \cite[Theorem (7.7) (a)]{Georgii:MR956646}, we know that $\mu(A)\in\{0,1\}$ for every set $A$ in the natural tail $\sigma$-algebra on $\R^\N$. Since $x\mapsto\liminf_{k\to\infty}\frac{x_k}{k}$ and $x\mapsto\limsup_{k\to\infty}\frac{x_k}{k}$ are measurable with respect to the tail $\sigma$-algebra, they must be $\mu$-a.s.\ constant. Denote them by $v_-$ and $v_+$, respectively.  We want to show $v_-=v_+\in\R$. Let $B$ be the set given in Lemma \ref{lemma:straightness} and we have $\mu(B) =1$. For every $x\in B$, the definition of $B$ yields the existence of $n=n(x)$ such that
\begin{align}\label{eq:x/k}
    \Big|\frac{x_{k'}}{k'}-\frac{x_k}{k}\Big| \leq Qk^{-\delta},\quad  k'\geq k\geq n,
\end{align}
Hence, $(\frac{x_k}{k})_{k\in\N}$ is Cauchy in $\R$, and thus $ v_+=  v_-\in\R$.
\end{proof}

It remains to prove Lemma~\ref{lemma:straightness}. 
The next result is a consequence of \eqref{eq:disintegration} applied to \cite[Theorem 9.1]{Bakhtin-Li:MR3911894} which concerns finite volume polymer measures with arbitrary length and arbitrary terminal measures. 

\begin{Lemma}\label{Lemma:straightness_general}
For each $M\in\N$ and each $\upsilon \in (0,1-2\delta)$, there are $\FF^{(n)}_1 = \FF^{(n)}_1(\delta, M,\upsilon) \in\cF$ for $n\in \N$, and $Q=Q(\delta)>0$ such that
\begin{enumerate}
    \item for all $\gamma\in(0,1-4\delta)$, there is $n_1(M,\gamma)\in\N $ such that
    \begin{align*}
        \P(\FF^{(n)}_1)\geq 1 - e^{-n^\gamma}\quad\quad n\geq n_1(M,\gamma);
    \end{align*}
    \item on $\FF^{(n)}_1$, there is $n_2(M,\upsilon)\in\N$ such that 
    \begin{align*}
        \mu\big(B(n,M,\upsilon)\big)
      \geq 1-e^{-n^\upsilon}, \quad n\geq n_2(M,\upsilon),
    \end{align*}
   where
     \[
    B(n,M,\upsilon) =\Big\{x\in \R^\N:    x^\out(k)\subset \Co(k,x_k,Qk^{-\delta}),\, \forall k\geq n \text{ with }(k,x_k)\in \Co(M)\Big\},
     \]
     with $\Co(M)=\{(k,r)\in \N\times \R:|r|\leq M k\}$.
\end{enumerate}
\end{Lemma}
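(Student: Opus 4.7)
The proof is a direct application of \cite[Theorem 9.1]{Bakhtin-Li:MR3911894}, which supplies straightness estimates for finite-volume polymer measures of arbitrary length $N$ with arbitrary terminal distributions, combined with the DLR disintegration~\eqref{eq:disintegration}. The role of~\eqref{eq:disintegration} is to realize, for any IVPM $\mu \in \Pc_F$ and any $N \ge n$, the $x_{\le N+1}$-marginal of $\mu$ as a point-to-distribution polymer measure of length $N$ (a mixture, over the $\nu$-marginal of $x_{N+1}$, of the point-to-point measures $\rho(x_{N+1}; \cdot)$), to which the cited finite-volume estimate applies directly.

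The plan is to define $\FF^{(n)}_1 = \FF^{(n)}_1(\delta, M, \upsilon)$ as the event on which the conclusion of \cite[Theorem 9.1]{Bakhtin-Li:MR3911894} holds uniformly over all lengths $N \ge n$ and all terminal distributions, with the constant $Q = Q(\delta)$ furnished by that result. Property~(1) is then inherited directly from the probability bound in the cited theorem, which admits any exponent $\gamma \in (0, 1 - 4\delta)$. For property~(2), fix $F \in \FF^{(n)}_1$ and $\mu \in \Pc_F$, and for each $N \ge n$ introduce the finite-length event
\[
  A_N = \big\{x \in \R^\N :\ \forall\, k \in [n,N] \text{ with } (k,x_k)\in \Co(M),\ \forall\, m \in (k,N],\ (m, x_m) \in \Co(k, x_k, Qk^{-\delta})\big\}.
\]
Applying the straightness estimate on $\FF^{(n)}_1$ to the $x_{\le N+1}$-marginal of $\mu$ (viewed as a point-to-distribution polymer measure of length $N$) yields $\mu(A_N) \ge 1 - e^{-n^\upsilon}$ for every $N \ge n_2(M,\upsilon)$. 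A direct inspection of the definitions shows $A_{N+1} \subset A_N$ and $\bigcap_{N \ge n} A_N = B(n,M,\upsilon)$: indeed, each pair $(k,m)$ with $k \ge n$, $m > k$ and $(k,x_k)\in \Co(M)$ is eventually covered by the finite-length constraint as $N \to \infty$. Continuity of $\mu$ from above then gives $\mu(B(n,M,\upsilon)) \ge 1 - e^{-n^\upsilon}$.

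The only nontrivial point is the uniformity of the finite-volume straightness bound in both the length $N$ and the terminal distribution; without this uniformity the $N \to \infty$ limit would degrade the estimate. This uniformity is precisely what is emphasized by the phrase ``arbitrary length and arbitrary terminal measures'' after the citation, and once it is taken as granted the lemma follows with essentially no additional work.
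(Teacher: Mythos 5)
Your proposal is correct and matches the paper's (extremely terse) treatment: the paper disposes of this lemma in a single sentence asserting it is a consequence of the DLR disintegration~\eqref{eq:disintegration} applied to \cite[Theorem 9.1]{Bakhtin-Li:MR3911894} ``which concerns finite volume polymer measures with arbitrary length and arbitrary terminal measures.'' Your elaboration --- reading the $x_{\leq N+1}$-marginal of an IVPM as a point-to-distribution polymer of length $N$ via~\eqref{eq:disintegration}, invoking the finite-volume straightness bound uniformly in $N$ and in the terminal law, and passing to the limit $N\to\infty$ through the decreasing events $A_N$ with $\bigcap_{N\geq n}A_N = B(n,M,\upsilon)$ --- is precisely what that sentence compresses, and the one hinge you flag (uniformity of the bound in the length and the terminal measure) is exactly the property the authors emphasize as the content of the cited theorem.
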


\begin{proof}[Proof of Lemma~\ref{lemma:straightness}]

To apply Lemma~\ref{Lemma:straightness_general}, let us 
set $n(M) = n_1(M,\gamma)\vee n_2(M,\upsilon)$. 
Taking
\begin{align*}
    \FF^*= \bigcap_{M\in\N}\bigcup_{n\geq n(M)}\FF^{(n)}_1(\delta, M,\upsilon)
\end{align*}
and noticing that
\begin{align*}
    B\supset \bigcap_{M\in \N}\bigcup_{n\geq n(M)}B(n,M,\upsilon)
\end{align*}
we obtain the desired result.
\end{proof}

Lastly, using Lemma~\ref{lemma:straightness}, we can prove the following result on the transversal fluctuations of polymer paths under IVPMs, which is a restatement of Theorem~\ref{thm:fluc_intro}.

\begin{Th}For every $\xi'>3/4$, there is a full measure set $\FF^*\in \cF$ and a constant $Q>0$ such that 
for every $F\in \FF^*$, $v\in\R$, $\mu\in \Pc_F(v)$, and $\mu$-a.e.\ polymer path $x$, we have
$|x_k-vk|<Qk^{\xi'}$ for sufficiently large $k$ (depending on $x$).
\end{Th}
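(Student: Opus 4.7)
The strategy is to deduce this theorem as an essentially immediate consequence of the straightness estimate Lemma~\ref{lemma:straightness} combined with the definition of $S(v)$, which is precisely the set of paths with $x_k/k \to v$.

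Given $\xi' > 3/4$, I would first exploit the freedom in the choice of $\delta$ in Lemma~\ref{lemma:straightness} by picking any $\delta$ in the nonempty interval $(1-\xi', 1/4)$; this is where the hypothesis $\xi' > 3/4$ is used. With this $\delta$ fixed, the lemma supplies a full-measure set $\FF^* \subset \FF_0$ and a constant $Q = Q(\delta)$ such that for every $F \in \FF^*$ and every $\mu \in \Pc_F$, the set
\begin{equation*}
B = \big\{x \in \R^\N :\ \exists\, n \in \N,\ \forall k \geq n,\ x^{\out}(k) \subset \Co(k, x_k, Q k^{-\delta})\big\}
\end{equation*}
has full $\mu$-measure.

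Next, for $\mu \in \Pc_F(v)$, the defining property $\mu(S(v)) = 1$ guarantees that $\mu$-a.e.~path also satisfies $x_m/m \to v$ as $m \to \infty$. For $x \in B \cap S(v)$ and $k \geq n(x)$, the cone inclusion unpacks to $|x_m/m - x_k/k| \leq Q k^{-\delta}$ for every $m > k$. Letting $m \to \infty$ with $k$ fixed and using $x_m/m \to v$ gives $|v - x_k/k| \leq Q k^{-\delta}$, and multiplying through by $k$ yields $|x_k - vk| \leq Q k^{1 - \delta}$ for all $k \geq n(x)$.

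Finally, since $\delta > 1 - \xi'$, we have $1 - \delta < \xi'$, so $Q k^{1-\delta} < Q k^{\xi'}$ for every $k > 1$, giving the desired bound $|x_k - vk| < Q k^{\xi'}$ for all sufficiently large $k$ (depending on $x$). The entire conceptual load is carried by Lemma~\ref{lemma:straightness} (which itself rests on the cone estimate of \cite[Theorem~9.1]{Bakhtin-Li:MR3911894}); what remains here is merely the elementary arithmetic relationship $1 - \delta < \xi'$ that converts the straightness exponent into the target fluctuation exponent, and an application of dominated/continuous passage to the limit $m \to \infty$ inside an absolute value. For this reason I do not anticipate any real obstacle; the only mild bookkeeping issue is making sure the set $\FF^*$ and constant $Q$ produced depend only on $\xi'$ (through $\delta$) and not on $v$ or on the specific choice of $\mu \in \Pc_F(v)$, which is built into Lemma~\ref{lemma:straightness}.
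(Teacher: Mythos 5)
Your proof is correct and takes essentially the same approach as the paper: invoke Lemma~\ref{lemma:straightness} for a suitable $\delta$, restrict to $B\cap S(v)$, and let $m\to\infty$ in the cone estimate to produce $|x_k-vk|\leq Qk^{1-\delta}$. The only cosmetic difference is that you pick $\delta$ strictly inside $(1-\xi',1/4)$ while the paper sets $\delta=1-\xi'$; both yield the stated bound.
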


\begin{proof} 
Let $\FF^*$ and $B$ be given by Lemma~\ref{lemma:straightness} for  $\delta = 1 -\xi'$. We have $\mu(B\cap S(v))=1$. For each $x \in B\cap S(v)$, we have \eqref{eq:x/k} for sufficiently large $k$ and $k'$ satisfying $k'\geq k$. Sending $k'\to \infty$, we complete the proof.
\end{proof}

\subsection{Ergodic IVPMs have asymptotic slopes} 
We recall the definition of $\LL$ in~\eqref{eq:def_LL}.
For  $A\in\cB^{1,\infty}$ and a probability measure $\mu$ on  $(\LL,\cB^{1,\infty})$, we say that $A$ is $\mu$-invariant under a Markov semigroup $(P^t)$ if $P^t\mathds{1}_A = \mathds{1}_A$, $\mu$-a.s. A $(P^t)$-invariant probability measure $\mu$ on  $(\LL,\cB^{1,\infty})$ is said to be ergodic if $\mu(A)\in\{0,1\}$ for every $\mu$-invariant $A$. The following result complements  Proposition~\ref{prop:extreme_IVPM_slope} in  Theorem~\ref{eq:extreme-measures-have-direction}.

\begin{Prop}
There is a full measure set $\FF^*\in \cF$ such that the following holds: if $F\in\FF^*$ and $\mu\in \Pc_F$ satisfying $\mu(\LL)=1$ is ergodic for the Markov semigroup generated by the polymer dynamics on $(\LL,\cB^{1,\infty})$, then there is $v\in\R$ such that $\mu(S(v))=1$.
\end{Prop}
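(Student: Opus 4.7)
The plan is to reduce the proposition to the observation that ergodicity, combined with the $\mu$-a.s.\ existence of asymptotic slopes from Lemma~\ref{lemma:straightness}, forces the slope function to be constant $\mu$-a.s. Set $\psi(x) = \limsup_{k \to \infty} x_k/k$, a Borel function $\R^\N \to [-\infty, +\infty]$. I would take $\FF^*$ to be the intersection of $\FF_0$ (from Lemma~\ref{Lemma:f_property}) with the full-measure set provided by Lemma~\ref{lemma:straightness}, so that for every $F \in \FF^*$ and every $\mu \in \Pc_F$, $\psi(x) = \lim_k x_k/k \in \R$ holds on a $\mu$-full-measure subset $B$.

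Fix $F \in \FF^*$ and $\mu \in \Pc_F$ as in the statement. Step (1): since $\LL = \XX^{1,\infty}$ and $\mu(\LL)=1$, Proposition~\ref{prop:invariance} applied with $(\alpha,p)=(1,\infty)$ gives that $\mu$ is $(P^t)$-invariant. Step (2): I claim $\psi \circ \Phi^t_{F,W} = \psi$ on $B$ for every $t \geq 0$ and every $W \in \WW_0$. Indeed, for $x \in B$ with $\psi(x) = v \in \R$, the equality $\liminf_k x_k/k = \limsup_k x_k/k = v$ means $x \in S(q_1, q_2)$ for every rational pair with $q_1 < v < q_2$; by the corollary to Proposition~\ref{prop:invariance_slope}, $\Phi^t_{F,W} x \in S(q_1, q_2)$ as well, so $\limsup_k(\Phi^t_{F,W} x)_k / k \leq q_2$ and $\liminf_k(\Phi^t_{F,W} x)_k / k \geq q_1$; letting $q_1, q_2 \to v$ along rationals yields $\psi(\Phi^t_{F,W} x) = v = \psi(x)$.

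Step (3): For each $q \in \Q$, the set $A_q := \{x \in \LL : \psi(x) \leq q\}$ is $\Phi^t_{F,W}$-forward-invariant for $\mu$-a.e.\ $x$ and every $W \in \WW_0$, so $P^t \mathds{1}_{A_q} \geq \mathds{1}_{A_q}$ pointwise $\mu$-a.s.; integrating against $\mu$ and using Step (1) gives $\int P^t \mathds{1}_{A_q}\, \d\mu = \mu(A_q) = \int \mathds{1}_{A_q}\, \d\mu$, so the inequality is in fact an equality $\mu$-a.s. Thus $A_q$ is $\mu$-invariant in the sense defined just before the proposition. Step (4): by ergodicity, $\mu(A_q) \in \{0,1\}$ for every rational $q$. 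Setting $v := \inf\{q \in \Q : \mu(A_q) = 1\}$ (finite, since $\psi$ is $\mu$-a.s.\ finite), continuity of $\mu$ from above along rationals $q_n \downarrow v$ gives $\mu(\{\psi \leq v\}) = 1$, and continuity from below along $q_n \uparrow v$ gives $\mu(\{\psi \geq v\}) = 1$; combining and intersecting with $B$ yields $\mu(S(v)) = 1$.

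The main conceptual step is (2): transferring the deterministic invariance of $S(v_-, v_+)$ (finite $v_\pm$) supplied by Proposition~\ref{prop:invariance_slope} into an invariance statement for the $\R$-valued function $\psi$. This transfer relies crucially on the $\mu$-a.s.\ finiteness of $\psi$ from Lemma~\ref{lemma:straightness}, which lets one sandwich the value $v = \psi(x)$ between rationals $q_1 < v < q_2$ and reduce to the countably many ``good'' cases covered by the earlier proposition. Everything else --- Step (1) is a direct citation, and Steps (3)--(4) are standard ergodic-theoretic manipulations (forward invariance plus stationarity $\Rightarrow$ a.s.\ invariance; then a cut-point argument on the distribution function of $\psi$ under $\mu$) --- is routine.
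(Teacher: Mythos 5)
Your proof is correct and follows essentially the same strategy as the paper: both rely on Lemma~\ref{lemma:straightness} for the $\mu$-a.s.\ existence of a finite slope, on Proposition~\ref{prop:invariance_slope} for forward invariance of the slope intervals $S(v_-,v_+)$ (so that slope events are $\mu$-invariant), and on ergodicity to conclude that the slope is $\mu$-a.s.\ constant. The only difference is cosmetic — the paper observes that $\liminf x_k/k$ and $\limsup x_k/k$ are both measurable with respect to the $0$--$1$ $\sigma$-algebra $\cG=\sigma\{S(v_-,v_+)\}$ and hence a.s.\ constant, while you run a cut-point argument on the level sets $A_q=\{\psi\le q\}$ of the limsup — and your Step (1) is redundant since ergodicity already presupposes invariance of $\mu$.
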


\begin{proof}
Let~$\FF^*$ be given by Lemma~\ref{lemma:straightness} for some $\delta$. Let $F\in\FF^*$ and let $\mu\in\Pc_F$ be ergodic. For real $v_-\leq v_+$, Proposition~\ref{prop:invariance_slope} implies that $S(v_-,v_+)$ is $\mu$-invariant and thus $\mu(S(v_-,v_+))\in \{0,1\}$.
Therefore, $\cG=\sigma\{S(v_-,v_+): v_-\le v_+\}$, a $\sigma$-algebra on $\LL$, contains only sets of measure $0$ and $1$. The random variables
$
 \bar v_-=\liminf_{k\to \infty} \frac{x_k}{k},\quad  \bar v_+ =\limsup_{k\to \infty} \frac{x_k}{k}
$
are well defined on $\LL$, finite, and $\cG$-measurable. Therefore, there is a set $A$ with $\mu(A)=1$ such that $ \bar v_+$ and  $\bar v_-$ are  constants on $A$, and it remains to show that $\bar v_-=\bar v_+\in\R$.  We have $\mu(A\cap B) =1$, where $B$ is given in Lemma \ref{lemma:straightness}. For every $x\in A\cap B$, the  definition of $B$ ensures that there is $n=n(x)$ such that \eqref{eq:x/k} holds.
This implies that $(\frac{x_k}{k})_{k\in\N}$ is Cauchy in~$\R$. Since $x$ is also in $A$, we must have $\bar v_+= \bar v_-\in\R$.
\end{proof}

\section{Ordering by noise}
\label{sec:monotonization}

We recall the solution operator $\Phi^t_{F,W}$ defined 
in \eqref{eq:solution-map} and the relations $\preceq$, $\succeq$, $\prec$, $\succ$ introduced in Section~\ref{sec:monotonicity}.

The following is our monotonization result  which is the precise statement of part~\ref{it:ordering} in Theorem~\ref{th:collect}:

\begin{Th}[Ordering by noise]\label{th:monotonization}
For almost every realization of $F$, there is a full measure set $\WW(F)\in \cW$ 
with the following property: for every $u,v\in\R$ satisfying $u>v$, $x\in S(u)$ and $y\in S(v)$, there is a stopping time $\tau = \tau(x,y,F)\geq 0$ with respect to $(\cW_{[0,t]})_{t\geq 0}$ such that, for every $W\in\WW(F)$, we have $\tau <\infty$ and
\begin{align}\label{eq:monotonized}
    \Phi^t_{F,W}x \succeq \Phi^t_{F,W}y,\quad  t\geq\tau.
\end{align}

\end{Th}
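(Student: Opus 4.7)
The plan is to study the difference $Z(t):=\Phi^t_{F,W}x-\Phi^t_{F,W}y$ pathwise, compare it to the homogeneous discrete heat equation, and exploit the flat-region condition~(\ref{eq:flat}) to control the nonlinear error, then propagate the resulting order by Lemma~\ref{Lemma:monotonicity}. Writing $X(t)=\Phi^t_{F,W}x$ and $Y(t)=\Phi^t_{F,W}y$, the common noise $\sigma W$ cancels in the difference, so
\[
\dot Z_k(t)=\Delta_k Z(t)+g_k(t),\qquad g_k(t):=f_k(X_k(t))-f_k(Y_k(t)),\qquad Z_0(t)\equiv0.
\]
By Proposition~\ref{prop:invariance_slope}, $Z(t)$ has asymptotic slope $u-v>0$ for every $t\ge0$.

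Next I would analyze the pure heat flow $\dot w=\Delta w$ with $w_0\equiv0$, starting from any $w(0)$ with slope $u-v$ at infinity. Since the linear profile $\ell_k:=(u-v)k$ satisfies $\Delta\ell=0$ and $\ell_0=0$, one has $w(t)=\ell+e^{t\Delta}(w(0)-\ell)$, where $e^{t\Delta}$ is the discrete Dirichlet heat semigroup on $\N$, representable via a simple random walk killed at $0$. Recurrence of the walk yields $(e^{t\Delta}r)_k\to0$ pointwise in $k$ whenever $r_k=o(k)$. Thus, once $N$ is taken large enough that the linear term $(u-v)k$ dominates the heat-evolved remainder on $\{k\ge N\}$, there is a deterministic time $T_*$ after which the pure heat flow of $Z(0)$ is strictly coordinatewise positive on $\{k\le N\}$; positivity on $\{k>N\}$ comes for free from the slope and a uniform tail estimate on $Z(t)$, obtainable from the $\LL$-norm bounds of Section~\ref{section:dyn_inf_poly}.

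The real work is a recurrence-to-flatness property: for $\sW$-a.e.\ $W$, there is a stopping time $T_0<\infty$ such that throughout $[T_0,T_0+T_*]$ both $X_k(s)$ and $Y_k(s)$ lie inside a common flat interval $[a-l,a+l]$ for all $k\le N$. Given $N$, $T_*$, and small $\delta>0$, the flatness condition (\ref{eq:flat}) supplies such an $a$ with $|f_k(r)|\le\delta$ on $\{1,\dots,N\}\times[a-l,a+l]$. The existence of $T_0$ would then follow from a quantitative small-ball/controllability estimate for the SDE (using the nondegenerate Gaussian noise acting independently on each coordinate) combined with the strong Markov property and a Borel--Cantelli argument on disjoint time windows. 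On $[T_0,T_0+T_*]$ one has $|g_k|\le2\delta$ for $k\le N$, so a Gronwall comparison in the spirit of Lemma~\ref{lemma:comp_in_*} shows that $Z(T_0+T_*)$ differs from the pure heat evolution of $Z(T_0)$ by at most a constant multiple of $\delta$ uniformly in $k\le N$. Choosing $\delta$ small enough compared to the positivity margin produced in the heat-equation step yields $Z(T_0+T_*)\succeq0$; setting $\tau:=T_0+T_*$ and invoking Lemma~\ref{Lemma:monotonicity} propagates $\Phi^t_{F,W}x\succeq\Phi^t_{F,W}y$ to all $t\ge\tau$.

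The main technical obstacle is this recurrence step. Because $x$ and $y$ carry distinct slopes $u$ and $v$, both trajectories separate linearly in $k$, so the common flat interval must have length $l\gtrsim(u-v)N$. One needs a uniform-in-initial-data lower bound on the probability that the noise drives the first $N$ coordinates of both $X$ and $Y$ into this interval and keeps them there throughout a time $T_*$, while simultaneously maintaining enough control on the tail $\{k>N\}$ to exploit the slope. All other ingredients---the heat-equation decay, the Gronwall comparison in the flat region, and the final propagation by monotonicity---fit naturally into the framework developed in Sections~\ref{section:SDE}--\ref{section:dyn_inf_poly}.
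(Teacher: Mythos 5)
The overall architecture you describe---establish eventual coordinatewise positivity for the homogeneous heat flow started from a difference of slope $u-v>0$, use the flat-region hypothesis~\eqref{eq:flat} and a Gronwall comparison to transfer this to the true dynamics, and then propagate the resulting order by Lemma~\ref{Lemma:monotonicity}---does match the paper's decomposition into Lemmas~\ref{lemma:monot_det}, \ref{lemma:posi_prob_mt}, and the final patching step. But the recurrence-to-flatness claim, which you yourself flag as the main obstacle, is precisely where your proposal has a genuine gap, and the route you sketch to close it is not the paper's and is unlikely to go through as stated. You propose a quantitative small-ball/controllability estimate plus Borel--Cantelli on disjoint time windows. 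For Borel--Cantelli one needs a uniform positive lower bound on the conditional probability, on each window, of steering \emph{both} $X$ and $Y$ simultaneously into a single flat interval $[a-l,a+l]$ of length $l\gtrsim(u-v)N$ and holding them there for duration $T_*$, uniformly over whatever configuration they have reached at the start of the window. Nothing in Sections~\ref{section:SDE}--\ref{section:dyn_inf_poly} provides a priori control on where $X(t)$ and $Y(t)$ can be at a stopping time, so this lower bound has no obvious source: the trajectories could be in a region where the potential is very steep and the required ``push'' has vanishingly small probability, defeating the divergence of the conditional Borel--Cantelli series.

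The paper handles the recurrence step (Lemma~\ref{lemma:recur}) by a completely different argument that does not require any controllability estimate. It invokes the already-known unique IVPMs $\mu_{F,w}$ concentrated on $S(w)$ for rational perturbations $w$ of $u$ and $v$ (Theorem~\ref{thm:thermodynamic-limit}), uses the DLR structure (Lemma~\ref{lem:dominations-exist}) to show these give positive mass to the cones $C^{\succeq x}$ and $C^{\preceq x}$, passes to ergodic components (Lemma~\ref{lem:dominations-exist-ergodic}), and then applies the pointwise ergodic theorem to show Birkhoff-typical trajectories from those ergodic measures spend a positive fraction of time in tubes $\llbracket\rr^{w-h}-c,\rr^{w+h}+c\rrbracket$. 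Monotonicity (Lemma~\ref{Lemma:monotonicity}) then squeezes the arbitrary trajectories $\Phi^t_{F,W}x$ and $\Phi^t_{F,W}y$ between such typical trajectories, giving recurrence to the tubes for free. This is the key idea missing from your proposal: the invariant-measure/ergodic-theorem mechanism supplies the recurrence that a controllability argument would struggle to establish, and it is tied to the static polymer structure from~\cite{Bakhtin-Li:MR3911894} rather than to pathwise noise estimates. A secondary, smaller point: your heat-kernel representation via a killed random walk and the assertion that $(e^{t\Delta}r)_k\to0$ for $r_k=o(k)$ would need justification (the Dirichlet semigroup on $\N$ is sub-stochastic with nontrivial boundary behavior); the paper instead proves Lemma~\ref{lemma:monot_det} via a convexity-preservation argument (Lemma~\ref{lemma:convexity}) and monotone convergence, which avoids kernel asymptotics entirely.
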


The proof consists of three parts. We first prove the monotonization for solutions of the homogeneous discrete heat equation (see Definition~\ref{def:heat_eqn}). 

For $u,a \in \R$, we denote by $\rr^u+a$ the polymer chain given by $(\rr^u+a)_k = uk+a$ for all $k\in\N$.
It can be viewed as a ray with slope $u$ shifted by $a$.  In particular, we write $\rr^u=\rr^u+0$.
Recalling that we impose the Dirichlet boundary condition $X_0(t)\equiv 0$ on the polymer dynamics, we note that these rays are actually broken near the origin, so (except when $a=0$) they are not preserved by the homogeneous heat flow $\sS$ introduced in Definition~\ref{def:heat_eqn}.

\begin{Lemma}\label{lemma:monot_det}
For $a,b>0$, and $x=\rr^a-b$, there is $T\in(0,\infty)$ such that $\sS^T x\succ 0$.
\end{Lemma}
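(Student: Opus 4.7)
The plan is to exploit the linearity of the homogeneous heat semigroup $\sS$ together with the observation that the ray $\rr^a$ is a stationary solution of the discrete heat equation (under the Dirichlet convention $x_0\equiv0$). A direct computation gives $\Delta_1\rr^a=0-2a+2a=0$ and $\Delta_k\rr^a=a(k-1)-2ak+a(k+1)=0$ for $k\ge 2$; since $\rr^a\in\LL$, the uniqueness part of Theorem~\ref{thm:RDS} (with $F=W=0$; see Remark~\ref{Rem:SDE_Delta}) yields $\sS^t\rr^a=\rr^a$ for every $t\in\R$. The same uniqueness shows that $\sS^t$ is a linear operator on $\LL$. Writing $\mathbf{1}=(1,1,1,\ldots)\in\LL$ and $x=\rr^a-b\mathbf{1}$, I obtain
\[
(\sS^t x)_k \;=\; ak - b\, y_k(t), \qquad y(t):=\sS^t\mathbf{1},
\]
so it suffices to produce $T>0$ with $b\, y_k(T)<ak$ for every $k\in\N$.

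The key step is to show $y_k(t)\to 0$ as $t\to\infty$ for each fixed $k$. I would argue via Lemma~\ref{Lemma:monotonicity}: applied to the pair of initial data $0\preceq\mathbf{1}$ and to $\Delta\mathbf{1}=(-1,0,0,\ldots)\preceq 0$, monotonicity gives $0\preceq y(t)\preceq\mathbf{1}$ and $\dot y(t)=\sS^t\Delta\mathbf{1}\preceq 0$. Thus each $y_k(\cdot)$ is non-increasing on $[0,\infty)$ with values in $[0,1]$ and admits a limit $y^*_k\in[0,1]$. An approximation argument (for instance Lemma~\ref{Lemma:cvg_sol}) identifies $y^*$ as a bounded nonnegative stationary solution of the discrete heat equation with $y^*_0=0$; such a sequence is discrete-harmonic on $\N$, hence of the form $y^*_k=ck$, and boundedness forces $c=0$. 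Alternatively one may use the random walk representation $y_k(t)=\Prob{\tau_0>t}$, where $\tau_0$ is the absorption time at $0$ of a rate-$2$ continuous-time simple symmetric random walk on $\Z$ started at $k$, and invoke recurrence.

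To conclude, set $K=\lfloor b/a\rfloor$. For every $k>K$, $ak>b\ge b\, y_k(t)$ uniformly in $t\ge 0$, so $(\sS^t x)_k>0$ at all times. For the at most finitely many indices $k\in\{1,\dots,K\}$, the decay $y_k(t)\to 0$ yields some $T_k$ with $y_k(T_k)<ak/b$, extended by the monotonicity of $y_k$ to all $T\ge T_k$. Taking $T=\max_{1\le k\le K}T_k$ (or any $T>0$ if $K=0$) gives $(\sS^T x)_k>0$ for every $k\in\N$, that is, $\sS^T x\succ 0$. The main obstacle is the decay $y_k(t)\to 0$; everything else reduces to linear superposition and the monotonicity already established in Section~\ref{sec:monotonicity}.
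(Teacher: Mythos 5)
Your proof is correct and takes a genuinely different route from the paper's. The paper works directly with $x=\rr^a-b$: it verifies $\Delta x\succeq 0$ (indeed $\Delta_1 x=b>0$ and $\Delta_k x=0$ for $k\ge 2$), invokes the separate convexity-preservation Lemma~\ref{lemma:convexity} (proved by a maximum-principle argument on Galerkin approximations) to deduce $\frac{\d}{\d t}\sS^t x\succeq 0$, compares with the stationary solution $\rr^a$ to get boundedness, passes to a monotone pointwise limit $x^*$ which is discrete-harmonic with $x^*_0=0$, hence $x^*_k=kx^*_1$, and finally extracts $x^*_1>0$ from $x_k>0$ at large $k$ together with the time-monotonicity. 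You instead exploit linearity at the outset, decomposing $x=\rr^a-b\mathbf{1}$, noting that $\rr^a$ is fixed by $\sS^t$, and reducing the whole claim to the decay of $y(t)=\sS^t\mathbf{1}$. Your commutation $\dot y(t)=\sS^t\Delta\mathbf{1}$ (valid since $\Delta$ is bounded on $\LL$, so $\Delta y$ itself solves the heat equation with datum $\Delta\mathbf{1}$ and uniqueness applies), combined with $\Delta\mathbf{1}\preceq 0$ and Lemma~\ref{Lemma:monotonicity}, is an elegant substitute for Lemma~\ref{lemma:convexity} in this linear special case. The endgame also differs in flavor: the paper shows the harmonic limit has strictly positive slope, while you show the bounded nonnegative harmonic limit must vanish. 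Both arguments are short; yours is arguably more modular, concentrating the analytic work in a single decay estimate on the profile $\mathbf{1}$, which moreover admits the clean probabilistic reading $y_k(t)=\Prob{\tau_0>t}$ for a simple random walk absorbed at $0$, so that recurrence gives the decay for free. One phrasing nit: the bound $y(t)\preceq\mathbf{1}$ is not a standalone consequence of $0\preceq\mathbf{1}$ — it follows from $\dot y\preceq 0$, which is how your argument actually uses it.
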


Then we use our assumption \eqref{eq:flat} on the presence of large regions where the potential is almost flat  together with  this lemma to prove that for almost every $F$, the monotonization happens with positive probability.

For $x,y,z\in\R^\N$, we write
\begin{align*}
y \in \llbracket x, z \rrbracket\quad  \text{if}\quad x\preceq y \preceq z.
\end{align*}

\begin{Lemma}\label{lemma:posi_prob_mt}
For every $u,v\in\R$ satisfying $u>v$, every $c>0$ and every $h\in(0,\frac{1}{2}(u-v))$, there is $T>0$ such that
\begin{align*}
    \sW\Big\{\Phi^T_{F,W}x \succ \Phi^T_{F,W}y,\quad   \forall x\in \llbracket \rr^{u-h}-c,\rr^{u+h}+c\rrbracket,\ \forall  y\in \llbracket \rr^{v-h}-c,\rr^{v+h}+c\rrbracket \Big\}>0
\end{align*}
holds for each $F\in\FF_1$, where $\FF_1$ is the intersection of $\FF_0$ given in Theorem~\ref{thm:RDS} with the full measure set on which~\eqref{eq:flat} holds. 
\end{Lemma}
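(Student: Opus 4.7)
By the monotonicity in Lemma~\ref{Lemma:monotonicity}, setting $x_0 = \rr^{u-h}-c$ and $y_0 = \rr^{v+h}+c$, for any $x \succeq x_0$ and $y \preceq y_0$ the relation $\Phi^T_{F,W}x \succ \Phi^T_{F,W}y$ follows from $\Phi^T_{F,W}x_0 \succ \Phi^T_{F,W}y_0$. I would therefore reduce the problem to producing $T > 0$ with $\sW\{\Phi^T_{F,W}x_0 \succ \Phi^T_{F,W}y_0\}>0$.

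\textbf{Duhamel decomposition and the heat-flow target.} Let $Z(t) = \Phi^t_{F,W}x_0 - \Phi^t_{F,W}y_0$. The Wiener increments cancel in $Z$, which solves $\dot Z = \Delta Z + (f(\Phi^t x_0) - f(\Phi^t y_0))$, so Duhamel's formula gives
\[
Z(T) \;=\; \sS^T(x_0 - y_0) \;+\; \int_0^T \sS^{T-s}\bigl[f(\Phi^s_{F,W}x_0) - f(\Phi^s_{F,W}y_0)\bigr]\,ds \;=:\; g + \Pi(T).
\]
Since $x_0 - y_0 = \rr^{u-v-2h} - 2c$ with $u-v-2h > 0$ and $2c > 0$, Lemma~\ref{lemma:monot_det} supplies $T$ with $g \succ 0$. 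The invariance $\sS^t\rr^{u-v-2h} = \rr^{u-v-2h}$ combined with the maximum principle applied to the constant sequence $2c$ further yields $g_k \geq (u-v-2h)k - 2c$, so $g_k$ grows linearly in $k$. The plan is now to exhibit a noise event of positive $\sW$-measure on which $|\Pi_k(T)| < g_k$ for every $k \in \N$.

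\textbf{Large-$k$ coordinates handled automatically.} Fix $M>0$ and consider the event $E_0 = \{\|W\|_{\sC(0,T;\LL)} \leq M\}$; since $W$ is continuous in $\LL$ (Lemma~\ref{Lemma:W_local_Holder}), $\sW(E_0) > 0$. On $E_0$, a Gronwall argument yields a deterministic bound on $\|\Phi^s x_0\|_\LL, \|\Phi^s y_0\|_\LL$; together with the logarithmic growth $|f_k(r)| \leq C(1 + \log k + \log_+|r|)$ from Lemma~\ref{Lemma:f_property}\eqref{item:f_k_upper_bound} and the decay of the discrete heat kernel, this gives a bound of the form $|\Pi_k(T)| \leq C(M,T)\log(1+k)$ on $E_0$. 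Hence there is a deterministic $n_0$ such that $Z_k(T) > 0$ for every $k > n_0$ on $E_0$.

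\textbf{Small-$k$ coordinates via flatness; the main obstacle.} Set $\eta = \min_{k \leq n_0} g_k > 0$. I would pick an auxiliary $n' > n_0$ large enough so that the Laplacian coupling from indices $>n_0$ onto $\{1,\dots,n_0\}$ over time $T$ is negligible, and then apply \eqref{eq:flat} with this $n'$, a small $\delta$, and large $l$ to obtain $a \in \R$ with $|f_k(r)|\leq \delta$ on $\{1,\dots,n'\}\times[a-l,a+l]$. On the subevent
\[
E_1 \;=\; E_0 \cap \bigl\{(\Phi^s x_0)_k,\,(\Phi^s y_0)_k \in [a-l,a+l]\ \forall k\leq n',\ s\in[0,T]\bigr\},
\]
one has $|f_k(\Phi^s x_0) - f_k(\Phi^s y_0)|\leq 2\delta$ for $k\leq n'$, $s\in[0,T]$, so for $\delta$ small enough $|\Pi_k(T)|\leq \eta/2$ for $k\leq n_0$, and combining with the previous step one concludes $Z(T)\succ 0$ on $E_1$. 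The hard step is to verify $\sW(E_1) > 0$: the band centre $a$ is prescribed by $F$ and will generally sit far from the ranges of $x_0,y_0$ on the coordinates $k \leq n'$, so the noise must first steer the two trajectories into $[a-l,a+l]$ and then confine them there for the whole interval $[0,T]$. Here I would invoke the ellipticity of the additive noise: on the Galerkin truncation at level $n'$ (a nondegenerate finite-dimensional SDE with constant diffusion), a Cameron--Martin/support-theorem argument assigns positive Wiener measure to any cylindrical event prescribing a smooth control driving the projection from its initial position into the flat band and keeping it there throughout $[0,T]$. With $l$ chosen large enough to absorb both the translation distance and the $O(\sigma\sqrt{T})$ Brownian excursions, the event $E_1$ has positive probability, and Corollary~\ref{cor:galerkin} together with the coordinate-wise convergence of Galerkin approximations transfers the conclusion to the full dynamics.
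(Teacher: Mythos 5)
Your plan correctly identifies the three key ingredients (monotonicity reduction to extremal initial data, the flat-region mechanism via \eqref{eq:flat}, and the heat-flow monotonization from Lemma~\ref{lemma:monot_det}), and the Duhamel decomposition $Z(T) = g + \Pi(T)$ is a legitimate alternative to the paper's comparison-with-auxiliary-solutions scheme. However, the way you set up the noise event breaks down in two related places.

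First, your event $E_1$ is internally inconsistent. You require $(\Phi^s x_0)_k, (\Phi^s y_0)_k \in [a-l,a+l]$ for \emph{all} $s\in[0,T]$, but at $s=0$ these equal $(x_0)_k=(u-h)k-c$ and $(y_0)_k=(v+h)k+c$, which are fixed data having nothing to do with $a$. The flatness assumption only asserts existence of \emph{some} $a$; it gives you no control over where $a$ lies, and generically $a$ will be far from the ranges of $x_0,y_0$. So as written $E_1=\emptyset$. You acknowledge the need for a steering phase in the discussion, but it is absent from the definition of $E_1$, and this is precisely the nontrivial part of the lemma.

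Second, even granting a steering phase, there is a circularity in the parameter order that you do not close. You first fix $M$ (so that $E_0$ has positive probability), then derive $n_0$ from the bound $|\Pi_k(T)|\le C(M,T)\log(1+k)$, then choose $n'>n_0$, and only then obtain $a$ from \eqref{eq:flat}. But to steer $\Phi^s x_0$ into a band centred at $a$, the Wiener path must traverse a distance comparable to $|a|$, which is incompatible with the constraint $\|W\|_{\sC(0,T;\LL)}\le M$ unless $M\gtrsim|a|$. Increasing $M$ increases $C(M,T)$ and hence $n_0$, which increases $n'$, which changes $a$ — and nothing guarantees this loop stabilises. Your remark that one should ``choose $l$ large enough to absorb the translation distance'' cannot fix this, since $a$ is obtained \emph{after} $l$ is chosen and grows with it. The Cameron--Martin/support-theorem appeal is also only sketched and does not address how the required control is to coexist with the $E_0$ bound, nor how the positive-probability statement passes to infinitely many coordinates simultaneously (the paper handles this via the $k^{1/8}$-weighted noise events and a Borel--Cantelli summation in Lemma~\ref{lemma:tame_noise}).

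The paper resolves both difficulties with a two-phase noise control: a very short steering interval $[0,t_0]$ with $t_0=\eps^2$ during which the noise may be of order $|a|$ (so the bound in $E_0$ is time-inhomogeneous, with a larger constant only on $[0,t_0]$), followed by a confinement interval $[t_0,t_0+T]$ during which the Wiener \emph{increments} are kept of order $\eps^2$. The drift contribution on the steering interval is harmless because $t_0$ is small; the drift contribution on the confinement interval is harmless because one is in the flat region. Moreover, the paper's large-$k$ control is done in the weighted $\|\cdot\|_*$-norm (so the tail $\sum_{k>n}(\log k)^2k^{-3/2}$ is summable and made $\le\eps^4$ by choosing $n$), which sidesteps the need for the coordinatewise log-versus-linear comparison and the dependence of a cutoff $n_0$ on the steering constant $M$. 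If you amend your $E_1$ to include a short steering window and replace the $\log(1+k)$ estimate by a weighted summable bound, your Duhamel route should become a viable variant of the paper's argument.
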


\begin{Rem} This lemma is trivially true with $T=0$ for the case $c=0$. For $c>0$, the initial conditions $x$ and $y$ may fail to satisfy
$x\succeq y$.
\end{Rem}

The last part is the following recurrence result.
\begin{Lemma}\label{lemma:recur}
For every $u,v\in\R$ satisfying $u>v$, there is a full measure set $\FF(u,v)\in\cF$ such that the following holds. For every $x\in S(u)$, $y\in S(v)$, $h>0$ and $F\in\FF(u,v)$ there is a constant $c>0$ and a full measure set $\WW(x,y,h,F)\in\cW$ such that for every $W\in \WW(x,y,h,F)$ there is a sequence of finite times $t_j\to\infty$ satisfying
 \begin{align*}
 \Phi^{t_j}_{F,W}x\in \llbracket \rr^{u-h}-c,\rr^{u+h}+c\rrbracket, \quad   j\in\N,\\
\Phi^{t_j}_{F,W}y \in \llbracket \rr^{v-h}-c,\rr^{v+h}+c\rrbracket,\quad   j\in\N.
 \end{align*}
\end{Lemma}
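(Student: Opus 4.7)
The plan is to sandwich $x$ (and similarly $y$) between two random paths drawn from IVPMs whose slopes straddle $u$, propagate the sandwich by monotonicity (Lemma~\ref{Lemma:monotonicity}), and then exploit the invariance (Proposition~\ref{prop:invariance}) and straightness (Theorem~\ref{thm:fluc_intro}) of IVPMs to force recurrence. By the symmetric roles of $x$ and $y$, it suffices to prove the one-chain analogue for $x\in S(u)$; the full statement then follows by intersecting the two resulting full-measure events and taking a common $c$.

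Fix a small $\eta\in(0,h/2)$ and, using Theorem~\ref{thm:fluc_intro}, pick $c_0$ so large that $\mu_{F,u\pm\eta}(T^\pm)\geq 1-\epsilon$, where $T^\pm:=\llbracket \rr^{u\pm\eta-\eta/2}-c_0,\rr^{u\pm\eta+\eta/2}+c_0\rrbracket$ sits inside $T_{c_0}:=\llbracket \rr^{u-h}-c_0,\rr^{u+h}+c_0\rrbracket$. Introduce auxiliary independent paths $y^\pm\sim\mu_{F,u\pm\eta}$, independent of $W$, and verify that the event $E:=\{y^-\preceq x\preceq y^+\}\cap\{y^\pm\in T^\pm\}$ has positive probability. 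Indeed, for $k\geq K(x,y^\pm)$ the slope gap $\eta$ dominates both the $O(k^{\xi'})$ fluctuations of $y^\pm$ furnished by Theorem~\ref{thm:fluc_intro} and the $o(k)$ deviation $x_k-uk$, so $y^-_k\leq x_k\leq y^+_k$ is automatic; for the finite prefix $k\leq K$, the DLR disintegration~\eqref{eq:disintegration} endows $(y^\pm_1,\dots,y^\pm_K)$ with a strictly positive conditional density on $\R^K$, ensuring positive probability of the prescribed ordering on the finite prefix.

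On $E$, Lemma~\ref{Lemma:monotonicity} gives $\Phi^t_{F,W}y^-\preceq\Phi^t_{F,W}x\preceq\Phi^t_{F,W}y^+$ for all $t\geq 0$. By Proposition~\ref{prop:invariance}, $\Pp\{\Phi^n_{F,W}y^\pm\in T^\pm\}\geq 1-\epsilon$ for every integer $n\geq 0$, so a union bound on marginals together with Fatou's lemma applied to $b_n:=\mathds{1}_{T^-}(\Phi^n y^-)\mathds{1}_{T^+}(\Phi^n y^+)$ produce, with probability at least $1-2\epsilon$, a sequence $n_j\to\infty$ along which $\Phi^{n_j}y^\pm\in T^\pm$ simultaneously. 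On $E$ intersected with this event, the sandwich places $\Phi^{n_j}x\in T_{c_0}$ along $n_j\to\infty$. Since the recurrence event $A_{c_0}$ depends only on $W$ and $W$ is independent of $y^\pm$, one deduces $\sW\{A_{c_0}\}\geq 1-2\epsilon/\Pp(E)$, which can be made arbitrarily close to $1$ by choosing $c_0$ large.

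The principal obstacle is closing the remaining gap from ``arbitrarily close to $1$'' to $\sW\{A_c\}=1$ for a single fixed constant $c$. I would handle this by a regeneration argument combining two ingredients: first, a uniform hitting lower bound $\inf_{z\in T_{c_0}}\sW\{\exists t\in[0,s_0]:\Phi^t_{F,W}z\in T_{c_0}\}\geq p_0>0$, obtained by reapplying the sandwich construction starting from an arbitrary $z\in T_{c_0}$, whose bounded transversal fluctuations make the sandwich probability uniformly controlled via the positivity and continuity of finite-dimensional IVPM densities; second, a strong-Markov conditional Borel--Cantelli argument which, upon any single entry into $T_{c_0}$, promotes it into infinite returns $\sW$-a.s. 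Together these show that $A_{c_0}$ coincides $\sW$-a.s.\ with the event of ever entering $T_{c_0}$, after which a careful iteration — exploiting the flexibility of resampling $y^\pm$ at each attempt and the independence from $W$ — absorbs the remaining exceptional $W$-set and yields $\sW\{A_c\}=1$ for a suitably chosen $c$.
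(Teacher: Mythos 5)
Your first paragraph matches the paper's sandwich idea (compare Lemma~\ref{lem:dominations-exist}: if $w'>w$ and $x\in S(w)$ then $\mu_{w'}(C^{\succeq x})>0$, proved essentially as you describe via tail straightness and strict positivity of the finite-dimensional DLR conditionals), and your use of monotonicity to propagate the sandwich is also how the paper proceeds. The genuine gap is in how you extract almost-sure recurrence. Your union-bound/Fatou step only gives $\sW(A_{c_0})\ge 1-2\epsilon/\Pp(E)$, i.e.\ a probability that tends to $1$ along a sequence $c_0\to\infty$. Since $A_{c_0}$ is increasing in $c_0$ this shows $\sW(\bigcup_{c_0}A_{c_0})=1$, which only says that for a.e.\ $W$ there is some $c_0=c_0(W)$ for which recurrence to $T_{c_0}$ holds; it does not produce the single deterministic $c$ required by the lemma.

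The regeneration argument you sketch to close this gap does not work. A uniform lower bound $\inf_{z\in T_{c_0}}\sW\{\exists\,t\in[0,s_0]:\Phi^t_{F,W}z\in T_{c_0}\}\ge p_0>0$ cannot, together with the strong Markov property and a conditional Borel--Cantelli lemma, turn one visit into infinitely many: after a failed attempt the process may lie outside $T_{c_0}$, where your bound gives no control, so there is no way to force $\sum_n\Pp(A_n\mid\cF_{n-1})=\infty$. Iterating only gives $\Pp(\ge n \text{ hits})\ge p_0^{\,n-1}\Pp(\ge 1\text{ hit})\to 0$, which is vacuous, and the opposite bound would require exactly the recurrence you are trying to establish. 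The paper circumvents this entirely: it applies the ergodic decomposition of $\mu_{w'}$ (Lemma~\ref{lem:dominations-exist-ergodic}) to pick an \emph{ergodic} invariant measure $\nu_{w'}$ with $\nu_{w'}(C^{\succeq x})>0$, chooses $c$ so that $\nu_{w'}$ gives the relevant band measure $>1-\eps$, and then invokes the Birkhoff ergodic theorem for the Markov process with ergodic invariant law $\nu_{w'}$ to conclude that for $\nu_{w'}$-a.e.\ starting point the Ces\`aro averages of $\mathds{1}_{\text{band}}(\Phi^t_{F,W}\cdot)$ exceed $1-\eps$ for $\sW$-a.e.\ $W$. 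That statement is genuinely almost sure, not merely high probability, so a single deterministic sandwich pair picked from a positive-measure intersection of the cone and the full-measure ergodic set does the job. Replacing your Fatou step with this ergodic-theorem step (after passing to ergodic components) is the minimal repair.
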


\begin{proof}[Proof of Theorem~\ref{th:monotonization}]

Let $\FF_2=\FF_1\cap \big(\cap_{u'<v',\  u',v'\in \Q}\FF(u',v')\big),$ where the sets $\FF_1$ and $\FF(u',v')$ are taken from the above lemmas. This is a full measure set. Let us fix $F\in\FF_2$ and construct the set $\WW(F)$.

By the continuity of $t\mapsto \Phi^t_{F,W}x$ in \eqref{item:RDS_cont_dep} of Theorem~\ref{thm:RDS} and the standard time discretization argument (see the proof of \cite[Theorem 8.3]{rogers1994diffusions}), we can deduce that the polymer dynamics defines a strong Markov process. 
The continuity also ensures that $\tau(x,y,F)$, the first nonnegative time for $\Phi^t_{F,W}x\succeq \Phi^t_{F,W}y$ to hold, is a stopping time.
Along with Lemma~\ref{lemma:posi_prob_mt} and Lemma~\ref{lemma:recur} by setting $h=1$, this ensures that for every
$u'>v'$, every  $x'\in S(u')$ and $y'\in S(v')$
there is a full measure set $\widetilde\WW(x',y',F)\in \cW$ on which $\tau(x',y',F)<\infty$.

Let
\begin{align*}
    \WW(F)=\WW_0\cap \bigcap_{\substack{u'>v'\\ u',v'\in\Q}}\bigcap_{a',b'\in\Q} \widetilde\WW\left(\rr^{u'}+a',\rr^{v'}+b',F\right) .
\end{align*}
Now, for arbitrary real $u>v$, $x\in S(u)$ and $y\in S(v)$, we set $\tau = \tau(x,y,F)$. We fix some $u',v',a',b'\in\Q$, and set $x'=\rr^{u'}+a'$ and $y'=\rr^{v'}+b'$ to ensure $u>u'>v'>v$, $x\succeq x'$ and $y'\succeq y$. By Lemma~\ref{Lemma:monotonicity}, we have $\Phi^t_{F,W}x \succeq \Phi^t_{F,W}x'$ and $\Phi^t_{F,W}y' \succeq \Phi^t_{F,W}y$ for all $t\geq 0$ and $W\in\WW_0$. On $\WW(F)$, we have $\tau(x,y,F) \leq \tau(x',y',F)<\infty$ by the above construction. 
Lastly, \eqref{eq:monotonized} is a consequence of the definition of $\tau$ and Lemma~\ref{Lemma:monotonicity}.
\end{proof}

\subsection{Proof of Lemma~\ref{lemma:monot_det}}

We need the next lemma, which states that if the initial condition is convex, then the solution stays convex.

\begin{Lemma}\label{lemma:convexity}
Suppose $x\in\XX^\alphap$ for $(\alphap)\in\setbetap$ (see \eqref{eq:betap_set}). If $\Delta x \succeq 0$, then $\Delta \sS^tx\succeq 0$ for all $t\geq 0$.
\end{Lemma}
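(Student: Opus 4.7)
The strategy is to commute the discrete Laplacian with the heat flow $\sS^t$ and thereby reduce the claim to the monotonicity already proved in Lemma~\ref{Lemma:monotonicity}. Concretely, I will show that $y(t) := \Delta \sS^t x$ coincides with $\sS^t(\Delta x)$, after which monotonicity applied to the ordered initial data $0 \preceq \Delta x$ yields the result.

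First, I verify that $y$ solves the homogeneous discrete heat equation. By Lemma~\ref{lemma:lap_bdd}, $\Delta x \in \XX^\alphap$, so $\sS^t(\Delta x)$ is well-defined and belongs to $\sC(\R;\XX^\alphap)$. Since $\sS^t x$ satisfies $\tfrac{d}{dt}(\sS^t x)_j = \Delta_j \sS^t x$ for every $j \in \N$, for $k \geq 2$ one gets
\begin{align*}
\dot y_k(t) = \Delta_{k-1}\sS^t x - 2\Delta_k \sS^t x + \Delta_{k+1}\sS^t x = y_{k-1}(t) - 2 y_k(t) + y_{k+1}(t) = \Delta_k y(t).
\end{align*}
For $k = 1$, using the standing Dirichlet condition $(\sS^t x)_0 \equiv 0$ and hence $\tfrac{d}{dt}(\sS^t x)_0 = 0$, a direct computation gives $\dot y_1(t) = -2\Delta_1 \sS^t x + \Delta_2 \sS^t x = -2 y_1(t) + y_2(t)$, which is precisely $\Delta_1 y(t)$ under the natural convention $y_0(t) \equiv 0$. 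Thus $y \in \cS(\Delta x, \Delta, 0, 0; \XX^\alphap)$, and by the uniqueness statement of Corollary~\ref{cor:galerkin} (applicable since $0 \in \FF_0$ and $0 \in \WW_0$ by Remark~\ref{remark:F=0_W=0}), we conclude $y(t) = \sS^t(\Delta x)$.

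Finally, the constant trajectory $0$ is the unique element of $\cS(0, \Delta, 0, 0; \XX^\alphap)$, so $\sS^t 0 \equiv 0$. Since $\Delta x \succeq 0$ by hypothesis, applying Lemma~\ref{Lemma:monotonicity} with $F = 0$, $W = 0$ to the two initial conditions $0 \preceq \Delta x$ gives
\begin{align*}
0 = \sS^t 0 \preceq \sS^t(\Delta x) = \Delta \sS^t x \qquad \text{for all } t \geq 0,
\end{align*}
which is the claim. No step presents a serious obstacle; the one delicate point is that the Dirichlet boundary of $\sS^t x$ at $k = 0$ is exactly what makes $\dot y_1$ close up with $\Delta_1 y$ under the convention $y_0 \equiv 0$, so that the commutation $\Delta \sS^t = \sS^t \Delta$ is legitimate.
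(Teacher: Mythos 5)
Your proof is correct, and it takes a genuinely different and more streamlined route than the paper's. The paper's argument fixes a Galerkin truncation, introduces the exit time $\tau=\sup\{t\geq0:\Delta_k y(s)\geq0,\ \forall s\leq t,\ \forall k\leq n\}$, and argues by contradiction: if $\tau<\infty$, one coordinate of the discrete second difference vanishes at $\tau$, and a propagation argument (using the structure of $\Delta$ and the Dirichlet/Neumann-type boundary of the truncated system) forces all of them to vanish, contradicting finiteness of $\tau$; then it passes to the limit in $n$. You instead exploit linearity: since $\Delta$ is a bounded operator on $\XX^\alphap$ (Lemma~\ref{lemma:lap_bdd}) and $\sS^t$ is its flow, the coordinatewise computation you do shows $y(t)=\Delta\sS^t x$ again solves $\dot y=\Delta y$ with the Dirichlet convention $y_0\equiv0$, and uniqueness identifies $y$ with $\sS^t(\Delta x)$. (Equivalently, $\sS^t=e^{t\Delta}$ commutes with its bounded generator; your hand computation makes this explicit without invoking operator calculus.) Positivity then falls out of the already-proved order-preservation Lemma~\ref{Lemma:monotonicity} applied to $0\preceq\Delta x$ with zero potential and zero noise, which is admissible by Remark~\ref{remark:F=0_W=0}. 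What your approach buys is brevity and the avoidance of any stopping-time or contradiction argument; what the paper's approach buys is a technique (maximum-principle-style reasoning at the Galerkin level) that is robust to nonlinearity and does not depend on $\sS^t$ being a linear semigroup — but here the equation is linear and homogeneous, so your shortcut is fully legitimate. One point worth making explicit, which you handled correctly: the commutation $\Delta\sS^t=\sS^t\Delta$ is delicate only at $k=1$ because of the pinned coordinate $x_0\equiv0$, and your observation that $\frac{d}{dt}(\sS^t x)_0\equiv0$ is exactly what closes the identity $\dot y_1=\Delta_1 y$ under the convention $y_0\equiv0$.
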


\begin{proof}
Fix any $n\in\N$, we consider the $n$-th Galerkin approximation given by
\begin{align}
    \dot y_k(t) &= \Delta_k y(t),\quad   t \in \R, \   k\in \{1,\dots, n\},\label{eq:y_heat_eq}\\
    y_k(0) &= x_k, \quad   k\in \{1,\dots, n\},\nonumber\\
     y_{k}(t) & = x_{k},\quad   t\in \R,\   k \geq n+1.\label{eq:bdry_x_gal_approx}
\end{align}
By this definition, $\Delta_k y(0)=\Delta_kx\geq 0$ for all $k\in\{1,\dots,n\}$. Set
\begin{align*}
    \tau = \sup\Big\{t\geq 0: \Delta_k y(s)\geq 0, \ \forall s\leq t,\,  \forall  k\in\{1,\dots,n\}\Big\}.
\end{align*}
We want to show $\tau =\infty$. Now suppose $\tau<\infty$. Then, there is $m\in\{1,\dots,k\}$ and a sequence of positive numbers $(t_j)_{j\in\N}$, which satisfies $\lim_{j\to\infty}t_j =0$, such that
\begin{align*}
    \Delta_m y(\tau +t_j) <0,\quad   j\in\N.
\end{align*}
We comment that the existence of $m$ and $(t_j)_{j\in\N}$ relies on the finite dimensionality of Galerkin approximations. By continuity of $t\mapsto \Delta_my(t)$ and setting $j\to \infty$, we have
\begin{align}\label{eq:Delta_my(tau)=0}
    \Delta_m y(\tau)=0.
\end{align}
Recall the definition of the discrete Laplacian in \eqref{eq:laplace}. The above two displays imply
\begin{align*}
    y_{m+1}(\tau+t_j)-y_{m+1}(\tau)+y_{m-1}(\tau+t_j)-y_{m-1}(\tau)<2\big(y_{m}(\tau+t_j)-y_{m}(\tau)\big).
\end{align*}
Sending $j\to \infty$, we obtain
\begin{align*}
    \dot y_{m+1}(\tau)+ \dot y_{m-1}(\tau)\leq 2\dot y_{m}(\tau).
\end{align*}
If $1<m<n$, then, using \eqref{eq:y_heat_eq} and \eqref{eq:Delta_my(tau)=0}, we have
\begin{align*}
    \Delta_{m+1} y(\tau)+ \Delta_{m-1} y(\tau)\leq 2\Delta_{m} y(\tau)=0.
\end{align*}
Since the two terms on the left are nonnegative due to the definition of $\tau$, we must have
\begin{align*}
    \Delta_{m+1} y(\tau) = \Delta_{m-1} y(\tau)= 0 .
\end{align*}
Similarly, we can deduce $\Delta_2 y(\tau)= 0 $ if $m=1$, and $\Delta_{n-1} y(\tau)= 0 $ if $m=n$. Iterating this, we obtain $\Delta_k y(\tau)=0$ for all $k\in\{1,\dots,n\}$. Using this and \eqref{eq:bdry_x_gal_approx}, we can compute $y_k(\tau)= k\frac{x_{n+1}}{n+1}$. It is easy to see that $y(\tau+\cdot)$ is a stationary solution of \eqref{eq:y_heat_eq} when only the first $n$ coordinates are considered. Hence, $\Delta_k y(\tau+t)=0$ for all $t\geq 0$ and all $k\in\{1,\dots,n\}$, which contradicts the assumption $\tau<\infty$. We thus deduce that $\Delta_k y(t)\geq0$ for all $t\geq 0$ and all $k\in\{1,\dots,n\}$.

\smallskip

Corollary~\ref{cor:galerkin} implies that the sequence of Galerkin approximations $\xn(\cdot)$ converges to $\sS^\cdot x$ in $\sC(\R;\XX^\alphaprime)$ for some $(\alphaprime)\in\setbetap$. Then, Lemma~\ref{lemma:ptw_cvg} yields that $\xn_k(s)$ converges to $x_k(s)$ in $\R$ for each $s\in[0,t]$ and $k\in\N$. In the previous paragraph we know that $\Delta_k\xn(s)\geq 0$ for all $n$, all $k\leq n$ and all $s\geq 0$. Passing to the limit, we obtain the desired result.
\end{proof}

Back to the proof of Lemma~\ref{lemma:monot_det}, consider $x$ satisfying its conditions. It is easy to see that $\Delta x\succeq 0$. By Lemma~\ref{lemma:convexity}, this implies $\Delta \sS^t x\succeq 0$ and thus $\frac{\d}{\d t}\sS^t x\succeq 0$ for all $t\geq 0$. In other words, $\sS^tx$ is nondecreasing in $t$. Let $z=\rr^a$. Then $z = \sS^t z$ is a stationary solution of the heat equation, and we also have $x\preceq z$. Invoking the monotonicity result of Lemma~\ref{Lemma:monotonicity} applied to zero potential and zero noise, we have $\sS^t x\preceq z$ for all $t\geq 0$. The monotone convergence theorem gives the existence of $x^*$ such that $(\sS^t x)_k$ converges to $x_k^*\in\R$ as $t\to \infty$ for all $k\in\N$. Sending $t\to \infty$ in $(\sS^{t+1}x)_k=(\sS^tx)_k+\int_0^1\Delta_k \sS^{t+s}x\,\d s$ for each $k$, one can see that  $\Delta x^*\equiv0$. Hence it can be computed iteratively that $x^*_k = kx^*_1$ for all $k\in\N$. 

\smallskip

Due to our assumptions on~$x$, there is $m$ such that
\begin{align}\label{eq:m_pf_lm_mt_det}
    x_k> 0,\quad   k\geq m.
\end{align}
Since $\sS^tx$ is nondecreasing, the coordinatewise convergence of $\sS^tx$ to $x^*$ implies $x^*_k> 0$ for $k\geq m$. This gives $x^*_1> 0$ and thus $x^*_k>0$ for all $k$. Lastly, due to this convergence, there is $T>0$ such that $(\sS^Tx)_k> 0$ for all $k< m$. Since \eqref{eq:m_pf_lm_mt_det} holds and the function $t\mapsto\sS^tx$ is nondecreasing, we can conclude $\sS^tx\succ 0$ for all $t\geq T$. This completes the proof. \epf

\subsection{Proof of Lemma~\ref{lemma:posi_prob_mt}}

The idea is  to use the property \eqref{eq:flat} first, finding a sufficiently large region where the potential $F$ is almost flat. Then, we use the noise~$W$ as a control to bring $\Phi^t_{F,W}x$ and $\Phi^t_{F,W}y$ into that region and keep them there over a period of time. 
Finally, we compare them with certain solutions of the homogeneous discrete heat equation which is possible due to the smallness of the influence of the potential in that region, and conclude by using Lemma~\ref{lemma:monot_det}.

The next lemma ensures that the controls we need as described above form a set of positive probability.

\begin{Lemma}\label{lemma:tame_noise}
For every $a \in \R$ and $ t_1,t_2$ satisfying $t_2>t_1>0$, there is $M>0$ such that, for all $\eps>0$, 
\begin{align}
\label{eq:positive-prob-of-intersect}
    \sW\left(\bigcap^\infty_{k=1}A_k\right)>0,
\end{align}
where 
\begin{gather*}
    A_k=  \Big\{\sup_{t\in[0,t_1]}|W_k(t)|\leq Mk^\frac{1}{8}\Big\}\cap\Big\{\sup_{t\in[t_1,t_2]}|W_k(t)-a| \leq\eps^2 k^\frac{1}{8}\Big\},\quad k\in\N\setminus\{1\},\\
    A_1 = \Big\{\sup_{t\in[0,t_1]}|W_1(t)|\leq M\Big\}\cap\Big\{\sup_{t\in[t_1,t_2]}|W_1(t)-(t-t_1+1)a| \leq\eps^2 \Big\}.
\end{gather*}

\end{Lemma}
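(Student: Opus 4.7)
The plan is to reduce this infinite-dimensional statement to a Borel--Cantelli-type summability check using the independence of the components. Since $A_k$ is measurable with respect to $W_k$ and the $(W_k)_{k\in\N}$ are mutually independent under $\sW$, we have
\[
\sW\Big(\bigcap_{k=1}^\infty A_k\Big)=\prod_{k=1}^\infty \sW(A_k),
\]
so it suffices to verify that (i)~$\sW(A_k)>0$ for every $k\in\N$, and (ii)~$\sum_{k=1}^\infty \sW(A_k^c)<\infty$. Together these make the infinite product strictly positive.

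Claim (i) I would get from the classical support theorem for Wiener measure: each $A_k$ is a nonempty open tube in path space around an explicit continuous function---the zero function on $[0,t_1]$ glued to either the constant $a$ (for $k\geq 2$) or the affine function $t\mapsto(t-t_1+1)a$ (for $k=1$) on $[t_1,t_2]$---and every such tube carries positive Wiener measure.

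The main content is (ii). For $k\geq 2$ I would use the union bound $A_k^c\subset B_k^{(1)}\cup B_k^{(2)}$ with
\[
B_k^{(1)}=\Big\{\sup_{t\in[0,t_1]}|W_k(t)|>Mk^{1/8}\Big\},\quad B_k^{(2)}=\Big\{\sup_{t\in[t_1,t_2]}|W_k(t)-a|>\eps^2 k^{1/8}\Big\},
\]
and apply the reflection principle together with a Gaussian tail bound to each piece. The first gives $\sW(B_k^{(1)})\leq 4\exp(-M^2 k^{1/4}/(2t_1))$. For the second, I would write $W_k(t)-a=(W_k(t)-W_k(t_1))+W_k(t_1)-a$, and for $k$ large enough that $\eps^2 k^{1/8}\geq 3|a|$ bound $\sW(B_k^{(2)})$ by the sum of $\sW\{|W_k(t_1)|>\tfrac{1}{3}\eps^2 k^{1/8}\}$ and $\sW\{\sup_{t\in[t_1,t_2]}|W_k(t)-W_k(t_1)|>\tfrac{1}{3}\eps^2 k^{1/8}\}$, each of which is of order $\exp(-c\eps^4 k^{1/4})$ with $c$ depending only on $t_1,t_2$. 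The crucial structural point is that the $k^{1/8}$ threshold in the statement yields a $k^{1/4}$ in the Gaussian exponent, which is exactly what makes both tails summable in $k$; this is the reason the $k^{1/8}$ scaling is built into the statement in the first place. Any fixed $M>0$ (say $M=1$) suffices, so the constant $M$ in the lemma may be chosen to depend only on $a,t_1,t_2$ and not on $\eps$.

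I do not anticipate a genuine obstacle here, since everything reduces to standard Gaussian concentration estimates once independence has been exploited. The only mildly delicate point is that at $k=1$ the path is required to track a nontrivial affine function on $[t_1,t_2]$ rather than a constant, but this only affects the \emph{single} factor $\sW(A_1)$, which is handled by (i) directly and plays no role in the summability analysis for $k\to\infty$.
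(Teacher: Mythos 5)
Your overall strategy---factoring $\sW(\cap_k A_k)=\prod_k\sW(A_k)$ by independence, then verifying positivity of each factor and summability of $\sum_k\sW(A_k^c)$ via the $k^{1/4}$ Gaussian exponent---matches the paper's proof, and your summability estimate for large $k$ is essentially correct. However, there is a genuine error in how you handle point (i), and it propagates to your claim about $M$.

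The ``explicit continuous function'' you describe (zero on $[0,t_1]$ glued to the constant $a$, or to $t\mapsto(t-t_1+1)a$, on $[t_1,t_2]$) is \emph{not} continuous unless $a=0$: it jumps from $0$ to $a$ at $t=t_1$. So $A_k$ is not a sup-norm tube around that path and the support theorem does not apply in the form you invoke. More importantly, whether $A_k$ is even nonempty depends on the compatibility of the two constraints at the junction $t=t_1$: any $W_k\in A_k$ must satisfy both $|W_k(t_1)|\leq Mk^{1/8}$ and $|W_k(t_1)-a|\leq\eps^2 k^{1/8}$, so the intervals $(-Mk^{1/8},Mk^{1/8})$ and $(a-\eps^2 k^{1/8},a+\eps^2 k^{1/8})$ must intersect. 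For $k=1$ and small $\eps$ this forces $M>|a|$ (roughly). With your suggestion $M=1$ and, say, $a=2$, the set $A_1$ is empty for all $\eps<1$, hence $\sW(A_1)=0$ and the product vanishes---so the lemma fails for that $M$. This is exactly why the paper insists on $M>|a|$: then one can run a genuinely continuous path from $0$ at time $0$ to $a$ at time $t_1$ while staying inside $(-M,M)\subset(-Mk^{1/8},Mk^{1/8})$, and continue along the required profile on $[t_1,t_2]$, making $A_k$ a nonempty open tube of positive Wiener measure. Once you replace ``$M=1$'' by ``any $M>|a|$'' and correct the center path so it interpolates continuously, your argument becomes the paper's.
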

\begin{proof}
It is sufficient to prove \eqref{eq:positive-prob-of-intersect} for small $\eps$.
Our choice of $M$ must at least ensure $\sW(A_k)>0$ for all $k$. For this, 
taking into account the continuity of~$W_k(t)$ at $t=t_1$, it suffices to have
\begin{align*}
    (a-\eps^2 k^\frac{1}{8}, a +\eps^2 k^\frac{1}{8})\cap (-Mk^\frac{1}{8}, Mk^\frac{1}{8})\neq \emptyset,\quad   k\in\N,
\end{align*}
which is guaranteed by $M >|a|$. Let us fix such $M$.

Using the independence of $W_k$, we have $\sW(\cap^\infty_{k=1}A_k)=\prod_{k=1}^\infty \sW(A_k)$. To show that this product is positive, it suffices to prove
\begin{align}
    \sum_{k=1}^\infty -\log \sW(A_k)<\infty.
    \label{eq:conv-series}
\end{align}
Note that for $k$ large, there are $C_1, C_2>0$ such that
\begin{align*}
    \sW(A_k^c) \leq \sW\Big\{\sup_{t\in[0,t_2]}|W_k(t)|\geq C_1 k^\frac{1}{8}\Big\}\leq 2e^{-C_2k^\frac{1}{4}}.
\end{align*}
Hence, for $k$ large, we have
\begin{align*}
    -\log\sW(A_k) = \log\big(1-\sW(A_k^c)\big)^{-1}\leq \log(1 - 2e^{-C_2k^\frac{1}{4}})^{-1}\leq C_3 e^{-C_2k^\frac{1}{4}},
\end{align*}
and~\eqref{eq:conv-series} follows.
\end{proof}

\begin{proof}[Proof of Lemma~\ref{lemma:posi_prob_mt}]

For brevity, we write $X(t)=\Phi^t_{F,W}x$ and $Y(t)= \Phi^t_{F,W} y$. By Proposition~\ref{prop:invariance_slope}, we have $X,Y\in \cS(x,\Delta,F,W;\LL)$ and by Remark~\ref{rem:heat_eq}, we know $\sS^\cdot z\in \cS(z,\Delta,0,0;\LL)$ whenever $z\in\LL$.

\smallskip
Step 1. We introduce basic deterministic objects and choose the size of the region where the potential is almost flat.
By Lemma~\ref{Lemma:monotonicity} applied to zero random potential and noise allowed by Remark~\ref{remark:F=0_W=0}, 
if $x\in \llbracket \rr^{u-h}-c,\rr^{u+h}+c\rrbracket$, then
$\sS^tx\succeq \sS^t(\rr^{u-h}-c)$ for all $t\geq 0$.
On the other hand, Lemma~\ref{lemma:convexity} implies that $\Delta \sS^t(\rr^{u-h}-c)\succeq 0$ for all $t\geq 0$. Since $\sS^t(\rr^{u-h}-c)$ satisfies the heat equation, we have $\frac{\d}{\d t}\sS^t(\rr^{u-h}-c)\succeq 0$ and thus $\sS^t(\rr^{u-h}-c)\succeq \rr^{u-h}-c $ for all $t\geq 0$. Hence, we obtain
\begin{align}\label{eq:x(t)_lower}
    \sS^tx \succeq \rr^{u-h}-c,\quad   t\geq 0.
\end{align}
Similarly, for all $ y\in \llbracket \rr^{v-h}-c,\rr^{v+h}+c\rrbracket $, we also have
\begin{align}\label{eq:y(t)_upper}
    \sS^ty \preceq \rr^{v+h}+c,\quad   t\geq 0.
\end{align}
Let us choose $\delta>0$ to satisfy $u-h-2\delta >v+h+2\delta$ and  consider $\eps\in(0, \delta\wedge 1)$. For $L>0$, we set
\begin{gather}\label{eq:x^one}
    x^\one = \rr^{u-h-\delta}-L,\qquad y^\one = \rr^{v+h+\delta}+L.
\end{gather}
In view of \eqref{eq:x(t)_lower} and \eqref{eq:y(t)_upper}, we can choose $L$ large so that
\begin{gather}
    (\sS^tx)_k - \eps k^\frac{3}{4}\geq x^\one_k,\qquad (\sS^ty)_k+\eps k^\frac{3}{4}\leq y^\one_k,\quad  k\in\N \label{eq:x>x^one}
\end{gather}
holds for all $t\geq 0$, for all $\eps<1$, all $x\in \llbracket \rr^{u-h}-c,\rr^{u+h}+c\rrbracket$, all $ y\in \llbracket \rr^{v-h}-c,\rr^{v+h}+c\rrbracket $.
Additionally, we set
\begin{align}
    x^\two = \rr^{u-h-2\delta}-L,\qquad y^\two = \rr^{v+h+2\delta}+L.\label{eq:x^two}
\end{align}
By Lemma~\ref{lemma:monot_det} applied to $\sS^{\cdot} (x^\two -y^\two)$, there is $T>0$ such that 
\begin{align}\label{eq:T_mt}
    \sS^Tx^\two-\sS^Ty^\two =\sS^T(x^\two -y^\two)\succ 0.
\end{align}

\smallskip

We move on to choosing some parameters.
We fix $n$ large enough to guarantee
\begin{align}\label{eq:choice_n}
    \sum_{k\geq n+1}(\log k)^2 k^{-\frac{3}{2}}\leq \eps^4.
\end{align}
We set $l$ to be
\begin{align}\label{eq:l}
    l=n \big(\|x^\one\|_\LL + \|y^\one\|_\LL +K\big)e^{KT}.
\end{align}
Here the large constant $K>0$ will be specified later.
By the condition~\eqref{eq:flat} with $\delta =\frac{\eps^4}{n}$, there is $a\in\R$ such that
\begin{align}\label{eq:zero_region}
    |f_k(s)|\leq \frac{\eps^4}{n}, \quad   (k,s)\in\{1,2,\dots,n\}\times [a-l,a+l].
\end{align}

\bigskip

Step 2. For~$a$ in \eqref{eq:zero_region}, we define $\aa$ by $\aa_k=a$, $k\in\N$. We set
\begin{align}\label{eq:t_0=eps^2}
    t_0 =\eps^2.
\end{align}
We want to show that with this definition of $t_0$, for  sufficiently small $\eps$, with positive probability,  $X(t_0)$ and $Y(t_0)$ are  $\eps$-close to $\sS^{t_0}x+\aa$:
\begin{gather}
    \|X(t_0) -\sS^{t_0}x -\aa\|_* <\eps,\label{eq:X-x-a_bd}\\
    \|Y(t_0) -\sS^{t_0}y -\aa\|_* <\eps.\label{eq:Y-y-a_bd}
\end{gather}

Here the norm is given in \eqref{eq:*norm}. 
We apply Lemma~\ref{lemma:tame_noise} with $t_1 =t_0$ and $t_2 =t_0+T$, where $T$ is given by Lemma~\ref{lemma:monot_det}, to see that, with positive probability,
\begin{align}
    \sup_{t\in[0,t_0]}&|W_k(t)|\leq Mk^\frac{1}{8},\quad   k\in\N,\label{eq:W_bounded_[t,t_0]}\\
    \sup_{t\in[0,T]}&|W_k(t_0+t)-a|\leq \eps^2 k^\frac{1}{8},\quad   k\in\N\setminus\{1\}, \label{eq:W-a_bd_[t_0,t_0+T]}\\
    \sup_{t\in[0,T]}&|W_1(t_0+t)-(t+1)a|\leq \eps^2 \label{eq:W-a_bd_[t_0,t_0+T],k=1}.
\end{align}
The first coordinate is special because  $(\Delta \aa)_k$ is nonzero only for $k=1$.
Hereafter, we fix any realization $W$ in $\WW_0$ satisfying \eqref{eq:W_bounded_[t,t_0]}--\eqref{eq:W-a_bd_[t_0,t_0+T],k=1}. In the following, $C$ stands for a constant uniform in all such $W$. In particular, \eqref{eq:W_bounded_[t,t_0]}--\eqref{eq:W-a_bd_[t_0,t_0+T],k=1} imply 
\begin{align}\label{eq:W_norm_L,*}
    \sup_{t\in[0,T]}\|W\|_\LL \leq \sup_{t\in[0,T]}\|W\|_*\leq C.
\end{align}

By Lemma~\ref{lemma:comp_in_*}, we obtain
\begin{align}\label{eq:X_bdd_[0,t_0]}
    \sup_{t\in[0,t_0]}\|X(t)-\sS^t x\|_* \leq C.
\end{align}
Subtracting $\aa$ from both sides of the equation that $X(t_0)-\sS^{t_0}x$ satisfies, we have
\begin{align*}
    \|X(t_0)-\sS^{t_0} x-\aa\|_* \leq \int_0^{t_0}C\|X(s)-\sS^sx\|_*\d s + \int_0^{t_0} \|f(X(s))\|_* \d s + \|W(t_0)-\aa\|_*.
\end{align*}
Due to $X(s)\in\sC(\R;\LL)$, by Lemma~\ref{Lemma:f_property}~\eqref{item:f_k_upper_bound}, we can verify $\sup_{s\in[0,t_0]}\|f(X(s))\|_*\leq C$. By this, \eqref{eq:t_0=eps^2} and \eqref{eq:X_bdd_[0,t_0]}, the two integrals on the right of the above display are bounded by $C t_0=C\eps^2$. Due to \eqref{eq:W-a_bd_[t_0,t_0+T]}--\eqref{eq:W-a_bd_[t_0,t_0+T],k=1} applied at $t=0$, it can seen that $\|W(t_0)-\aa\|_* \leq C\eps^2$. Using these estimates in the above display we obtain $\|X(t_0)-\sS^{t_0}x-\aa\|_*\leq C\eps^2$.  Hence, \eqref{eq:X-x-a_bd} holds for sufficiently small $\eps$.
Similarly, \eqref{eq:Y-y-a_bd} is valid.

\bigskip

Step 3. We study the behavior of $X(t_0+t)$ and $Y(t_0+t)$ for $t\in[0,T]$. The key results are \eqref{eq:X>X^one}, \eqref{eq:X^one-x^one} and the corresponding ones for $Y$. The estimate \eqref{eq:X-x-a_bd} implies
\begin{align*}
    X_k(t_0)\geq (\sS^{t_0}x)_k+a - \eps k^\frac{3}{4},\quad   k\in\N.
\end{align*}
In view of \eqref{eq:x>x^one}, this shows $X(t_0)\succeq x^\one +\aa$. 
Let 
\[X^\one(t_0+t)=\Phi^{t}_{F,\theta^{t_0}W}(x^\one+\aa),\quad t\ge0,\]
 i.e.,
$X^\one(t_0+\cdot)\in \cS(x^\one,\Delta,F,\theta^{t_0}W;\LL)$ satisfies
\begin{align}\label{eq:X^one}
\begin{split}
        &X^\one(t_0+t) = x^\one +\aa+\int_0^t\Delta X^\one(t_0+s)\d s \\&\qquad+ \int_0^t f(X^\one(t_0+s))\d s +W(t_0+t)-W(t_0).
\end{split}
\end{align}
Then, Lemma~\ref{Lemma:monotonicity} implies that 
\begin{align}\label{eq:X>X^one}
    X(t_0+t)\succeq X^\one (t_0+t),\quad   t\geq 0.
\end{align}
Let us show that
\begin{align}\label{eq:X^one-x^one}
    \sup_{t\in[0,T]}\|X^\one(t_0+t)-\sS^t x^\one-\aa\|_* \leq \eps.
\end{align}
We set $\hat\aa=-\Delta \aa$, i.e., recalling the definition of $\Delta$ in \eqref{eq:laplace},
 $\hat \aa_1 =a$ and $\hat\aa_k=0$ for all $k\geq 2$. We have 
\begin{align}\label{eq:lap_X^one}
    \Delta X^\one = \Delta (X^\one -\aa) - \hat\aa.
\end{align}
This,  \eqref{eq:X^one}, and the fact that $\sS^t x^\one$ solves the heat equation imply
\begin{align*}
        X^\one(t_0+t)-\sS^t x^\one-\aa = \int_0^t \Big(\Delta\big(X^\one(t_0+s)-\sS^sx^\one-\aa\big) \\ +  f(X^\one(t_0+s))\Big)\d s +  W(t_0+t)-t\hat\aa-W(t_0). 
\end{align*}
Therefore,
\begin{align}\label{eq:X^one-x^one_step}
\begin{split}
    \|X^\one(t_0+t)-\sS^t x^\one-\aa\|_* \leq \int_0^tC\|X^\one(t_0+s)-\sS^sx^\one-\aa\|_* \d s\\ + \int_0^t \|f(X^\one(t_0+s))\|_* \d s + \| W(t_0+t)-t\hat\aa-W(t_0)\|_*. 
\end{split}
\end{align}
Lemma~\ref{lemma:comp_in_*} applied to $X^\one(t_0+\cdot)$ and $\sS^\cdot x^\one$ yields $\sup_{t\in[0,T]}\|X^\one(t_0+t)-\sS^t x^\one\|_*\leq C$,
 which implies
\begin{align*}
    \sup_{t\in[0,T]}\|X^\one(t_0+t)-\sS^t x^\one-\aa\|_*\leq C.
\end{align*}
From \eqref{eq:W-a_bd_[t_0,t_0+T]}--\eqref{eq:W-a_bd_[t_0,t_0+T],k=1}, we see that
\begin{align}\label{eq:W-W}
    \sup_{t\in[0,T]}\|W(t_0+t)-t\hat \aa-W(t_0)\|_*,\quad \sup_{t\in[0,T]}\|W(t_0+t)-t\hat \aa-W(t_0)\|_\LL \leq C\eps^2.
\end{align}
To estimate the integral involving $f$, we want to use \eqref{eq:zero_region}. For this, we need 
\begin{align}
\label{eq:X-in-theright-region}
    X_k^\one(t_0+t) \in [a-l,a+l],\quad   k\in \{1,2,\dots,n\}, \   t\in[0,T].
\end{align}
so let us estimate $X^\one(t_0+t)-\aa$.
An estimate similar to \eqref{eq:X^n<} holds for $X^\one(t_0+\cdot)$ with $\|\cdot\|$ replaced by $\|\cdot\|_\LL$. Using \eqref{eq:W_norm_L,*} and Gronwall's inequality, we have
\begin{gather}
    \sup_{t\in [0,T]}\|X^\one(t_0+t)\|_\LL \leq C.\label{eq:sup_X^one}
\end{gather}
This and Lemma~\ref{Lemma:f_property}~\eqref{item:f_k_upper_bound} yield
\begin{gather}
     \sup_{t\in[0,T]}\|f(X^\one(t_0+t)\|_\LL \leq C.\label{eq:f_bdd_alpha_poisson_2}
\end{gather}
By \eqref{eq:lap_X^one} and \eqref{eq:X^one}, we have
\begin{align*}
    \|X^\one(t_0+t)-\aa\|_\LL \leq \|x^\one\|_\LL + \int_0^tC \|X^\one(t_0+s)-\aa\|_\LL \d s\\ +\int_0^t \|f(X^\one(t_0+s))\|_\LL \d s +\|W(t_0+t)-t\hat\aa-W(t_0)\|_\LL.
\end{align*}
Using \eqref{eq:f_bdd_alpha_poisson_2} and \eqref{eq:W-W} along with Gronwall's inequality, we obtain
\begin{align}\label{eq:X^one-a}
    \sup_{t\in[0,T]}\|X^\one(t_0+t)-\aa\|_\LL \leq \big(\|x^\one\|_\LL +K\big)e^{KT}
\end{align}
where the constant $K$  is determined only by  $\|\Delta\|_{\sL_\b(\LL)}$  and $f$, so it does not depend on the shift parameter $a$.  Thus we can use this $K$ in \eqref{eq:l}. 
Now, from \eqref{eq:l} and \eqref{eq:X^one-a}, we deduce \eqref{eq:X-in-theright-region}.
The latter, along with \eqref{eq:choice_n}, \eqref{eq:zero_region}, \eqref{eq:sup_X^one} and Lemma~\ref{Lemma:f_property}~\eqref{item:f_k_upper_bound}, implies that 
\begin{align*}
    \|f(X^\one(t_0+s))\|^2_*\leq \eps^4 + C\sum_{k\geq n+1}^\infty(\log k)^2 k^{-\frac{3}{2}} \leq C\eps^4,\quad   s\in[0,T].
\end{align*}
Inserting this and \eqref{eq:W-W} into \eqref{eq:X^one-x^one_step} and applying Gronwall's inequality, we arrive at 
\begin{align*}
    \sup_{t\in[0,T]}\|X^\one(t_0+t)-\sS^t x^\one-\aa\|_* \leq C\eps^2(1+T)e^{CT}.
\end{align*}
Hence,  \eqref{eq:X^one-x^one} holds for sufficiently small $\eps$.

\bigskip

Step 4. We want to deduce that monotonization happens, namely, $X(t_0+T)\succ Y(t_0+T)$. From \eqref{eq:X^one-x^one}, we have
\begin{align}\label{eq:X^one_k>x^one_k}
    X_k^\one(t_0+t) \geq (\sS^tx^\one)_k+a - \eps k^\frac{3}{4},\quad   k \in \N,\   t\in[0,T].
\end{align}
By \eqref{eq:x^one}, \eqref{eq:x^two}, we have $x^\one - x^\two = \rr^\delta$ and thus
\begin{align*}
    \sS^tx^\one-\sS^tx^\two =\sS^t \rr^\delta =\rr^\delta,\quad t\ge 0.
\end{align*}
Initially, we have already required $\eps<\delta$, which gives
\begin{align*}
    (\sS^t x^\one)_k-(\sS^tx^\two)_k\geq \eps k^\frac{3}{4},\quad k\in\N,\ t\ge0.
\end{align*}
This together with \eqref{eq:X>X^one} and \eqref{eq:X^one_k>x^one_k} implies 
\begin{align*}
    X(t_0+t)\succeq \sS^t x^\two +\aa,\quad   t\in[0,T].
\end{align*}
Symmetrically, the same arguments yield
\begin{align*}
    Y(t_0+t)\preceq \sS^t y^\two+\aa,\quad   t\in[0,T].
\end{align*}
Therefore, we have
\begin{align*}
    X(t_0+t)-Y(t_0+t)\succeq  \sS^t x^\two-\sS^t x^\one,\quad   t\in[0,T],
\end{align*}
and $X(t_0+T)-Y(t_0+T)\succ 0$ follows from this and \eqref{eq:T_mt}. 

\smallskip

Note that by our construction in Step~1, the desired inequality $\Phi^T_{F,W}x \succ \Phi^T_{F,W}y$
holds with this choice of $T$ on the event of positive probability described in \eqref{eq:W_bounded_[t,t_0]}--\eqref{eq:W-a_bd_[t_0,t_0+T],k=1} simultaneously 
 for all $x\in \llbracket \rr^{u-h}-c,\rr^{u+h}+c\rrbracket$ and all $y\in \llbracket \rr^{v-h}-c,\rr^{v+h}+c\rrbracket $,
 which completes the proof of the lemma.
\end{proof}

\subsection{Proof of Lemma~\ref{lemma:recur}}
We recall the sets $\FF(v)=\FF_\invtemp(v)$ introduced in Theorem~\ref{thm:thermodynamic-limit}. We set 
\begin{gather*}
    \mathcal{I}(u,v)=(u+\Q)\cup  (v+\Q),\\
    \FF(u,v)=\bigcap_{w\in \mathcal{I}(u,v)}\FF(w).
\end{gather*}
Then, we have $\sF(\FF(u,v))=1$.
By Theorem~\ref{thm:thermodynamic-limit}, if $F\in\FF(u,v)$, then for each $w \in \mathcal{I}(u,v)$, there is a unique IVPM $\mu_w=\mu_{w,F}$ concentrated on $S(w)$.
For $x\in \R^\N$, we define
\begin{align}\label{eq:def_C^>x}
    \begin{split}
        C^{\succeq x} & = \{y\in\R^\N: y\succeq x\},\\
    C^{\preceq x} & = \{y\in\R^\N: y\preceq x\}.
    \end{split}
\end{align}

\begin{Lemma} 
\label{lem:dominations-exist}
Suppose $F\in\FF(u,v)$, $w,w'\in \mathcal{I}(u,v)$ and $x\in S(w)$.   If $w'>w$, then $\mu_{w'}(C^{\succeq x})>0$. 
If $w'<w$, then $\mu_{w'}(C^{\preceq x})>0$. 
\end{Lemma}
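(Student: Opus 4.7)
The plan is to combine the DLR description of $\mu_{w'}$ on a finite initial window with the fact that $\mu_{w'}$-typical paths have asymptotic slope $w'$, so they either eventually dominate or are eventually dominated by $x$ coordinatewise, depending on whether $w'>w$ or $w'<w$.

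By symmetry, I focus on the case $w'>w$. Since $\mu_{w'}\in\Pc_F(w')$ is concentrated on $S(w')$, we have $y_k/k\to w'$ for $\mu_{w'}$-a.e.\ $y$; combined with $x\in S(w)$ and $w'>w$, this gives $y_k-x_k\to+\infty$, $\mu_{w'}$-a.s. Therefore the events
\[
A_n=\{y\in\R^\N:\ y_k\ge x_k\ \forall k\ge n+1\}
\]
form an increasing sequence with $\mu_{w'}(\bigcup_n A_n)=1$, so one can fix $n$ large enough that $\mu_{w'}(A_n)>0$.

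For this $n$, invoke the DLR decomposition: there is a probability measure $\nu$ on $\R^{\{n+1,n+2,\dots\}}$ such that
\[
\mu_{w'}(dy)=\rho(y_{n+1};dy_{\le n})\,\nu(dy_{\ge n+1}),
\]
where $\rho(y_{n+1};\cdot)$ has Lebesgue density proportional to $\exp(-\invtemp E_n(y_{n+1};y_{\le n}))$. Because $F\in\FF_0$ consists of continuous realizations, $E_n(y_{n+1};\cdot)$ is finite on all of $\R^n$ for every $y_{n+1}\in\R$, so this density is strictly positive on $\R^n$. In particular, letting $B=\{y:\ y_k\ge x_k,\ 1\le k\le n\}$, the conditional probability $\rho(y_{n+1};B)$ is positive for every $y_{n+1}$.

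Since $A_n$ depends only on $y_{\ge n+1}$, the disintegration gives
\[
\mu_{w'}(A_n\cap B)=\int_{A_n}\rho(y_{n+1};B)\,\nu(dy_{\ge n+1})>0,
\]
and $A_n\cap B\subseteq C^{\succeq x}$, yielding the first claim. The case $w'<w$ is handled identically after replacing $\ge$ by $\le$ throughout (and $A_n$, $B$, $C^{\succeq x}$ by their $\preceq$-counterparts). I do not anticipate a serious obstacle: the only nontrivial input is the a.s.\ slope identification for $\mu_{w'}$, which is built into its definition via $\Pc_F(w')$, and the strict positivity of the Gibbs density, which is immediate from finiteness of $F$ on any finite window under our assumptions on the environment.
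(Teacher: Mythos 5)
Your proof is correct and follows essentially the same route as the paper: both fix a tail index $n$ where a positive-measure set of $\mu_{w'}$-paths already dominates $x$ beyond $n$, then use the DLR disintegration together with the equivalence of the finite-dimensional conditional Gibbs measure to Lebesgue measure to fill in the first $n$ coordinates. The only differences are cosmetic (your $n{+}1$ versus the paper's $n$ as the split point, and your explicit derivation of positivity of the tail event from $y_k-x_k\to\infty$, which the paper states more tersely).
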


\begin{proof}
By symmetry, we only need to prove  the first part. We denote 
\begin{align*}
B_{\ge n}&=\{y\in\R^{\{n,n+1,\ldots\}}:\ y_k\ge x_k,\ k\ge n\},\\
B_{< n}&=\{y\in\R^{\{1,\ldots n-1\}}:\ y_k\ge x_k,\ k< n\}.
\end{align*}

For every $y\in S(w')$, there is $n$ such that $y_{\ge n}\in B_{\ge n}(x)$. So let us find  $n$ such that
\begin{equation}
\label{eq:tail-ordered}
\mu_{w'}\{y:\ y_{\ge n}\in B_{\ge n}(x)\}>0.
\end{equation} 
Using regular conditional probabilities and decomposing  \[\mu_{w'}(\d y)=\mu_{w', <n}(\d y_{<n}|\, y_{\ge n}) \mu_{w', \ge n}(\d y_{\ge n}),\]
we can write
\[
\mu_{w'}(C^{\succeq x})=\int_{B_{\ge n}}  \mu_{w', \ge n}(\d y_{\ge n}) \mu_{w', <n}(B_{<n}; y_{\ge n}). 
\]
The lemma now follows once we notice that \eqref{eq:tail-ordered} means that $\mu_{w', \ge n}(B_{\ge n})>0$ and that  $\mu_{w', <n}(B_{<n}; y_{\ge n})>0$ for all $y_{\ge n}$ since $\mu_{w', <n}(\d y_{<n}|\, y_{\ge n})$ is a finite-dimensional Gibbs distribution equivalent to the Lebesgue measure.
\end{proof}

\begin{Lemma} 
\label{lem:dominations-exist-ergodic}
Suppose $F\in\FF(u,v)$, $w,w'\in \mathcal{I}(u,v)$ and $x\in S(w)$.   If $w'>w$, then there is an ergodic invariant measure $\nu_{w'}$ concentrated on $S(w')$ such that $\nu_{w'}(C^{\succeq x})>0$. 
If $w'<w$, then there is an ergodic invariant measure $\nu_{w'}$ concentrated on $S(w')$ such that $\nu_{w'}(C^{\preceq x})>0$. 
\end{Lemma}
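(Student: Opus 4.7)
The plan is to obtain $\nu_{w'}$ as an ergodic component in the ergodic decomposition of the invariant measure $\mu_{w'}$ produced by Lemma~\ref{lem:dominations-exist}. By symmetry, we treat only the case $w'>w$; the argument for $w'<w$ is identical with $C^{\preceq x}$ replacing $C^{\succeq x}$.

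First, I would set up the Markov structure. By Proposition~\ref{prop:invariance_slope}, the set $S(w')$ is forward invariant under the polymer dynamics, and by Remark~\ref{rem:S(v)} it is Polish when equipped with the metric induced by $\|\cdot\|_\LL$. Theorem~\ref{thm:RDS} supplies Borel measurability of the solution map, so the polymer Markov semigroup $(P^t)$ restricts to a well-defined measurable Markov semigroup on $(S(w'),\cB(S(w')))$. Since $\mu_{w'}$ is concentrated on $S(w')\subset\LL=\XX^{1,\infty}$, Proposition~\ref{prop:invariance} combined with Remark~\ref{rem:S(v)} tells us that $\mu_{w'}$ is invariant under $(P^t)$.

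Next, I would apply the standard ergodic decomposition theorem for invariant probability measures of Borel-measurable Markov semigroups on Polish spaces to produce a probability measure $\Pi$ supported on the set of $(P^t)$-ergodic invariant probability measures on $S(w')$ such that
\[
\mu_{w'}=\int \nu\,\Pi(\d\nu).
\]
Evaluating both sides on $C^{\succeq x}$ yields
\[
0<\mu_{w'}(C^{\succeq x})=\int \nu(C^{\succeq x})\,\Pi(\d\nu),
\]
the left-hand inequality being precisely the conclusion of Lemma~\ref{lem:dominations-exist}. Hence the set of ergodic $\nu$ with $\nu(C^{\succeq x})>0$ has positive $\Pi$-mass; any element of this set serves as $\nu_{w'}$, and by construction it is ergodic, invariant, concentrated on $S(w')$, and assigns positive mass to $C^{\succeq x}$.

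The only technical step is invoking the ergodic decomposition, but this is a classical Choquet-type result identifying the extreme points of the convex simplex of invariant probability measures with the ergodic ones; the required Polish state space and Borel-measurable semigroup hypotheses are supplied by Remark~\ref{rem:S(v)} and Theorem~\ref{thm:RDS}, so no additional property of the polymer dynamics beyond what is already established in the paper is needed here.
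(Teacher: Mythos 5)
Your proposal is correct and takes essentially the same route as the paper: ergodic decomposition of $\mu_{w'}$ on the Polish space $S(w')$, then integration of $\nu(C^{\succeq x})$ against the decomposing measure and appeal to Lemma~\ref{lem:dominations-exist} to extract an ergodic component with positive mass on $C^{\succeq x}$. The only cosmetic difference is that you work on the restricted state space $S(w')$ from the outset, whereas the paper notes that the ergodic components are absolutely continuous with respect to $\mu_{w'}$ and therefore concentrated on $S(w')$; these are equivalent ways of ensuring the components live on $S(w')$.
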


\begin{proof} Since $S(w')$ is a Polish space, the ergodic decomposition $\mu_{w'}=\int \nu \, \Pi(d\nu)$  holds. Since the ergodic measures $\nu$ are absolutely continuous with respect to $\mu_{w'}$ and hence concentrated on $S(w')$, the claim of the lemma follows from Lemma~\ref{lem:dominations-exist}.
\end{proof}

\begin{proof}[Proof of Lemma~\ref{lemma:recur}]
First take a number $h'\in(0,h)\cap\Q$ and use Lemma~\ref{lem:dominations-exist-ergodic} to find ergodic invariant measures
$\nu_{u-h'}$, $\nu_{u+h'}$, $\nu_{v-h'}$, $\nu_{v+h'}$ on
$S(u-h')$, $S(u+h')$, $S(v-h')$, $S(v+h')$
satisfying

\begin{equation}
\label{eq:positive_measures_of_cones}
\nu_{u-h'}(C^{\preceq x}),\ \nu_{u+h'}(C^{\succeq x}),\ \nu_{v-h'}(C^{\preceq y}),\ \nu_{v+h'}(C^{\succeq y}) >0,
\end{equation}
Let us fix $\eps>0$ and find a constant $c>0$ sufficiently large
such that 
\begin{equation}
\label{eq:large-sets}
\nu_{u-h'}(C^{\succeq \rr^{u-h}-c}),\    \nu_{u+h'}(C^{\preceq \rr^{u+h}+c}),\ 
\nu_{v-h'}(C^{\succeq \rr^{v-h}-c}),\    \nu_{v+h'}(C^{\preceq \rr^{v+h}+c})> 1-\eps.
\end{equation}
For $A\in\Bc(\R^\N)$, $r\in\R$, $\delta>0$, we denote
\begin{equation}
\label{eq:def_B(r,A,eps)}
B(r,A,\delta)=\Bigg\{z\in S(r):\
\sW\Big\{W: \limsup_{T\to\infty}\frac{1}{T}\int_0^T \mathds{1}_{A}(\Phi^t_{F,W}z) dt >1- \delta\Big\}=1\Bigg\},
\end{equation}
By the ergodic theorem and~\eqref{eq:large-sets}, we have
\begin{align}
\label{eq:on-one-side-of-the-line-1}
\nu_{u-h'}\Big(B\big(u-h', C^{\succeq \rr^{u-h}-c},\eps\big)\Big)&=1,
\\
\nu_{u+h'}\Big(B\big(u+h', C^{\preceq \rr^{u+h}+c},\eps\big)\Big)&=1,
\\
\nu_{v-h'}\Big(B\big(v-h', C^{\succeq \rr^{v-h}-c},\eps\big)\Big)&=1,
\\
\nu_{v+h'}\Big(B\big(v+h', C^{\preceq \rr^{v+h}+c},\eps\big)\Big)&=1,
\label{eq:on-one-side-of-the-line-4}
\end{align}
Let us now take independent initial conditions 
$x^{u-h'}, x^{u+h'},  x^{v-h'}, x^{v+h'}$
distributed according to $\nu_{u-h'}$, $\nu_{u+h'},$
$\nu_{v-h'}$, 
$\nu_{v+h'}$.

Due to~\eqref{eq:positive_measures_of_cones} and ~\eqref{eq:on-one-side-of-the-line-1}--\eqref{eq:on-one-side-of-the-line-4}, we have 
\[
\nu_{u-h'}\times \nu_{u+h'} \times  \nu_{v-h'}\times  \nu_{v+h'}  (B)>0,
\]
where
\begin{multline}
\label{eq:def-B}
B = \big(C^{\preceq x}\times C^{\succeq x} \times C^{\preceq y}\times C^{\succeq y} \big)
\cap \Big( B\big(u-h', C^{\succeq \rr^{u-h}-c},\eps\big) \times
\\ \times B\big(u+h', C^{\preceq \rr^{u+h}+c},\eps\big) \times 
B\big(v-h', C^{\succeq \rr^{v-h}-c},\eps\big)
\times B\big(v+h', C^{\preceq \rr^{v+h}+c},\eps\big)\Big).
\end{multline}
In particular, $B\ne\emptyset$, so we can take some
 $(x^{u-h'}, x^{u+h'}, x^{v-h'}, x^{v+h'})\in B$.
 We have
\begin{align*}
x^{u-h'}\preceq x \preceq x^{u+h'},\\
x^{v-h'}\preceq y \preceq x^{v+h'},
\end{align*}
so that due to monotonicity of the flow,
\begin{align*}
\Phi^t_{F,W} x^{u-h'}\preceq  \Phi^t_{F,W} x \preceq \Phi^t_{F,W} x^{u+h'},\\
\Phi^t_{F,W}  x^{v-h'}\preceq \Phi^t_{F,W} y \preceq \Phi^t_{F,W} x^{v+h'}.
\end{align*}
This means that if $\Phi^t_{F,W} x^{u-h'}\in C^{\succeq \rr^{u-h}-c}$, 
$\Phi^t_{F,W} x^{u+h'}\in C^{\preceq \rr^{u+h}+c}$,
$\Phi^t_{F,W}  x^{v-h'}\in C^{\succeq \rr^{v-h}-c}$, 
and  $\Phi^t_{F,W} x^{v+h'}\in C^{\preceq \rr^{v+h}+c}$, 
then
$\Phi^t_{F,W} x\in \llbracket \rr^{u-h}-c,\rr^{u+h}+c\rrbracket$ and $\Phi^t_{F,W} y\in \llbracket \rr^{v-h}-c,\rr^{v+h}+c\rrbracket$. 

Therefore, using the definition \eqref{eq:def_B(r,A,eps)} in \eqref{eq:def-B}, we obtain
\begin{align*}
\sW\Big\{ \limsup_{T\to\infty}\frac{1}{T}\int_0^T \mathds{1}_{\llbracket \rr^{u-h}-c,\rr^{u+h}+c\rrbracket}(\Phi^t_{F,W}x) \mathds{1}_{\llbracket \rr^{v-h}-c,\rr^{v+h}+c\rrbracket}(\Phi^t_{F,W} y)  dt  > 1- 4\eps\Big\}=1.
\end{align*}
Choosing arbitrary $\eps<1/4$ and denoting by $\WW(u,v,h,F)$ the full measure set in the above display, we complete the proof.
\end{proof}

\section{One Force --- One Solution  Principle}\label{section:1F1S}

The goal of this section is to prove parts~\ref{it:uniq-erg-mix} and~\ref{it:1F1S} of Theorem~\ref{th:collect}. After auxiliary statements
on shear invariance, we first  prove a weak form of 1F1S in  Theorem~\ref{th:sample-measures-for-Gibbs-are-delta}. The unique ergodicity claim of part~\ref{it:uniq-erg-mix}  is part~\ref{it:inv-measure} of Theorem~\ref{th:sample-measures-for-Gibbs-are-delta}. The claim of existence and uniqueness of stationary nonanticipating global solution in part~\ref{it:1F1S} of Theorem~\ref{th:collect} is part~\ref{it:global-solution} of Theorem~\ref{th:sample-measures-for-Gibbs-are-delta}. The attractor property of the global solution  in part~\ref{it:1F1S} of Theorem~\ref{th:collect} is Theorem~\ref{th:1F1S-coord}. The mixing property in part~\ref{it:uniq-erg-mix} of Theorem~\ref{th:collect} is Corollary~\ref{cor:mixing}.

\subsection{Shear-invariance}
\label{sec:shear-inv}
Just like the Burgers equation dynamics studied in~\cite{Bakhtin-Li:MR3911894}, the polymer dynamics is skew-invariant with respect to
the shear semigroup. For brevity we speak of {\it shear-invariance}. To state this property, we first recall the definition~\eqref{eq:shear-on-F} of the shear transformations group $\Shear=(\Shear^v)_{v\in\R}$ acting on functions defined on $\NR$ such as potentials and their derivatives.
We also need to define the action of the same group on polymers themselves: for $v\in\R$ and $x\in\R^\N$,
\[
(\Shear^v x)_k =x_k+kv,\quad k\in\N.
\]

\begin{Lemma}
\label{lem:shear-invariance-of-dynamics}
For all potentials $F\in\FF_0$, for all noise realizations $W\in\WW_0$, all $(\alphap)\in\Pi$ all initial conditions $x\in \XX^\alphap$, all times $t>0$, we have 
\[
\Phi^t_{\Shear^v F,W} \Shear^v x = \Shear^v \Phi^t_{F,W}  x.
\]
\end{Lemma}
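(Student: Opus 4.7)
The strategy is direct verification combined with the uniqueness statement of Theorem~\ref{thm:RDS}. I would show that $Y(\cdot) := \Shear^v \Phi^\cdot_{F,W}x$ solves the SDE \eqref{eq:SDE} driven by the sheared potential $\Shear^v F$ and the same noise $W$, starting from $\Shear^v x$, and then appeal to uniqueness to identify it with $\Phi^\cdot_{\Shear^v F, W}\Shear^v x$.

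The whole proof rests on two algebraic identities. First, the discrete Laplacian $\Delta$ defined in~\eqref{eq:laplace} annihilates linear rays, so with the Dirichlet convention $y_0 = (\Shear^v y)_0 = 0$ a direct computation gives $\Delta_k(\Shear^v y) = \Delta_k y$ for every $k\in\N$ and every $y\in\R^\N$; note that the case $k=1$ is not special since $0\cdot v = 0$. Second, by~\eqref{eq:shear-on-F} and~\eqref{eq:def_f}, differentiating $(\Shear^v F)_k(r) = F_k(r-kv)$ yields $(\Shear^v f)_k(r) = f_k(r-kv)$, so if $X(\cdot)=\Phi^\cdot_{F,W}x$ and $Y_k(t) := X_k(t)+kv$, then $f_k(X_k(t)) = (\Shear^v f)_k(Y_k(t))$.

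Substituting these two identities into the integral formulation~\eqref{eq:sys_int_eq} satisfied by $X$ produces
\[
Y_k(t) = (\Shear^v x)_k + \int_0^t \Big(\Delta_k Y(s) + (\Shear^v f)_k(Y_k(s))\Big)\,\d s + \sigma W_k(t),\quad k\in\N,\ t\in\R,
\]
which means $Y \in \cS(\Shear^v x, \Delta, \Shear^v F, W)$ in the sense of Definition~\ref{def:solution}. Invoking uniqueness in Theorem~\ref{thm:RDS} would then give $Y(t) = \Phi^t_{\Shear^v F, W}\Shear^v x$ for all $t$, which is the claim. For uniqueness to apply one needs $\Shear^v F \in \FF_0$, which is the invariance of $\FF_0$ under $\Shear$ recorded at the end of Lemma~\ref{Lemma:f_property}.

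The one piece of bookkeeping to watch is that $\Shear^v$ does \emph{not} preserve $\XX^\alphap$ when $\alpha$ is too small, since the ray $(kv)_{k\in\N}$ lies in $\XX^\alphap$ only for sufficiently large $\alpha$ (specifically $\alpha > 1+1/p$ when $p<\infty$, or $\alpha\ge 1$ when $p=\infty$). I would handle this by passing to a larger space: given $(\alphap)\in\Pi$ and $x\in\XX^\alphap$, pick $(\alpha',p')\in\Pi$ with $\alpha'$ large enough that $\Shear^v x\in\XX^{\alphaprime}$, and apply Theorem~\ref{thm:RDS} in $\XX^{\alphaprime}$; the identification of $Y$ with $\Phi^\cdot_{\Shear^v F,W}\Shear^v x$ follows coordinatewise (Lemma~\ref{lemma:ptw_cvg}) and is therefore independent of the choice of ambient space. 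Beyond this minor point about function spaces, I expect no serious obstacle: the content of the lemma is the two commutation identities above plus uniqueness.
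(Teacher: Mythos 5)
Your proof is correct and follows essentially the same route as the paper's: commute $\Delta$ and $f$ with $\Shear^v$, verify that $\Shear^v\Phi^\cdot_{F,W}x$ solves the sheared integral equation, and invoke uniqueness together with the $\Shear$-invariance of $\FF_0$. Your remark about $\Shear^v$ not preserving $\XX^\alphap$ for small $\alpha$ is a sensible bit of extra bookkeeping; the paper sidesteps it by working directly in $\LL=\XX^{1,\infty}$, which contains all rays, but your pass-to-a-larger-space argument with the coordinatewise identification via Lemma~\ref{lemma:ptw_cvg} achieves the same thing.
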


\bpf 
The shear transformations do not affect the noise realizations, so to check that
$X\in\cS(x,\Delta,F,W;\LL)$ implies $\Shear^v X\in\cS(x,\Delta,\Shear^v F,W;\LL)$
it suffices to note that the drift in right-hand side  of \eqref{eq:SDE}  transforms appropriately:
\[
\Delta_k \Shear^v x =(x_{k-1}+(k-1)v)+(x_{k+1}+(k+1)v)-2(x_n+kv) =\Delta_k x,\quad k\in\N,
\]
and $\Shear^v F\in\FF_0$ with
\[
(\Shear^v f)_k((\Shear^v x)_k)=f_k(x_k),\quad k\in\N.
\]
The lemma now follows from the uniqueness of solutions.
\epf

This is an important property which, along with monotonicity, and distributional invariance of potentials under shear transformations,
 helps us to study invariant distributions for polymer and in particular, establish 1F1S. 
The idea of the lack-of-space argument for the following statement  goes back at least to  \cite[Lemma~6]{HoNe3}. It has been used in uniqueness arguments in~
\cite{BCK:MR3110798,kickb:bakhtin2016,Bakhtin-Li:MR3911894}.

\begin{Lemma}\label{lem:jumps-of-random-monotone-f} Suppose $(X_v)_{v\in\R}$ is a stochastic process on a probability space $(\Omega,\Ac,\Pp)$ with a.s.-nondecreasing in $v$ right-continuous realizations, and with stationary increments. Then,  for every $v$, $\Pp\{X(v+)\ne X(v-)\}=0$.
\end{Lemma}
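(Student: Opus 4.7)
The plan is to combine a Fubini-type argument with the classical fact that a monotone function on the real line has at most countably many discontinuities. Let $J(v) := X(v) - X(v-) \ge 0$, which is well-defined since realizations are nondecreasing and right-continuous. The goal is to show $\Pp\{J(v) > 0\} = 0$ for every $v$.

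First I would extract from the stationary-increments assumption that the one-dimensional distribution of the jump $J(v)$ does not depend on $v$. Concretely, for any $\delta > 0$, the law of $X(v) - X(v-\delta)$ equals that of $X(0) - X(-\delta)$; sending $\delta \downarrow 0$ along a countable sequence and using monotonicity (so the limits are pointwise) gives $J(v) \stackrel{d}{=} J(0)$. In particular, $p := \Pp\{J(v) > 0\}$ is a constant independent of $v$.

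Next I would invoke the classical observation that every nondecreasing function has at most countably many points of discontinuity, which is applicable $\Pp$-a.s.\ to the realizations $v \mapsto X(v,\omega)$. Hence the random set $\{v \in \R : J(v,\omega) > 0\}$ has Lebesgue measure zero almost surely, and so
\begin{equation*}
    \int_0^1 \mathds{1}\{J(v,\omega) > 0\}\, \d v = 0 \quad \text{for $\Pp$-a.e.\ $\omega$.}
\end{equation*}
Applying Fubini's theorem (with the product measure $\Pp \otimes \mathrm{Leb}$ on $\Omega \times [0,1]$, which is justified once one checks joint measurability of $(v,\omega) \mapsto J(v,\omega)$, easy from right-continuity and monotonicity) yields
\begin{equation*}
    p = \int_0^1 \Pp\{J(v) > 0\}\, \d v = \E \int_0^1 \mathds{1}\{J(v) > 0\}\, \d v = 0,
\end{equation*}
which is the desired conclusion.

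The only mildly delicate step is the first: transferring the stationary-increments hypothesis, which naturally concerns finite or multi-point increments, into a statement about the left-limit jump $J(v)$. I do not expect this to be a genuine obstacle, since monotonicity gives pointwise monotone convergence of $X(v) - X(v - \delta)$ as $\delta \downarrow 0$ and distributional identities are preserved under such limits. The rest is a standard Fubini/countability argument of the same lack-of-space flavor as in~\cite{HoNe3} and the uniqueness arguments from~\cite{BCK:MR3110798,kickb:bakhtin2016,Bakhtin-Li:MR3911894}.
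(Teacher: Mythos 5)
Your proof is correct and follows essentially the same route as the paper's: show that the jump probability $p_v$ is $v$-independent via stationary increments, then integrate against a probability measure in $v$ and apply Fubini together with the countability of discontinuities of a monotone path. The only cosmetic differences are that the paper integrates against an arbitrary probability density $g$ on $\R$ rather than Lebesgue measure on $[0,1]$, and you spell out (correctly) the monotone-limit step $J(v)\stackrel{d}{=}J(0)$ that the paper leaves implicit.
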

\bpf  Denoting $A_v=\{X(v+)\ne X(v-)\}$, noting that $p=p_v=\Pp(A_v)$ does not depend on $v$ due to our stationarity assumption,
and choosing any probability density $g$ with respect to the Lebesgue measure on $\R$, we obtain, changing the order of integration:
\[
p=\int_{\R}g(v)p_v \d v=\int_{\R}g(v)\Pp(A_v) \d v = \int_{\R}g(v)\Pp \ONE_{A_v} \d v = \Pp \int_{\R}g(v) \ONE_{A_v} \d v = 0
\]
since a monotone function has at most countably many discontinuity points.
\epf

 Our strategy to prove
uniqueness of various objects is based on 
 identifying intervals of their possible values with 
upward jumps of appropriate random monotone functions satisfying the conditions of Lemma~\ref{lem:jumps-of-random-monotone-f} due to shear invariance
thus showing that those intervals collapse into single points.

\subsection{Weak 1F1S}
\label{sec:weak1F1S}

Our argument is based on sample measures also known as Markov measures. This notion was first introduced in \cite{Ledrappier-LSY:MR968818}, see also \cite[Section~1.7 and Chapter~2]{Arnold:MR1723992} and~\cite{Crauel:MR1148346}.

Suppose $(P^t)$ is a Markov semigroup  on a Polish metric space $(\XX,d)$ with an invariant distribution $\rho$.
Suppose further that  $(P^t)$
generated by a continuous random dynamical system $(\Phi^t_W)$ on a white-noise probability space $(\WW,\Wc,\Ws)$ with a group of measure-preserving automorphisms $\tshift^t$ via  
\begin{equation*}
P^tg(x)=\Ws g(\Phi^t_{W}x),\quad x\in \XX,\ t\ge 0,\ g\in\sM_\b(\XX).
\end{equation*}
The white-noise property means that maps $\Phi^{s_1,t_1}_W, \Phi^{s_2,t_2}_W,\ldots,  \Phi^{s_n,t_n}_W $ are independent for all $n$ and all disjoint
$(s_1,t_1],\ldots,(s_n,t_n]$, where
\begin{equation*}
\Phi^{s,t}_W=\Phi_{\tshift^s W}^{t-s}.
\end{equation*}

Then
there is a $\tshift$-invariant set $\widehat\WW$ of probability~$1$ such that  the weak limit
\begin{equation}
\label{eq:time-dependent-sample-measures}
\rho_{W,t}=\lim_{s\to-\infty} \rho (\Phi^{s,t}_W)^{-1}
\end{equation}
exists for all $W\in\widehat\WW$ and all $t\in\R$. Here,  for a measure $\mu$ and a measurable function~$h$,
$\mu h^{-1}$ denotes the pushforward of $\mu$ under $h$: $\mu h^{-1}(A)=\mu(h^{-1}(A))$.

It is useful to single out $t=0$ and set $\rho_W=\rho_{W,0}$.

The {\it sample measures} $(\rho_{W,t})_{W\in\widehat W,\ t\in\R}$ describe the distribution worked out by the random dynamics given the history of the noise. They  
have the following properties:
\begin{enumerate}[(i)]
\item for all $W\in\widehat\WW$ and all $t\in\R$, $ \rho_{W,t}=\rho_{\tshift^t W} $;
\item for all $t\in\R$,  $\rho_{W,t}$ is $\Wc_t$-measurable and  independent of $\Wc_{[t,\infty)}$. 
\item  for all $t_1,t_2\in\R$ satisfying $t_1<t_2,$
\begin{equation}
\label{eq:sample-measures-skew-inv}
\rho_{W,t_1}(\Phi^{t_1,t_2}_W)^{-1}= \rho_{W,t_2};
 \end{equation}
\item For all $t\in\R$
\[
\rho(\cdot)=\Ws\rho_{W,t}(\cdot).
\]
\end{enumerate}

We can apply this concept to the IVPM $\mu_{v,F}$ constructed in  Theorem~\ref{thm:thermodynamic-limit}  because it is invariant under the polymer dynamics Markov semigroup  and concentrated on $S(v)$ which is a Polish space (see Proposition~\ref{prop:invariance} and Remark~\ref{rem:S(v)}). In agreement with  \eqref{eq:time-dependent-sample-measures},
we will denote the associated sample measures by $\mu_{v,F,W,t}$.

The idea is to approach a form of 1F1S through the ordering by noise property which helps to compare the sample measures concentrated on distinct~$S(v)$, in combination with the shear invariance.

We begin with definitions related to supports of measures. For a Borel measure $\nu$ on $\R$, we denote the leftmost point in its support by
\begin{align*}
    \lsupp(\nu) = \sup\{r\in\R: \nu\{r': r'\le r\}=0\}\in\R\cup\{-\infty\},
\end{align*}
and the rightmost point in its support by
\begin{align*}
    \rsupp(\nu) = \inf\{r\in\R: \nu\{r': r'\ge r\}=0\}\in\R\cup\{+\infty\}.
\end{align*}
We define the coordinate operator $\pi_k:\R^\N\to\R$  by $\pi_k x=x_k$. For a measure $\mu$ on~$\R^\N$, a point  $x\in (\R\cup\{-\infty\})^\N$  is called the lower boundary for the support of $\mu$ if
\begin{align*}
    x_k = \lsupp(\nu \pi^{-1}_k),\quad k\in\N,
\end{align*}
and $x\in  (\R\cup\{+\infty\})^\N$ is the upper boundary for the support of $\mu$ if
\begin{align*}
    x_k = \rsupp(\nu \pi^{-1}_k),\quad k\in\N.
\end{align*}

We recall the definition~\ref{eq:def_C^>x}.
\begin{Lemma}\label{lem:criterion-for-dominating-supports}
 If $\mu_1$ and $\mu_2$ are two probability measures on $\R^\N$ satisfying
\begin{equation}
\label{eq:cond_on_product_measure}
\mu_1\times\mu_2\{(y^1,y^2): y^1\preceq y^2\}=1,
\end{equation}
 then 
\[
\mu_1(C^{\preceq x})=\mu_2(C^{\succeq x})=1,
\]
where $x\in\R^\N$ can be taken as either the lower boundary for the support of $\mu_2$ or the upper boundary for the support of $\mu_1$.
\end{Lemma}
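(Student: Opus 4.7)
My proof plan proceeds by combining Fubini with the definitions of the support boundaries. I will treat the case where $x$ is the lower boundary of the support of $\mu_2$; the argument for the upper boundary is symmetric.

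First, I would apply Fubini to the hypothesis \eqref{eq:cond_on_product_measure}: there exists a set $N_1$ with $\mu_1(N_1)=1$ such that for every $y^1\in N_1$,
\[
\mu_2\{y^2\in\R^\N:\ y^1\preceq y^2\}=1.
\]
Fixing such a $y^1$ and any coordinate $k\in\N$, projecting to the $k$-th coordinate gives $\mu_2\pi_k^{-1}\{r\in\R:\ r<y^1_k\}=0$. By the definition of $\lsupp$, this forces $y^1_k\le \lsupp(\mu_2\pi_k^{-1})=x_k$ (possibly $x_k=+\infty$, which causes no trouble). Taking this over all $k$ simultaneously yields $y^1\preceq x$ for every $y^1\in N_1$, and hence $\mu_1(C^{\preceq x})=1$.

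Next, I would establish $\mu_2(C^{\succeq x})=1$ directly from the definition of $x$. For each $k\in\N$, $x_k=\lsupp(\mu_2\pi_k^{-1})$ implies $\mu_2\{y^2:\ y^2_k<x_k\}=0$. A countable union bound then gives
\[
\mu_2\{y^2:\ \exists k\in\N,\ y^2_k<x_k\}=0,
\]
i.e.\ $\mu_2(C^{\succeq x})=1$. This step also shows, in particular, that the lower boundary $x$ is automatically $>-\infty$ in every coordinate as soon as either $\mu_1$ or $\mu_2$ is nontrivial under the ordering hypothesis (and finiteness in each coordinate is in any case all that we need).

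For the case where $x$ is instead the upper boundary of the support of $\mu_1$, I would swap the roles of $\mu_1$ and $\mu_2$: Fubini gives $\mu_1$-a.e.\ $y^1$ dominated by $\mu_2$-a.e.\ $y^2$ coordinatewise, hence for $\mu_2$-a.e.\ $y^2$ and every $k$, $y^2_k$ is an upper bound for $\mu_1\pi_k^{-1}$, so $y^2_k\ge x_k$ and $\mu_2(C^{\succeq x})=1$; the complementary statement $\mu_1(C^{\preceq x})=1$ then follows from $\mu_1\{y^1_k>x_k\}=0$ for each $k$ together with a countable union. I do not foresee any real obstacle: the only care needed is the handling of possibly infinite coordinates of $x$ and the passage from "for $\mu_1$-a.e.\ $y^1$, a full-measure-in-$\mu_2$ statement holds" to the coordinatewise bound, which is immediate once one projects to each coordinate.
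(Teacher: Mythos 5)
Your proof is correct and uses a somewhat different mechanism than the paper's. You apply Fubini to pass from the product-measure hypothesis to a pointwise statement on a full-measure set $N_1$ and then project coordinatewise; the paper instead introduces both boundaries at once ($x$ the lower boundary of $\mu_2$, $z$ the upper boundary of $\mu_1$) and argues by contradiction that $z \preceq x$, since a strict violation $z_k > r > x_k$ at some coordinate would make $\mu_1\{y_k > r\} > 0$ and $\mu_2\{y_k < r\} > 0$ and hence assign positive product measure to $\{y^1 \not\preceq y^2\}$. From $z\preceq x$ the paper then reads off both $x\in\R^\N$ (squeezing $x_k$ between $z_k > -\infty$ and the automatic bound $x_k < +\infty$) and both claimed full-measure statements simultaneously, because $C^{\preceq z}\subseteq C^{\preceq x}$. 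Your route handles the two choices of $x$ by two parallel applications of Fubini, and is a bit more hands-on; the paper's observation that $z\preceq x$ packages the finiteness and both conclusions into one inequality. Both arguments are elementary and of comparable length.

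One small inaccuracy: you write that the step establishing $\mu_2(C^{\succeq x})=1$ "also shows" that $x_k>-\infty$. It does not — if some $x_k$ were $-\infty$, the coordinate $k$ constraint in $C^{\succeq x}$ would be vacuous and the statement $\mu_2(C^{\succeq x})=1$ would carry no information about $x_k$. The finiteness of $x$ in the lower-boundary case actually comes from your \emph{first} step: for any $y^1\in N_1$ you derived $y^1_k\le x_k$, and since $y^1_k\in\R$ this forces $x_k>-\infty$ (the other inequality $x_k<+\infty$ holds automatically because $\lsupp$ of a probability measure on $\R$ is never $+\infty$, which also makes your parenthetical ``possibly $x_k=+\infty$'' moot). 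Since the statement of the lemma asserts $x\in\R^\N$, this point deserves to be made explicitly rather than in passing, but the ingredient is already present in your argument and the proof is sound.
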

\bpf 
We take $x$ to be the lower boundary for the support of $\mu_2$ and $z$ to be the upper boundary for the support of $\mu_1$.
Then $-\infty<z_k\le x_k < +\infty$ for all $k\in\N$, i.e.,
\begin{equation}
z\preceq x,
\label{eq:z-preseq-x}
\end{equation}
 since the violation of this for some $k$ would  imply that  there is  $r$ satisfying $x_k<r<z_k$, and  
we would have $\mu_1\{y: y_k> r\}>0$ and $\mu_2\{y: y_k< r\}>0$, so
\[\mu_1\times\mu_2\{(y^1,y^2): y^1\not\preceq y^2\}\ge \mu_1\times\mu_2\{(y^1,y^2): y^1_k> r > y^2_k\}>0,\]
 contradicting  \eqref{eq:cond_on_product_measure}.

 By definitions of $x$ and $z$
there are $x',z'\in\R^\N$ such that $x\preceq x'$ and $z'\preceq z$. Combining this with~\eqref{eq:z-preseq-x}, we obtain
 that $x\in\R^\N$.

 Suppose $\mu_2(C^{\succeq x})<1$. This means that for some $k\in\N$, we have
$\mu_2\{y: y_k < x_k\}>0$, a contradiction against the definition of $x_k$. A similar argument applies to $\mu_1(C^{\preceq z})\le \mu_1(C^{\preceq x})$,  completing the proof.
\epf

\begin{Lemma}\label{lem:ordered-supports-of-sample-measures}
There is a full measure set $ \widetilde\FF\in\cF$ such that the following holds. For each $F\in\widetilde\FF $ and each $v_1,v_2\in\R$ satisfying $v_1<v_2$, suppose that $\mu_1,\mu_2$ are two probability measures on $\R^\N$ which are
\begin{itemize}
    \item concentrated on $S(v_1)$ and $S(v_2)$, respectively;
    \item invariant for the polymer dynamics Markov semigroup on $S(v_1)$ and $S(v_2)$, respectively.
\end{itemize}

Then there is a full measure set $\WW(\mu_1,\mu_2,F)\in\cW$ and an $\R^\N$-valued nonanticipating process $(x(t,F,W))_{t\in\R}$ such that, for all $t\in\R$ and all $W\in \WW(\mu_1,\mu_2,F)$, the sample measures for $\mu_1$ and $\mu_2$ satisfy
\begin{equation}
\label{eq:strong-order-on-sample-measures}
\mu_{1,W,t}(C^{\preceq x(t,F,W)})=\mu_{2,W,t}(C^{\succeq x(t,F,W)})=1.
\end{equation}
Moreover, $x(t,F,W)$ can be taken to be either the lower boundary for the support of $\mu_{2,W,t}$ or the upper boundary for the support of $\mu_{1,W,t}$.
\end{Lemma}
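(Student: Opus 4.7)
The plan is to combine the pullback construction of sample measures with the ordering-by-noise statement of Theorem~\ref{th:monotonization} and funnel the result through Lemma~\ref{lem:criterion-for-dominating-supports}. First I take $\widetilde\FF$ to be the intersection of $\FF_0$, the full-measure set of potentials on which Theorem~\ref{th:monotonization} applies, and any further full-measure sets needed to guarantee that the sample measures $\mu_{i,W,t}$ are well-defined via \eqref{eq:time-dependent-sample-measures} for the polymer dynamics viewed as a Markov semigroup on the Polish space $S(v_i)$ (cf.\ Remark~\ref{rem:S(v)}). Fix $F\in\widetilde\FF$ and $v_1<v_2$. The core claim I need to establish is that for $\sW$-a.e.\ $W$ and every $t\in\R$,
\begin{equation*}
(\mu_{1,W,t}\times\mu_{2,W,t})\big(\{(y^1,y^2):y^1\preceq y^2\}\big)=1.
\end{equation*}
Once this is proved, Lemma~\ref{lem:criterion-for-dominating-supports} immediately produces the bounding point $x(t,F,W)$, with the freedom to take it as the upper boundary of $\mu_{1,W,t}$ or the lower boundary of $\mu_{2,W,t}$.

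To prove the core claim, let $\xi^1,\xi^2$ be random variables with distributions $\mu_1,\mu_2$, mutually independent and independent of $W$. Since the map $\Phi^{s,t}_{F,W}$ is deterministic given $W$ and independence is preserved under such maps, the conditional joint law of $(\Phi^{s,t}_{F,W}\xi^1,\Phi^{s,t}_{F,W}\xi^2)$ given $W$ equals the product $\mu_1(\Phi^{s,t}_{F,W})^{-1}\times\mu_2(\Phi^{s,t}_{F,W})^{-1}$. Each marginal factor converges weakly to $\mu_{i,W,t}$ by definition, so the product converges weakly to $\mu_{1,W,t}\times\mu_{2,W,t}$ in the product topology on $\R^\N\times\R^\N$. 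Because $\{y^1\preceq y^2\}$ is closed, the portmanteau theorem applied along any sequence $s_n\to-\infty$ yields
\begin{equation*}
(\mu_{1,W,t}\times\mu_{2,W,t})(\{y^1\preceq y^2\})\ge \limsup_{n\to\infty} P_{s_n}(W),
\end{equation*}
where
\begin{equation*}
P_s(W)=(\mu_1\times\mu_2)\big\{(\xi^1,\xi^2): \Phi^{s,t}_{F,W}\xi^1\preceq \Phi^{s,t}_{F,W}\xi^2\big\}.
\end{equation*}

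It remains to verify $\limsup_n P_{s_n}(W)=1$ for $\sW$-a.e.\ $W$. By Fubini and the cocycle identity $\Phi^{s,t}_{F,W}=\Phi^{t-s}_{F,\tshift^sW}$,
\begin{equation*}
\sW[P_s]=\int \sW\big\{\tau(\xi^1,\xi^2,F,\tshift^sW)\le t-s\big\}\,d(\mu_1\times\mu_2)(\xi^1,\xi^2),
\end{equation*}
where $\tau$ is the monotonization time of Theorem~\ref{th:monotonization}. Since $\sW$ is invariant under $\tshift^s$ and $\tau(\xi^1,\xi^2,F,\cdot)<\infty$ $\sW$-a.s.\ for each $(\xi^1,\xi^2)\in S(v_1)\times S(v_2)$, the inner probability tends to $1$ as $s\to-\infty$; bounded convergence then gives $\sW[P_s]\to 1$. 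Because $P_s\le 1$, reverse Fatou yields
\begin{equation*}
\sW\big[\limsup_{n}P_{s_n}\big]\ge \limsup_{n}\sW[P_{s_n}]=1,
\end{equation*}
so $\limsup_n P_{s_n}(W)=1$ $\sW$-a.s., which, combined with the portmanteau bound, proves the core claim. Nonanticipation of $x(\cdot,F,\cdot)$ then follows from the $\Wc_t$-measurability of $\mu_{i,W,t}$ (sample-measure property (ii) listed before the lemma) together with the Borel measurability of the maps $\nu\mapsto\lsupp(\nu\pi_k^{-1})$ and $\nu\mapsto\rsupp(\nu\pi_k^{-1})$ on the space of probability measures equipped with the weak topology, taken coordinate by coordinate.

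The principal obstacle I anticipate is bridging two distinct almost-sure statements: sample measures exist a.s.\ in $W$ alone, while the ordering $\tau<\infty$ is almost sure jointly in $(\xi^1,\xi^2,W)$. Reverse Fatou is what converts the $L^1$-convergence $\sW[P_s]\to 1$ into the pointwise $\limsup$ statement demanded by portmanteau, and this is the crux of the argument. A secondary but essentially routine point is the measurability of the support-extraction operation, which is standard in view of the Polish structure of $S(v_i)$ and the weak topology on probability measures.
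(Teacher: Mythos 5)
Your argument takes a genuinely different route from the paper. You work with the pullback limit: you push the product measure $\mu_1\times\mu_2$ forward under $\Phi^{s,t}_{F,W}$, observe via Fubini, shift-invariance of $\sW$, and Theorem~\ref{th:monotonization} that the mass on $\{y^1\preceq y^2\}$ tends to $1$ in $L^1(\sW)$ as $s\to-\infty$, upgrade this to an a.s.\ $\limsup$ statement via reverse Fatou, and then feed the resulting lower bound through the portmanteau theorem (legitimate, since $\{y^1\preceq y^2\}$ is closed in the product topology on $\R^\N\times\R^\N$). The paper instead runs a ``lack of space'' argument: it sets $Q(t)=\sW[q(\mu_{1,W,t},\mu_{2,W,t})]$ with $q(\nu_1,\nu_2)=\nu_1\times\nu_2\{y^1\not\preceq y^2\}$, uses stationarity to get $Q(t)=Q(0)$, and then decomposes $Q(t)$ via the skew-invariance~\eqref{eq:sample-measures-skew-inv} and the order-preservation of the flow to show that $Q(t)=Q(0)-\sW[\mu_{1,W,0}\times\mu_{2,W,0}\{y^1\not\preceq y^2,\ \Phi^t_{F,W}y^1\preceq\Phi^t_{F,W}y^2\}]$, forcing the subtracted term to vanish for all $t>0$; combined with ordering by noise and the independence of $\Wc_0$ and $\Wc_{[0,\infty)}$, this forces $q(\mu_{1,W,0},\mu_{2,W,0})=0$ a.s. Both proofs hinge on the same two ingredients (Theorem~\ref{th:monotonization} and Lemma~\ref{lem:criterion-for-dominating-supports}); yours replaces the stationarity/cocycle decomposition by weak convergence of the pullback, which is arguably a more transparent mechanism and makes the role of $s\to-\infty$ explicit. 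You also address the nonanticipation of $x(\cdot,F,\cdot)$ more explicitly than the paper does.

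There is one genuine gap to close. The lemma asserts a \emph{single} full-measure set $\WW(\mu_1,\mu_2,F)$ on which \eqref{eq:strong-order-on-sample-measures} holds simultaneously for all $t\in\R$, but your core-claim argument produces, for each fixed $t$, a $t$-dependent $\sW$-null exceptional set (coming from the a.s.\ portmanteau/reverse-Fatou step). You cannot intersect over uncountably many $t$. The standard and easy fix is the one the paper uses implicitly: first establish the ordering for all $t\in\Z$ on a single full-measure set of $W$ (a countable intersection), then, for $t\in(m,m+1)$, use the sample-measure evolution identity~\eqref{eq:sample-measures-skew-inv}, $\mu_{i,W,t}=\mu_{i,W,m}(\Phi^{m,t}_{F,W})^{-1}$, together with the order-preserving property of the cocycle (Lemma~\ref{Lemma:monotonicity}) to push the $t=m$ ordering forward to all $t\ge m$. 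You should state this step explicitly; without it the ``for all $t\in\R$'' in your core claim is not justified.

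A second, cosmetic, point: you attach all the requisite full-measure conditions (existence of sample measures, ordering by noise) to the set $\widetilde\FF$ of potentials, but some of them are conditions on $W$ (or on $(F,W)$), not on $F$; you should carry a separate full-measure set of noise realizations and intersect it into $\WW(\mu_1,\mu_2,F)$, exactly as the statement of the lemma anticipates.
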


\bpf 
Due to Theorem~\ref{th:monotonization}, we 
can  take a set of full measure $\widetilde\FF $ such that for all  
$F\in \widetilde\FF$ and all $(x^1,x^2)\in S(v_1)\times S(v_2)$, ordering happens on a full measure set $\WW(F)\in\cW$.
By the invariance of $\mu_1$ and $\mu_2$, the sample measures in display~\eqref{eq:strong-order-on-sample-measures} are well-defined on a full measure set
$\widetilde\WW(\mu_1,\mu_2,F)$. We take $\WW(\mu_1,\mu_2,F)=\WW(F)\cap \widetilde\WW(\mu_1,\mu_2,F)$.

Let us introduce the following measure of departure from the domination property~\eqref{eq:strong-order-on-sample-measures}:
for two measures $\nu_1$ and $\nu_2$ on  $\R^\N$, we define
\[
q(\nu_1,\nu_2)=\nu_1\times\nu_2\{(x^1,x^2): x^1\not \preceq x^2 \}.
\]
In particular, if one can find sets $A_1$ and $A_2$ satisfying $A_1\preceq A_2$  (i.e., $x_1\preceq x_2$ for all $x_1\in A_1$, $x_2\in A_2$) and $\nu_1(A_1)=\nu_2(A_2)=1$, then $q(\nu_1,\nu_2)=0$.

Let us now introduce $Q(t)= \Ws[q(\mu_{1,W,t},\mu_{2,W,t})]$.
The right-hand side does not really depend on $t$ due to the stationarity, so $Q(t)=Q(0)$ for all $t$.
For all $t>0$, we have
\begin{align*}
Q(t)=& \sW[\mu_{1,W,t}\times\mu_{2,W,t}\{(x^1,x^2): x^1\not \preceq x^2 \}]
\\
=& \sW[\mu_{1,W,0}(\Phi^t_{F,W})^{-1}\times\mu_{2,W,0}(\Phi^t_{F,W})^{-1}\{(x^1,x^2): x^1\not \preceq x^2 \}]
\\
=& \sW[\mu_{1,W,0}\times\mu_{2,W,0}\{(x^1,x^2): \Phi^t_{F,W} x^1\not \preceq \Phi^t_{F,W} x^2 \}]
\\
=& 1 - \sW[\mu_{1,W,0}\times\mu_{2,W,0}\{(x^1,x^2): \Phi^t_{F,W} x^1 \preceq \Phi^t_{F,W} x^2 \}]
\\
= & 1 - \sW[\mu_{1,W,0}\times\mu_{2,W,0}\{(x^1,x^2): x^1\preceq x^2\}]
\\ &-
\sW[\mu_{1,W,0}\times\mu_{2,W,0}\{(x^1,x^2): x^1\not \preceq x^2;\ \Phi^t_{F,W} x^1 \preceq \Phi^t_{F,W} x^2 \}]
\\
= &Q(0) - \sW[\mu_{1,W,0}\times\mu_{2,W,0}\{(x^1,x^2): x^1\not \preceq x^2;\ \Phi^t_{F,W} x^1 \preceq \Phi^t_{F,W} x^2 \}],
\end{align*}
so for all $t>0$ we have 
\[\sW[\mu_{1,W,0}\times\mu_{2,W,0}\{(x^1,x^2): x^1\not \preceq x^2;\ \Phi^t_{F,W} x^1 \preceq \Phi^t_{F,W} x^2 \}]=0,\]
which, due to the choice of $F$ guaranteeing ordering, and independence of 
$\Wc_{0}$ and $\Wc_{[0,\infty)}$ implies that,
for $W\in\WW(\mu_1,\mu_2,F)$, $\mu_{1,W,0}\times\mu_{2,W,0}\{(x^1,x^2): x^1\not \preceq x^2\}=0$, i.e.,
\[
q(\mu_{1,W,0},\mu_{2,W,0}) = 0,\quad W\in\WW(\mu_1,\mu_2,F).
\]
This also implies that for all $m\in\Z$,
\[
q(\mu_{1,W,m},\mu_{2,W,m}) = 0, \quad W\in\WW(\mu_1,\mu_2,F).
\]
Due to the monotonicity of the cocycle and the skew-invariance property~\eqref{eq:sample-measures-skew-inv}, we can conclude that
\[
q(\mu_{1,W,t},\mu_{2,W,t}) = 0,\quad t\in\R,\  W\in\WW(\mu_1,\mu_2,F).
\]
Now we can apply Lemma~\ref{lem:criterion-for-dominating-supports}
to construct $x(t,F,W)$, which completes the proof of the lemma.
\epf

We will need the following lemma on shear invariance.

\begin{Lemma}\label{lemma:sample_meas_sh_inv}
For every $v_1,v_2\in\R$, the following holds:
\begin{enumerate}[1.]
\item There is a set $\FF'(v_1,v_2)\subset \FF(v_1)\cap\FF(v_2)$ such that $\Fs(\FF'(v_1,v_2))=1$ and 
for every $F\in \FF'(v_1,v_2)$,
\[
\mu_{v_2,\Shear^{v_2-v_1} F} = \mu_{v_1,F} (\Shear^{v_2-v_1})^{-1}.
\]
\item  There is a set $B'(v_1,v_2)\subset B(v_1,v_2)$ such that  $\Fs\times\Ws(B'(v_1,v_2))=1$
and for every $(F,W)\in B'(v_1,v_2)$ and all $t\in\R$,
\[
\mu_{v_2,\Shear^{v_2-v_1} F, W,t} = \mu_{v_1,F,W,t} (\Shear^{v_2-v_1})^{-1}.
\]
\end{enumerate}
\end{Lemma}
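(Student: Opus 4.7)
The plan has two layers: first prove shear-invariance of the static IVPMs using uniqueness and the quadratic form of the interaction $V(r) = r^2/2$ (Part~1), then propagate it to the sample measures via the shear-equivariance of the cocycle established in Lemma~\ref{lem:shear-invariance-of-dynamics} (Part~2).

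\emph{Part 1.} Write $v = v_2-v_1$ and define
\[\FF'(v_1,v_2) = \FF(v_1)\cap\FF(v_2)\cap(\Shear^{v})^{-1}\FF(v_2)\cap(\Shear^{-v})^{-1}\FF(v_1).\]
Each factor has full $\sF$-measure by Theorem~\ref{thm:thermodynamic-limit} together with the shear-invariance of $\sF$ noted after~\eqref{eq:shear-on-F}, so $\sF(\FF'(v_1,v_2))=1$. Fix $F\in\FF'(v_1,v_2)$; since $\Shear^{v}$ is a continuous bijection from $S(v_1)$ onto $S(v_2)$, the pushforward $\nu = \mu_{v_1,F}(\Shear^{v})^{-1}$ is concentrated on $S(v_2)$, and the main claim is $\nu\in\Pc_{\Shear^v F}$. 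To verify this, I would substitute $y_k = x_k+kv$, expand $\tfrac12(y_{k+1}-y_k)^2 = \tfrac12(x_{k+1}-x_k+v)^2$, telescope the cross-terms using $x_0=0$, and combine with $F_k(x_k) = (\Shear^v F)_k(y_k)$ to obtain
\[E_n^{\Shear^v F}(y_{n+1};y_{\le n}) = E_n^{F}(x_{n+1};x_{\le n}) + v\,y_{n+1} - \tfrac{n+1}{2}v^2,\]
where the additive correction depends only on $y_{n+1}$. This correction therefore cancels between $e^{-\invtemp E_n}$ and the normalizing constant $Z_n$ in the Gibbs density of~\eqref{eq:disintegration}, and since $\Shear^v$ is a translation of each coordinate and preserves Lebesgue measure on $\R^n$, the pushforward density at outer variable $y_{n+1}$ coincides with the Gibbs density for $\Shear^v F$ at that endpoint. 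This establishes the DLR specification for $\nu$, so $\nu\in\Pc_{\Shear^v F}(v_2)$; uniqueness for $\Shear^v F\in\FF(v_2)$ then forces $\nu = \mu_{v_2,\Shear^v F}$.

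\emph{Part 2.} For $F\in\FF_0$ and $W\in\WW_0$, Lemma~\ref{lem:shear-invariance-of-dynamics} yields
\[\Shear^{v}\circ\Phi^{s,t}_{F,W} = \Phi^{s,t}_{\Shear^{v} F,W}\circ\Shear^{v},\quad s\le t.\]
Taking pushforward of $\mu_{v_1,F}$ and composing with $(\Shear^v)^{-1}$ gives
\begin{align*}
\mu_{v_1,F}(\Phi^{s,t}_{F,W})^{-1}(\Shear^{v})^{-1} &= \mu_{v_1,F}\bigl(\Shear^{v}\circ\Phi^{s,t}_{F,W}\bigr)^{-1}\\
&= \mu_{v_1,F}\bigl(\Phi^{s,t}_{\Shear^{v} F,W}\circ\Shear^{v}\bigr)^{-1}\\
&= \bigl(\mu_{v_1,F}(\Shear^{v})^{-1}\bigr)\bigl(\Phi^{s,t}_{\Shear^{v} F,W}\bigr)^{-1}.
\end{align*}
For $F\in\FF'(v_1,v_2)\cap\FF_0$, Part~1 identifies the bracketed factor as $\mu_{v_2,\Shear^v F}$. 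Since $\Shear^{v}$ is continuous on $\R^\N$ with the product topology, pushforward by $\Shear^{v}$ commutes with the weak limit $s\to-\infty$ in~\eqref{eq:time-dependent-sample-measures}, so the identity $\mu_{v_1,F,W,t}(\Shear^v)^{-1} = \mu_{v_2,\Shear^v F,W,t}$ follows whenever both sample measures are defined.

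Finally, I would define $B'(v_1,v_2)$ as the intersection of $B(v_1,v_2)$ with $(\FF'(v_1,v_2)\cap\FF_0)\times\WW_0$ and with the $\sF\times\sW$-full-measure set -- obtained by Fubini from the sample-measure existence theorem applied to the invariant pairs $(F,\mu_{v_1,F})$ and $(\Shear^v F,\mu_{v_2,\Shear^v F})$ -- on which both sample measures are defined for every $t\in\R$. The identity at general $t$ follows from the case $t=0$ via the shift-covariance $\mu_{v,F,W,t}=\mu_{v,F,\tshift^tW,0}$. The main difficulty is not conceptual but organizational: assembling a single full-measure event supporting the identity for every $t$ simultaneously; the substantive content is fully captured by the cocycle shear-equivariance and the quadratic-interaction cancellation of Part~1.
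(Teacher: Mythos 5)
Your proof is correct and follows the same route as the paper: Part 1 is shear-invariance of the DLR specification plus uniqueness of IVPMs, and Part 2 is the cocycle shear-equivariance of Lemma~\ref{lem:shear-invariance-of-dynamics} pushed through the weak limit defining the sample measures. The only difference is that where the paper cites Lemma~5.2 of~\cite{Bakhtin-Li:MR3911894} for the shear-preservation of the Gibbs property, you rederive it directly via the telescoping of the quadratic energy; the resulting additive term $vy_{n+1}-\tfrac{n+1}{2}v^2$ does indeed depend only on the boundary coordinate and cancels between the density and the partition function, so your derivation is sound.
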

\bpf
The first part follows from  (i)~the fact that shear transformations preserve the Gibbs property of finite-volume polymer measures (this is Lemma~5.2
of \cite{Bakhtin-Li:MR3911894}) and (ii)~the uniqueness of IVPMs (our Theorem~\ref{thm:thermodynamic-limit} or Theorem 4.2 of~\cite{Bakhtin-Li:MR3911894}). The second part follows from the first one, the fact that infinite volume
polymer measures are invariant under the polymer dynamics (Proposition~\ref{prop:invariance}) and the shear-invariance property of the dynamics 
(Lemma~\ref{lem:shear-invariance-of-dynamics}).
\epf

The sets $B'(v_1,v_2)$ 
do depend on slopes
$v_1,v_2$. It is hard to make a statement valid for all $v_1,v_2$ at the same time, so we will need to use monotonicity and countable 
sets of~$v$.

Before stating a result on stationary global solutions, let us recall some terminology.
A process $(x(t))_{t\in\R}$ is called stationary if its distribution does not change under time shifts.
It is called a global solution for  $(\Phi_{F,W}^t)$ if there is a set $A$ of probability $1$ such that 
\begin{equation}
\label{eq:global-solution}
\Phi^{t,s}_{F,W} x(t,W)=x(s,W)
\end{equation}
 whenever $t<s$ and $W\in A$. A stationary nonanticipating global solution $(x(t))_{t\in\R}$ is said to be unique in  a space $L$ if for  any other stationary nonanticipating global solution $(y(t))_{t\in\R}$ with values in $L$,
there is a set $A'$ of probability $1$ such that $x(t,W)=y(t,W)$ for all $t\in\R$ and all $W\in A'$.

\begin{Th} \label{th:sample-measures-for-Gibbs-are-delta}
There is a random field $x_n(v,t)=x_n(v,t,F,W)$,  $t\in\R$, $v\in\R$, $n\in\N$, defined on $(\FF_0\times\WW_0,\Fc\times\Wc,\Fs\times \Ws)$
and, for each $v\in\R$,  a set $C(v)\in\Fc\times\Wc$ satisfying $\Fs\times\Ws(C(v))=1$
 such that the following holds:
\begin{enumerate}[1.]
\item \label{it:x-satisfies-shear-inv-lemma}
For all  $n\in\N$, $t\in\R$, the process $\{x_n(v,t)\}_{v\in\R}$ has right-continuous paths and  nonnegative stationary increments in $v$. 
\item \label{it:continuity-pt} For all $v\in\R$, $(F,W)\in C(v)$, $t\in\R,$  $n\in\N$,
$v$ is a continuity point of $x_n(v,t)$.
\item \label{it:sample-measures-are-Dirac}
Let us define $x(v,t)=x(v,t,F,W)=(x_n(v,t,F,W))_{n\in\N}$. Then, for all  $v\in\R,$ $(F,W)\in C(v)$,  $t\in \R$,  we have 
\[\mu_{v,F,W,t}=\delta_{x(v,t,F,W)}.
\]
\item\label{it:x-is-S-v-valued}
 For all $v\in\R$, $(F,W)\in C(v)$, $t\in\R,$  we have $x(v,t)\in S(v)$.
\item \label{it:global-solution}
For any fixed $v\in\R$ and $\Fs$-a.e.\ potential $F$, $(x(v,t))_{t\in\R}$ is a unique stationary nonanticipating global  solution of the polymer dynamics in $S(v)$.

\item \label{it:inv-measure} For any fixed $v\in\R$ and $\Fs$-a.e.\ potential $F$, $\mu_{v,F}$ is
a unique invariant measure for the polymer Markov semigroup $(P^t)$ on $S(v)$.
\end{enumerate} 
\end{Th}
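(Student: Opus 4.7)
The plan is to construct the random field $x_n(v,t)$ as the rightmost support point of the $n$-th marginal of the sample measure $\mu_{v,F,W,t}$, exploit the ordering of sample measures across distinct slopes together with shear-invariance, and collapse the support of each sample measure to a single point via Lemma~\ref{lem:jumps-of-random-monotone-f}. Concretely, for rational $v$ set $u_n(v,t,F,W) = \rsupp(\mu_{v,F,W,t}\pi_n^{-1})$ and $l_n(v,t,F,W) = \lsupp(\mu_{v,F,W,t}\pi_n^{-1})$; these are well-defined on a full-measure subset of $\FF_0 \times \WW_0$. Applying Lemma~\ref{lem:ordered-supports-of-sample-measures} to every pair of rational slopes and taking the countable intersection of the resulting full-measure sets, we obtain a set on which $u_n(v_1,t) \le l_n(v_2,t) \le u_n(v_2,t)$ for all rational $v_1 < v_2$, all $n \in \N$ and all $t \in \R$; in particular $v \mapsto u_n(v,t)$ is nondecreasing on $\Q$.

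\textbf{Collapsing the support.} Extend $u_n(\cdot,t)$ right-continuously to all $v \in \R$ by $x_n(v,t,F,W) = \inf\{u_n(v',t,F,W): v' \in \Q,\ v' > v\}$. By Lemma~\ref{lemma:sample_meas_sh_inv}, $\mu_{v+h, \Shear^h F, W, t} = \mu_{v,F,W,t}(\Shear^h)^{-1}$, and since $\Shear^h$ shifts the $n$-th coordinate by $nh$, we obtain $u_n(v+h, t, \Shear^h F, W) = u_n(v,t,F,W) + nh$ and likewise for $x_n$; combined with the $\Shear$-invariance of $\sF$, a short computation shows that $v \mapsto x_n(v,t)$ has stationary increments under $\sF \times \sW$. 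Lemma~\ref{lem:jumps-of-random-monotone-f} now guarantees that every fixed $v \in \R$ is almost surely a continuity point of the right-continuous monotone process $x_n(\cdot,t)$. Squeezing $l_n$ and $u_n$ at $v$ between rationals approaching $v$ from below and above forces the $n$-th marginal of $\mu_{v,F,W,t}$ to concentrate on $\{x_n(v,t)\}$. Intersecting over $n \in \N$ and rational $t$, and using the skew-invariance \eqref{eq:sample-measures-skew-inv} under the cocycle to propagate the Dirac property from rational to arbitrary $t$, yields a full-measure set $C(v)$ on which $\mu_{v,F,W,t} = \delta_{x(v,t)}$; the inclusion $x(v,t) \in S(v)$ then follows from the invariance of $S(v)$ under the dynamics (Proposition~\ref{prop:invariance_slope}). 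This gives parts~\ref{it:x-satisfies-shear-inv-lemma}--\ref{it:x-is-S-v-valued}.

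\textbf{Global solution and unique ergodicity.} For existence in part~\ref{it:global-solution}, the cocycle identity \eqref{eq:sample-measures-skew-inv} applied to the Dirac sample measures gives $\Phi^{t_1,t_2}_{F,W} x(v,t_1) = x(v,t_2)$ for all $t_1 \le t_2$, exhibiting $(x(v,t))_{t\in\R}$ as a stationary nonanticipating global solution in $S(v)$. For part~\ref{it:inv-measure}: given any invariant measure $\mu$ on $S(v)$, Lemma~\ref{lem:ordered-supports-of-sample-measures} applied to the pairs $(\mu_{v'',F}, \mu)$ and $(\mu, \mu_{v',F})$ for rationals $v'' < v < v'$ sandwiches the sample measure $\mu_{W,t}$ between $\delta_{x(v'',t)}$ and $\delta_{x(v',t)}$; sending $v'', v' \to v$ and invoking the continuity of $x(\cdot,t)$ at $v$ established in part~\ref{it:continuity-pt} forces $\mu_{W,t} = \delta_{x(v,t)} = \mu_{v,F,W,t}$, so $\mu = \sW\mu_{W,0} = \mu_{v,F}$, which yields unique ergodicity. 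Uniqueness of the global solution in part~\ref{it:global-solution} now follows: any stationary nonanticipating global solution $y$ in $S(v)$ has marginal $\mu_{v,F}$ by unique ergodicity, so its sample measure at time $0$ equals $\delta_{x(v,0)}$; on the other hand, writing $y(0) = \Phi^{s,0}_{F,W} y(s)$ with $y(s) \sim \mu_{v,F}$ independent of $\Wc_{[s,0]}$ and letting $s \to -\infty$ identifies this sample measure as $\delta_{y(0)}$, giving $y(0) = x(v,0)$ and hence $y(t) = x(v,t)$ for all $t$. The main technical obstacle is the careful bookkeeping of the full-measure sets indexed by $v$, $t$ and $n$, and verifying that the stationary-increment property of $x_n(\cdot,t)$ holds coherently across all these parameters so that Lemma~\ref{lem:jumps-of-random-monotone-f} can be applied.
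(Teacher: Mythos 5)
Your proposal is correct and follows essentially the same route as the paper: construct $x_n$ as the right-continuous regularization over rational slopes of the rightmost support points of the marginal sample measures, deduce monotonicity from Lemma~\ref{lem:ordered-supports-of-sample-measures}, stationary increments from Lemma~\ref{lemma:sample_meas_sh_inv}, collapse the supports via Lemma~\ref{lem:jumps-of-random-monotone-f}, propagate through time by the cocycle/skew-invariance, and derive unique ergodicity and uniqueness of the global solution from the identification of all sample measures with the Dirac at $x(v,t)$. The only cosmetic differences are your use of both $\lsupp$ and $\rsupp$ for the squeeze (the paper gets the same squeeze using $\rsupp$ alone together with Lemma~\ref{lem:ordered-supports-of-sample-measures}) and your choice of rational $t$ where the paper uses $t\in\Z$, neither of which affects the argument.
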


Part~\ref{it:inv-measure} is one of the claims in part~\ref{it:uniq-erg-mix} of Theorem~\ref{th:collect}. 
Part~\ref{it:global-solution}  is one of the claims in part~\ref{it:1F1S}

\begin{Rem}
This theorem, together with the definition of the sample measures,  implies weak convergence of $\mu_{F,v} (\Phi^{s,t}_{F,W})^{-1}$ 
to $\delta_{x(v,t,F,W)}$. Since the initial condition in this statement is not arbitrary (it is distributed according to $\mu_{F,v}$) and the convergence
to $x(v,t,F,W)$ is only distributional, this is a weaker form of 1F1S. We will upgrade our theorem to the true 1F1S on each $S(v)$ in the next subsection.
\end{Rem}

\begin{Rem}\label{rem:continuity-conj}We conjecture that, with probability $1$, the right-continuous and nondecreasing in $v$ process $x(v,t,F,W)$ actually has no discontinuity 
points on the entire real line (this is a strengthening of part~\ref{it:continuity-pt} of the theorem).
\end{Rem}

\bpf
Let us fix $t$, $n$, and for any countable dense set $V\subset\R$ define 
\[
x_n(v,t,F,W,V)=\lim_{V\ni v'\downarrow v} \rsupp(\mu_{v',t,F,W}\pi^{-1}_n),\quad   v\in\R.
\]
Due to Lemma~\ref{lem:ordered-supports-of-sample-measures}, there is a set $A(V)\in \Fc\times\Wc$ of probability $1$ on which this function is well-defined, real,
nondecreasing and right-continuous. On the zero-measure complement of this set we can set, say, $x_n(v,t,F,W,V)=v$.
Due to Lemma~\ref{lem:ordered-supports-of-sample-measures},   for any two countable dense sets $V_1,V_2\subset\R$,  $x_n(v,t,F,W,V_1)=x_n(v,t,F,W,V_2)$ for $\Fs\times\Ws$-a.e.~$(F,W)$, so one can view $x_n(v,t,F,W)$ as uniquely defined
up to zero-measure modifications depending on a particular choice of $V$. Next, since $(\mu_{v,t,F,W})_{v\in\R}$ is shear-invariant
(Lemma~\ref{lemma:sample_meas_sh_inv}), we obtain that
$x_n(v,t,F,W)$ is a process with stationary increments in $v$, which proves part~\ref{it:x-satisfies-shear-inv-lemma} for a fixed $t$. It is also automatically
 valid for countably many~$t$ but we will need to extend this to uncountably many. 

Now we can  apply Lemma~\ref{lem:jumps-of-random-monotone-f} on jumps of stationary monotone processes to conclude that for each fixed $v\in\R$,
$x_n(v,t,F,W)$ is continuous at $v$ with probability $1$. Together with Lemma~\ref{lem:ordered-supports-of-sample-measures}, this implies that
 the support of $\mu_{v,F,W,t}\pi^{-1}_n$
is one point $x_n(v,t,F,W)$. The point $x(v,t,F,W)=(x_n(v,t,F,W))_{n\in\N}$ has to belong to $S(v)$ because $\mu_{v,F,W,t}$ is supported by $S(v)$.
Considering all $t\in\Z$ simultaneously, we combine countably many exceptional zero measure sets into a single one and see that parts \ref{it:x-satisfies-shear-inv-lemma}--\ref{it:x-is-S-v-valued} hold true simultaneously for all those values of~$t$. 

Since sample measures always satisfy~\eqref{eq:sample-measures-skew-inv},
 we obtain \eqref{eq:global-solution} for $t\in\Z$ and $s-t\in\N$. 
We can now use  this and \eqref{eq:sample-measures-skew-inv} to define $x(v,s,F,W)$ for noninteger $s$. This defines a global solution in agreement with parts \ref{it:x-satisfies-shear-inv-lemma}--\ref{it:x-is-S-v-valued}  of the theorem.
Part~\ref{it:sample-measures-are-Dirac} now implies that the process  $(x(v,t))_{t\in\R}$ is stationary and nonanticipating, completing the proof of existence in 
part~\ref{it:global-solution}
 of the theorem. 

Let us prove part~\ref{it:inv-measure}. 
Let $\rho$ be an invariant measure for for $(P^t)$ on $S(v)$. Then its
sample measures $\rho_{F,W,t}$ are well-defined on a set of full measure. 
Let us fix some~$t$, e.g., set $t=0$.
Due to Lemma~\ref{lem:ordered-supports-of-sample-measures} applied to $\rho$ and $\mu_{v',F}$ for $v'\in\Q$,
we also have that with probability 1, for every
$v_1,v_2\in\Q$ such that $v_1<v<v_2$, the support of
$\rho_{F,W,t}$ is squeezed between $x(v_1,t,F,W)$ and $x(v_2,t,F,W)$.
Since the probability that $v$ is a discontinuity point of the monotone function 
$v\mapsto x(v,t,F,W)$ is zero, we obtain that with probability $1$,
$\rho_{F,W,t}$ is 
uniquely defined as
the delta measure at 
\[ x(v,t,F,W)=\lim_{\Q \ni v_1\uparrow v} x(v_1,t,F,W)=\lim_{\Q \ni v_2\downarrow v} x(v_2,t,F,W).
\]
i.e., $\rho_{W,t}=\mu_{v,F,W,t}$. 
Therefore, $\rho$ is also uniquely defined and must coincide with~$\mu_{v,F}$.

To prove the remaining uniqueness  claim in part~\ref{it:global-solution} of the theorem, we assume that there is another
nonanticipating global stationary $S(v)$-valued solution $x'(v,t,F,W)$. Its distribution is invariant under $(P^t)$, so this distribution must coincide with
$\mu_{v,F}$. For the sample measures for $\mu_{v,F}$,  we have $\mu_{v,F,W,t}=\delta_{x(v,t,F,W)}$ on the one hand, and 
$\mu_{v,F,W,t}=\delta_{x'(v,t,F,W)}$ on the other hand,
which implies our claim and completes the proof.
\epf

\subsection{True 1F1S}
\label{sec:strong1F1S}

The attractor property in
part~\ref{it:1F1S} in Theorem~\ref{th:collect} is restated below in a more precise way.

\begin{Th} 
\label{th:1F1S-coord}
Let $v\in\R$. Then, for $\sF\times \sW$-a.e.\ $(F,W)$, we have

\[
\lim_{t\to-\infty} \|\Phi^{t,s}_{F,W} y - x(v,s,F,W)\|_{\LL}=0, \quad y\in S(v),\  s\in \R,
\]
where  $x(\cdot,\cdot,\cdot,\cdot)$ is the family of global solutions constructed in~Theorem~\ref{th:sample-measures-for-Gibbs-are-delta}.
\end{Th}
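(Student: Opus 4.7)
The plan is to sandwich $\Phi^{t,s}_{F,W}y$ between trajectories started from rays of slightly perturbed slopes $v\pm\epsilon$, squeeze these by the nearby global solutions $x(v\pm 2\epsilon,s,F,W)$ via ordering by noise, and conclude using continuity of $v'\mapsto x(v',s,F,W)$ at $v$.

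First, I would work on the intersection of full-measure sets from Theorems~\ref{th:sample-measures-for-Gibbs-are-delta} and~\ref{th:monotonization} (for $v$ and for slopes in a countable dense subset of $\R$). Given $y\in S(v)$ and $\epsilon>0$, since $y_k/k\to v$ I pick $C=C(y,\epsilon)$ so that $y^\pm:=\rr^{v\pm\epsilon}\pm C$ satisfy $y^-\preceq y\preceq y^+$; monotonicity of the flow (Lemma~\ref{Lemma:monotonicity}) preserves this sandwich for all $t\le s$. The core step is to apply ordering by noise at starting time $t$ to the pair $(y^+,\,x(v+2\epsilon,t,F,W))\in S(v+\epsilon)\times S(v+2\epsilon)$: using the shifted noise $\tshift^t W$, Theorem~\ref{th:monotonization} yields an $\sW$-a.s.\ finite stopping time $\tau_t$ after which
\[
\Phi^{t,r}_{F,W}y^+\preceq \Phi^{t,r}_{F,W}x(v+2\epsilon,t,F,W) = x(v+2\epsilon,r,F,W),\qquad r\ge t+\tau_t,
\]
the last equality being the cocycle property of the global solution. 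By shift-invariance of $\sW$ and of the global solution, the distribution of $\tau_t$ is independent of $t$. The symmetric argument applied to $y^-$ gives a lower bound by $x(v-2\epsilon,s,F,W)$.

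Once the squeeze $x(v-2\epsilon,s,F,W)\preceq\Phi^{t,s}_{F,W}y\preceq x(v+2\epsilon,s,F,W)$ is in place for all sufficiently negative $t$, sending $\epsilon\to 0$ along a countable sequence and invoking the a.s.\ continuity of $v'\mapsto x_n(v',s,F,W)$ at $v$ (Theorem~\ref{th:sample-measures-for-Gibbs-are-delta}, part~\ref{it:continuity-pt}) yields coordinatewise convergence $\Phi^{t,s}_{F,W}y\to x(v,s,F,W)$. The upgrade to $\|\cdot\|_\LL$ is routine: all sandwiched polymers have slope within $2\epsilon$ of $v$, so $|\pi_k(\Phi^{t,s}_{F,W}y)-x_k(v,s,F,W)|/k = O(\epsilon)+o(1)$ for large $k$ uniformly in $t$, and this tail estimate coupled with coordinatewise convergence on the initial segment closes the argument.

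The main obstacle I foresee is promoting the inequality $\tau_t\le s-t$ (needed for the squeeze) from a density-one subset of $t$, which is immediate from Birkhoff's theorem applied to $\mathds{1}_{\tau>M}$, to \emph{all} $t$ sufficiently negative. My approach is to exploit the persistence of ordering under the monotone forward flow (once $\Phi^{t^*,r_0}_{F,W}y^+\preceq x(v+2\epsilon,r_0,F,W)$ at some intermediate time, it persists for all later times by monotonicity of $\Phi^{r_0,\cdot}$) together with the cocycle decomposition $\Phi^{t,s}=\Phi^{t^*,s}\circ\Phi^{t,t^*}$, and to derive a summable tail bound on $\P(\tau_0>n)$ by iterating the positive-probability monotonization of Lemma~\ref{lemma:posi_prob_mt} over the successive recurrences to flat regions provided by Lemma~\ref{lemma:recur}. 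This should yield geometric-type decay of $\P(\tau_0>n)$ and hence a direct Borel--Cantelli argument giving the required uniform-in-$t$ control.
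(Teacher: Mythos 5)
Your sandwiching strategy and the final continuity step match the structure of the paper's proof, but the central mechanism you use to establish the squeeze is different from the paper's and, as written, has a gap that you partially acknowledge but do not close.

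The paper does \emph{not} invoke Theorem~\ref{th:monotonization} here. Instead it uses Lemma~\ref{lem:dominations-exist}: for any $y\in S(v)$ and rational $v'<v$ there is a set $L_y(v')=C^{\preceq y}\cap S(v')$ with $a:=\mu_{v',F}(L_y(v'))>0$. Because the sample measure is \emph{by definition} the weak limit of the pullbacks $\mu_{v',F}(\Phi^{-t,0}_{F,W})^{-1}$, for $t$ large the pullback mass outside $O_\eps(x(v',0,F,W))$ drops below $a/2$, so there must exist $z(t)\in L_y(v')$ whose pullback lands in $O_\eps(x(v',0,F,W))$. Monotonicity then gives $\Phi^{-t,0}_{F,W}y\succeq z'(t)\succeq\Shear^{-\eps}x(v',0,F,W)$ for \emph{all} $t$ past a deterministic threshold $t_{\eps,-}$. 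This sidesteps any stopping-time considerations.

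Your approach instead applies ordering by noise directly to the pair $(y^+,\,x(v+2\eps,t,F,W))$ started at time $t$. The stopping time $\tau_t$ then depends on $t$, and you need $\tau_t\le s-t$ for \emph{all} sufficiently negative $t$, not just a density-one set. The persistence argument you sketch does not resolve this: to transfer ordering from a start time $t^*$ to an earlier $t<t^*$ one needs $\Phi^{t,t^*}_{F,W}y^+\preceq y^+$ (false in general) or, alternatively, ordering of the pair $(\Phi^{t,t^*}_{F,W}y^+,\,x(v+2\eps,t^*,F,W))$, which is again the original problem since the left-hand initial condition varies with $t$. Theorem~\ref{th:monotonization} provides a stopping time depending on the initial pair with no uniformity over the initial condition, and Lemma~\ref{lemma:recur} supplies recurrence only for a \emph{fixed} pair $(x,y)$ with no quantitative control on return times. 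So the ``geometric tail bound on $\P(\tau_0>n)$'' you appeal to is a genuinely new estimate that would require proving a quantitative strengthening of both Lemma~\ref{lemma:posi_prob_mt} (uniform positive probability over a ball of initial conditions, which is in place) and Lemma~\ref{lemma:recur} (exponential moments for return times to the flat-region box, which is not). Without that, the Borel--Cantelli step does not go through and the claimed squeeze for all negative $t$ is unsupported. The paper's use of the sample-measure definition plus Lemma~\ref{lem:dominations-exist} is precisely what makes this extra work unnecessary.
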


\begin{Rem} One can view $x(\cdot,s,F,W)$ as a global solution in the space of functions of $v$. 
Theorem~\ref{th:1F1S-coord} has an obvious extension for simultaneous convergence to $x(v,s,F,W)$ over countable sets of values $v$. 
Requiring that $y=y(v)$ is a nondecreasing function of $v$ (i.e., $y(v_1)\preceq y(v_2)$ for $v_1<v_2$),
one can make a stronger claim of convergence at every point of continuity of $x(\cdot,s,F,W)$.
If in addition  the continuity conjecture in Remark~\ref{rem:continuity-conj} holds, then the convergence will hold at all points $v\in\R$ without exception.
\end{Rem}
\bpf
Due to the skew-invariance identity \eqref{eq:global-solution} and the continuity of the flow, it suffices to consider only  $s=0$. 
Theorem~\ref{th:sample-measures-for-Gibbs-are-delta}  implies that on the set 
\begin{equation}
D(v)=\bigcap_{v'\in v+\Q} C(v')
\label{eq:D-all-Q-shifts-of-C}
\end{equation}
of probability~$1$, for all $v'\in v+\Q$, invariant measures $\mu_{v',F}$ and their sample measures $\mu_{v',F,W}$ are well-defined.

Let us fix $v'\in v+\Q$ satisfying $v'<v$ and define 
\[
L_y(v')= C^{\preceq y} \cap S(v'),\quad y\in S(v). 
\]
Let us now take any $y\in S(v)$ and denote  $a=\mu_{v',F}(L_y(v'))$. Lemma~\ref{lem:dominations-exist} implies  that $a>0$.

By the definition \eqref{eq:time-dependent-sample-measures} of sample measures, the definition \eqref{eq:D-all-Q-shifts-of-C} of $D(v)$ and  Theorem~\ref{th:sample-measures-for-Gibbs-are-delta}, we obtain that on $D(v)$, for each $\eps>0$,
there is  $t_{\eps,-}>0$ such that for all $t>t_{\eps,-}$, 
\[
\mu_{v',F} (\Phi^{-t,0}_{F,W})^{-1}  (O^c_\eps(x(v',0,F,W)))<a/2,
\]
where
\begin{equation}
\label{eq:O}
O_\eps(z)=\{z'\in\R^\N:\ z_k-k \eps< z'_k < z_k +k\eps,\ k\in\N  \}
\end{equation}
denotes the open $\eps$-neighborhood of a point $z$ in $\|\cdot\|_{\LL}$. Therefore, for each $t>t_{\eps,-}$, there is $z(t)\in  L_y(v')$ such that
$z'(t)=\Phi^{-t,0}_{F, W} (z(t)) \in O_\eps(x(v',0,F,W))$. Using $z(t)\preceq y$ and   the order-preserving property of the flow, we obtain that for all $t>t_{\eps,-}$, we have 
$z'(t)\in O_\eps(x(v',0,F,W))$ and $\Phi^{-t,0}_{F,W} y\succeq z'(t)$.  Due to~\eqref{eq:O}, this implies
\begin{equation}
\label{eq:lower-bound-on-pullback}
\Phi^{-t,0}_{F,W} y\succeq  \Shear^{-\eps}x(v',0,F,W),\quad t>t_{\eps,-}.
\end{equation}
A similar argument implies that there is $t_{\eps_+}>0$ such that
\begin{equation}
\label{eq:upper-bound-on-pullback}
\Phi^{-t,0}_{F,W} y\preceq \Shear^{+\eps}x(v',0,F,W),\quad t>t_{\eps,+}.
\end{equation}
 Since $\eps>0$ and $v'\in v+\Q$ are arbitrary, we obtain for all $k\in\N$:
\[
\liminf_{t\to\infty}(\Phi^{-t,0}_{F,W} y)_k\ge \lim_{v+\Q\ni v'\uparrow v} x_k(v',0,F,W).
\]
\[
\limsup_{t\to\infty}(\Phi^{-t,0}_{F,W} y)_k\le \lim_{v+\Q\ni v'\downarrow v} x_k(v',0,F,W).
\]
Using the a.s-continuity of $x(\cdot,0,F,W)$ at $v$ we conclude that with probability $1$, for each $k\in\N$
\begin{equation}
\label{eq:coordinatewise-pullback}
\lim_{t\to\infty}(\Phi^{-t,0}_{F,W} y)_k=  x_k(v,0,F,W).
\end{equation}

It remains to upgrade this to convergence in $\|\cdot\|_{\LL}$. Let us fix an arbitrary $\delta>0$ and set $\eps=\delta/3$ and $v'_{\pm}=v\pm\delta/3$.
Since $x(v'_\pm,0,F,W)\in S(v'_\pm)$, we can find $n\in\N$ such that for $k\ge n$,
\[
(x(v'_-,0,F,W))_k\ge k(v'_- -\eps),
\]
and
\[
(x(v'_+,0,F,W))_k\le k(v'_++\eps).
\]
So we have
\[
( \Shear^{-\eps}x(v'_-,0,F,W))_k\ge k(v'_--2\eps)\ge k(v-\delta)
\]
implying
\[
\inf_{k\ge n}\frac{
( \Shear^{-\eps}x(v'_-,0,F,W))_k- kv}{k} \ge -\delta,
\]
and, similarly,
\[
\sup_{k\ge n}\frac{
( \Shear^{-\eps}x(v'_+,0,F,W))_k- kv}{k} \le \delta.
\]
Combining these estimates with~\eqref{eq:lower-bound-on-pullback} applied to $v'=v'_-$, \eqref{eq:upper-bound-on-pullback} 
applied to $v'=v'_+$,
and 
\eqref{eq:coordinatewise-pullback} applied to $k<n$,
we obtain
\[
\limsup_{t\to\infty}\|\Phi^{-t,0}_{F,W} y - x(v,0,F,W)\|_{\LL}\le 2\delta.
\]
Since $\delta>0$ is arbitrary, this completes the proof.
\epf

\medskip

It remains to state a  corollary on mixing which, together with part~\ref{it:inv-measure} in Theorem~\ref{th:sample-measures-for-Gibbs-are-delta}, yields part~\ref{it:uniq-erg-mix} in Theorem~\ref{th:collect}.

A Markov  semigroup $(P^t)$ on a space $\XX$  is called mixing  with respect  to a $(P^t)$-invariant measure~$\mu$  if the associated transition probabilities
$P^t(x,\d y)$ converge weakly to $\mu(\d y)$ as $t\to\infty$, i.e.,
$\lim_{t\to\infty} P^t g(x) =\mu g$ for all continuous bounded functions~$g$ and all $x\in\XX$.

\begin{Cor}\label{cor:mixing} Let $v\in\R$. For $\sF$-a.e.\ $F$, the Markov semigroup $(P^t)$ is mixing with respect to $\mu_{v,F}$.
\end{Cor}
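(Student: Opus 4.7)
The plan is to deduce mixing from the pullback attractor property established in Theorem~\ref{th:1F1S-coord}. Fix $v\in\R$ and a potential $F$ in the $\sF$-full measure set produced by Fubini from the $\sF\times\sW$-a.e.\ statement of that theorem, intersected with the full measure set on which $\mu_{v,F}$ is defined and invariant. Working on the Polish space $(S(v),\|\cdot\|_\LL)$ of Remark~\ref{rem:S(v)}, it suffices to show that for every $x\in S(v)$ and every bounded $\|\cdot\|_\LL$-continuous function $g:S(v)\to\R$,
$$
P^tg(x)=\sW g(\Phi^t_{F,W}x)\longrightarrow\mu_{v,F}g\quad \text{as } t\to\infty.
$$

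The first step is to replace the forward evaluation of the cocycle at time $t$ by a pullback evaluation. Using the cocycle identity $\Phi^{-t,0}_{F,W}=\Phi^t_{F,\theta^{-t}W}$ together with the $\theta$-invariance of $\sW$, the random elements $\Phi^t_{F,W}x$ and $\Phi^{-t,0}_{F,W}x$ have the same distribution under $\sW$, so $P^tg(x)=\sW g(\Phi^{-t,0}_{F,W}x)$. Theorem~\ref{th:1F1S-coord} applied at $s=0$ then gives that $\sW$-a.s.\ one has $\Phi^{-t,0}_{F,W}x\to x(v,0,F,W)$ in $\|\cdot\|_\LL$, so the boundedness and continuity of $g$ together with the bounded convergence theorem yield $\sW g(\Phi^{-t,0}_{F,W}x)\longrightarrow \sW g(x(v,0,F,W))$ as $t\to\infty$.

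Finally, $(x(v,t,F,W))_{t\in\R}$ is a stationary nonanticipating global solution of the polymer dynamics on $S(v)$, so the law of $x(v,0,F,W)$ is a $(P^t)$-invariant probability measure on $S(v)$ and therefore coincides with $\mu_{v,F}$ by the uniqueness assertion in Theorem~\ref{th:sample-measures-for-Gibbs-are-delta}~\ref{it:inv-measure}. Hence $\sW g(x(v,0,F,W))=\mu_{v,F}g$, and combining the previous displays yields $P^tg(x)\to\mu_{v,F}g$, as required. There is no substantive obstacle in this argument; the only point demanding care is the distributional identity $\Phi^t_{F,W}x\stackrel{d}{=}\Phi^{-t,0}_{F,W}x$ under $\sW$, which is a direct consequence of the stationarity of the Wiener measure.
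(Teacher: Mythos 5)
Your argument is correct and follows essentially the same route as the paper's: express $P^t g(x)$ as $\sW g(\Phi^{-t,0}_{F,W}x)$ via stationarity of $\sW$, apply the pullback-attractor convergence of Theorem~\ref{th:1F1S-coord} with bounded convergence, and identify the limit $\sW g(x(v,0,F,W))$ with $\mu_{v,F}g$. The only cosmetic difference is that you identify the law of $x(v,0,F,W)$ with $\mu_{v,F}$ via uniqueness of the invariant measure (part~\ref{it:inv-measure} of Theorem~\ref{th:sample-measures-for-Gibbs-are-delta}), whereas the paper relies implicitly on the sample-measure identity $\mu_{v,F}(\cdot)=\Ws\mu_{v,F,W,0}(\cdot)$ together with $\mu_{v,F,W,0}=\delta_{x(v,0,F,W)}$; both are immediate from the same theorem.
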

\bpf 
By Theorem~\ref{th:1F1S-coord} and the bounded convergence theorem,
for each $x\in S(v)$,
\[
P^tg(x)=\Ws[g(\Phi^t_{F,W}x )]=\Ws[g(\Phi^{-t,0}_{F,W} x)]\to  \mu_{v,F} g, 
\]
and the corollary follows.
\epf

\newcommand{\etalchar}[1]{$^{#1}$}

\bibliographystyle{alpha}
\end{document}